\newcites{suppl}{References} 
\numberwithin{equation}{section}
\def\blfootnote{\xdef\@thefnmark{}\@footnotetext}
\newcommand{\id}{\mathds{1}}
\newcommand{\ind}{\id}
\renewcommand{\epsilon}{\varepsilon}
\newcommand{\eps}{{\varepsilon}}
\renewcommand{\phi}{\varphi}
\renewcommand{\theta}{\vartheta}
\DeclareMathOperator{\cum}{cum} 
\DeclareMathOperator{\ident}{id} 
\newcommand{\ubar}[1]{\underaccent{\bar}{#1}}
\newcommand{\reals}{\mathbb{R}}
\newcommand{\R}{\mathbb{R}}
\newcommand{\integers}{\mathbb{Z}}
\newcommand{\Z}{\mathbb{Z}}
\newcommand{\naturals}{\mathbb{N}}
\newcommand{\N}{\mathbb{N}}
\newcommand{\pr}{\mathbb{P}}        
\newcommand{\Prob}{\mathbb{P}}        
\newcommand{\ex}{\mathbb{E}}        
\newcommand{\Exp}{\mathbb{E}}        
\newcommand{\var}{\textnormal{Var}} 
\newcommand{\cov}{\textnormal{Cov}} 
\newcommand{\IMSE}{\textnormal{IMSE}}
\newcommand{\MSE}{\textnormal{MSE}}
\newcommand{\vertiii}[1]{{\left\vert\kern-0.25ex\left\vert\kern-0.25ex\left\vert #1 
    \right\vert\kern-0.25ex\right\vert\kern-0.25ex\right\vert}}
\newcommand{\Ac}{\mathcal{A}}
\newcommand{\Bc}{\mathcal{B}}
\renewcommand{\Mc}{\mathcal{M}}
\newcommand{\Nc}{\mathcal{N}}
\newcommand{\Hc}{\mathcal{H}}
\newcommand{\Lc}{\mathcal{L}}
\newcommand{\Sc}{\mathcal{S}}
\newcommand{\Bb}{\mathbb{B}}
\newcommand{\Gb}{\mathbb{G}}
\newcommand{\Ub}{\mathbb{U}}
\newcommand{\diff}{{\,\mathrm{d}}}
\newcommand{\scs}{\scriptscriptstyle}
\newcommand{\convd}{\rightsquigarrow}              
\newcommand{\convw}{\convd}                           
\newcommand{\weak}{\convd}              
\newtheorem{theorem}{Theorem}[section]
\newtheorem{proposition}[theorem]{Proposition}
\newtheorem{lemma}[theorem]{Lemma}
\newtheorem{corollary}[theorem]{Corollary}
\theoremstyle{definition}
\newtheorem{condition}[theorem]{Condition}
\newtheorem{alg}[theorem]{Algorithm}
\theoremstyle{remark}
\newtheorem{remark}[theorem]{Remark}
\begin{document}
\title[Detecting non-stationarities in functional Time Series]{Detecting deviations from second-order stationarity in locally stationary functional~time~series}

\author{Axel B\"ucher}
\address{Heinrich-Heine-Universit\"at D\"usseldorf, Mathematisches Institut, Universit\"atsstr.~1, 40225 D\"usseldorf, Germany.}
\email{axel.buecher@hhu.de}

\author{Holger Dette and Florian Heinrichs}
\address{Ruhr-Universit\"at Bochum, Fakult\"at f\"ur Mathematik, Universit\"atsstr.\ 150, 44780 Bochum, Germany.}
\email{holger.dette@rub.de}
\email{florian.heinrichs@rub.de}


\maketitle

\begin{abstract}
A time-domain test for the assumption of second order stationarity of a functional time series is proposed. The test is based on combining individual cumulative sum tests which are designed to be sensitive to changes in the mean, variance and autocovariance operators, respectively.  The combination of their dependent $p$-values relies on a joint dependent block multiplier bootstrap of the individual test statistics. Conditions under which the proposed combined testing procedure is asymptotically valid under stationarity are provided. A procedure is proposed to automatically choose the block length parameter needed for the construction of the bootstrap. The finite-sample behavior of the proposed test is investigated in Monte Carlo experiments and an illustration on a real data set is provided. 

\medskip

\noindent \textit{Key words:} alpha-mixing, CUSUM-test, auto-covariance operator, block multiplier bootstrap, change points.

\end{abstract}

\date{\today}

\maketitle


\section{Introduction} \label{sec:intro}

Within the last decades,  statistical analysis for  functional time series has become a very active area of research
(see the monographs \citealp{Bos00},  \citealp{ferrvieu2006}, \citealp{HorKok12} and \citealp{hsingeubank2015}, among others).
Many authors impose the assumption of stationarity, which allows for developing
advanced statistical theory. For instance, \cite{Bosq2002} and \cite{Dehling2005}
investigate stationary functional processes with a linear representation
and  \cite{Hormann2010} provide a general framework to model  functional observations
from stationary processes.  Frequency domain analysis of stationary functional time series has been considered by \cite{PanTav13}, while \cite{vanDelft2016} propose a new concept of local stationarity  for  functional data. The assumption of second order stationarity is also of particular importance for prediction problems (see \citealp{as03,adh,hyndmann2009} among others) and for dynamic principal component analysis (\citealp{horkidhal2015}).

Ideally, the assumption of stationarity should be checked before applying any statistical methodology.
Several authors have considered this problem, in particular within the context of change point analysis where the null hypothesis of stationarity is tested against the alternative  of  a structural change in certain parameters of the process; see \cite{Aue2009,Berkes2009,Horvath2010,Aston2012} among others.
Tests that are designed to be powerful against more general alternatives are often based on an analysis in the frequency domain.
For example,  \cite{AueVan2017} generalize  the approach of \cite{dwisub2011}  and \cite{jensub2015} to functional time series. More precisely, they begin by showing  that the functional discrete Fourier transform (fDFT) is asymptotically uncorrelated at distinct Fourier frequencies if and only if the process is weakly stationary. The corresponding test is then based on a quadratic form based on a finite dimensional projection  of the empirical covariance operator of the fDFT's. Consequently, the properties of the test depend on the number of lagged fDFT's included. 
As an alternative, \cite{VanBagChaDet17} construct a test using  an estimate of a minimal  distance between the spectral density operator of a non-stationary process  and its best  approximation by a spectral density operator corresponding to a stationary  process
(see also \citealp{DettePreussVetter2011}  for a discussion of this approach in the univariate context). 
The test statistic consists of  sums of Hilbert-Schmidt inner products of periodogram operators (evaluated at different frequencies), and is  asymptotically normal  distributed.

In the present paper, we propose an alternative time domain  test for second order stationarity of a functional time series. More precisely, we suggest to address the problem of detecting non-stationarity by individually checking the hypothesis that the mean and the autocovariance operators at a given lag, say $h$,  of a collection (indexed by time) of approximating stationary functional time series are in fact time independent. As explained in the next paragraph, the individual tests are then combined to yield a joint test including autocovariances up to a given maximal lag $H$.
Thus, the approach investigated here is  similar in spirit to  the classical Portmanteau tests for serial correlation of a univariate time series, where the hypothesis  of white  noise is checked by investigating whether  correlations  up to a given lag vanish (see \citealp{Box1970,Ljung1978}). For the problem of checking stationarity in real-valued time series,  similar approaches have been taken by  \cite{Jin2015} and \cite{BucFerKoj18}. 

To combine the individual tests for stationarity of the mean and the autocovariance operators at a given lag $h$, we use appropriate extensions of well-known $p$-value combination methods dating back to \cite{Fis32}. 
 Each individual test is relying on a block multiplier approach making necessary the choice of a joint block length parameter $m$. Following ideas put forward in \cite{PolWhi2004}, a procedure is proposed to automatically select that parameter data-adaptively in such a way that, asymptotically, a certain MSE-criterion  is optimized.

The remaining part of this article is organized as follows: In Section~\ref{sec:math}, we collect necessary mathematical preliminaries. In Section~\ref{sec:dev1}, 
we first propose individual tests for the hypothesis of second-order stationarity which are particularly sensitive to deviations in the mean, variance and a given lag $h$ autocovariance, respectively. The tests are then combined to a joint test for second order stationarity which is sensitive to deviations in the mean, variance and the first $H$ autocovariances.  
In Section~\ref{sec:examples} we discuss an exemplary locally stationary time series model in great theoretical detail, while finite-sample results and a case study are presented in Section~\ref{sec:finite}. The central proofs are collected in Section~\ref{sec:proofs}, while less central proofs and auxiliary results are provided in a supplementary material.

	\section{Mathematical Preliminaries} \label{sec:math}

	\subsection{Random elements in $L^p$-spaces}
	
For some separable measurable space $(S, \Sc,\nu)$ with a $\sigma$-finite measure $\nu$ and $p>1$, let $\Lc^p(S, \nu)$ denote the set of measurable functions $f:S \to \R$ such that $\|f\|_p = (\int |f|^p \diff \nu)^{1/p}<\infty$. For $f\in \Lc^p(S, \nu)$, let $[f]$ be the set of all functions $g$ such that $f=g$, $\nu$-almost surely. The space $L^p(S, \nu)$ of all equivalence classes $[f]$ then becomes a separable Banach space, and standard weak convergence theory  is applicable. If $S$ is a subset of $\R^d$ and $\nu$ is the Lebesgue measure, we occasionally write $\Lc^p(S)$ and $L^p(S)$. 
		
Let $(\Omega, \Ac, \Prob)$ denote a probability space and let $X: S\times \Omega \to \R$ be $(\Sc \otimes \Ac)$-measurable such that $X(\cdot, \omega) \in \Lc^p(S, \mu)$ for $\Prob$-almost every $\omega$. It follows from Lemma 6.1 in \cite{JanKai15} that $\omega \mapsto [X(\cdot, \omega)]$ is a random variable in $L^p(S, \mu)$ (equipped with the Borel-$\sigma$-field). Conversely, note that for any random variable $[Y]$ in $L^p(S, \mu)$, we can choose a $(\mu \otimes \Prob)$-a.s.\ unique  $(\Sc \otimes \Ac)$-measurable mapping $Y': S \times \Omega \to \R$ such that $Y'(\cdot, \omega) \in [Y](\omega)$ for $\Prob$-almost every $\omega$. We can hence (a.s.) identify random variables in $L^p(S, \mu)$ with measurable functions on $S\times \Omega$ which are $p$-integrable in the first argument ($\Prob$-a.s.); slightly abusing notation we also write $X$ for the equivalence class $[X]$. 
		
%
	
A random variable $X$  in $L^2([0,1]^d)$ is called integrable  if $\Exp \|X\|_2 < \infty$. In that case, it follows from the Riesz representation theorem that there exists a unique element $\mu_X =\Exp X \in L^2([0,1]^d)$ such that $\Exp\langle X,f\rangle = \langle \mu_X,f\rangle$ for all $f\in L^2([0,1]^d)$, where $\langle f,g\rangle=\int_{\scs [0,1]^d} fg \diff \lambda_d$.  If $X$ is even square integrable, that is, $\Exp \|X\|_2^2 < \infty$, the covariance operator of $X$ is defined as the operator $C_X: L^2([0,1]^d) \to L^2([0,1]^d)$ given by $C_X(f)=\Exp[\langle f,X-\mu_X \rangle (X-\mu_X)]$. $C_X$ is nuclear and hence a Hilbert-Schmidt operator (\citealp{Bos00}, Section~1.5), whence, by Theorem 6.11 in \cite{Wei80}, there exists a kernel $c_X \in L^2([0,1]^d \times [0,1]^d)$ such that 
\[
C_X(f)(\tau) = \int_{[0,1]^d} c_X(\tau,\sigma) f(\sigma) \diff \sigma
\]
for almost every $\tau \in[0,1]^d$ and every $f\in L^2([0,1]^d)$.  Similarly, for square integrable random elements $X,Y\in L^2([0,1]^d)$ we define the cross-covariance operator $C_{X,Y}: L^2([0,1]^d) \to L^2([0,1]^d)$ by  $C_{X,Y}(f) = \Exp[\langle X- \mu_X, f\rangle (Y-\mu_Y)]$. By the same reasoning as above, there exists a kernel $c_{X,Y}\in L^2([0,1]^d \times [0,1]^d)$ such that 
\[
C_{X,Y}(f)(\tau) = \int_{[0,1]^d} c_{X,Y}(\tau,\sigma) f(\sigma) \diff \sigma
\]
If $X$ is in fact a $(\Bc([0,1]^d) \otimes \Ac)$-measurable function from $[0,1]^d \times \Omega$ to $\R$ with $X(\cdot, \omega) \in \Lc^2([0,1]^d)$ a.s., then it can be shown that, in the respective $L^2$-spaces,
\[
\mu(\tau)=\Exp[X(\tau)], \quad c_{X}(\tau,\sigma)= \cov\{ X(\tau), X(\sigma) \},  \quad c_{X,Y}(\tau,\sigma)= \cov\{ X(\tau), Y(\sigma) \}.
\]
By the preceding paragraph, this notation also makes sense for equivalence classes $X,Y\in L^{2}([0,1]^d)$.

	\subsection{Functional time series in $L^2([0,1])$}\label{subsec:fts}
For each $t\in\Z$, let $X_{t}: [0,1] \times \Omega \to \R$ denote a $(\Bc|_{[0,1]}  \otimes \Ac)$-measurable function with $X_{t}(\cdot, \omega) \in \Lc^2([0,1])$. By the preceding section, we can regard $[X_{t}]$ as a random variable in $L^2([0,1])$, which we also write as $X_{t}$.  The sequence
	$
	(X_{t})_{t\in\Z} 
	$
	will be referred to as a \textit{functional time series} in $L^2([0,1])$. 
	The functional time series will be called \textit{stationary} if, for all $q\in\Z$ and all $h,t_1,\dots,t_q\in\Z$ 
	\[ 
	(X_{t_1+h},\dots,X_{t_q+h})\overset{d}{=} (X_{t_1},\dots,X_{t_q})
	\]
	in $L^2([0,1])^{q}$.
	
	
	 Let $\rho>0$. A sequence of functional time series $(X_{t,T})_{t\in\Z}$, indexed by $T\in\N$,  is called \textit{locally stationary (of order $\rho$)} if, for any $u\in[0,1]$, there exists a strictly stationary functional time series $\{X_t^{\scs (u)} \mid t\in\Z\}$ in $L^2([0,1])$ and an array of real-valued random variables $\{P_{\scs t,T}^{\scs (u)} \mid t=1,\dots ,T\}_{T\in\naturals}$ with $\ex |P_{t,T}^{\scs (u)}|^\rho<\infty$, uniformly in $1\leq t\leq T, T\in\naturals$ and $u\in[0,1]$, such that 
	\begin{align} \label{eq:ls}
	\|X_{t,T}-X_t^{(u)}\|_2 \leq \bigg(\bigg|\frac{t}{T}-u\bigg|+\frac{1}{T}\bigg)P_{t,T}^{(u)} 
	\end{align} 
	for all $t=1,\dots ,T,T\in\naturals$ and $u\in[0,1]$. 
	This concept of local stationarity was first introduced by \cite{Vogt2012} for $p$-dimensional time series ($p\in \mathbb{N}$).
	By the arguments in the preceding section, we may assume that $X_t^{\scs (u)}$ is in fact a $(\Bc([0,1])\times \Ac)$-measurable function from $[0,1]\times \Omega$ to $\R$ such that $X_t^{\scs (u)}(\cdot , \omega) \in \Lc^2([0,1])$ for $\Prob$-almost every $\omega$.
	In the subsequent sections, we will usually assume that $\rho\geq 2$ and that $\Exp[ \|X_t^{(u)}\|_2^2]< \infty$ for all $u\in[0,1]$. Despite the fact that $\{(X_{t,T})_{t\in\Z}: T\in\N\}$ is a sequence of time series, we will occasionally simply call $(X_{t,T})_{t\in\Z}$ a locally stationary time series.
	
%

\color{black}{}
	\subsection{Further Notation} 

In the following, we will deal with different norms on the spaces $L^p([0,1]^d)$, for $p\geq 1,d\in\naturals$. To avoid confusion, we denote the corresponding norms by $\|\cdot\|_{p,d}$. As a special case, we will write $\|\cdot\|_p$ instead of $\|\cdot\|_{p,1}$. Further, we introduce the notation $\|\cdot\|_{p,\Omega\times[0,1]^d}$ for the $p$-norm on the space $L^p(\Omega\times[0,1]^d,\Prob \otimes \lambda^d)$. 
Finally, we define $(f\otimes g)(x,y)=f(x)g(y)$ for functions $f,g\in L^p([0,1])$. 

	\section{Detecting deviations from second-order stationarity}  \label{sec:dev1}
	
	\subsection{Second-order stationarity in locally stationary time series} \label{subsec:so}
	
Before we can propose suitable test statistics for detecting deviations from second-order stationarity in a locally stationary functional time series, we need to clarify what is meant by second-order stationarity. Loosely speaking, we want to test the null hypothesis that the mean and/or the (auto)covariances do not vary too much over time.   Meaningful asymptotic results will be obtained by formulating these null hypotheses in terms of the approximating sequences $\{X_{t}^{\scs (u)}: t \in \Z\}$ defined in Section~\ref{subsec:fts}. More precisely, we will subsequently assume that $\Exp[ \|X_t^{(u)}\|_2^2]< \infty$ for all $u\in[0,1]$ and consider the hypotheses
\begin{align} \label{eq:h0m}
	H_0^{(m)}: \|\ex[X_0^{(u)}]-\ex[X_0^{(v)}]\|_2=0 \quad \text{ for all } u,v \in [0,1]
\end{align}
and, for some lag $h\ge 0$,
\begin{align} \label{eq:h0c}
	H_0^{(c,h)}:\|\ex[X_{0}^{(u)} \otimes X_h^{(u)}]-\ex[X_{0}^{(v)} \otimes X_h^{(v)}]\|_{2,2}=0 
	\quad \text{ for all } u,v \in [0,1].
\end{align}
Note that the intersection 
\[
	H_0=H_0^{(m)} \cap H_0^{(c,0)} \cap H_0^{(c,1)} \cap \dots
\] 
corresponds to the case where the approximating sequences $\{X_{t}^{\scs (u)}: t \in \Z\}$, indexed by $u\in[0,1]$, all share the same second-order characteristics. We will therefore call the sequence of time series $(X_{t,T})_{t\in\Z}$, indexed by $T\in\N$, second-order stationary if the global hypothesis $H_0$ is met. The test statistics we are going to propose will be particularly sensitive to deviations from (weak) stationarity in the mean, the variance, and the first $H$ autocovariances, which leads us to define
\begin{equation}\label{eq:h0H}
	H_0^{(H)} = H_0^{(m)} \cap H_0^{(c,0)} \cap H_0^{(c,1)} \cap \dots \cap H_0^{(c,H)},
\end{equation}
where $H \in \N_0$ is fixed and denotes the maximum number of lags under consideration.
	
	\begin{remark}
	The hypotheses $H_0^{\scs (m)}$ and $H_0^{\scs (c,h)}$ are independent of the choice of the approximating family $\{X_t^{\scs (u)}: t\in\Z \}_{u\in[0,1]}$. Indeed, suppose there were two approximating families $\{X_t^{\scs (u)}: t\in\Z\}_{u\in[0,1]}$ and $\{Y_t^{\scs (u)}: t\in\Z\}_{u\in[0,1]}$ satisfying \eqref{eq:ls}. By stationarity and the triangle inequality, we have, for any $t,T\in\N$,
	\begin{align*}
	\ex\|X_t^{(u)}-Y_t^{(u)}\|_2
	&=  
	\ex\|X_{\lfloor uT\rfloor}^{(u)}-Y_{\lfloor uT\rfloor}^{(u)}\|_2 \\
	&\leq 
	\ex\|X_{\lfloor uT\rfloor}^{(u)}-X_{\lfloor uT\rfloor,T}\|_2 +\ex\|X_{\lfloor uT\rfloor,T}-Y_{\lfloor uT\rfloor}^{(u)}\|_2
	\leq \frac{C}{T}.
	\end{align*}
	This implies $\ex\|X_t^{\scs (u)}-Y_t^{\scs (u)}\|_2=0$ and hence $\|X_t^{\scs (u)}-Y_t^{\scs (u)}\|_2=0$ almost surely.  \qed
\end{remark}


The following lemma provides two interesting equivalent formulations of each of the above hypotheses. Introduce the notations $M:[0,1]^2\to\R, M_h:[0,1]^3\to\R$, where
\begin{align}
M(u,\tau)&=\int_0^u \ex[X_0^{(w)}(\tau)]\diff w-u\int_0^1 \ex[X_0^{(w)}(\tau)]\diff w,
\label{eq:mut} \\
M_h(u,\tau_1,\tau_2)&=\int_0^u \ex[X_0^{(w)}(\tau_1)X_h^{(w)}(\tau_2)]\diff w-u\int_0^1 \ex[X_0^{(w)}(\tau_1)X_h^{(w)}(\tau_2)]\diff w. 
\label{eq:mhutt}
\end{align}

\begin{lemma}\label{lem:equivh}
	Let $\{(X_{t,T})_{t\in\Z}: T\in\N\}$ denote a locally stationary functional time series of order $\rho\ge 4$ with approximating sequences $(X_{t}^{\scs (u)})_{t \in \Z}$ satisfying $\ex[\| X_{0}^{\scs (u)} \|_2^4]<\infty$ for all $u\in[0,1]$. 
	Then, the hypothesis $H_0^{\scs (m)}$ in \eqref{eq:h0m} is met if and only if
	\begin{equation}\label{eq:h0m1}
	\| M \|_{2,2} =0.
	\end{equation}
	Likewise, for any $h\in \N_0$, $H_0^{\scs (c,h)}$ in \eqref{eq:h0c} is met if and only if 
	\begin{equation}\label{eq:h0c1}
	\| M_h \|_{2,3} =0.
	\end{equation}
	Moreover, the hypothesis $H_0^{\scs (m)}$ is equivalent to
	\begin{align} \label{eq:h0m2}
	& \exists \ C>0: \quad \| \ex[X_{\lfloor uT\rfloor,T}]-\ex[X_{0,T}]\|_{2}\leq \frac{C}{T}  \quad \text{for all } u\in[0,1], T \in \N,
	\end{align}
	and $H_0^{\scs (c,h)}$ is equivalent to
	\begin{multline} \label{eq:h0c2}
	\exists\ C>0: \quad \| \ex[X_{\lfloor uT\rfloor,T}\otimes X_{\lfloor uT\rfloor+h,T} - X_{0,T}\otimes X_{h,T}]\|_{2,2} 
	\leq \frac{C}{T}   \\
	\text{for all } u\in[0,1], T \in \N.
	\end{multline}
\end{lemma}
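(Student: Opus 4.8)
The plan is to route all four equivalences through a single auxiliary continuity statement, which is where the real work lies. Write $\mu^{(u)} = \ex[X_0^{(u)}] \in L^2([0,1])$ and $r_h(u) = \ex[X_0^{(u)} \otimes X_h^{(u)}] \in L^2([0,1]^2)$, so that $M(u) = \int_0^u \mu^{(w)}\diff w - u \int_0^1 \mu^{(w)} \diff w$ and $M_h(u) = \int_0^u r_h(w)\diff w - u\int_0^1 r_h(w)\diff w$ in the sense of \eqref{eq:mut}--\eqref{eq:mhutt}. First I would show that $u \mapsto \mu^{(u)}$ and $u \mapsto r_h(u)$ are Lipschitz continuous as maps into $L^2([0,1])$ and $L^2([0,1]^2)$, respectively. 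For the mean this is immediate: by stationarity $\mu^{(u)} - \mu^{(v)} = \ex[X_t^{(u)} - X_t^{(v)}]$ for every $t$, so by Jensen's inequality and insertion of $X_{t,T}$ with $t = \lfloor uT\rfloor$, the defining bound \eqref{eq:ls} together with the uniform bound $\ex P_{t,T}^{(u)} \le C$ yields $\|\mu^{(u)} - \mu^{(v)}\|_2 \le C(|u-v| + 1/T)$; letting $T \to \infty$ gives the Lipschitz property. For $r_h$ I would proceed analogously after telescoping $X_0^{(u)}\otimes X_h^{(u)} - X_0^{(v)}\otimes X_h^{(v)} = (X_0^{(u)} - X_0^{(v)})\otimes X_h^{(u)} + X_0^{(v)}\otimes(X_h^{(u)} - X_h^{(v)})$, using $\|f\otimes g\|_{2,2} = \|f\|_2\|g\|_2$ and Cauchy--Schwarz; here the assumption $\ex\|X_0^{(u)}\|_2^4 < \infty$ and the uniform moment bounds furnished by local stationarity of order $\rho \ge 4$ are exactly what is needed to control the cross terms.

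Given continuity, the first pair of equivalences \eqref{eq:h0m1}/\eqref{eq:h0c1} follows from the fundamental theorem of calculus for Banach-space-valued (Bochner) integrals. Since $w \mapsto \mu^{(w)}$ is continuous on $[0,1]$, it is Bochner integrable, and $M:[0,1]\to L^2([0,1])$ (which coincides with the pointwise object in \eqref{eq:mut}) is continuously differentiable with $M'(u) = \mu^{(u)} - \bar\mu$, where $\bar\mu = \int_0^1 \mu^{(w)}\diff w$. If $H_0^{(m)}$ holds then $\mu^{(w)}$ is constant in $w$, equal to $\bar\mu$, so $M(u) = u\bar\mu - u\bar\mu = 0$ and $\|M\|_{2,2} = 0$. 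Conversely, $\|M\|_{2,2} = 0$ forces $M(u) = 0$ in $L^2([0,1])$ for almost every $u$, hence for every $u$ by continuity of $M$, hence $M \equiv 0$, hence $M' \equiv 0$, i.e. $\mu^{(u)} = \bar\mu$ for all $u$, which is $H_0^{(m)}$. The argument for $M_h$ and $H_0^{(c,h)}$ is identical with $\mu^{(u)}$ replaced by $r_h(u)$.

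For the second pair \eqref{eq:h0m2}/\eqref{eq:h0c2} I would only need the triangle inequality and \eqref{eq:ls}, with no further continuity input. The key estimate is $\|\ex[X_{\lfloor uT\rfloor,T}] - \mu^{(\lfloor uT\rfloor/T)}\|_2 \le C/T$, together with its tensor-product analogue $\|\ex[X_{\lfloor uT\rfloor,T}\otimes X_{\lfloor uT\rfloor + h,T}] - r_h(\lfloor uT\rfloor/T)\|_{2,2} \le C/T$ (proved by the same telescoping and Cauchy--Schwarz as above), both direct consequences of \eqref{eq:ls} with $t = \lfloor uT\rfloor$. Under $H_0^{(m)}$ one has $\mu^{(\lfloor uT\rfloor/T)} = \mu^{(0)}$, so inserting these two means between $\ex[X_{\lfloor uT\rfloor,T}]$ and $\ex[X_{0,T}]$ gives \eqref{eq:h0m2}; conversely, chaining $\mu^{(u)} \to \ex[X_{\lfloor uT\rfloor,T}] \to \ex[X_{0,T}] \to \ex[X_{\lfloor vT\rfloor,T}] \to \mu^{(v)}$ bounds $\|\mu^{(u)} - \mu^{(v)}\|_2$ by $C/T$ for every $T$, whence $\mu^{(u)} = \mu^{(v)}$ on letting $T \to \infty$. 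The covariance case \eqref{eq:h0c2} is the same with $r_h$ in place of $\mu$.

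The main obstacle is the backward direction of the first pair: pointwise nullity of $M$ (resp. $M_h$) in $L^2$ only delivers $\mu^{(u)} = \bar\mu$ for almost every $u$, whereas $H_0^{(m)}$ demands it for every $u \in [0,1]$. Upgrading ``almost every'' to ``every'' is precisely what the continuity lemma is for, and establishing that lemma for the lag-$h$ covariance kernel---where the tensor structure forces the fourth-moment bookkeeping---is the most delicate part; everything else reduces to the triangle inequality and \eqref{eq:ls}.
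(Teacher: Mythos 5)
Your proposal is correct and follows essentially the same route as the paper: both arguments rest on a Lipschitz-continuity estimate for $u\mapsto\ex[X_0^{(u)}]$ and $u\mapsto\ex[X_0^{(u)}\otimes X_h^{(u)}]$ derived from \eqref{eq:ls}, upgrade a.e.\ nullity of $M$ (resp.\ $M_h$) to everywhere nullity via continuity and then recover the pointwise hypothesis by a differentiation argument, and prove the sub-asymptotic equivalences \eqref{eq:h0m2}/\eqref{eq:h0c2} by exactly the same triangle-inequality chaining through the approximating family followed by letting $T\to\infty$. The only difference is cosmetic: you package the differentiation step as the fundamental theorem of calculus for Bochner integrals (using second-moment bounds after Jensen), whereas the paper carries it out by hand, averaging over $[u,u+\delta]$ and combining the reverse triangle inequality with the $O(\delta^{1/2})$ bound from its fourth-moment Lipschitz estimate \eqref{eq:lip2}.
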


The lemma is proven in Section~\ref{subsec:proofs3}. We will heavily rely on the conditions \eqref{eq:h0m1} and \eqref{eq:h0c1} when constructing the test statistics in the next section. Assertions \eqref{eq:h0m2} and \eqref{eq:h0c2} are interesting in their own rights, as they provide a sub-asymptotic formulation of the hypothesis of second-order stationarity. They are used in the next section for showing that the tests are consistent, and will also be crucial when extending consistency results to the case of piecewise locally stationary processes in Section~\ref{subsec:amoc}.

	\subsection{Test statistics} \label{subsec:test} 
	In the subsequent sections, we assume to observe, for some $T\in\N$, an excerpt  $X_{1,T}, \dots, X_{T,T}$ from a locally stationary time series $\{(X_{t,T})_{t\in\Z}:T\in\N\}$.  We are interested in testing the hypotheses $H_0^{\scs (m)}$ and $H_0^{\scs (c,h)}$ formulated in the preceding section, which can be done  individually by a CUSUM-type procedure. More precisely, for $u,\tau \in[0,1]$, let
\begin{align} \label{eq:cusumm}
U_T(u,\tau)&= \frac{1}{\sqrt{T}}\bigg( \sum_{t=1}^{\lfloor uT\rfloor}X_{t,T}(\tau)-u\sum_{t=1}^{T}X_{t,T}(\tau) \bigg)
\end{align}
denote the \textit{CUSUM-process for the mean}, and, for  $u,\tau_1, \tau_2\in[0,1]$ and $h \in \N_0$, let
\begin{align} \label{eq:cusumc}
{U}_{T,h}(u,\tau_1,\tau_2)&= \frac{1}{\sqrt{T}}\bigg( \sum_{t=1}^{\lfloor uT\rfloor\wedge (T-h)}X_{t,T}(\tau_1)X_{t+h,T}(\tau_2)-u\sum_{t=1}^{T-h}X_{t,T}(\tau_1)X_{t+h,T}(\tau_2) \bigg)
\end{align} 	
denote the \textit{CUSUM-process for the (auto)cross-moments at lag $h$}. Under the null hypothesis $H_0^{\scs (m)}$, $T^{-1/2} {U}_T(u,\tau)$ can be regarded as an estimator of 
the quantity $M(u,\tau)$ defined in \eqref{eq:mut}, and a similar statement holds for $T^{-1/2} {U}_{T,h}$, which estimates the $M_h$ in \eqref{eq:mhutt}.
Hence, by Lemma~\ref{lem:equivh}, it seems reasonable to reject $H_0^{\scs (m)}$  or $H_0^{\scs (c,h)}$ for large values of 
\begin{align} \label{eq:testst}
\Sc_T^{(m)} = \| {U}_{T} \|_{2,2} \quad  \text{ or } \quad \Sc_T^{(c,h)} = \| {U}_{T,h} \|_{2,3},
\end{align}
respectively.\footnote{Alternatively, one could use the $L^2$-norm in $\tau$ and $(\tau_1,\tau_2)$, respectively, and the supremum in $u$, as proposed in \cite{sharipov2016}. However, preliminary simulation results suggested that a test based on the $L^2$-norm in $u$ performs better in applications with small sample sizes.} 


In Section~\ref{subsec:boot} below we will propose a procedure that allows to combine the previous tests statistics to obtain a joint test for the combined hypothesis $H_0^{\scs (H)}$, with maximal lag $H\in \N_0$ fixed.
For that purpose, we will first need (asymptotic) critical values for the individual test statistics $\Sc_T^{(m)}$ and $\Sc_T^{(c,h)}$, which in turn can be deduced from the joint asymptotic distribution of the CUSUM processes in \eqref{eq:cusumm} and \eqref{eq:cusumc}. The basic tools are the following partial sum processes
\begin{align*}
\tilde{B}_{T}(u,\tau) &=\frac{1}{\sqrt{T}}\sum_{t=1}^{\lfloor uT\rfloor} X_{t,T}(\tau)-\ex[X_{t,T}(\tau)], \\
\tilde{B}_{T,h}(u,\tau_1,\tau_2) &=\frac{1}{\sqrt{T}}\sum_{t=1}^{\lfloor uT\rfloor \wedge (T-h)} X_{t,T}(\tau_1)X_{t+h,T}(\tau_2)-\ex[X_{t,T}(\tau_1)X_{t+h,T}(\tau_2)],
\end{align*}
where $u,\tau,\tau_1, \tau_2 \in [0,1]$ and $h\in \N_0$. The expected values within the sums will  be denoted by 
\[
\mu_{t,T}(\tau) = \Exp[X_{t,T}(\tau)] 
\quad \text{and} \quad 
\mu_{t,T,h}(\tau_1, \tau_2) = \ex[X_{t,T}(\tau_1)X_{t+h,T}(\tau_2)].
\]
The following assumptions are sufficient to guarantee weak convergence of these processes.
\begin{condition}[Assumptions on the functional time series] \label{cond:fts}~
	\begin{enumerate}
	\renewcommand{\theenumi}{(A1)}
	\renewcommand{\labelenumi}{\theenumi}
\item\label{cond:ls} \textbf{Local Stationarity.} The observations $X_{1,T}, \dots X_{T,T}$ are an excerpt from a  locally stationary functional time series $\{(X_{t,T})_{t\in\Z}:T\in\N\}$ of order $\rho=4$ in $L^2([0,1],\reals)$. 

	\renewcommand{\theenumi}{(A2)}
		\renewcommand{\labelenumi}{\theenumi}
		\item\label{cond:mom}  \textbf{Moment Condition.}  
		For any $k\in\N$, there exists a constant $C_k<\infty$ such that $\ex \|X_{t,T}\|_2^k\leq C_k$ and $\ex\|X_0^{\scs (u)}\|_2^k\leq C_k$ uniformly in $t\in\integers,T\in\naturals$ and $u\in[0,1]$.
%

		\renewcommand{\theenumi}{(A3)}
		\renewcommand{\labelenumi}{\theenumi}
		
		\item\label{cond:cum}  \textbf{Cumulant Condition.} For any $j\in\N$ there is a constant $C_j <\infty$ such that
		\begin{equation} \label{sumcum}
		\sum_{t_1,\dots ,t_{j-1}=-\infty}^{\infty} \big\| \cum(X_{t_1,T},\dots ,X_{t_j,T})\big\|_{2,j} \leq C_j<\infty, 
		\end{equation}
		for any $t_j\in\integers$ (for $j=1$ the condition is to be interpreted as $\|\ex X_{t_1,T}\|_2\leq C_1$ for all $t_1 \in \integers$). Further, for $k\in\{2,3,4\}$, there exist functions $\eta_k:\integers^{k-1}\to\reals$ satisfying
		\[
		\sum_{t_1,\dots,t_{k-1}=-\infty}^{\infty} (1+|t_1|+\dots+|t_{k-1}|)\eta_k(t_1,\dots,t_{k-1})  < \infty
		\]
		such that, for any $T\in\N, 1 \le t_1 , \dots , t_k \le T, v, u_1, \dots , u_k \in[0,1],h_1,h_2\in\Z$, $Z_{t,T}^{\scs (u)}\in\{X_{\scs t, T},X_{t}^{\scs (u)}\}$,  and any $Y_{t,h,T}(\tau_1,\tau_2)\in\{ X_{t,T}(\tau_1), X_{t,T}(\tau_1)X_{t+h,T}(\tau_2) \}$, we have
				\begin{enumerate}
			\item[(i)] {\small$\|\cum(X_{t_1,T}-X_{t_1}^{(t_1/T)},Z_{t_2,T}^{(u_2)},\cdots,Z_{t_k,T}^{(u_k)})\|_{2,k} \leq \frac{1}{T} \eta_k(t_2-t_1,\dots,t_k-t_1)$},
			\item[(ii)] {\small$\|\cum(X_{t_1}^{(u_1)}-X_{t_1}^{(v)},Z_{t_2,T}^{(u_2)},\cdots,Z_{t_k,T}^{(u_k)})\|_{2,k} \leq |u_1-v| \eta_k(t_2-t_1,\dots,t_k-t_1)$}, 
			\item[(iii)] {\small$\|\cum(X_{t_1,T},\dots,X_{t_k,T})\|_{2,k} \leq \eta_k(t_2-t_1,\cdots,t_k-t_1)$}, 	
			\item[(iv)] {\small$\int_{[0,1]^{2}} |\cum\big(Y_{t_1,h_1,T}(\tau),Y_{t_2,h_2,T}(\tau) \big)|\diff\tau
				\leq \eta_2(t_2-t_1)$.}
		\end{enumerate}	

	\end{enumerate}
\end{condition}

	Assumption~\ref{cond:mom} is needed to ensure existence of all cumulants. The cumulant condition \ref{cond:cum} is  a (partially)  weakened version 
	of the assumptions made by \cite{LeeSubbaRao2016} and \cite{AueVan2017} and has its origins in classical multivariate time series analysis, see \cite{Brillinger1965}, Assumption 2.6.2. 
Lemma \ref{mixing} below shows that the cumulant conditions in \ref{cond:cum} hold, provided \ref{cond:ls}, \ref{cond:mom}, a further moment condition and a strong mixing condition are satisfied. In particular, they are met for the models employed in Section~\ref{sec:finite} within our simulation study, see in particular Lemma~\ref{lem:ar1}.

The following theorem, proven in Section~\ref{subsec:proofs3}, shows that $\tilde{B}_{T}$ and $\tilde{B}_{T,h}$ jointly converge weakly with respect to the $L^2$-metric.  For $H \in \N_0$, let the cartesian product 
\[\Hc_{H+2}= L^2([0,1]^2)\times\{ L^2([0,1]^3)\}^{H+1}\] 
be equipped with the sum of the individual scalar products, such that $\Hc_{H+2}$ is a Hilbert space itself.

\begin{theorem}\label{theo:main}
	Suppose that Assumptions~\ref{cond:ls}--\ref{cond:cum} are met. Then, the vector $\mathbb{B}_T=(\tilde{B}_T,\tilde{B}_{T,0},\dots ,\tilde{B}_{T,H})$ converges weakly to a centred Gaussian variable $\Bb=(\tilde B,\tilde {B}_0,\dots ,\tilde {B}_H)$ in $\Hc_{H+2}$  with covariance operator $C_\Bb:\Hc_{H+2} \to \Hc_{H+2}$ defined as
	\begin{multline*}
	C_{\Bb} 
	\left(\begin{array}{c}
	g \\ f_0 \\ \vdots \\  f_{H}
	\end{array} \right)
	\left(\begin{array}{c}
	(u,\tau) \\ (u_0, \tau_{01}, \tau_{02})  \\ \vdots \\  (u_H, \tau_{H1}, \tau_{H2})
	\end{array} \right) \\
	=
	\left(\begin{array}{c}
	\langle r^{(m)}((u,\tau), \cdot), g\rangle + \sum_{h=0}^H \langle r^{(m,c)}_h ((u,\tau ), \cdot), f_h \rangle \\ 
	\langle r^{(m,c)}_0(\cdot, (u_{0}, \tau_{01}, \tau_{02})), g\rangle + \sum_{h=0}^H \langle r^{(c)}_{0,h} ((u_0,\tau_{01}, \tau_{02}) , \cdot), f_h \rangle  \\
	\vdots \\  
	\langle r^{(m,c)}_H(\cdot, (u_{H}, \tau_{H1}, \tau_{H2})), g\rangle + \sum_{h=0}^H \langle r^{(c)}_{H,h} ((u_H,\tau_{H1}, \tau_{H2}) , \cdot), f_h \rangle 
	\end{array} \right).
	\end{multline*}
Here, the kernel functions $r^{(m)}, r^{(c)}_{h,h'}$ and $r^{(m,c)}_h$ are given by
\begin{align*}
r^{(m)}((u,\tau), (v,\phi)) 
&= 
\cov\big(\tilde B(u,\tau),\tilde B(v,\phi)\big)
=
\sum_{k=-\infty}^{\infty}\int_{0}^{u\wedge v}c_{k,1}(w) \diff w,\\
r_{h,h'}^{(c)}((u,\tau_1, \tau_2), (v,\phi_1, \phi_2)) 
&= 
\cov\big(\tilde {B}_h(u,\tau_1,\tau_2),\tilde {B}_{h'}(v,\phi_1,\phi_2)\big)
=
\sum_{k=-\infty}^{\infty}\int_{0}^{u\wedge v}c_{k,2}(w) \diff w,\\
r_{h}^{(m,c)}((u,\tau_1, \tau_2), (v,\phi_1, \phi_2)) 
&=
\cov\big(\tilde B(u,\tau),\tilde {B}_{h}(v,\phi_1,\phi_2)\big)
=
\sum_{k=-\infty}^{\infty}\int_{0}^{u\wedge v}c_{k,3}(w) \diff w,
\end{align*}
with
\begin{align*}
c_{k,1}(w)
&= 
c_{k,1}(w, \tau, \varphi)
=
\cov\big(X_0^{(w)}(\tau),X_k^{(w)}(\phi)\big),\\
c_{k,2}(w)
&=
c_{k,2}(w, h, h',\tau_1, \tau_2, \varphi_1, \varphi_2)
=
\cov\big(X_0^{(w)}(\tau_1)X_h^{(w)}(\tau_2),X_k^{(w)}(\phi_1)X_{k+h'}^{(w)}(\phi_2)\big),\\
c_{k,3}(w)
&=
c_{k,3}(w, h, \tau, \varphi_1, \varphi_2)
=
\cov\big(X_0^{(w)}(\tau),X_k^{(w)}(\phi_1)X_{k+h}^{(w)}(\phi_2)\big),
\end{align*}
for any $0\leq h,h'\leq H$. In particular the infinite sums and integrals converge.
\end{theorem}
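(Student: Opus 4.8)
The plan is to prove weak convergence in the separable Hilbert space $\Hc_{H+2}$ by verifying the two classical ingredients of Prokhorov's scheme: convergence of the finite-dimensional distributions (to identify the limit) and asymptotic tightness (to get relative compactness). A convenient feature of the present formulation is that the partial-sum index $u$ is a coordinate of the Hilbert space $L^2([0,1]^2)$ (respectively $L^2([0,1]^3)$) rather than a separate continuous time parameter, so there is no need for a Skorokhod-type modulus-of-continuity argument in $u$; all of the analysis takes place at the level of the $L^2$-geometry. Throughout, the cumulant condition~\ref{cond:cum} and the moment condition~\ref{cond:mom} supply the required quantitative bounds.

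First I would treat the finite-dimensional distributions. By the Cram\'er--Wold device it suffices to show that, for any finite family of test functions $g,f_0,\dots,f_H$, evaluation points, and real coefficients, the corresponding linear combination of inner products is asymptotically univariate normal. Each such inner product is itself a normalized and centred partial sum of a scalar sequence, namely $\langle X_{t,T},g\rangle$ for the mean part and $\langle X_{t,T}\otimes X_{t+h,T},f_h\rangle$ for the lag-$h$ part. I would then apply the method of cumulants: the $j$-th cumulant of the normalized sum equals $T^{-j/2}$ times a $j$-fold lag sum of joint cumulants of these scalars, which by parts~(iii) and~(iv) of~\ref{cond:cum} is absolutely summable over the lags and hence of order $T^{1-j/2}$. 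Consequently every cumulant of order $j\ge 3$ vanishes in the limit, while the first cumulant is zero by centring, so asymptotic normality follows once the second-order structure has been identified.

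The identification of the limiting covariance is the step where local stationarity genuinely enters. For the mean part, for instance,
\[
\cov\big(\langle\tilde B_T(u,\cdot),g\rangle,\langle\tilde B_T(v,\cdot),g'\rangle\big)
=\frac1T\sum_{s=1}^{\lfloor uT\rfloor}\sum_{t=1}^{\lfloor vT\rfloor}\cov\big(\langle X_{s,T},g\rangle,\langle X_{t,T},g'\rangle\big),
\]
and I would replace each $X_{t,T}$ by its stationary approximation $X_{t}^{(t/T)}$, controlling the resulting error at order $1/T$ by parts~(i) and~(ii) of~\ref{cond:cum} together with~\ref{cond:mom}. Re-indexing by the lag $k=t-s$ and using summability in $k$ to discard the off-diagonal contributions, the normalized double sum turns into a Riemann sum in $w=s/T$ over the interval $(0,u\wedge v)$, which converges to $\int_0^{u\wedge v}\sum_{k}c_{k,1}(w)\diff w$. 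The same computation applied to the quadratic statistics and to their cross terms produces the kernels $r^{(c)}_{h,h'}$ and $r^{(m,c)}_h$ built from $c_{k,2}$ and $c_{k,3}$; the absolute convergence of the lag sums and of the integrals asserted at the end of the theorem is a by-product of these same bounds.

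Finally, I would establish tightness. Fixing an orthonormal basis $(e_j)$ of $\Hc_{H+2}$, it suffices, by the standard compactness criterion for Hilbert-space valued laws, to show that $\limsup_T\ex\|\Bb_T\|^2<\infty$ together with the vanishing of the tail energy, $\lim_{N\to\infty}\limsup_{T\to\infty}\ex\sum_{j>N}\langle\Bb_T,e_j\rangle^2=0$. The total energy $\ex\|\tilde B_T\|_{2,2}^2=\int_0^1\!\int_0^1\ex\,\tilde B_T(u,\tau)^2\diff\tau\diff u$ (and its analogue for the quadratic parts) is bounded uniformly in $T$ by the second-order cumulant summability in~\ref{cond:cum}, which is exactly the diagonal of the covariance computation above. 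The genuinely infinite-dimensional content lies in making the tail energy vanish uniformly in $T$, which I expect to be the main obstacle: it requires that the approximating second moments converge to a trace-class limiting operator uniformly enough that no energy escapes to high-frequency basis directions. Together with the uniform control of the local-stationarity replacement errors in the covariance step, this is where the full strength of condition~\ref{cond:cum} is used; the candidate limit $\Bb$ is then automatically a centred Gaussian element with the stated trace-class covariance operator $C_{\Bb}$.
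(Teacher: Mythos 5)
Your outline follows the same route the paper takes: Theorem~\ref{theo:main} is obtained there as the special case (no multipliers) of Theorem~\ref{bootstrapThm}, whose proof reduces weak convergence in $\Hc_{H+2}$, via the approximation Lemma~\ref{lem:app}, to exactly your two ingredients --- asymptotic normality of the projections onto an orthonormal basis by Cram\'er--Wold and the method of cumulants (Lemma~\ref{lem:fidis}, Propositions~\ref{PropCovConv} and~\ref{PropCumConv}), plus a vanishing-tail-energy condition (Lemma~\ref{ThmTightness}); the covariance identification (stationary-approximation replacement, re-indexing by the lag, Riemann sums) also matches. However, your proposal stops short at two points, and the first is precisely the step you yourself flag as the main obstacle. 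For tightness you establish only $\limsup_T \ex\|\Bb_T\|^2<\infty$, and uniform boundedness of the total energy does \emph{not} prevent mass from escaping to high-frequency basis directions --- that is the entire difficulty. The missing argument is a squeeze, and it is clean: writing
\[
\ex\Big[\sum_{\ell>n}\langle \tilde B_{T,h},\psi_\ell\rangle^2\Big]
=\ex\|\tilde B_{T,h}\|_{2,3}^2-\sum_{\ell=1}^{n}\ex\big[\langle \tilde B_{T,h},\psi_\ell\rangle^2\big],
\]
one proves that the total energy \emph{converges}, $\ex\|\tilde B_{T,h}\|_{2,3}^2\to\ex\|\tilde B_h\|_{2,3}^2$ (this is just the diagonal of your covariance computation, integrated in $u$), and combines this with the coordinatewise convergence $\ex[\langle\tilde B_{T,h},\psi_\ell\rangle^2]\to\ex[\langle\tilde B_h,\psi_\ell\rangle^2]$ already delivered by your second-moment step; then $\limsup_T$ of the left-hand side equals the tail energy of the limit, which vanishes as $n\to\infty$ because $C_\Bb$ has finite trace. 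That trace-class property, and hence the very existence of the Gaussian limit $\Bb$ in $\Hc_{H+2}$, is not ``automatic'' either: it is verified separately in Lemma~\ref{lem:nuc} (symmetry, positivity, finite trace norm).

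Second, your cumulant bound for the quadratic coordinates is not correct as stated. Condition \ref{cond:cum}(iv) controls only \emph{second-order} cumulants of the products $X_{t,T}(\tau_1)X_{t+h,T}(\tau_2)$, so for $j\ge 3$ you cannot simply invoke (iii)--(iv) to claim absolute summability of the $j$-fold joint cumulants of these scalars. One must first expand $\cum\big(X_{t_1,T}\otimes X_{t_1+h_1,T},\dots,X_{t_j,T}\otimes X_{t_j+h_j,T}\big)$ by Brillinger's product theorem into a sum over \emph{indecomposable partitions} of a $j\times 2$ table of products of cumulants of the underlying $X$'s, and then exploit the indecomposability together with \ref{cond:cum}(iii) to show that each such term contributes $O(T)$ after summing over $t_1,\dots,t_j$, giving the desired $O(T^{1-j/2})$ for the normalized cumulant. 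This combinatorial argument is the technical core of Proposition~\ref{PropCumConv} and cannot be bypassed. Relatedly, pulling the cumulant bounds through the inner products requires choosing the orthonormal bases uniformly bounded (tensor Fourier basis); your sketch leaves this point implicit, but without it the reduction from $\langle\cdot,\psi_\ell\rangle$ to lag sums of cumulant norms does not go through.
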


The following corollary on joint weak convergence of the CUSUM processes defined in \eqref{eq:cusumm} and \eqref{eq:cusumc} is essentially a mere consequence of the continuous mapping theorem. Let 
\begin{align*}
\tilde {G}_T(u,\tau) &= \tilde {B}_T(u,\tau) - u \tilde {B}_T(1,\tau) \\
\tilde {G}_{T,h}(u,\tau_1, \tau_2) &= \tilde {B}_{T,h}(u,\tau_1, \tau_2) - u \tilde {B}_{T,h}(1,\tau_1, \tau_2) \\
\Gb_T &= (\tilde G_T, \tilde G_{T,1}, \dots, \tilde G_{T,H})
\end{align*}
and, similarly, 
\begin{align}
\label{eq:tg}
\tilde {G}(u,\tau) &= \tilde B(u,\tau) - u \tilde B(1,\tau) \\
\nonumber
\tilde {G}_{h}(u,\tau_1, \tau_2) &= \tilde {B}_{h}(u,\tau_1, \tau_2) - u \tilde {B}_{h}(1,\tau_1, \tau_2) \\
\nonumber
\Gb &= (\tilde G, \tilde G_{1}, \dots, \tilde G_{H}).
\end{align}

\begin{corollary}\label{cor:main1}
Suppose that Assumptions~\ref{cond:ls}--\ref{cond:cum} are satisfied.  If $H_0^{\scs (m)}$ holds, then
\[
\| {U}_T -  \tilde {G}_T\|_{2,2} = o_\Prob(1). 
\]
If $H_0^{\scs (c,h)}$ holds, then
\[
\| {U}_{T,h} -  \tilde {G}_{T,h}\|_{2,3} = o_\Prob(1).
\]
As a consequence, if the hypothesis $H_0^{\scs (H)}$ in \eqref{eq:h0H} holds, then, 
\[
\Ub_T = (U_T, U_{T,1}, \dots, U_{T,H}) = \Gb_T + o_\Prob(1) \weak \Gb.
\]
On the other hand, if $H_0^{\scs (m)}$ or $H_0^{\scs (c,h)}$ does not hold, then $\Sc_T^{(m)}\to \infty$ or $\Sc_T^{(c,h)}  \to \infty$ in probability, respectively. 
\end{corollary}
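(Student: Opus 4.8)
The plan is to reduce everything to the elementary identity $U_T-\tilde{G}_T=D_T$ and its lag-$h$ analogue, where $D_T$ is the \emph{deterministic} bias obtained by splitting each observation into its centred part and its mean. Decomposing $X_{t,T}=(X_{t,T}-\mu_{t,T})+\mu_{t,T}$ in \eqref{eq:cusumm} and comparing with the definitions of $\tilde{B}_T$ and $\tilde{G}_T$ gives
\[
(U_T-\tilde{G}_T)(u,\tau)=D_T(u,\tau):=\frac{1}{\sqrt{T}}\sum_{t=1}^{\lfloor uT\rfloor}\mu_{t,T}(\tau)-\frac{u}{\sqrt{T}}\sum_{t=1}^{T}\mu_{t,T}(\tau),
\]
which carries no randomness, so the first assertion becomes a purely analytic bound on $\|D_T\|_{2,2}$. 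Under $H_0^{\scs(m)}$ I would write $\mu_{t,T}=\mu_{0,T}+(\mu_{t,T}-\mu_{0,T})$ and invoke the sub-asymptotic reformulation \eqref{eq:h0m2} of Lemma~\ref{lem:equivh} at $u=t/T$, which gives $\|\mu_{t,T}-\mu_{0,T}\|_2\le C/T$ uniformly in $1\le t\le T$. The constant part cancels up to the rounding term $T^{-1/2}(\lfloor uT\rfloor-uT)\mu_{0,T}$, of norm $O(T^{-1/2})$ by the moment bound $\|\mu_{0,T}\|_2\le C_1$ from \ref{cond:mom}, while each of the two remaining sums is bounded by $T^{-1/2}\cdot T\cdot C/T=O(T^{-1/2})$. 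These bounds being uniform in $u$, integration over $u$ yields $\|D_T\|_{2,2}=O(T^{-1/2})\to0$ (in fact $o(1)$, not merely $o_\Prob(1)$).

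The autocovariance assertion is proved identically, with $\mu_{t,T,h}(\tau_1,\tau_2)=\ex[X_{t,T}(\tau_1)X_{t+h,T}(\tau_2)]$ replacing $\mu_{t,T}$ and \eqref{eq:h0c2} supplying $\|\mu_{t,T,h}-\mu_{0,T,h}\|_{2,2}\le C/T$. The only new point is to account for the truncations $\lfloor uT\rfloor\wedge(T-h)$ and $T-h$ in \eqref{eq:cusumc}: replacing them by $\lfloor uT\rfloor$ and $T$ alters at most $h$ summands, each with $\|\mu_{t,T,h}\|_{2,2}\le C$ by Cauchy--Schwarz and \ref{cond:mom}, and so contributes only $O(hT^{-1/2})$. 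This gives $\|U_{T,h}-\tilde{G}_{T,h}\|_{2,3}=O(T^{-1/2})\to0$.

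For the third assertion, under $H_0^{\scs(H)}$ all of $H_0^{\scs(m)},H_0^{\scs(c,0)},\dots,H_0^{\scs(c,H)}$ hold, so the first two parts yield $\Ub_T-\Gb_T=o_\Prob(1)$ in $\Hc_{H+2}$, and it remains to deduce $\Gb_T\weak\Gb$ from $\Bb_T\weak\Bb$ (Theorem~\ref{theo:main}). The CUSUM transformation $g\mapsto g(\cdot,\cdot)-u\,g(1,\cdot)$ is formally the continuous-mapping step, and here lies the only genuine subtlety: the endpoint slice $g(1,\cdot)$ is a point evaluation in the rescaled-time argument and is \emph{not} continuous on $L^2([0,1]^2)$. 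I expect this to be the main obstacle. I would resolve it by exploiting that the covariance kernels in Theorem~\ref{theo:main} all have the form $\int_0^{u\wedge v}(\cdots)\diff w$, so that the limit $\Bb$ admits a version with paths continuous in $u$ (as $L^2$-valued processes) and the full-sum endpoints $\tilde{B}_T(1,\cdot),\tilde{B}_{T,h}(1,\cdot,\cdot)$ converge jointly with $\Bb_T$ to their natural limits; on this augmented vector the (coordinatewise) map $\Psi(g,v)(u,\tau)=g(u,\tau)-u\,v(\tau)$ is jointly continuous, indeed Lipschitz since $\|u\,v\|_{2,2}=\tfrac{1}{\sqrt{3}}\|v\|_2$. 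The continuous mapping theorem then gives $\Gb_T\weak\Gb$, and together with $\Ub_T=\Gb_T+o_\Prob(1)$ and Slutsky's lemma this proves $\Ub_T\weak\Gb$.

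Finally, for consistency I use the same decomposition without any null assumption. From Theorem~\ref{theo:main} and the covariance summability in \ref{cond:cum}, $\tilde{B}_T$ and the endpoint sums are $O_\Prob(1)$, hence $\tilde{G}_T=O_\Prob(1)$ and $T^{-1/2}\tilde{G}_T=o_\Prob(1)$. For the deterministic part, $T^{-1/2}D_T(u,\tau)=\frac1T\sum_{t\le\lfloor uT\rfloor}\mu_{t,T}(\tau)-\frac uT\sum_{t\le T}\mu_{t,T}(\tau)$; replacing $\mu_{t,T}$ by the stationary approximation $\ex[X_0^{\scs(t/T)}]$ up to $O(1/T)$ uniformly (via \eqref{eq:ls} and \ref{cond:mom}) and passing to the Riemann-sum limit gives $T^{-1/2}D_T\to M$ in $L^2([0,1]^2)$, with $M$ as in \eqref{eq:mut}. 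Consequently $T^{-1/2}\Sc_T^{\scs(m)}=T^{-1/2}\|U_T\|_{2,2}\to\|M\|_{2,2}$ in probability; if $H_0^{\scs(m)}$ fails then $\|M\|_{2,2}>0$ by Lemma~\ref{lem:equivh}, so $\Sc_T^{\scs(m)}\to\infty$. The identical argument with $M_h$ from \eqref{eq:mhutt} shows $\Sc_T^{\scs(c,h)}\to\infty$ whenever $H_0^{\scs(c,h)}$ fails.
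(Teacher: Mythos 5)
Your proof is correct and, for both the null-hypothesis approximation and the divergence under the alternative, follows essentially the paper's own route: the difference $U_T-\tilde G_T$ (resp.\ $U_{T,h}-\tilde G_{T,h}$) is deterministic and is bounded by $O(T^{-1/2})$ using the sub-asymptotic formulations \eqref{eq:h0m2}/\eqref{eq:h0c2} together with the moment bound \ref{cond:mom}, while under the alternative $T^{-1/2}U_T$ splits into an $o_\Prob(1)$ stochastic part and a deterministic part converging to $M$ (resp.\ $M_h$), whose $L^2$-norm is positive by Lemma~\ref{lem:equivh} --- this is precisely the paper's $S_{1,T}$/$S_{2,T}$ argument, up to your centering at $\mu_{0,T}$ instead of pairwise differencing. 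The one place where you genuinely add something is the step $\Gb_T\weak\Gb$: the paper dismisses it as ``essentially a mere consequence of the continuous mapping theorem,'' whereas you correctly observe that the endpoint evaluation $g\mapsto g(1,\cdot)$ is not well defined, let alone continuous, on $L^2([0,1]^2)$, so a naive application of the CMT is incomplete. Your remedy --- establish joint convergence of $\Bb_T$ together with the endpoint slices $\tilde B_T(1,\cdot)$, $\tilde B_{T,h}(1,\cdot,\cdot)$, then apply the bounded linear (hence continuous) map $(g,v)\mapsto\{(u,\tau)\mapsto g(u,\tau)-u\,v(\tau)\}$ --- is sound and fits the paper's machinery, since the proof of Theorem~\ref{bootstrapThm} via Lemma~\ref{lem:app} (finite-dimensional projections plus the tightness condition) extends verbatim to the augmented vector, the extra coordinates being partial sums covered by the same cumulant computations; you only sketch this extension rather than carry it out, but the plan is correct and in fact makes explicit a point the paper leaves implicit.
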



The corollary suggests to reject $H_0^{\scs (m)}$ or $H_0^{\scs (c,h)}$ for large values of $\Sc_T^{(m)}$ or $\Sc_T^{(c,h)}$, respectively. However,
the corresponding null-limiting distributions $\|\tilde{G} \|_{2,2}$ and $\|\tilde{G}_h \|_{2,3}$ depend in a complicated way on the functions $c_{k,j}$ defined in Theorem~\ref{theo:main}, and cannot be easily transformed into a pivotal distribution. 
We therefore propose to derive critical values by a suitable block multiplier bootstrap approximation worked out in detail in Section \ref{subsec:boot}.

\subsection{Strong mixing and Cumulants} \label{subsec:gen}

In this section we will demonstrate that under the assumption of a \textit{strong mixing} locally stationary functional time series, Assumption \ref{cond:cum} is met.
To be precise,  let  $\mathcal{F}$ and $\mathcal{G}$  be  $\sigma$-fields in $(\Omega, \Ac)$ and define
\[ 
\alpha(\mathcal{F},\mathcal{G})=\sup\{ |\pr(A\cap B)-\pr(A)\pr(B)| :  A\in\mathcal{F}, B\in\mathcal{G}\}.
\]
A functional time series $\{(X_{t,T})_{t\in\Z}: T\in\N\}$ in $L^2([0,1])$ is called $\alpha$- or \textit{strongly mixing} if the mixing coefficients
\begin{align*} 
\alpha'(k)=\sup\limits_{T\in\naturals}\sup\limits_{t\in\integers}\alpha\Big(\sigma\big(\{X_{s,T}(\tau)|\tau\in[0,1]\}_{s=-\infty}^t\big),\sigma\big(\{X_{s,T}(\tau)|\tau\in[0,1]\}_{s=t+k}^\infty\big)\Big) 
\end{align*}
vanish as $k$ tends to infinity. Analogously, we define
\begin{align*} 
\alpha''(k)=\sup_{u\in[0,1]}\sup\limits_{t\in\integers}\alpha\Big(\sigma\big(\{X_{s}^{(u)}(\tau)|u,\tau\in[0,1]\}_{s=-\infty}^t\big),\sigma\big(\{X_{s}^{(u)}(\tau)|u,\tau\in[0,1]\}_{s=t+k}^\infty\big)\Big) 
\end{align*}
as mixing coefficients of the family of approximating stationary processes.	Further, we define $\alpha(k)=\max\{\alpha'(k),\alpha''(k)\}$. A locally stationary, functional time series is called \textit{strongly mixing}, if $\alpha(k)$ vanishes, as $k$ tends to infinity and  {\it exponentially strongly  mixing} if  $\alpha (k) \leq c a^k $ for some constants $c >0 $   and $a \in (0,1)$.
Note that we can define the mixing coefficients in terms of a function in $\mathcal{L}^2([0,1])$ rather than an element of the space $L^2([0,1])$ of equivalence classes by Lemma 6.1 in \cite{JanKai15}.  The main result of this section provides 
sufficient conditions for the theory developed so far for strong mixing processes.

\begin{lemma}\label{mixing}
Let $\{(X_{t,T})_{t\in\Z}: T\in\N\}$ be a strongly mixing locally stationary functional time series in $L^2([0,1],\reals)$ such that 
Assumptions~\ref{cond:ls}, \ref{cond:mom} and the condition
\[\sup_{t,T} \|X_{t,T}\|_{r, \Omega \times [0,1]}<C_r < \infty
\]
 are satisfied for any integer $r> 2$. If  $\{(X_{t,T})_{t\in\Z}: T\in\N\}$  is exponentially strongly mixing, then 
it also  satisfies the summability conditions for the cumulants in 
Assumption \ref{cond:cum}.
\end{lemma}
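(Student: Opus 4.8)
The plan is to reduce the entire statement to a single functional cumulant--mixing inequality, after which the exponential decay of $\alpha(k)$ does the remaining work. Concretely, I would establish, for real-valued or $L^2$-valued elements $Y_1,\dots,Y_k$ of the (jointly) strongly mixing system generated by $\{X_{t,T}\}$ and $\{X_t^{(u)}\}$, a bound of Statulevi\v{c}ius--Rosenblatt type: if the ordered time indices attached to $Y_1,\dots,Y_k$ have maximal consecutive gap $m$, then for every integer $r>k$
\begin{equation*}
\big\|\cum(Y_1,\dots,Y_k)\big\|_{2,k}\le C_{k,r}\,\alpha(m)^{1-k/r}\prod_{i=1}^k\|Y_i\|_{r,\Omega\times[0,1]}.
\end{equation*}
First I would prove the pointwise analogue in the spatial arguments $(\tau_1,\dots,\tau_k)$ by the classical partition/telescoping argument that controls the joint cumulant through the mixing coefficient across the largest gap, and then pass to the $\|\cdot\|_{2,k}$-norm. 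The transition is a Jensen step: since $2/r\le 1$, one has $\int_0^1(\ex|Y_i(\tau)|^r)^{2/r}\diff\tau\le\|Y_i\|_{r,\Omega\times[0,1]}^2$, so integrating the product of the pointwise $L^r(\Omega)$-norms over $[0,1]^k$ factorises into the stated product of functional norms.

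Granting the inequality, \eqref{sumcum} and parts (iii), (iv) are immediate. Since all moments are finite by hypothesis, for a cumulant of order $j$ I would select $r=r(j)>j$, so that $\alpha(m)^{1-j/r}\le(ca^m)^{1-j/r}=c'b^m$ with $b\in(0,1)$ under exponential mixing, while the product of $L^r$-norms stays uniformly bounded. Writing the indices in terms of gaps and using that the maximal gap $m$ satisfies $m\ge(\text{total spread})/(j-1)$, the factor $b^m$ decays exponentially in the spread and hence remains summable against the polynomial weight $1+|t_1|+\dots$; this delivers both \eqref{sumcum} and the weighted summability required of $\eta_k$, and I would simply take $\eta_k$ to be the resulting exponential-in-the-spread envelope. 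For (iv) the same inequality is applied with $k=2$ to the \emph{products} $Y_{t,h,T}$, which only needs $L^r$-moments of products, i.e.\ $L^{2r}$-moments of the $X_{t,T}$; these exist for every order, and the two product blocks are separated by a gap $|t_2-t_1|$ up to the fixed shift $h$, giving the envelope $\eta_2(t_2-t_1)$.

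The genuinely delicate parts are (i) and (ii), where mixing decay must be combined with the local-stationarity rate. By multilinearity of cumulants the difference $X_{t_1,T}-X_{t_1}^{(t_1/T)}$ (resp.\ $X_{t_1}^{(u_1)}-X_{t_1}^{(v)}$) occupies a single argument, so the cumulant--mixing inequality applies with $Y_1$ equal to that difference. Its functional $L^r$-norm is then estimated from \eqref{eq:ls}: evaluated at $u=t_1/T$ the prefactor is exactly $1/T$, so $\|X_{t_1,T}-X_{t_1}^{(t_1/T)}\|_{r,\Omega\times[0,1]}\le C/T$ for (i); for (ii) one first upgrades \eqref{eq:ls} to the Lipschitz estimate $\|X_{t_1}^{(u_1)}-X_{t_1}^{(v)}\|_{r,\Omega\times[0,1]}\le C|u_1-v|$ exactly as in the Remark after Lemma~\ref{lem:equivh} (insert $X_{\lfloor u_1T\rfloor,T}$, apply \eqref{eq:ls} twice with parameters $u_1$ and $v$, and let $T\to\infty$). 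Multiplying the small-norm factor by $\alpha(m)^{1-k/r}$ and absorbing the gap decay into $\eta_k$ yields the rates $\tfrac1T\eta_k$ and $|u_1-v|\eta_k$ demanded in (i) and (ii).

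The main obstacle I anticipate is establishing the cumulant--mixing inequality \emph{for the joint system}: the difference terms couple $X_{t_1,T}$ with $X_{t_1}^{(t_1/T)}$, and mix the original with the approximating family in the remaining slots, so the split at the maximal gap must be governed by a mixing coefficient of the combined process $\{(X_{t,T},(X_t^{(u)})_{u\in[0,1]})\}_t$ rather than of either family alone. I would address this by showing that the combined process inherits exponential strong mixing from $\alpha=\max\{\alpha',\alpha''\}$ (exploiting that $X_{t,T}$ and $X_t^{(u)}$ are measurable functions of a common underlying mixing input), and by phrasing the inequality in the robust form that admits an arbitrary integrable element in one argument while retaining both the small-norm factor and the gap decay. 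The only remaining work is the bookkeeping of moment exponents, namely $r>k$ for order-$k$ cumulants and $r$ exceeding twice the order for the product terms in (iv), all of which is available because every moment is finite.
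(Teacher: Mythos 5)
Your proposal is correct and follows essentially the same route as the paper's proof: a Statulevi\v{c}ius-type cumulant--mixing inequality (your exponent $1-k/r$ with $r=(1+\delta)k$ is exactly the paper's $\alpha^{\delta/(1+\delta)}$ bound), a H\"older/Jensen step to pass from the pointwise bound to the $\|\cdot\|_{2,k}$-norm, and exponential mixing to make the gap decay summable --- your use of the maximal gap and the bound $m\ge(\text{spread})/(k-1)$ is only a cosmetic variant of the paper's geometric mean over consecutive gaps followed by successive summation. The paper in fact writes out only \eqref{sumcum} and declares (i)--(iv) to follow ``by similar arguments,'' so your sketch of those parts --- in particular your observation that the mixed cumulants in (i) and (ii) require mixing of the joint system of $\{X_{t,T}\}$ and $\{X_t^{(u)}\}$, which the paper's stated coefficients $\alpha'$ and $\alpha''$ do not literally cover --- goes beyond what the paper records and flags a genuine gap in the paper's own brevity rather than in your argument.
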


	\subsection{Bootstrap approximation} \label{subsec:boot}
	The bootstrap approximation will be based on two smoothing parameters: a block length sequence $m=m_T$ needed to asymptotically catch the serial dependence within the time series, and a bandwidth sequence $n=n_T$ needed to estimate expected values locally in time. We will impose the following condition.
	
	\begin{condition}[Assumptions on the bootstrap scheme]~
	\begin{enumerate}
	\renewcommand{\theenumi}{(B1)}
	\renewcommand{\labelenumi}{\theenumi}
	\item\label{eq:b1} 
	Let $m=m(T) \le T$ be an integer-valued sequence, to be understood as the block length within a block bootstrap procedure. Assume that $m$ tends to infinity and $m/T$ vanishes, as $T\to\infty$.  
	\renewcommand{\theenumi}{(B2)}
	\renewcommand{\labelenumi}{\theenumi}
	\item\label{eq:b2} 
	Let $n=n(T) \le T/2$ be an integer-valued sequence such that both $m/n$ and $mn^2/T^2$ converge to zero, as $T$ tends to infinity. 
	\renewcommand{\theenumi}{(B3)}
	\renewcommand{\labelenumi}{\theenumi}
	\item\label{eq:b3}  Let $\{R_i^{\scs (k)}\}_{i,k\in\naturals}$ denote independent standard normally distributed random variables, independent of the stochastic process $\{(X_{t,T})_{t\in\Z}:T\in\N\}$ . 
	\end{enumerate}
	\end{condition}
	
Under this set of notations, we define
\[ 
\hat{B}_{T}^{(k)}(u,\tau)
=
\frac{1}{\sqrt{T}} \sum_{i=1}^{\lfloor uT\rfloor}\frac{R_i^{(k)}}{\sqrt{m}}\sum_{t=i}^{(i+m-1)\wedge T} \big\{ X_{t,T}(\tau)-\hat \mu_{t,T}(\tau) \big\}
\]
as a bootstrap approximation for $\tilde {B}_T(u, \tau)$, where
\begin{align*}  
\hat \mu_{t,T}(\tau) 
=
\frac{1}{\tilde{n}_{t,0}}\sum_{j=\ubar{n}_t}^{\bar{n}_{t,0}}X_{t+j,T}(\tau) 
\end{align*}
denotes an estimator for $\mu_{t,T}(\tau)$ relying on the bandwidth sequence $n$ via
\begin{equation}\label{eq:def:n}
\bar{n}_{t,h}=n\wedge (T-t-h), \quad \ubar{n}_t=-n\vee (1-t), \quad \tilde{n}_{t,h} = \bar{n}_{t,h}-\ubar{n}_t+1, 
\end{equation}
for $0\leq h\leq H$. Similarly, for any $0\leq h\leq H$, bootstrap approximations for $\tilde {B}_{T,h}(u, \tau_1, \tau_2)$ are defined as
\begin{align*}
\hat{B}_{T,h}^{(k)}(u,\tau_1,\tau_2)
=
\frac{1}{\sqrt{T}} \sum_{i=1}^{\lfloor uT\rfloor \wedge (T-h)}\frac{R_i^{(k)}}{\sqrt{m}}
\sum_{t=i}^{(i+m-1)\wedge (T-h)} \big\{ X_{t,T}(\tau_1)X_{t+h,T}(\tau_2) - \hat \mu_{t,T,h}(\tau_1,\tau_2) \big\},
\end{align*}
where
\begin{align*} 
\hat \mu_{t,T,h}(\tau_1,\tau_2) =   \frac{1}{\tilde{n}_{t,h}}\sum_{j=\ubar{n}_t}^{\bar{n}_{t,h}}X_{t+j,T}(\tau_1)X_{t+j+h,T}(\tau_2).
\end{align*}
Finally, for fixed $k\in\N$, collect the bootstrap approximations in the vector 
\[
\hat \Bb_T^{(k)} = (\hat{B}_{T}^{(k)},\hat{B}_{T,0}^{(k)},\dots ,\hat{B}_{T,H}^{(k)}).
\]
The following theorem shows that the bootstrap replicates can be regarded as asymptotically independent copies of the original process $\Bb_T$ from Theorem~\ref{theo:main}.

\begin{theorem}\label{theo:boot}
	Suppose that Assumptions~\ref{cond:ls}--\ref{cond:cum} and \ref{eq:b1}--\ref{eq:b3} are met. Then, for any fixed $K\in \N$ and as $T \to \infty$,
	\[
	\big(\mathbb{B}_T,\hat{\mathbb{B}}_{T}^{(1)},\dots ,\hat{\mathbb{B}}_{T}^{(K)}\big)
	\weak 
	\big(\mathbb{B},\mathbb{B}^{(1)},\dots ,\mathbb{B}^{(K)}\big)
	\]
	in $\{ L^2([0,1]^2)\times(L^2([0,1]^3))^{H+1}\}^{K+1}$, where $\mathbb{B}^{\scs (k)}$ ($k=1,\dots ,K$) are independent copies of the centred Gaussian variable $\mathbb{B}$  from Theorem~\ref{theo:main}. Equivalently (\citealp{BucKoj17}, Lemma 2.2), 
	\[
	d_{\mathrm{BL}} ( \pr^{\hat{\mathbb{B}}_{T}^{(1)} \mid X_{1,T}, \dots, X_{T,T}}, \pr^{\mathbb{B}_T} ) =o_\Prob(1), \quad T \to \infty,
	\]
where $d_{\mathrm{BL}}$ denotes the bounded Lipschitz metric between probability distributions on $L^2([0,1]^2)\times(L^2([0,1]^3))^{H+1}$. 
\end{theorem}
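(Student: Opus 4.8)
The plan is to prove the unconditional joint weak convergence displayed first; the equivalent conditional bounded-Lipschitz statement then follows immediately from the cited Lemma~2.2 in \cite{BucKoj17}. Since weak convergence in the separable Hilbert space $\{L^2([0,1]^2)\times(L^2([0,1]^3))^{H+1}\}^{K+1}$ is equivalent to convergence of the finite-dimensional distributions together with asymptotic tightness, I would organize the argument along these two lines after a preliminary centring reduction.

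First, the reduction. I would replace the locally estimated centrings $\hat\mu_{t,T}$ and $\hat\mu_{t,T,h}$ by the true means $\mu_{t,T}$ and $\mu_{t,T,h}$, writing the resulting ``oracle'' replicates as $\check{\mathbb{B}}_T^{(k)}$. The difference is driven by $\hat\mu_{t,T}-\mu_{t,T}$ (and its lag-$h$ analogue), whose bias is $O(n/T)$ by the slow variation of the mean under local stationarity and whose stochastic fluctuation has variance of order $1/n$. A second-moment bound in the multipliers, using Assumption~\ref{cond:mom} and the summability in \ref{cond:cum}, shows that the block structure inflates these by a factor $m$, so that the two parts are controlled precisely by $m/n\to0$ and $mn^2/T^2\to0$ from Assumption~\ref{eq:b2}; hence $\|\hat{\mathbb{B}}_T^{(k)}-\check{\mathbb{B}}_T^{(k)}\|_{\Hc_{H+2}}=o_\Prob(1)$ for each $k$, and it suffices to work with $\check{\mathbb{B}}_T^{(k)}$.

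Second, the finite-dimensional distributions. Conditionally on the data $X_{1,T},\dots,X_{T,T}$, each $\check{\mathbb{B}}_T^{(k)}$ is a linear image of the independent standard normal multipliers $\{R_i^{(k)}\}$, hence a conditionally centred Gaussian element, independent across $k$ because the multiplier families are independent. The crux is that the conditional covariance of $\langle\check{\mathbb{B}}_T^{(k)},g\rangle$ converges in probability to the target quadratic form $\langle C_\Bb g,g\rangle$ of Theorem~\ref{theo:main}. Since $\Exp[R_i^{(k)}R_j^{(k)}]$ equals $1$ if $i=j$ and $0$ otherwise, this conditional covariance reduces to a moving-block average of within-block empirical second moments, i.e.\ a block-based long-run-variance estimator; its expectation converges to the long-run variance by passing from $X_{t,T}$ to the stationary approximations $X_t^{(u)}$ and integrating the lag-covariances over rescaled time $w=i/T$, matching the $\int_0^{u\wedge v}c_{k,j}(w)\diff w$ structure of the limit, while its own variance vanishes by the fourth-order cumulant control in \ref{cond:cum} together with $m\to\infty$ and $m/T\to0$ from \ref{eq:b1}. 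Granting this, I would compute the joint characteristic function by conditioning on the data: the bootstrap factor factorizes into $\prod_{k}\exp(-\tfrac12 t_k^2\hat\sigma_{T,k}^2)$ with conditional variances $\hat\sigma_{T,k}^2\to\sigma_k^2$ in probability, so the data-measurable term $\langle\mathbb{B}_T,g^{(0)}\rangle$ decouples from the bootstrap factors and the limit is a product of characteristic functions of independent Gaussians. This simultaneously delivers the asymptotic independence of $\mathbb{B}$ from the $\mathbb{B}^{(k)}$ and of the $\mathbb{B}^{(k)}$ among themselves.

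Third, tightness. Tightness of the product vector is equivalent to tightness of each coordinate; that of $\mathbb{B}_T$ is already supplied by Theorem~\ref{theo:main}, so only each $\check{\mathbb{B}}_T^{(k)}$ remains. Using the standard tightness criterion in a separable Hilbert space, I would fix an orthonormal basis $(e_j)_j$ and bound the tail mass $\sum_{j>N}\Exp\langle\check{\mathbb{B}}_T^{(k)},e_j\rangle^2$ uniformly in $T$; taking the multiplier expectation first collapses this into an average of block second moments, controlled uniformly by the moment condition \ref{cond:mom} and the lag-$2$ bound \ref{cond:cum}(iv). I expect the main obstacle to lie in the second step, namely the convergence of the conditional long-run covariance: this is the single place where the overlapping-block structure, the local-stationarity approximation, and the need to dominate a fourth-order object (to annihilate the estimator's variance) must be handled together, and it is where Assumptions~\ref{cond:ls}--\ref{cond:cum} are used at full strength.
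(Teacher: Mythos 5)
Your proposal is correct, and its first step coincides exactly with the paper's own proof of Theorem~\ref{theo:boot}: the paper also reduces to the ``oracle'' replicates centred at the true means $\mu_{t,T}$, $\mu_{t,T,h}$, splits the discrepancy into a deterministic bias term and a centred fluctuation term, and bounds their contributions by $O(mn^2/T^2)$ and $O(m/n)$ respectively, which is precisely your accounting. Where you genuinely diverge is in how the joint convergence of $(\mathbb{B}_T,\mathbb{B}_T^{(1)},\dots,\mathbb{B}_T^{(K)})$ (the paper's Theorem~\ref{bootstrapThm}) is established. The paper proceeds unconditionally by a moment/cumulant method: Cram\'er--Wold on bounded basis coefficients (Lemma~\ref{lem:fidis}), convergence of second moments (Proposition~\ref{PropCovConv}), vanishing of all joint cumulants of order at least three (Proposition~\ref{PropCumConv}), and a tail condition (Lemma~\ref{ThmTightness}) fed into the Hilbert-space approximation Lemma~\ref{lem:app}; data and multiplier components are treated symmetrically throughout. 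You instead exploit that, conditionally on the data, the multiplier replicates are \emph{exactly} Gaussian and independent across $k$, so the whole problem collapses to a law-of-large-numbers statement for the conditional block long-run covariance (mean convergence via local stationarity and Riemann sums, variance annihilation via the fourth-order cumulant summability and $m/T\to 0$), followed by a characteristic-function factorization that decouples the data term from the bootstrap factors. Your route is more elementary in that it needs only second- and fourth-order control of the data rather than cumulants of every order, and it delivers the conditional $d_{\mathrm{BL}}$ formulation very directly; its price is that it leans on the Gaussianity of the $R_i^{(k)}$ (the paper's cumulant machinery would tolerate non-Gaussian multipliers) and it takes Theorem~\ref{theo:main} as an external input for the $\mathbb{B}_T$ coordinate --- which is legitimate here since that theorem precedes Theorem~\ref{theo:boot}, even though in the paper's internal logic Theorem~\ref{theo:main} is itself deduced from the joint result. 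One small point to tighten: for the tightness step it is not enough to bound the tail sums $\sum_{j>N}\Exp\langle\check{\mathbb{B}}_T^{(k)},e_j\rangle^2$ uniformly in $T$; you need them to tend to zero as $N\to\infty$ uniformly in large $T$, which, as in Lemma~\ref{ThmTightness}, follows from convergence of the total second moment $\Exp\|\check{\mathbb{B}}_T^{(k)}\|^2$ to that of the limit together with coefficientwise convergence --- your block second-moment bounds give exactly the ingredients for this, but the argument is a comparison of norms, not a uniform bound alone.
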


The proof is given in Section~\ref{subsec:proofs3}. The preceding theorem, together with Corollary~\ref{cor:main1}, suggests to define the following bootstrap approximation for the CUSUM processes defined in \eqref{eq:cusumm} and \eqref{eq:cusumc}:
\begin{align*}
\hat{G}_T^{(k)}(u,\tau)&=\hat{B}_{T}^{(k)}(u,\tau) -u\hat{B}_{T}^{(k)}(1,\tau),\\
\hat{G}_{T,h}^{(k)}(u,\tau_1,\tau_2)&=\hat{B}_{T,h}^{(k)}(u,\tau_1,\tau_2) -u\hat{B}_{T,h}^{(k)}(1,\tau_1,\tau_2), \\
\hat{\mathbb{G}}_T^{(k)}&=(\hat{G}_T^{(k)},\hat{G}_{T,0}^{(k)},\dots ,\hat{G}_{T,H}^{(k)}).
\end{align*}
Theorem \ref{theo:boot}, Corollary~\ref{cor:main1} and the continuous mapping theorem then imply that, under the hypothesis $H_0^{\scs (H)}$ in \eqref{eq:h0H}, 
\begin{align*}
(\bm S_T, \bm S_T^{(1)}, \dots, \bm S_T^{(K)}) 
&\equiv 
(\Phi(\mathbb{U}_T), \Phi(\hat{\mathbb{G}}_T^{\scs (1)}),\dots ,\Phi(\hat{\mathbb{G}}_T^{\scs(K)}))  \\
&=
(\Phi(\mathbb{G}_T), \Phi(\hat{\mathbb{G}}_T^{\scs (1)}),\dots ,\Phi(\hat{\mathbb{G}}_T^{\scs(K)}))+o_\Prob(1)   \\
&\weak 
(\Phi(\mathbb{G}), \Phi(\mathbb{G}^{\scs (1)}),\dots ,\Phi(\mathbb{G}^{\scs (K)}))
\equiv (\bm S, \bm S^{(1)}, \dots, \bm S^{(K)}) ,
\end{align*}
where $\Phi(G_{-1},G_0, \dots, G_{H}) = (\|G_{-1} \|_{2,2}, \|G_{0}\|_{2,3}, \dots, \| G_H\|_{2,3})$
 and where $\mathbb{G}^{\scs (1)},\dots ,\mathbb{G}^{\scs (K)}$ are independent copies of $\mathbb{G}$. Individual bootstrap-based tests for, e.g., $H_{0}^{\scs (c,h)}$ are then naturally defined by the $p$-value
 \[
 p_{T,K}(S_{T,h}) = \frac1K \sum_{j=1}^K \bm 1( S_{T,h}^{(k)} \ge S_{T,h}),
 \]
where $S_{T,h}^{\scs (k)}$ and $S_{T,h}$ denote the $(h+2)$nd coordinate of $\bm S_{T}^{\scs (k)}$ and  $\bm S_T$, respectively; in particular, $S_{T,-1}=\Sc_T^{\scs (m)}$ and $S_{T,h}=\Sc_T^{\scs (c,h)}$ as defined in \eqref{eq:testst}. Indeed, we can show the following result for each individual test.

\begin{proposition} \label{prop:test}
Suppose that Assumptions~\ref{cond:ls}--\ref{cond:cum} and \ref{eq:b1}--\ref{eq:b3} are met. Then, for all $h \in \Z_{\ge -1}$, provided $K=K_T \to \infty$, and with $H_0^{(c,-1)}=H_0^{(m)}$, we have
\[
 p_{T,K_T}(S_{T,h}) \weak 
 \begin{cases} \mathrm{Uniform}(0,1) & \text{if }H_0^{\scs (c,h)} \text{ is met} \\
 0  & \text{else}.
\end{cases}
\]
\end{proposition}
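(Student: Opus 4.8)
The plan is to fix a single $h \in \Z_{\ge -1}$ and reduce the assertion to two facts: that the original statistic $S_{T,h}$ converges weakly under $H_0^{\scs(c,h)}$ to a limit $S_h$ with \emph{continuous} distribution function $F_h$, and that the empirical distribution function of the bootstrap replicates converges uniformly to $F_h$ in probability. Writing $\hat F_{T,K}(x)=\tfrac1K\sum_{k=1}^K \bm 1(S_{T,h}^{(k)}\le x)$, the $p$-value can be expressed as $p_{T,K}(S_{T,h}) = 1-\hat F_{T,K}(S_{T,h}^-)$, so once these two facts are available the conclusion under the null will follow from the probability integral transform, while the behaviour under the alternative will follow from the divergence of $S_{T,h}$.

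For the first fact, under $H_0^{\scs(c,h)}$ Corollary~\ref{cor:main1} gives $S_{T,h}=\|U_{T,h}\|_{2,3}=\|\tilde G_{T,h}\|_{2,3}+o_\Prob(1)$ (with $\|\cdot\|_{2,2}$ and $\tilde G_T$ in the case $h=-1$), and Theorem~\ref{theo:main} together with the continuous mapping theorem yields $S_{T,h}\weak S_h$, where $S_h=\|\tilde G_h\|_{2,3}$ for $h\ge 0$ and $S_{-1}=\|\tilde G\|_{2,2}$. I would isolate as a separate lemma the claim that $S_h$, being the norm of a non-degenerate centred Gaussian element of a separable Hilbert space, has a continuous distribution function $F_h$ (its squared norm being a weighted sum of independent $\chi^2_1$ variables, hence absolutely continuous on $(0,\infty)$ whenever the limiting covariance operator is nonzero); verifying this non-degeneracy is the one genuinely analytic ingredient here.

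For the second fact, let $\Psi_h$ be the continuous map sending an element of $\Hc_{H+2}$ to the $L^2$-norm of its $h$-component after subtracting $u$ times its value at $u=1$, so that $S_{T,h}^{(k)}=\Psi_h(\hat{\mathbb B}_T^{(k)})$ exactly and $\|\tilde G_{T,h}\|_{2,3}=\Psi_h(\mathbb B_T)$. By Assumption~\ref{eq:b3} the replicates $\hat{\mathbb B}_T^{(1)},\dots,\hat{\mathbb B}_T^{(K)}$ are, conditionally on the data, independent and identically distributed, and hence so are the $S_{T,h}^{(k)}$; write $F_{T,h}$ for their conditional distribution function. The $d_{\mathrm{BL}}$-statement of Theorem~\ref{theo:boot} and a continuous-mapping argument for the bounded Lipschitz metric show that the conditional law of $S_{T,h}^{(1)}$ converges to that of $S_h$ in $d_{\mathrm{BL}}$, in probability; combined with continuity of $F_h$ and P\'olya's theorem this upgrades to $\sup_x|F_{T,h}(x)-F_h(x)|=o_\Prob(1)$. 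The crux, and the step I expect to be the main obstacle, is to pass from this conditional law to the realised empirical distribution function while letting $K_T\to\infty$ and $T\to\infty$ simultaneously. I would control this double limit with the Dvoretzky--Kiefer--Wolfowitz inequality, which, being distribution-free, gives $\Prob(\sup_x|\hat F_{T,K}(x)-F_{T,h}(x)|>\delta)\le 2e^{-2K\delta^2}$ \emph{irrespective} of the data-dependent $F_{T,h}$; with $K=K_T\to\infty$ this bound vanishes, and together with the previous display it yields $\sup_x|\hat F_{T,K_T}(x)-F_h(x)|=o_\Prob(1)$. Combining this uniform approximation with continuity of $F_h$ and with $S_{T,h}\weak S_h$ gives $p_{T,K_T}(S_{T,h})=1-F_h(S_{T,h})+o_\Prob(1)\weak 1-F_h(S_h)\sim\mathrm{Uniform}(0,1)$, the final step being the probability integral transform for continuous $F_h$.

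Finally, under the alternative, Corollary~\ref{cor:main1} gives $S_{T,h}\convp\infty$, whereas Theorems~\ref{theo:main} and~\ref{theo:boot} continue to hold since their proofs do not invoke any null hypothesis, so the bootstrap replicates remain stochastically bounded. Conditioning on the data, $\Exp[p_{T,K_T}(S_{T,h})\mid X_{1,T},\dots,X_{T,T}]=1-F_{T,h}(S_{T,h}^-)$, which tends to $0$ in probability because $F_{T,h}\to F_h$ uniformly and $F_h(x)\to1$ as $x\to\infty$; bounded convergence of the non-negative, $[0,1]$-valued $p$-value then yields $p_{T,K_T}(S_{T,h})\convp 0$, completing both cases.
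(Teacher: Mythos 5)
Your proof is correct in substance, but it takes a genuinely different route from the paper's, whose entire argument consists of three citations: continuity of the CDF of the limiting statistic $S_h$ is taken from Theorem 7.5 of Davydov and Lifshits (1985); the null case then follows in one stroke from Lemma~4.1 of B\"ucher and Kojadinovic (2017), which takes as input exactly the joint \emph{unconditional} weak convergence $(\bm S_T,\bm S_T^{(1)},\dots,\bm S_T^{(K)})\weak(\bm S,\bm S^{(1)},\dots,\bm S^{(K)})$ for every fixed $K$ (the display preceding the proposition), with the $\bm S^{(k)}$ i.i.d.\ copies of $\bm S$ having continuous marginal CDFs, and outputs $p_{T,K_T}(S_{T,h})\weak\mathrm{Uniform}(0,1)$ for any $K_T\to\infty$; and the alternative is handled exactly as you do, by playing stochastic boundedness of the replicates (Theorem~\ref{theo:boot}) against divergence of $S_{T,h}$ (Corollary~\ref{cor:main1}). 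What you do instead is re-derive the content of that lemma from first principles: you exploit the fact that, for the multiplier bootstrap, the replicates are \emph{exactly} conditionally i.i.d.\ given the data, use the distribution-free DKW inequality to bridge the empirical CDF of the $K_T$ replicates and the random conditional CDF $F_{T,h}$, and use the $d_{\mathrm{BL}}$ statement of Theorem~\ref{theo:boot} together with P\'olya's theorem to bridge $F_{T,h}$ and $F_h$; this is a valid and self-contained argument, and your treatment of the alternative via the conditional expectation $1-F_{T,h}(S_{T,h}^-)$ also works. The trade-off is that your route needs the exact conditional i.i.d.\ structure (the cited lemma requires only joint unconditional convergence, hence covers bootstrap schemes lacking it), and it makes you responsible for continuity of $F_h$: your chi-square-expansion argument is sound, but the non-degeneracy of the limiting covariance operator, which you flag and defer, would still have to be verified (the paper's citation presupposes essentially the same thing). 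Finally, note that your claim that $\Psi_h$ is continuous is, strictly speaking, false, since evaluation at $u=1$ is not continuous on $L^2([0,1]^3)$; but the paper commits the identical informality when invoking the continuous mapping theorem for $\Gb_T$ and $\hat{\mathbb{G}}_T^{(k)}$, so this is a shared gloss rather than a gap specific to your argument.
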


Moreover,  we can rely on an extension of Fisher's $p$-value combination method \citep{Fis32} as  described in Section 2 in \cite{BucFerKoj18}
 to obtain a combined test for the joint hypothesis $H_0^{\scs (H)}$ in \eqref{eq:h0H}. More precisely, let $\psi:(0,1)^{H+2} \to \R$ be a continuous function that is decreasing in each argument (throughout the simulations, we employ $\psi(p_{-1},\dots,p_H)=\sum_{i=-1}^{H}w_i\Phi^{-1}(1-p_i)$ with weights $w_{-1}=w_{0}=1/3$ and $w_1=\cdots=w_H= (3H)^{-1}$.)
  The combined test is defined by its $p$-value calculated based on the following  algorithm.

\begin{alg}[Combined Bootstrap test for $H_0^{\scs (H)}$] \label{alg:joint}~
\begin{compactenum}
\item Let $\bm S_T^{\scs (0)} = \bm S_{T}$.

\item Given a large integer $K$, compute the sample of $K$ bootstrap replicates $\bm S_T^{\scs (1)},\dots,\bm S_T^{\scs (K)}$ of the vector $\bm S_T^{\scs (0)}$.

\item Then, for all $i \in \{0,1,\dots,K\}$ and $h \in \{-1,\dots,H\}$, compute
\[
p_{T,K}(S_{T,h}^{(i)}) = \frac{1}{K+1} \bigg\{\frac{1}{2} + \sum_{k=1}^K \bm 1 \left( S_{T,h}^{(k)} \geq S_{T,h}^{(i)} \right) \bigg\}.
\]

\item Next, for all $i \in \{0,1,\dots,K\}$, compute
\begin{equation*}
W_{T,K}^{(i)} = \psi \{ p_{T,K}(S_{T,0}^{(i)}),\dots,p_{T,K}(S_{T,H}^{(i)}) \}.
\end{equation*}
\item The global statistic is $W_{T,K}^{\scs (0)}$ and the corresponding $p$-value is given by
\begin{equation*}
p_{T,K}(W_{T,K}^{(0)}) =\frac{1}{K} \sum_{k=1}^K \bm 1 \left( W_{T,K}^{(k)} \geq W_{T,K}^{(0)} \right).
\end{equation*}
\end{compactenum}
\end{alg}

Consistency of this procedure is a mere consequence of Proposition 2.1 in \cite{BucFerKoj18}; details are omitted for the sake of brevity.

	\subsection{Consistency against AMOC-piecewise locally stationary alternatives}  \label{subsec:amoc}
	In the previous section, the proposed tests were shown to be consistent against locally stationary alternatives. In classical change point settings, the underlying CUSUM-principle is also known to be consistent against piecewise (locally) stationary alternatives, notably against those that involve a single change in the signal of interest (AMOC = at most one change). We are going to derive such results within the present setting.  
		
	For the sake of brevity, we only consider AMOC-alternatives in the mean. More precisely, we assume that $\{(X_{t,T})_{t \in \Z}:T\in\N\}$ follows the data generating process
\begin{equation}\label{defCPprocess}
	X_{t,T}=\left\{ 
	\begin{array}{ll}
	\mu_1 + Y_{t,T}~\text{for}~t\le \lambda T\rfloor\\
	\mu_2 + Y_{t,T}~\text{for}~t\ge\lfloor \lambda T\rfloor+1.
	\end{array}
	\right.
\end{equation}
for some $\lambda \in (0,1), \mu_1, \mu_2 \in \Lc^2([0,1])$ and $\{(Y_{t,T})_{t \in \Z}:T\in\N\}$ a locally stationary time series satisfying Condition~\ref{cond:fts}.
In the literature on classic change point detection, one would be  interested in testing for the null hypothesis that $\|\mu_1 - \mu_2 \|_2 =0$, against the alternative that this $L^2$-norm is positive.

Now, if $\|\mu_1 - \mu_2 \|_2 =0$,  we are back in the situation of the preceding sections. However, one can show (by contradiction) that if $\|\mu_1 - \mu_2 \|_2 > 0$, $\{(X_{t,T})_{t \in \Z}:T\in\N\}$ is not locally stationary, whence additional theory must be developed to show consistency of the test statistic $\Sc_{T}^{\scs (H)}$. Note that even the formulation of $H_0^{\scs (H)}$ relying on \eqref{eq:h0m} and \eqref{eq:h0c} is not possible anymore, so that we need to rely on their equivalent sub-asymptotic counterparts \eqref{eq:h0m2} and \eqref{eq:h0c2} in Lemma~\ref{lem:equivh}. 

\begin{proposition} \label{prop:amoc}
	Let $\{(X_{t,T})_{t \in \Z}:T\in\N\}$ be a sequence of functional time series as defined in \eqref{defCPprocess}, with $\mu_1 \neq \mu_2$ in $L^2([0,1])$ and with $\{(Y_{t,T})_{t \in \Z}:T\in\N\}$ satisfying  Conditions~\ref{cond:ls}--\ref{cond:cum}.
	Then, the test statistic $\Sc_T^{\scs (m)}=S_{T,-1}$ based on observations $X_{1,T}, \dots, X_{T,T}$ diverges to infinity, in probability. If, additionally,   \ref{eq:b1}--\ref{eq:b3} are met, then the bootstrap variables $\hat S_{T,-1}^{\scs (k)}$ are stochastically bounded. As a consequence, the proposed test is consistent.
\end{proposition}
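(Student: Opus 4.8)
The plan is to decompose the mean CUSUM process $U_T$ into a deterministic part carrying the jump and a centred stochastic part, to show that the former diverges at rate $\sqrt T$ while the latter is tight, and then to argue separately that the locally centred bootstrap replicates do not see the jump. Write $m_{t,T}=\ex[X_{t,T}]$ and $\eps_{t,T}=X_{t,T}-m_{t,T}=Y_{t,T}-\ex[Y_{t,T}]$. By linearity of the CUSUM functional, $U_T=D_T+\tilde G_T^{\eps}$, where $D_T$ is the CUSUM of the deterministic sequence $(m_{t,T})_t$ and $\tilde G_T^{\eps}$ is the centred CUSUM of $(\eps_{t,T})_t$. Since the deterministic means cancel in the centring, $\tilde G_T^{\eps}$ is exactly the centred CUSUM of the locally stationary process $(Y_{t,T})_t$, so Theorem~\ref{theo:main} and the continuous mapping theorem give $\tilde G_T^{\eps}\weak\tilde G$ and hence $\|\tilde G_T^{\eps}\|_{2,2}=O_{\Prob}(1)$. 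By the reverse triangle inequality it therefore suffices to show $\|D_T\|_{2,2}\to\infty$.

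Next I would identify the deterministic limit. Plugging $m_{t,T}=\mu_1\ind(t\le\lfloor\lambda T\rfloor)+\mu_2\ind(t>\lfloor\lambda T\rfloor)+\ex[Y_{t,T}]$ into $D_T$ and using the local stationarity of $Y$, one obtains $T^{-1/2}D_T\to\Mc$ in $L^2([0,1]^2)$, where $\Mc(u,\tau)=g(u)(\mu_1-\mu_2)(\tau)+M^{Y}(u,\tau)$, the function $g$ is the tent map $g(u)=u(1-\lambda)$ for $u\le\lambda$ and $g(u)=\lambda(1-u)$ for $u>\lambda$, and $M^{Y}$ is defined as in \eqref{eq:mut} with $X$ replaced by $Y$. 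Consequently $T^{-1/2}\|D_T\|_{2,2}\to\|\Mc\|_{2,2}$, and everything reduces to checking $\|\Mc\|_{2,2}>0$. This is a non-cancellation argument: the map $u\mapsto\ex[Y_0^{(u)}]$ is Lipschitz in $L^2([0,1])$ as a direct consequence of \eqref{eq:ls}, so $u\mapsto\partial_u M^{Y}(u,\cdot)=\ex[Y_0^{(u)}]-\int_0^1\ex[Y_0^{(w)}]\diff w$ is $L^2$-continuous, whereas $u\mapsto\partial_u\{g(u)(\mu_1-\mu_2)\}$ has an $L^2$-valued jump of size $-(\mu_1-\mu_2)\neq0$ at $u=\lambda$. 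The kink at $\lambda$ can therefore not be cancelled, so $\Mc\not\equiv0$ and $\|D_T\|_{2,2}\sim\sqrt T\,\|\Mc\|_{2,2}\to\infty$; this proves $\Sc_T^{(m)}=S_{T,-1}\to\infty$ in probability.

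For the bootstrap replicates $\hat S_{T,-1}^{(k)}=\|\hat G_T^{(k)}\|_{2,2}$ I would exploit that $\hat B_T^{(k)}$ is assembled from the locally centred increments $X_{t,T}-\hat\mu_{t,T}$. Splitting $X_{t,T}-\hat\mu_{t,T}=(\eps_{t,T}-\bar\eps_{t,T})+\beta_{t,T}$, where $\bar\eps_{t,T}$ and $\bar m_{t,T}$ denote the local averages over $[t-n,t+n]$ and $\beta_{t,T}=m_{t,T}-\bar m_{t,T}$ is deterministic, the noise part $(\eps_{t,T}-\bar\eps_{t,T})$ reproduces the block multiplier bootstrap of the locally stationary process $(Y_{t,T})_t$ and is tight by the argument underlying Theorem~\ref{theo:boot}, so it contributes an $O_{\Prob}(1)$ term. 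For the bias part the crucial observation is that $\beta_{t,T}$ equals the local smoothing bias of $\ex[Y_{t,T}]$, which is of order $O(n/T)$, except on the $O(n)$ indices with $|t-\lfloor\lambda T\rfloor|\le n$, where the smoothing window straddles the change point and $\beta_{t,T}$ is of order $\|\mu_1-\mu_2\|_2$. Conditionally on the data the induced process $\hat G_T^{(k),\beta}$ is Gaussian in the multipliers $R_i^{(k)}$, so its magnitude is controlled by its conditional second moment $\tfrac1{Tm}\sum_i\|\sum_{t=i}^{i+m-1}\beta_{t,T}\|_2^2$; estimating this sum and invoking Assumption~\ref{eq:b2} ($m/n\to0$ and $mn^2/T^2\to0$) shows that $\hat G_T^{(k),\beta}$ is stochastically bounded and, in particular, of smaller order than the $\sqrt T$ divergence of $S_{T,-1}$. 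Together this yields that $\hat S_{T,-1}^{(k)}$ stays stochastically bounded.

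The main obstacle is precisely this bias analysis near the change point: unlike in the null setting, the local centring cannot remove the jump inside a window of width $n$ around $\lfloor\lambda T\rfloor$, and the resulting deterministic bias is of order one on $O(n)$ time points, so its block multiplier contribution must be bounded quantitatively rather than discarded. Once the two parts are established, consistency follows immediately: since $S_{T,-1}\to\infty$ while the bootstrap statistics stay of smaller order, the $p$-value $p_{T,K}(S_{T,-1})$ tends to zero, and because $\psi$ is decreasing in its first argument the combined statistic $W_{T,K}^{(0)}$ of Algorithm~\ref{alg:joint} exceeds its bootstrap counterparts with probability tending to one.
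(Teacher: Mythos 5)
Your overall strategy coincides with the paper's: split the CUSUM statistic into the centred noise part (tight, by the null theory applied to $(Y_{t,T})_t$) plus a deterministic drift of order $\sqrt{T}$, and, for the bootstrap, compare the feasible process built from locally estimated means with the oracle multiplier process in which the jump cancels exactly. Your first part is correct and in fact somewhat more careful than the paper's, which simply applies Corollary~\ref{cor:main1} to the CUSUM of $(Y_{t,T})_t$; your tent-map-plus-kink argument additionally rules out cancellation between the jump drift and a possible mean drift $M^{Y}$ of the noise process, a configuration the paper's argument does not address.

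The gap is in the bootstrap part. You correctly isolate the deterministic smoothing bias $\beta_{t,T}=m_{t,T}-\bar m_{t,T}$, of size comparable to $\|\mu_1-\mu_2\|_2$ on the $O(n)$ indices whose smoothing window straddles the change point, as the crux (the paper disposes of this with the unproved remark that ``the same result can be shown with similar arguments''). But the conclusion you draw from your own estimate does not follow. Since roughly $2n$ blocks satisfy $\big\|\sum_{t=i}^{i+m-1}\beta_{t,T}\big\|_2\asymp m\|\mu_1-\mu_2\|_2$ (there is no cancellation inside a block: $m/n\to 0$, and the $\beta_{t,T}$ keep a common sign on each side of the change point), the quantity $\tfrac{1}{Tm}\sum_i\big\|\sum_{t=i}^{i+m-1}\beta_{t,T}\big\|_2^2$ is of exact order $mn/T$, and Assumptions~\ref{eq:b1}--\ref{eq:b3} do not make this bounded: for instance $m=T^{1/3}$ and $n=T^{5/6}/\log T$ satisfy \ref{eq:b1} and \ref{eq:b2}, yet $mn/T\to\infty$, and one can check (e.g.\ via Paley--Zygmund) that the bootstrap statistics then genuinely fail to be tight. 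So ``stochastically bounded'' is not established this way --- to be fair, the paper's own proof contains exactly the same lacuna, only hidden rather than flagged. What your computation does yield is $\hat S^{(k)}_{T,-1}=O_\Prob\big(1+\sqrt{mn/T}\big)$, and since $mn/T=(mn^2/T^2)\cdot(T/n)=o(T/n)=o(T)$ by \ref{eq:b2}, this is $o_\Prob(\sqrt T)$; because $S_{T,-1}$ diverges at rate $\sqrt{T}$, that weaker bound still delivers the final consistency claim. Literal stochastic boundedness would require the additional condition $mn/T=O(1)$ (or a finer treatment of the straddling windows).
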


\subsection{Data-driven choice of the block length parameter $m$}\label{subsec:choiceM} 

The bootstrap procedure depends on the choice of the width of the local mean estimator, $n$, and the length of the bootstrap blocks, $m$. Preliminary simulation studies suggested that the performance of the procedure crucially depends on the choice of $m$, while it is less sensitive to the choice of $n$ (which may also be chosen by other standard criteria in specific applications, like adaptations of Silverman's rule of thumb, cross-validation or visual investigation of respective plots). In this section we propose a data-driven procedure for choosing the block length $m$ based on a certain optimality criterion. 

Recall that the limiting null-distributions of the proposed test statistics depend in a complicated way on the covariances $\cov\{\tilde B(u,\tau),\tilde B(v,\phi)\}, 
\cov\big\{ \tilde {B}_h(u,\tau_1,\tau_2),\tilde {B}_{h'}(v,\phi_1,\phi_2)\}$ and 
$\cov\{\tilde B(u,\tau),\tilde {B}_{h}(v,\phi_1,\phi_2)\}$. Following Section 5 in \cite{BucKoj2016}, the procedure  we propose essentially chooses $m$ in such a way that the bootstrap approximation for $\sigma_c(\tau, \phi)=\cov\{\tilde B(1,\tau),\tilde B(1,\phi)\}$ is optimal, with respect to $m$, in a certain asymptotic sense. More precisely, we propose to first minimize the integrated mean squared of the `bootstrap-estimator' 
\begin{align*}
\tilde{\sigma}_T(\tau,\phi) 
&= 
\cov\big(\tilde{B}_T^{(1)}(1,\tau),\tilde{B}_T^{(1)}(1,\phi)|X_{1,T},\cdots, X_{T,T}\big)
\end{align*}
considered as an estimator for $\sigma_c(\tau, \phi)$,
with respect to $m$ theoretically (see Lemma~\ref{lem:IMSE} below), and then use a simple plug-in approach to obtain a formula that solely depends on observable quantities.
Observe that $\tilde{\sigma}_T(\tau,\phi)$  can be rewritten as
\begin{align*}
\tilde{\sigma}_T(\tau,\phi)
&=\ex[\tilde{B}_T^{(1)}(1,\tau),\tilde{B}_T^{(1)}(1,\phi)|X_{1,T},\cdots, X_{T,T}]\\
&=
\frac{1}{T} \sum_{i=1}^{T} \frac{1}{m} \bigg(\sum_{t=i}^{(i+m-1)\wedge T}X_{t,T}(\tau)-\mu_{t,T}(\tau)\bigg) \bigg(\sum_{t=i}^{(i+m-1)\wedge T}X_{t,T}(\phi)-\mu_{t,T}(\phi)\bigg)
\end{align*}
whence $\tilde{\sigma}_T(\tau,\phi)$ is not a proper estimator as it depends on the unknown expectation $\mu_{t,T}$. The asymptotic integrated bias and integrated variance satisfy the following expansions. For simplicity, we replace Condition~\ref{cond:cum} by a strong mixing condition as in Section~\ref{subsec:gen}.

\begin{lemma}\label{lem:IMSE}
	Let $m=m(T)$ be an integer-valued sequence, such that $m$ tends to infinity and $m^2/T$ vanishes, as $T$ tends to infinity. If
	conditions   \ref{cond:ls} and \ref{cond:mom} are met and $\{(X_{t,T})_{t\in\Z}: T\in\N\}$  is exponentially strongly mixing, 	then, as $T\to\infty$,
\begin{align*} 
	\int_{[0,1]^2} \big(\ex[\tilde{\sigma}_T(\tau,\phi)]-\sigma_c(\tau,\phi)\big)^2 \diff(\tau,\phi) 
	&= 
	\frac{1}{m^2} \Delta + o(m^{-2}), \\
	\int_{[0,1]^2}\var\big(\tilde\sigma_T(\tau,\phi)\big) \diff(\tau,\phi)
	&= 
	\frac{m}{T} \Gamma + o(m/T).
\end{align*}
where
\[ 
	\Delta = \bigg\| \sum_{k=-\infty}^{\infty} |k| \int_0^1 \cov(X_0^{(w)},X_k^{(w)}) \diff w \bigg\|_{2,2}^2 
\]
and
\[
\Gamma = \frac{2}{3}\int_0^1 \bigg(\sum_{k=-\infty}^{\infty}  \int_0^1  \cov\big(X_0^{(w)}(\tau),X_{k}^{(w)}(\tau)\big) \diff \tau\bigg)^2
+ \bigg\| \sum_{k=-\infty}^{\infty} \cov(X_0^{(w)},X_k^{(w)}) \bigg\|_{2,2}^2  \diff w .
\]
\end{lemma}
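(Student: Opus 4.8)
The plan is to treat the integrated squared bias and the integrated variance separately, in both cases passing first to the stationary approximation kernels $c_{k,1}(w,\tau,\phi)=\cov(X_0^{(w)}(\tau),X_k^{(w)}(\phi))$. Under exponential strong mixing together with the assumed moment bounds, a Davydov-type inequality gives $\|c_{k,1}(w,\cdot,\cdot)\|_{2,2}\le C a^{|k|}$ uniformly in $w$ for some $a\in(0,1)$; in particular $\bar c(w,\cdot,\cdot):=\sum_k c_{k,1}(w,\cdot,\cdot)$ is well defined, $\sum_k|k|\,\|c_{k,1}(w,\cdot,\cdot)\|_{2,2}<\infty$, and all lag-tails $\sum_{|k|\ge m}$ are exponentially small. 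Writing $Y_{t,T}(\tau)=X_{t,T}(\tau)-\mu_{t,T}(\tau)$ and $B_i(\tau)=\sum_{t=i}^{(i+m-1)\wedge T}Y_{t,T}(\tau)$, one has $\tilde\sigma_T(\tau,\phi)=\frac{1}{Tm}\sum_{i=1}^{T}B_i(\tau)B_i(\phi)$, while $\sigma_c(\tau,\phi)=\int_0^1\bar c(w,\tau,\phi)\diff w$ by Theorem~\ref{theo:main}.

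\emph{Integrated bias.} Taking expectations and interchanging the order of summation, each pair $(t,s)$ with $|t-s|<m$ is counted by the number of blocks containing it, equal to $m-|t-s|$ up to corrections for the $O(m)$ indices within distance $m$ of $1$ or $T$. This yields $\ex[\tilde\sigma_T(\tau,\phi)]=\frac1T\sum_t\sum_{|k|<m}(1-|k|/m)\cov(X_{t,T}(\tau),X_{t+k,T}(\phi))+O(m/T)$, the boundary term being of size $O(m/T)$ in $\|\cdot\|_{2,2}$. Using the local-stationarity bound \ref{cond:cum}(i) I would replace the covariance by $c_{k,1}(t/T,\tau,\phi)$ at total cost $O(1/T)$ (summable in $k$), and a Riemann-sum step converts $\frac1T\sum_t c_{k,1}(t/T,\cdot)$ into $\int_0^1 c_{k,1}(w,\cdot)\diff w$ with error $O(1/T)$, the $w$-Lipschitz constants being summable in $k$ by \ref{cond:cum}(ii). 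Subtracting $\sigma_c$ then leaves $-\frac1m\int_0^1\sum_{|k|<m}|k|\,c_{k,1}(w,\cdot)\diff w-\int_0^1\sum_{|k|\ge m}c_{k,1}(w,\cdot)\diff w$. The second integral is exponentially small and the boundary term is $o(1/m)$ in $\|\cdot\|_{2,2}$ precisely because $m^2/T\to0$; hence the bias equals $-\frac1m\sum_k|k|\int_0^1 c_{k,1}(w,\cdot)\diff w+o(1/m)$ in $L^2([0,1]^2)$. Squaring and integrating over $(\tau,\phi)$ gives $m^{-2}\Delta+o(m^{-2})$, the cross term being $o(m^{-2})$ by Cauchy--Schwarz.

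\emph{Integrated variance.} Since the $B_i$ are centred, I would expand, for fixed $(\tau,\phi)$,
\[
\cov(B_i(\tau)B_i(\phi),B_{i'}(\tau)B_{i'}(\phi)) = \cov(B_i(\tau),B_{i'}(\tau))\cov(B_i(\phi),B_{i'}(\phi)) + \cov(B_i(\tau),B_{i'}(\phi))\cov(B_i(\phi),B_{i'}(\tau)) + \cum(B_i(\tau),B_i(\phi),B_{i'}(\tau),B_{i'}(\phi)),
\]
producing three contributions to $\var(\tilde\sigma_T(\tau,\phi))=\frac{1}{T^2m^2}\sum_{i,i'}(\cdots)$. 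Expanding each $B$ into its $m$ summands and invoking cumulant summability (\ref{cond:cum}(iii),(iv)) shows the quadruple cumulant is non-negligible only when all indices lie within the mixing range, so $|\cum(\cdots)|=O((m-|i-i'|)_+)$ and the associated double sum is $O(1/T)=o(m/T)$. For the two product terms I would approximate $\cov(B_i(\tau),B_{i'}(\phi))=(m-|i-i'|)_+\,\bar c(i/T,\tau,\phi)+\text{error}$, where the errors from the local-stationarity replacement \ref{cond:cum}(i), from freezing $w$ at $i/T$ over a block \ref{cond:cum}(ii), and from the lag-dependence of the overlap count are each of smaller order after summation, using $m/T\to0$ and $\sum_k|k|\,\|c_{k,1}\|_{2,2}<\infty$. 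The surviving factor is $\frac{1}{T^2m^2}\sum_{i,i'}(m-|i-i'|)_+^2$; the inner sum equals $\sum_{|\ell|<m}(m-|\ell|)^2=\tfrac23 m^3+O(m^2)$, so after the outer Riemann sum $\frac1T\sum_i\to\int_0^1\diff w$ the prefactor is $\frac{2m}{3T}(1+o(1))$. Integrating the first product term over $(\tau,\phi)$ factorises into $(\int_0^1\bar c(w,\tau,\tau)\diff\tau)^2$; for the second I would use the symmetry $\bar c(w,\tau,\phi)=\bar c(w,\phi,\tau)$, valid because $\{X_t^{(w)}\}$ is stationary, to rewrite $\int_{[0,1]^2}\bar c(w,\tau,\phi)\bar c(w,\phi,\tau)\diff(\tau,\phi)=\|\bar c(w,\cdot,\cdot)\|_{2,2}^2$. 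Adding the two product terms yields $\frac{m}{T}\cdot\frac23\int_0^1\{(\int_0^1\bar c(w,\tau,\tau)\diff\tau)^2+\|\bar c(w,\cdot,\cdot)\|_{2,2}^2\}\diff w+o(m/T)=\frac mT\Gamma+o(m/T)$.

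The hard part will be the uniform-in-$(\tau,\phi)$ bookkeeping of the error terms as they propagate through the quadruple sums in the variance---verifying that the local-stationarity replacement errors, the block-freezing errors, and the fourth-order cumulant term are all genuinely $o(m/T)$---together with the exact overlap count that produces the constant $2/3$, and the care needed to keep $L^2$-norms throughout so that the $(\tau,\phi)$-integrations collapse to the stated Hilbert--Schmidt norms.
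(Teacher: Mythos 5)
Your proposal is correct and follows essentially the same route as the paper's proof: the same overlap-counting computation for the bias (with the boundary error controlled exactly by $m^2/T\to 0$), and the same decomposition of $\cov\big(B_i(\tau)B_i(\phi),B_{i'}(\tau)B_{i'}(\phi)\big)$ into a negligible fourth-order cumulant plus the two covariance-product terms that produce the constant $2/3$ and the two components of $\Gamma$ (including the symmetry of the long-run kernel used to identify the second term with a squared $L^2$-norm). The only cosmetic difference lies in the variance combinatorics: you collapse each block covariance to $(m-|i-i'|)_+\,\bar c$ and then sum the squared triangular kernel $\sum_{|\ell|<m}(m-|\ell|)^2=\tfrac{2}{3}m^3+O(m^2)$, whereas the paper keeps individual lags, localizes them via a sequence $\ell_m$, and evaluates $\sum_{i'=-m}^{m}\big(1-\tfrac{|k_1-i'|}{m}\big)\big(1-\tfrac{|k_2-i'|}{m}\big)=\tfrac{2}{3}m+O(\ell_m^2/m)$; both give the same leading term.
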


As a consequence of this lemma, we obtain the expansion
\begin{align*}\label{eq:IMSE}
\IMSE_T(m)
&= 
\int_{[0,1]^2} \MSE(\tilde{\sigma}_T(\tau,\phi)) \diff(\tau,\phi)\\
&= 
\int_{[0,1]^2} \var\big(\tilde{\sigma}_T(\tau,\phi)\big) + \big(\ex[\tilde{\sigma}_T(\tau,\phi)]-\sigma_c(\tau,\phi)\big)^2 \diff(\tau,\phi) \\
&=
\frac{m}{T} \Gamma + \frac{1}{m^2} \Delta + o(m^{-2}) + o(m/T),
\end{align*} 
which can next be minimized with respect to $m$ to get a natural choice for the block length. More precisely, the dominating function $\Lambda(m)=\tfrac{m}{T} \Gamma + \tfrac{1}{m^2} \Delta$ is differentiable in $m$ with $\Lambda'(m)=\tfrac{\Gamma}{T} - \tfrac{2\Delta}{m^3}$ and $\Lambda''(m)=\tfrac{6\Delta}{m^4}$, whence  $m=\big(\tfrac{2\Delta T}{\Gamma}\big)^{1/3}$ is the unique minimiser of $\Lambda$. In practice,  both $\Gamma$ and $\Delta$ are unknown and must be estimated in terms of the observed data. This leads us to define 
\[
\hat m = \big({2\hat \Delta_T T}/{\hat \Gamma_T}\big)^{1/3}
\]
where, for some constant  $L\in \N$  specified below, 
\[
\hat{\Delta}_T = \int_{[0,1]^2}\bigg( \frac{1}{T-2L} \sum_{i=L+1}^{T-L} \sum_{k=-L}^{L}|k| \hat{\gamma}_{i,k,T}(\tau,\phi) \bigg)^2 \diff(\tau,\phi)
\]
and
\[
\hat{\Gamma}_T = \frac{2}{3} \frac{1}{T-2L} \sum_{i=L+1}^{T-L} \bigg(\sum_{k=-L}^{L} \int_0^1 \hat{\gamma}_{i,k,T}(\tau,\tau) \diff\tau\bigg)^2 + \int_{[0,1]^2} \bigg( \sum_{k=-L}^{L} \hat{\gamma}_{i,k,T}(\tau,\phi) \bigg)^2 \diff(\tau,\phi).
\]
Here $\hat{\gamma}_{i,k,T}$ is defined by
\begin{multline*}
\hat{\gamma}_{i,k,T}(\tau,\phi)= \frac{1}{\bar{n}_{i+k,0}-\ubar{n}_{i}+1 } \sum_{j=\ubar{n}_{i}}^{\bar{n}_{i+k,0}} \bigg(X_{i+j,T}(\tau)-\frac{1}{\tilde{n}_{i+j,0}}\sum_{t=\ubar{n}_{i+j}}^{\bar{n}_{i+j,0}} X_{i+j+t,T}(\tau)\bigg)\\ \times\bigg(X_{i+j+k,T}(\phi)-\frac{1}{\tilde{n}_{i+j+k,0}}\sum_{t=\ubar{n}_{i+j+k}}^{\bar{n}_{i+j+k,0}} X_{i+j+k+t,T}(\phi)\bigg)
\end{multline*}
and $\bar{n}_{t,h},\ubar{n}_t$ and $\tilde{n}_{t,h}$ are given in \eqref{eq:def:n}. Note that the above estimators depend on the choice of the integer $L$. Following \cite{BucKoj2016} and \cite{PolWhi2004}, we select $L$ to be the smallest integer, such that 
\[ 
\hat{\rho}_{k,T} 
=
\frac{\|\frac{1}{T-k}\sum_{i=1}^{T-k}\hat \gamma_{i,k,T}\|_{2,2}}{\|\frac{1}{T} \sum_{i=1}^{T} \hat \gamma_{i,0,T}\|_{2,2}} 
\]
is negligible for any $k>L$; more precisely, $L$ is chosen as the smallest integer such that $\hat{\rho}_{L+k,T} \leq 2 \sqrt{\log(T)/T}$, for any $k=1,\cdots, K_T$, with $K_T= \max\{5, \sqrt{\log T}\}.$

	\section{Time-varying random operator functional AR processes}  \label{sec:examples}

	We consider an exemplary  class of functional locally stationary processes and specify the approximating family of stationary processes. The results in this section are similar to Theorem 3.1 of \cite{Bos00}.  
	
	Let $\mathcal{L}=\mathcal{L}\big(L^2([0,1]),L^2([0,1])\big)$ be the space of bounded linear operators on $L^2([0,1])$. Further, denote by $\|\cdot\|_\mathcal{L}$ and $\|\cdot\|_\mathcal{S}$ the standard operator norm and the Hilbert-Schmidt norm respectively, i.\,e.,
	\[ 
	\|\ell \|_\mathcal{L} = \sup\limits_{\|x\|_2\leq 1} \|\ell(x)\|_2,
	\qquad 
	\|\ell \|_\mathcal{S} = \bigg(\sum_{j=1}^{\infty} \lambda_j^2\bigg)^{1/2} 
	\]
	for $\ell \in \Lc$ with eigenvalues $\lambda_1\geq \lambda_2\geq \dots$.
	By Equation (1.55) in \cite{Bos00}, we have $\|\cdot\|_\mathcal{L} \leq \|\cdot\|_\mathcal{S}$. For any $T\in\N$, consider the recursive functional equation
	\begin{align} \label{eq:ar}
	X_{t,T} = Y_{t,T} + \mu(t/T), \qquad Y_{t,T} = A_{t/T}(Y_{t-1,T})+ \eps_{t,T}, \qquad t \in \Z,
	\end{align}
	where $(\eps_{t,T})_{t\in \Z}$ is a sequence of independent zero mean innovations in $L^2([0,1])$ and where $A_{t,T}:L^2([0,1]) \to L^2([0,1])$ denotes a possibly random and time-varying bounded linear operator. 
	The equation defines what might be called a (time varying) random operator functional autoregressive process of order one, denoted by $\rm tvrFAR(1)$, see also \cite{VanBagChaDet17}, Section~4.1, for the non-random case with $\eps_{t,T}$ not depending on~$T$.
	
In the following, we will only consider the case where $\mu$ is the null function. In the more general case of $\mu$ being Lipschitz, if there exists a locally stationary solution $Y_{t,T}$ of the equation on the right-hand side of \eqref{eq:ar} with approximating family $\{Y_t^{\scs (u)}|t\in\integers\}_{u\in[0,1]}$, then $X_{t,T} = Y_{t,T} + \mu(t/T)$ is obviously locally stationary with approximating family $X_t^{\scs (u)}=Y_t+\mu(u)$.

To be precise, we restrict ourselves to the following specific parametrization 
	\begin{align*} 
	\mu\equiv 0, \qquad A_{t/T} = a(t/T) \tilde A, \qquad \eps_{t,T} = \sigma(t/T)  \tilde \eps_t,
	\end{align*}
	where $a$ and $\sigma>0$ are measurable functions on $[0,1]$.
	The following lemma provides sufficient conditions for ensuring local stationarity of the model and provides an explicit expression for the approximating family of stationary processes. 
	For a related result in the case where $A_{t/T}$ is non-random and $\eps_{t,T}$ does not depend on $T$ see Theorem 3.1 in \cite{vanDelft2016}.

	For a sequence of operators $(B_i)_i$ in $\Lc$, we will write $\prod_{i=0}^{n} B_i = B_0 \circ \dots \circ B_n$ for $n\in\N$. The empty product will be identified with the identity on $L^2([0,1])$, that is, $\prod_{i=0}^{-1} B_i = \ident_{L^2([0,1]}$ 
	
	\begin{lemma}\label{lem:ar1}
		
		Let $(\tilde{\eps}_t)_{t\in\Z}$ be strong white noise in $L^2([0,1])$.
		Further, let $a$ and $\sigma$ be measurable functions on $(-\infty,1]$ such that $\sigma >0$, $a(u) = a(0)$ and $\sigma(u)=\sigma(0)$ for all $u\le 0$.
		Finally, let $\eps_{t,T}=\sigma(t/T)\tilde \eps_t$, $\eps_t^{(u)} = \sigma(u) \tilde \eps_t$ and $A_u=a(u) \tilde A$, where $\tilde A$ denotes a random operator in $\Lc$ that is independent from $(\tilde \eps_t)_{t\in\Z}$ and satisfies
		$\sup_{u \in [0,1]} \|A_u\|_\mathcal{S}\leq q<1$ with probability one. 
		Then:
		\begin{asparaenum}[(i)]
			\item
			For any $u\in[0,1]$, there exists a unique stationary solution $(Y_t^{\scs (u)})_{t \in \Z}$ of the recursive equation
			\begin{align*} 
			Y_t^{(u)}=A_u(Y_{t-1}^{(u)})+\eps_{t}^{(u)}, \qquad t \in \Z,
			\end{align*}
			namely 
			\[
			Y_t^{(u)}=\sum_{j=0}^{\infty} A_u^j(\eps_{u,t-j}),
			\]
			where the latter series converges in $L^2(\Omega \times [0,1], \Prob \otimes \lambda)$ and almost surely in $L^2([0,1])$.

			\item If $\sigma$ and $a$ are Lipschitz continuous, then there exists a unique locally stationary solution $(Y_{t,T})$ of order $\rho=2$ satisfying $\sup_{t\in\Z, T\in\N} \ex[\|Y_{t,T}\|_2^2]<\infty$
			of the recursive equation
			\begin{align*}
			Y_{t,T}=A_{t/T}(Y_{t-1,T})+\eps_{t,T}, \qquad t \in \Z, T\in\N,
			\end{align*}
			namely
			\[ 
			Y_{t,T}=\sum_{j=0}^{\infty} \prod_{i=0}^{j-1} A_{\tfrac{t-i}{T}}(\eps_{t-j,T}),
			\]
			the series again being convergent in $L^2(\Omega \times [0,1], \Prob \otimes \lambda)$ and almost surely in $L^2([0,1])$. The locally stationary process has approximating family $\{(Y_t^{(u)})_{t\in\Z}: u \in [0,1] \}$.
		\end{asparaenum}
		
	\end{lemma}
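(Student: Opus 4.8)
The plan is to prove both parts by the same scheme: write down the candidate solution as an explicit series, verify its convergence in the two claimed modes, check that it solves the recursion, and then argue uniqueness; for part~(ii) the extra and most delicate task is to verify the local stationarity bound~\eqref{eq:ls}.

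For part~(i), fix $u\in[0,1]$ and consider the partial sums $S_n=\sum_{j=0}^{n}A_u^{j}(\eps_{t-j}^{(u)})$. The pathwise bound $\|A_u^{j}\|_{\mathcal{L}}\le\|A_u\|_{\mathcal{S}}^{j}\le q^{j}$ together with $\eps_{t-j}^{(u)}=\sigma(u)\tilde\eps_{t-j}$ gives $\ex\|A_u^{j}(\eps_{t-j}^{(u)})\|_2^2\le q^{2j}\sigma(u)^2\,\ex\|\tilde\eps_0\|_2^2$, so the $S_n$ are Cauchy in $L^2(\Omega\times[0,1],\Prob\otimes\lambda)$ and the series converges there. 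Almost-sure convergence in $L^2([0,1])$ follows because $\ex\sum_{j\ge0}\|A_u^{j}(\eps_{t-j}^{(u)})\|_2\le\sigma(u)\,\ex\|\tilde\eps_0\|_2\sum_{j\ge0}q^{j}<\infty$ forces the nonnegative series $\sum_{j}\|A_u^{j}(\eps_{t-j}^{(u)})\|_2$ to be finite almost surely. Using boundedness (hence linearity and continuity) of $A_u$ to interchange it with the infinite sum shows $A_u(Y_{t-1}^{(u)})+\eps_t^{(u)}=Y_t^{(u)}$, and strict stationarity is immediate since $Y_t^{(u)}$ is the same shift-invariant measurable functional of the i.i.d.\ sequence $(\tilde\eps_s)_{s\le t}$ for every $t$. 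For uniqueness, any stationary solution $Z_t$ satisfies $Z_t-Y_t^{(u)}=A_u^{n}(Z_{t-n}-Y_{t-n}^{(u)})$ by iteration, so $\|Z_t-Y_t^{(u)}\|_2\le q^{n}\|Z_{t-n}-Y_{t-n}^{(u)}\|_2$; taking second moments and using stationarity to see that $\ex\|Z_{t-n}-Y_{t-n}^{(u)}\|_2^2$ is constant in $n$, the right-hand side tends to $0$.

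For part~(ii), the candidate $Y_{t,T}=\sum_{j\ge0}\prod_{i=0}^{j-1}A_{(t-i)/T}(\eps_{t-j,T})$ is treated by the identical two convergence arguments, now using $\|\prod_{i=0}^{j-1}A_{(t-i)/T}\|_{\mathcal{L}}\le q^{j}$ almost surely and $\|\eps_{t-j,T}\|_2\le\|\sigma\|_\infty\|\tilde\eps_{t-j}\|_2$ (the supremum being finite since $\sigma$ is Lipschitz on $[0,1]$ and constant on $(-\infty,0]$, and all arguments $(t-i)/T$ lie in $(-\infty,1]$); verification of the recursion and uniqueness carry over verbatim. It remains to establish local stationarity of order $\rho=2$ with approximating family $(Y_t^{(u)})$. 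The idea is to compare the two series termwise,
\[
Y_{t,T}-Y_t^{(u)}=\sum_{j\ge0}\Big\{\Big[\prod_{i=0}^{j-1}A_{(t-i)/T}-A_u^{j}\Big](\eps_{t-j,T})+A_u^{j}\big(\eps_{t-j,T}-\eps_{t-j}^{(u)}\big)\Big\},
\]
and to bound each bracket via the Lipschitz continuity of $a$ and $\sigma$.

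For the operator part I would use the telescoping identity $\prod_i B_i-\prod_i C_i=\sum_k(\prod_{i<k}B_i)(B_k-C_k)(\prod_{i>k}C_i)$ together with $\|A_{(t-k)/T}-A_u\|_{\mathcal{L}}=|a((t-k)/T)-a(u)|\,\|\tilde A\|_{\mathcal{S}}\le L_a\|\tilde A\|_{\mathcal{S}}(|t/T-u|+k/T)$, giving a factor $q^{j-1}\sum_{k<j}(|t/T-u|+k/T)$; the innovation part is bounded directly by $q^{j}L_\sigma(|t/T-u|+j/T)\|\tilde\eps_{t-j}\|_2$. Bounding $|t/T-u|$ and $k/T$ (resp.\ $j/T$) by $(|t/T-u|+\tfrac1T)$ times a polynomial in $j$, every term carries the common factor $(|t/T-u|+\tfrac1T)$ and a residual coefficient of the form $(\text{polynomial in }j)\,q^{j-1}$, yielding~\eqref{eq:ls} with $P_{t,T}^{(u)}=\sum_{j\ge0}c_j\|\tilde\eps_{t-j}\|_2$ for summable deterministic $c_j$ (after replacing the a.s.\ bounded $\|\tilde A\|_{\mathcal{S}}\le q/\sup_u|a(u)|$ by its deterministic bound). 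The main obstacle is precisely this operator-product comparison: the telescoping must be arranged so the Lipschitz increments enter linearly and so the accumulated polynomial factors $j$ and $j^2$ are absorbed by the geometric weight $q^{j-1}$. The uniform moment bound $\sup_{t,T}\ex|P_{t,T}^{(u)}|^2<\infty$ then follows from Minkowski's inequality in $L^2(\Prob)$, using $\ex\|\tilde\eps_0\|_2^2<\infty$, independence of $\tilde A$ from $(\tilde\eps_s)_s$, and the almost-sure bound on $\|\tilde A\|_{\mathcal{S}}$.
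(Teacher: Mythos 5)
Your proposal is correct and follows essentially the same route as the paper: geometric bounds $\|A_u^j\|_{\mathcal{L}}\le\|A_u\|_{\mathcal{S}}^j\le q^j$ for the two modes of convergence, verification of the recursion by continuity, uniqueness by iterating the recursion so the remainder is contracted by $q^n$, and local stationarity via the telescoping identity for differences of operator products combined with the Lipschitz increments of $a$ and $\sigma$, ending with $P_{t,T}^{(u)}=\sum_{j\ge0}c_j\|\tilde\eps_{t-j}\|_2$ for summable $c_j$ (the paper takes $c_j=Cq^j(j+1)^2$); your decomposition is merely the mirror image of the paper's, with the operator difference applied to $\eps_{t-j,T}$ rather than to $\eps_{t-j}^{(u)}$. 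The one substantive difference is that the paper outsources part (i) to Theorem 3.1 of Bosq (2000) while you argue it directly, and there your uniqueness step takes second moments of $Z_{t-n}-Y_{t-n}^{(u)}$, which implicitly restricts uniqueness to square-integrable stationary solutions; since the statement carries no such qualifier, you should instead observe that $\|Z_{t-n}-Y_{t-n}^{(u)}\|_2$ is identically distributed in $n$, hence stochastically bounded, so that $q^n\|Z_{t-n}-Y_{t-n}^{(u)}\|_2\to0$ in probability and $Z_t=Y_t^{(u)}$ almost surely.
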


	\section{Finite-sample Results}  \label{sec:finite}
	
	\subsection{Monte Carlo Simulations} \label{sec:sims}
	A large scale Monte Carlo simulation study was performed to analyse the finite-sample behavior of the proposed tests. The major goals of the study were to analyse the level approximation and the power of the various tests, with a particular view on investigating various different forms of alternatives, notably models from $H_1^{\scs (m)}, H_1^{\scs (c,0)}$ and $H_1^{\scs (c,1)}$. All stated results related to testing the joint hypothesis  $H_0^{\scs(H)}$ are for the combined test described in Algorithm~\ref{alg:joint},  with $\psi(p_{-1},\dots,p_H)=\sum_{i=-1}^{H}w_i\Phi^{-1}(1-p_i)$ with weights $w_1=w_0=1/2$ for $H=0$ and $w_{-1}=w_{0}=1/3$ and $w_1=\cdots=w_H= (3H)^{-1}$ for $H\ge 1$.
	
	For the data-generating processes, we employed 10 different choices for the parameters in \eqref{eq:ar}, which will be described next.
	Let $(\psi_i)_{i\in\N_0}$ denote the Fourier basis of $L^2([0,1])$, that is, for $n\in\N$,
	\[
	\psi_0 \equiv 1, 
	\quad \psi_{2n-1}(\tau) = \sqrt 2 \sin(2\pi n\tau),
	\quad \psi_{2n}(\tau) = \sqrt 2 \cos(2\pi n\tau).
	\]
	Let $(\tilde \eps_t)_{t\in\Z}$ denote an i.i.d.\ sequence of mean zero random variables in $L^2([0,1])$, defined by $\tilde \eps_t = \sum_{i=0}^{16} u_{t,i} \psi_i$, where $u_{t,i}$ are independent and normally distributed with mean zero and variance $\var(u_{i,t}) = \exp(-i/10)$. Independent of $(\tilde \eps_t)_{t\in\Z}$, let $\bm G=(G_{i,j})_{i,j=0, \dots, 16}$ denote a matrix with independent normally distributed entries with $\var(G_{i,j})=\exp(-i-j)$. Let $\tilde A :L^2([0,1]) \to L^2([0,1])$ denote the (random) integral operator  defined by 
	\[
	\tilde A (f)(t)  = \tfrac1{3 \vertiii{G}_F} \sum_{i,j=0}^{16} G_{i,j} \langle f, \psi_i\rangle \psi_j(t)
	=
 \int_0^1 	\Big( \tfrac1{3 \vertiii{G}_F} \sum_{i,j=0}^{16} G_{i,j} \psi_i(s)  \psi_j(t) \Big) f(s) \diff s,
	\]
	where $\vertiii{\bm G}_F$ denotes the Frobenius norm (note that the Hilbert-Schmidt-norm of $\tilde A$ is equal to $1/3$, see \citealp{HorKok12}, Section 2.2).
	Finally, let
	\begin{align*}
	a_0(u) &= 1, &  	
	a_1(u) &= \tfrac12 + u, \\
	a_2(u) &=1-\tfrac{1}{2}\cos(2\pi u),   & 
	a_3(u) &= \tfrac{1}{2}+\id(u \geq 1/2), 
	\end{align*}
	for $u\in[0,1]$ and let $a_j(u)=a_j(0)$ for $u\le 0$ and $a_j(u)=a_j(1)$ for $u\ge 1$.
	The following ten data-generating processes are considered:
	\begin{compactitem}
	\item \textit{Stationary Case.} Let 
		\begin{align} 
		\mu \equiv 0, \qquad A_{t/T} = \tilde A, \qquad \eps_{t,T} = \tilde \eps_t.
		\label{eq:h0} 
	\end{align}
	\item \textit{Models deviating from $H_0^{\scs (m)}$.}   For $j=1,\dots, 3$, consider the choices
	\begin{align} 
		\mu(\tau) = a_j(\tau), \qquad A_{t/T} = \tilde A, \qquad \eps_{t,T} = \tilde \eps_t.
		\label{eq:m} 
	\end{align}
	\item \textit{Models deviating from $H_0^{\scs (c,0)}$.}  For $j=1,\dots, 3$, consider the choices
	\begin{align} 
		\mu\equiv 0, \qquad A_{t/T} = \tilde A, \qquad \eps_{t,T} = a_j(t/T) \tilde \eps_t.
		\label{eq:c0} 
	\end{align}
	\item \textit{Models deviating from $H_0^{\scs (c,1)}$.}   For $j=1,\dots, 3$, consider the choices
	\begin{align} 
		\mu\equiv 0, \qquad A_{t/T} = a_j(t/T) \tilde A, \qquad \eps_{t,T} =  \tilde \eps_t.
		\label{eq:c1} 
	\end{align}
	\end{compactitem}
	
	{Subsequently, the respective models will be denoted by $(\Mc_0)$ and  $(\Mc_{m,j}), (\Mc_{v,j})$ and $(\Mc_{a,j})$ for $j=1, \dots, 3$.}
	Note that the model descriptions are non-exclusive: for instance, the models in \eqref{eq:m} exhibiting deviations from $H_0^{\scs (m)}$ also deviate from $H_0^{\scs (c,0)}$.

Preliminary  simulation studies showed that the data-driven choice of $m$, as introduced in \ref{subsec:choiceM}, yields similar results as a manual choice of $m$, and should be favoured. Further parameters of the simulation design are as follows: the number of bootstrap replicates is set to $K=200$. 
Two sample sizes were considered, namely $T=256$ and $T=512$. Observe though that, unlike many frequency domain based methods for functional time series,  the proposed testing procedure does not require the sample sizes to be  a power of two to work effectively.  The hyperparameter $n$ for estimating local means is set to $n=45,60,75,90,T$. Finally, the maximum number of lags considered was set to $H=4$. Empirical rejection rates are based on $N=500$ simulation runs each and are summarized in Tables~\ref{tab:t256} and~\ref{tab:t512}.

{\scriptsize
	\begin{table}[!htbp] 
		\centering\begin{tabular}{cc|rrrrrrr|rr}
			Model&n & $H_0^{(m)}$ & $H_0^{(c,0)}$ & $H_0^{(0)}$ & $H_0^{(1)}$ & $H_0^{(2)}$ & $H_0^{(3)}$ & $H_0^{(4)}$ & $\bar{m}$ & $sd(m)$ \\ 
			\hline
$(\mathcal{M}_0)$
&45 & 7.2 & 0.4 & 3.0 & 1.0 & 0.8 & 0.8 & 1.0 & 5.77 & 0.41 \\ 
&60 & 7.0 & 0.2 & 2.6 & 0.4 & 0.4 & 0.4 & 0.4 & 5.78 & 0.41 \\ 
&75 & 5.4 & 0.8 & 2.2 & 0.2 & 0.2 & 0.4 & 0.4 & 5.80 & 0.40 \\ 
&90 & 5.2 & 0.2 & 1.8 & 0.2 & 0.2 & 0.2 & 0.2 & 5.80 & 0.41 \\ 
&256 & 4.0 & 0.2 & 1.8 & 0.0 & 0.0 & 0.0 & 0.0 & 5.81 & 0.38 \\ 
$(\mathcal{M}_{m,1})$
&45 & 92.8 & 64.4 & 89.6 & 89.8 & 90.8 & 90.6 & 91.2 & 5.82 & 0.41 \\ 
&60 & 91.8 & 61.2 & 88.8 & 87.8 & 88.6 & 89.4 & 89.0 & 5.81 & 0.41 \\ 
&75 & 90.2 & 59.2 & 87.8 & 86.4 & 87.2 & 87.6 & 87.6 & 5.82 & 0.41 \\ 
&90 & 89.8 & 58.2 & 86.4 & 85.2 & 86.4 & 86.2 & 86.8 & 5.83 & 0.40 \\ 
&256 & 88.2 & 50.4 & 81.8 & 79.4 & 81.8 & 82.8 & 83.2 & 5.82 & 0.42 \\ 
$(\mathcal{M}_{m,2})$
&45 & 57.6 & 31.2 & 55.8 & 53.8 & 55.0 & 55.0 & 56.0 & 5.79 & 0.41 \\ 
&60 & 53.0 & 26.2 & 49.2 & 48.2 & 48.6 & 48.4 & 49.4 & 5.78 & 0.42 \\ 
&75 & 49.0 & 20.8 & 45.0 & 42.8 & 43.4 & 42.8 & 43.0 & 5.78 & 0.42 \\ 
&90 & 41.8 & 16.0 & 38.8 & 33.8 & 35.8 & 35.8 & 36.4 & 5.78 & 0.42 \\ 
&256 & 31.0 & 9.0 & 24.8 & 21.4 & 23.4 & 22.4 & 22.0 & 5.89 & 0.60 \\ 
$(\mathcal{M}_{m,3})$
&45 & 99.8 & 97.0 & 99.8 & 99.8 & 99.8 & 99.8 & 100.0 & 5.78 & 0.43 \\ 
&60 & 99.6 & 96.6 & 99.8 & 99.8 & 99.8 & 99.8 & 99.8 & 5.78 & 0.44 \\ 
&75 & 99.6 & 96.2 & 99.8 & 99.8 & 99.8 & 99.8 & 99.8 & 5.78 & 0.44 \\ 
&90 & 99.8 & 95.8 & 99.8 & 99.8 & 99.8 & 99.8 & 99.8 & 5.80 & 0.44 \\ 
&256 & 99.6 & 94.8 & 99.4 & 99.0 & 99.2 & 99.2 & 99.2 & 6.15 & 1.42 \\ 
$(\mathcal{M}_{v,1})$
&45 & 8.0 & 100.0 & 99.6 & 84.8 & 81.2 & 79.4 & 79.0 & 5.50& 0.59 \\ 
&60 & 8.0 & 100.0 & 99.8 & 82.4 & 77.6 & 76.8 & 75.4 & 5.53 & 0.59 \\ 
&75 & 7.4 & 100.0 & 99.8 & 78.8 & 74.4 & 72.8 & 74.2 & 5.60 & 0.58 \\ 
&90 & 7.2 & 100.0 & 99.6 & 75.4 & 73.2 & 72.4 & 73.0 & 5.69 & 0.53 \\ 
&256 & 3.6 & 100.0 & 96.6 & 63.0 & 56.6 & 56.0 & 53.4 & 6.00 & 0.45 \\ 
$(\mathcal{M}_{v,2})$
&45 & 6.2 & 100.0 & 99.4 & 76.2 & 71.4 & 70.4 & 70.4 & 6.34 & 2.84 \\ 
&60 & 5.0 & 100.0 & 98.2 & 64.8 & 60.2 & 59.0 & 60.0 & 6.81 & 3.70 \\ 
&75 & 4.6 & 99.0 & 90.8 & 51.8 & 48.6 & 47.2 & 45.6 & 7.86 & 5.38 \\ 
&90 & 3.4 & 87.4 & 69.8 & 31.4 & 27.2 & 26.2 & 26.8 & 9.75 & 7.40 \\ 
&256 & 3.6 & 96.6 & 65.6 & 22.4 & 20.2 & 21.0 & 20.6 & 6.60 & 2.28 \\ 
$(\mathcal{M}_{v,3})$
&45 & 20.2 & 100.0 & 100.0 & 95.8 & 93.6 & 92.4 & 91.8 & 14.15 & 10.14 \\ 
&60 & 13.6 & 100.0 & 100.0 & 93.8 & 89.2 & 88.4 & 87.8 & 13.85 & 9.65 \\ 
&75 & 10.2 & 100.0 & 100.0 & 90.6 & 88.2 & 85.0 & 84.6 & 14.05 & 9.47 \\ 
&90 & 8.6 & 100.0 & 100.0 & 87.6 & 86.2 & 82.0 & 80.8 & 13.63 & 8.96 \\ 
&256 & 3.4 & 100.0 & 96.4 & 78.0 & 71.2 & 67.6 & 67.0 & 11.43 & 7.25 \\ 
$(\mathcal{M}_{a,1})$
&45 & 6.6 & 4.8 & 5.6 & 5.0 & 4.6 & 4.2 & 3.6 & 5.82 & 0.41 \\ 
&60 & 5.8 & 3.8 & 5.2 & 4.0 & 3.2 & 3.4 & 2.6 & 5.83 & 0.40 \\ 
&75 & 5.2 & 2.6 & 4.2 & 3.2 & 3.0 & 2.8 & 2.0 & 5.85 & 0.39 \\ 
&90 & 4.8 & 2.2 & 3.8 & 2.8 & 2.4 & 2.0 & 1.6 & 5.85 & 0.39 \\ 
&256 & 3.0 & 1.0 & 1.2 & 0.6 & 0.6 & 0.4 & 0.6 & 5.83 & 0.43 \\ 
$(\mathcal{M}_{a,2})$
&45 & 7.8 & 2.4 & 4.8 & 3.4 & 2.8 & 2.4 & 2.8 & 5.82 & 0.42 \\ 
&60 & 7.0 & 1.6 & 3.6 & 2.0 & 1.2 & 1.2 & 1.4 & 5.84 & 0.42 \\ 
&75 & 6.4 & 1.0 & 3.2 & 1.2 & 1.0 & 0.8 & 0.8 & 5.87 & 0.44 \\ 
&90 & 6.4 & 0.8 & 2.6 & 0.6 & 0.6 & 0.6 & 0.6 & 5.90 & 0.44 \\ 
&256 & 5.4 & 0.2 & 1.2 & 0.2 & 0.2 & 0.2 & 0.2 & 5.84 & 0.52 \\ 
$(\mathcal{M}_{a,3})$
&45 & 10.2 & 16.4 & 13.2 & 20.6 & 16.8 & 13.0 & 11.6 & 5.93 & 0.57 \\ 
&60 & 8.6 & 14.4 & 11.2 & 17.0 & 13.6 & 10.2 & 9.2 & 5.94 & 0.55 \\ 
&75 & 7.6 & 12.6 & 9.2 & 15.0 & 11.6 & 9.0 & 8.2 & 5.96 & 0.59 \\ 
&90 & 7.2 & 11.8 & 9.2 & 14.6 & 10.8 & 8.2 & 7.6 & 5.96 & 0.61 \\ 
&256 & 5.4 & 5.6 & 5.6 & 6.6 & 5.4 & 4.2 & 4.0 & 5.93 & 0.56
		\end{tabular}  \medskip 
		
		\caption{ \it
		Empirical rejection rates for various combined tests, based on a sample size of $T=256$. and a block length parameter $m$ calculated as proposed in Section~\ref{subsec:choiceM}. The last two columns provide the mean and standard deviation of the selected value of $m$.} 
		\label{tab:t256}
	\end{table}
}

{\scriptsize
	\begin{table}[!htbp] 
		\centering\begin{tabular}{cc|rrrrrrr|rr}
			Model&n & $H_0^{(m)}$ & $H_0^{(c,0)}$ & $H_0^{(0)}$ & $H_0^{(1)}$ & $H_0^{(2)}$ & $H_0^{(3)}$ & $H_0^{(4)}$ & $\bar{m}$ & $sd(m)$ \\ 
			\hline
$(\mathcal{M}_0)$
&45 & 10.6 & 5.2 & 7.8 & 4.4 & 4.0 & 3.8 & 3.4 & 7.24 & 0.43 \\ 
&60 & 10.0 & 3.8 & 6.6 & 3.0 & 2.2 & 2.4 & 2.6 & 7.25 & 0.44 \\ 
&75 & 8.4 & 2.0 & 3.8 & 1.4 & 1.0 & 1.6 & 1.4 & 7.28 & 0.46 \\ 
&90 & 7.6 & 1.8 & 3.4 & 1.2 & 1.2 & 1.0 & 1.0 & 7.28 & 0.46 \\ 
&512 & 4.6 & 1.0 & 2.6 & 0.8 & 0.8 & 0.8 & 0.8 & 7.30 & 0.46 \\ 
$(\mathcal{M}_{m,1})$
&45 & 100.0 & 97.4 & 100.0 & 99.8 & 99.8 & 100.0 & 100.0 & 7.23 & 0.43 \\ 
&60 & 99.8 & 97.0 & 100.0 & 99.6 & 99.6 & 99.8 & 100.0 & 7.23 & 0.42 \\ 
&75 & 100.0 & 96.8 & 99.8 & 99.6 & 99.8 & 99.8 & 99.8 & 7.25 & 0.44 \\ 
&90 & 100.0 & 96.6 & 99.8 & 99.4 & 99.6 & 99.8 & 99.8 & 7.27 & 0.45 \\ 
&512 & 99.0 & 91.2 & 98.8 & 98.4 & 98.4 & 98.6 & 98.6 & 7.35 & 0.54 \\ 
$(\mathcal{M}_{m,2})$
&45 & 95.6 & 82.4 & 95.6 & 95.2 & 95.6 & 96.4 & 96.0 & 7.23 & 0.42 \\ 
&60 & 94.2 & 78.8 & 94.6 & 94.2 & 94.6 & 94.6 & 95.0 & 7.25 & 0.43 \\ 
&75 & 94.0 & 75.2 & 94.0 & 93.2 & 93.4 & 94.2 & 94.0 & 7.26 & 0.45 \\ 
&90 & 93.2 & 73.4 & 93.4 & 93.2 & 93.0 & 93.6 & 93.6 & 7.27 & 0.44 \\ 
&512 & 83.4 & 51.8 & 79.2 & 79.6 & 81.6 & 82.0 & 83.4 & 7.40 & 0.55 \\ 
$(\mathcal{M}_{m,3})$
&45 & 100.0 & 100.0 & 100.0 & 100.0 & 100.0 & 100.0 & 100.0 & 7.24 & 0.43 \\ 
&60 & 100.0 & 100.0 & 100.0 & 100.0 & 100.0 & 100.0 & 100.0 & 7.27 & 0.45 \\ 
&75 & 100.0 & 100.0 & 100.0 & 100.0 & 100.0 & 100.0 & 100.0 & 7.28 & 0.46 \\ 
&90 & 100.0 & 100.0 & 100.0 & 100.0 & 100.0 & 100.0 & 100.0 & 7.29 & 0.46 \\ 
&512 & 100.0 & 99.6 & 100.0 & 100.0 & 100.0 & 100.0 & 100.0 & 8.47 & 3.45 \\ 
$(\mathcal{M}_{v,1})$
&45 & 9.2 & 100.0 & 100.0 & 97.8 & 95.6 & 95.4 & 95.0 & 6.85 & 0.44 \\ 
&60 & 8.4 & 100.0 & 100.0 & 95.8 & 93.2 & 91.0 & 89.2 & 6.88 & 0.46 \\ 
&75 & 7.2 & 100.0 & 100.0 & 94.8 & 92.6 & 89.6 & 87.6 & 6.89 & 0.44 \\ 
&90 & 6.0 & 100.0 & 100.0 & 94.4 & 90.6 & 89.6 & 85.8 & 6.91 & 0.43 \\ 
&512 & 4.0 & 100.0 & 100.0 & 90.4 & 84.0 & 80.8 & 79.2 & 7.43 & 0.54 \\ 
$(\mathcal{M}_{v,2})$
&45 & 7.2 & 100.0 & 100.0 & 96.6 & 94.0 & 93.6 & 92.4 & 6.79 & 0.60 \\ 
&60 & 6.6 & 100.0 & 100.0 & 94.2 & 90.4 & 88.2 & 87.6 & 6.86 & 0.57 \\ 
&75 & 6.2 & 100.0 & 100.0 & 93.2 & 88.6 & 85.4 & 82.4 & 6.96 & 0.63 \\ 
&90 & 5.6 & 100.0 & 100.0 & 90.8 & 86.8 & 84.2 & 81.0 & 7.05 & 0.66 \\ 
&512 & 3.8 & 100.0 & 99.8 & 87.2 & 82.4 & 80.0 & 78.2 & 7.50 & 0.75 \\ 
$(\mathcal{M}_{v,3})$
&45 & 8.2 & 100.0 & 100.0 & 99.4 & 97.8 & 96.0 & 96.4 & 7.89 & 3.41 \\ 
&60 & 7.2 & 100.0 & 100.0 & 98.6 & 96.4 & 95.0 & 93.4 & 7.79 & 3.08 \\ 
&75 & 6.4 & 100.0 & 99.8 & 98.6 & 94.8 & 92.0 & 91.2 & 7.87 & 3.12 \\ 
&90 & 6.0 & 100.0 & 100.0 & 98.4 & 95.4 & 92.0 & 90.4 & 7.98 & 3.14 \\ 
&512 & 4.6 & 100.0 & 100.0 & 98.0 & 95.8 & 93.4 & 90.2 & 8.58 & 2.58 \\  
$(\mathcal{M}_{a,1})$
&45 & 11.0 & 23.8 & 19.8 & 31.0 & 26.2 & 23.0 & 21.4 & 7.26 & 0.46 \\ 
&60 & 10.2 & 20.2 & 16.6 & 24.4 & 20.4 & 18.0 & 15.4 & 7.30 & 0.49 \\ 
&75 & 9.8 & 18.0 & 13.4 & 22.0 & 18.8 & 16.4 & 13.2 & 7.30 & 0.47 \\ 
&90 & 9.6 & 17.2 & 13.2 & 20.0 & 16.8 & 15.2 & 13.0 & 7.31 & 0.48 \\ 
&512 & 5.6 & 7.4 & 5.2 & 7.0 & 4.8 & 4.2 & 4.0 & 7.35 & 0.50 \\ 
$(\mathcal{M}_{a,2})$
&45 & 8.8 & 9.2 & 9.2 & 10.8 & 8.8 & 8.4 & 7.4 & 7.29 & 0.50 \\ 
&60 & 7.4 & 6.0 & 5.8 & 6.4 & 5.2 & 4.2 & 3.4 & 7.31 & 0.51 \\ 
&75 & 6.8 & 4.2 & 5.6 & 5.0 & 4.0 & 3.0 & 2.4 & 7.33 & 0.53 \\ 
&90 & 6.4 & 3.4 & 5.2 & 4.4 & 3.0 & 1.6 & 2.2 & 7.35 & 0.55 \\ 
&512 & 4.8 & 1.6 & 1.6 & 1.0 & 0.6 & 0.6 & 0.4 & 7.39 & 0.59 \\ 
$(\mathcal{M}_{a,3})$
&45 & 10.4 & 42.4 & 32.0 & 59.2 & 47.6 & 40.6 & 37.6 & 7.38 & 0.59 \\ 
&60 & 10.3 & 37.8 & 27.8 & 53.2 & 42.6 & 35.6 & 30.8 & 7.40 & 0.60 \\ 
&75 & 8.2 & 35.2 & 25.4 & 51.2 & 40.2 & 33.2 & 28.2 & 7.43 & 0.64 \\ 
&90 & 7.8 & 33.4 & 24.2 & 47.4 & 37.0 & 31.4 & 25.0 & 7.43 & 0.65 \\ 
&512 & 7.0 & 26.8 & 16.2 & 35.4 & 25.4 & 20.6 & 16.4 & 7.52 & 0.68
		\end{tabular} \medskip
		
		\caption{\it 
		Empirical rejection rates for various combined tests, based on a sample size of $T=512$ and a block length parameter $m$ calculated as proposed in Section~\ref{subsec:choiceM}. The last two columns provide the mean and standard deviation of the selected value of $m$.} \label{tab:t512}
	\end{table}
}

From the previous results, it can be seen that different choices of $n$ do not lead to crucially different results. For $T=256$, the tests for the hypotheses $H_0^{\scs (m)}$ and $H_0^{\scs (c,0)}$ already have good power against the alternatives $(\mathcal{M}_{m,1}),(\mathcal{M}_{m,3})$ and $(\mathcal{M}_{v,1}),(\mathcal{M}_{v,2}),(\mathcal{M}_{v,3})$ respectively. When combining $H_0^{\scs (m)}$ and $H_0^{\scs (c,0)}$ and taking even more autocovariances into account, the test does not loose significant power. For $T=512$, the power further increases such that all tests have good power against the alternatives $(\mathcal{M}_{m,i})$ and $(\mathcal{M}_{v,i})$, $i=1,2,3$.
Detecting non-stationarities in models $(\mathcal{M}_{a,i}),i=1,2,3$ turns out to be more difficult. Even though the power increases with $T$, for small values of $T$, the results are not too convincing. These findings can be explained by the fact that the \textit{measures of non-stationarity} $\|M\|_{2,2}$ and $\|M_h\|_{2,3}$, as introduced in \eqref{eq:mut} and \eqref{eq:mhutt}, are comparably small for models $(\mathcal{M}_{a,i}),i=1,2,3$. This can be deduced from Table~\ref{tab:M}, where these measures of non-stationarity are approximated by their natural estimators $\|M_T\|_{2,2}=\|U_T\|_{2,2}/\sqrt{T}$ and $\|M_{T,h}\|_{2,3}=\|U_{T,h}\|_{2,3}/\sqrt{T}$, based on 2,000 Monte-Carlo repetitions and  for various choices of $T$. 
It is noticeable that the values for models $(\mathcal{M}_{a,1})$ and $(\mathcal{M}_{a,2})$ are close to those for $(\mathcal{M}_{0})$, which perfectly explains the results of the simulation study.




{\scriptsize
	\begin{table}[!htbp] 
		\centering\begin{tabular}{cc|rrrrrr}
			T&Model & $\|M_T\|_{2,2}$ & $\|M_{T,0}\|_{2,3}$ & $\|M_{T,1}\|_{2,3}$ & $\|M_{T,2}\|_{2,3}$ & $\|M_{T,3}\|_{2,3}$ & $\|M_{T,4}\|_{2,3}$\\
			\hline
			256& $(\mathcal{M}_0)$ & 0.0759 & 0.2344 & 0.2273 & 0.2272 & 0.2274 & 0.2277 \\
			&$(\mathcal{M}_{m,1})$ & 0.1180  & 0.3208  & 0.3152  & 0.3145  & 0.3146  & 0.3138\\
			&$(\mathcal{M}_{m,2})$ & 0.0952  & 0.2812  & 0.2754  & 0.2754  & 0.2758  & 0.2760\\
			&$(\mathcal{M}_{v,1})$ & 0.0803  & 0.5394  & 0.3124  & 0.2989  & 0.2979  & 0.2977\\
			&$(\mathcal{M}_{v,2})$ & 0.0698  & 0.3681  & 0.2471  & 0.2412  & 0.2412  & 0.2422\\
			&$(\mathcal{M}_{a,1})$ & 0.0764  & 0.2426  & 0.2389  & 0.2328  & 0.2306  & 0.2301\\
			&$(\mathcal{M}_{a,2})$ & 0.0758  & 0.2358  & 0.2304  & 0.2277  & 0.2273  & 0.2273\\
			\hline
			512 & $(\mathcal{M}_0)$ & 0.0540 & 0.1659 & 0.1607 & 0.1603 & 0.1604 & 0.1604 \\
			&$(\mathcal{M}_{m,1})$ & 0.1049  & 0.2590  & 0.2562  & 0.2556  & 0.2547  & 0.2549\\
			&$(\mathcal{M}_{m,2})$ & 0.0781  & 0.2144  & 0.2097  & 0.2095  & 0.2097  & 0.2100\\
			&$(\mathcal{M}_{v,1})$ & 0.0572  & 0.4939  & 0.2313  & 0.2121  & 0.2105  & 0.2099\\
			&$(\mathcal{M}_{v,2})$ & 0.0494  & 0.3241  & 0.1791  & 0.1702  & 0.1695  & 0.1697\\
			&$(\mathcal{M}_{a,1})$ & 0.0542  & 0.1743  & 0.1743  & 0.1665  & 0.1632  & 0.1624\\
			&$(\mathcal{M}_{a,2})$ & 0.0537  & 0.1683  & 0.1652  & 0.1620  & 0.1606  & 0.1602\\
			\hline
			1024 & $(\mathcal{M}_0)$ & 0.0383 & 0.1172 & 0.1134 & 0.1132 & 0.1131 & 0.1133\\
			&$(\mathcal{M}_{m,1})$ & 0.0987  & 0.2250  & 0.2233  & 0.2229  & 0.2230  & 0.2223\\
			&$(\mathcal{M}_{m,2})$ & 0.0681  & 0.1710  & 0.1685  & 0.1685  & 0.1684  & 0.1684\\
			&$(\mathcal{M}_{v,1})$ & 0.0403  & 0.4696  & 0.1776  & 0.1518  & 0.1489  & 0.1486\\
			&$(\mathcal{M}_{v,2})$ & 0.0349  & 0.3000  & 0.1336  & 0.1210  & 0.1196  & 0.1198\\
			&$(\mathcal{M}_{a,1})$ & 0.0386  & 0.1289  & 0.1321  & 0.1215  & 0.1169  & 0.1152\\
			&$(\mathcal{M}_{a,2})$ & 0.0381  & 0.1212  & 0.1206  & 0.1162  & 0.1141  & 0.1134\\
			\hline
			2048 & $(\mathcal{M}_0)$ & 0.0270 & 0.0831 & 0.0802 & 0.0801 & 0.0800 & 0.0801\\
			&$(\mathcal{M}_{m,1})$ & 0.0949  & 0.2047  & 0.2041  & 0.2038  & 0.2035  & 0.2036\\
			&$(\mathcal{M}_{m,2})$ & 0.0624  & 0.1449  & 0.1431  & 0.1431  & 0.1431  & 0.1431\\
			&$(\mathcal{M}_{v,1})$ & 0.0283  & 0.4568  & 0.1430  & 0.1097  & 0.1055  & 0.1049\\
			&$(\mathcal{M}_{v,2})$ & 0.0245  & 0.2869  & 0.1035  & 0.0866  & 0.0846  & 0.0844\\
			&$(\mathcal{M}_{a,1})$ & 0.0272  & 0.0973  & 0.1040  & 0.0906  & 0.0845  & 0.0821\\
			&$(\mathcal{M}_{a,2})$ & 0.0269  & 0.0884  & 0.0898  & 0.0839  & 0.0814  & 0.0804
		\end{tabular} \medskip
		
		\caption{\it 
		$\|M_{T}\|_{2,2}$ and $\|M_{T,h}\|_{2,3},h=0,\cdots, 4$, calculated by $2,000$ Monte-Carlo repetitions. }\label{tab:M}
	\end{table}
}
	
	\subsection{Case Study}
	
	Functional time series naturally arise in the field of meteorology. For instance, the daily minimal temperature at one place over  time can be naturally divided into yearly functional data. 
	
	To illustrate the proposed methodology, we consider the daily minimum temperature recorded at eight different locations across Australia. Exemplary, the temperature curves of Melbourne and Sydney are displayed in Figure~\ref{fig:data}.
	The results of our testing procedure can be found in Table \ref{tab:data}, where we employed $K=1000$ bootstrap replicates, considered up to $H=4$ lags and chose $n=25$, based on visual exploration of the respective plots. The null hypotheses of stationarity can be rejected, at level $\alpha=0.05$, for all measuring stations except of Gunnedah Pool, for which the $p$-values exceed $\alpha$ by a small amount.
	
	
	\begin{figure}[!htbp] 
		\centering
		\subfloat[Melbourne]{\label{figur:5}\includegraphics[width=60mm]{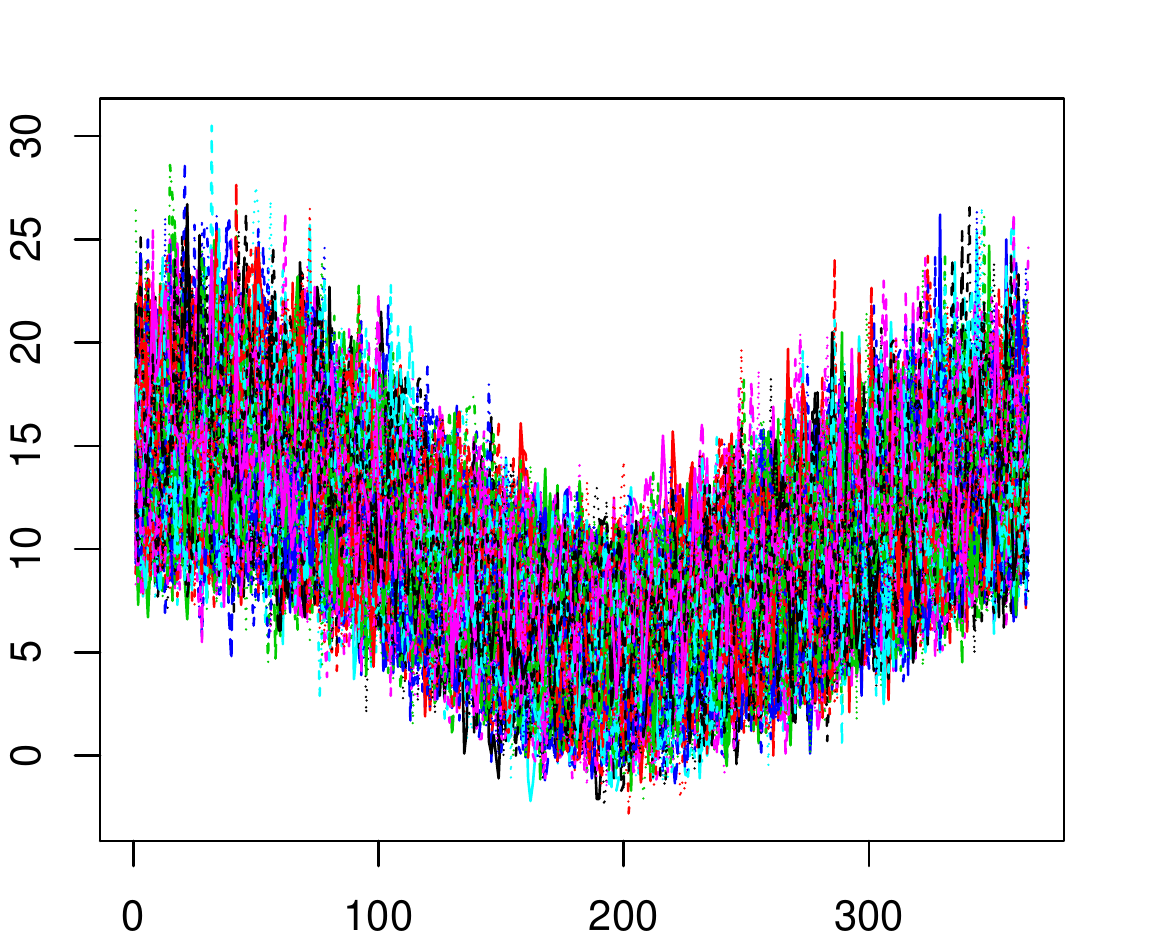}}
		\subfloat[Sydney]{\label{figur:8}\includegraphics[width=60mm]{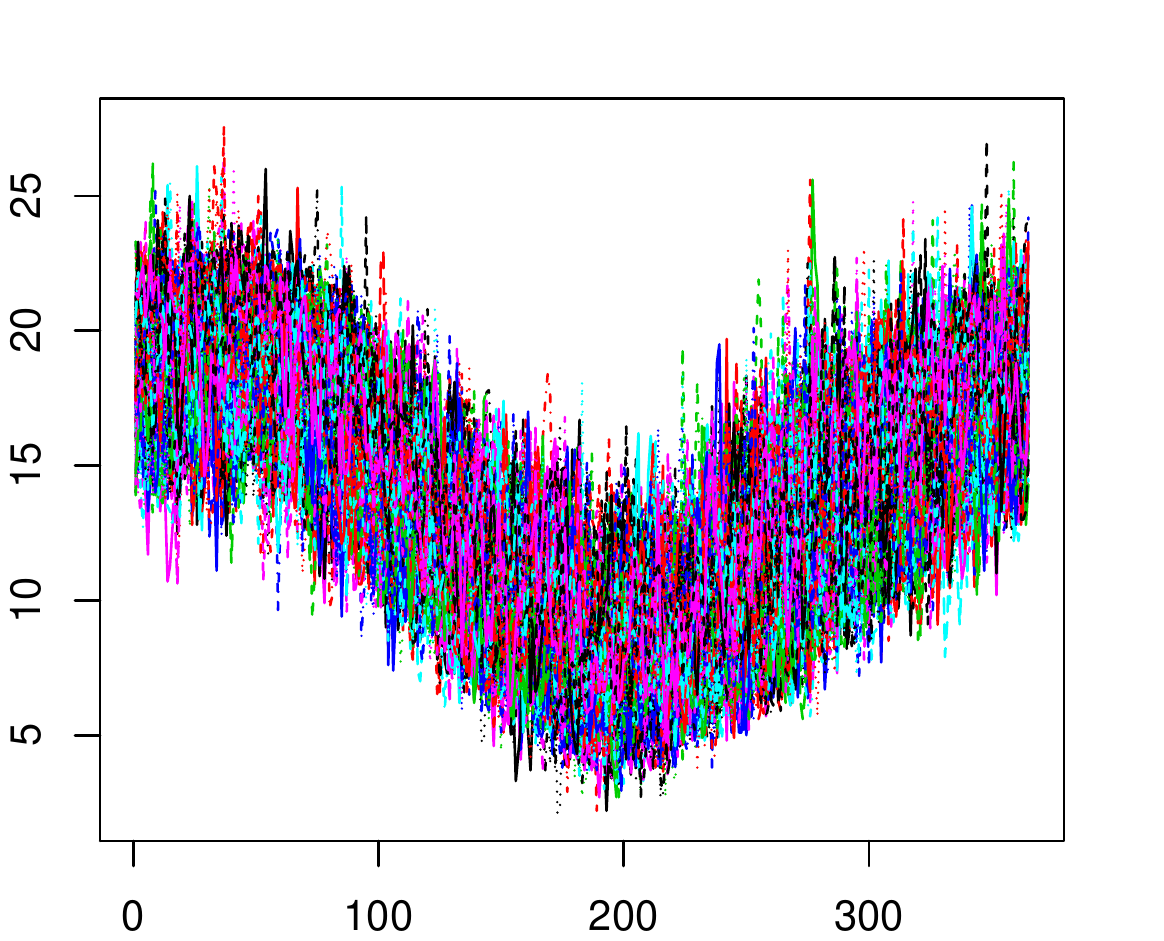}}
		\caption{ \it 
		Temperature curves of Melbourne ($T=161$ years) and Sydney ($T=160$ years), where the x-axis corresponds to a year in rescaled time and the y-axis denotes temperature in degree Celsius.}	\label{fig:data}
	\end{figure}

{\small
	\begin{table}[!htbp] 
	\centering\begin{tabular}{l|r|rrrrrrr|r}
		Location&T & $H_0^{(m)}$ & $H_0^{(c,0)}$ & $H_0^{(0)}$ & $H_0^{(1)}$ & $H_0^{(2)}$ & $H_0^{(3)}$ & $H_0^{(4)}$ & $m$  \\ 
		\hline
	Boulia Airport &131&0.3& 0.0& 0.0& 0.0& 0.0& 0.0& 0.0&5\\
	Gayndah Post Office &117&0.0& 0.0& 0.0& 0.0& 0.0& 0.0& 0.0&4\\
	Gunnedah Pool &136&6.4&  5.8&  5.8&  5.9&  5.8&  5.3&  5.1&6\\
	Hobart &137&0.0& 0.0& 0.0& 0.0& 0.0& 0.0& 0.0&8\\
	Melbourne&161&0.0& 0.0& 0.0& 0.0& 0.0& 0.0& 0.0&5\\	
	Cape Otway Lighthouse&155&0.5&  0.4&  0.4&  0.1&  0.0&  0.0&  0.0& 7\\
	Robe&135&4.5&  1.0&  1.9&  1.0&  1.0&  0.6&  0.5& 6\\	
	Sydney&160&0.0& 0.0& 0.0& 0.0& 0.0& 0.0& 0.0&5\\	
	\end{tabular}  \medskip
	
	\caption{\it  
	$p$-values of the (combined) tests for the respective null hypotheses in percent, and selected value of $m$.}\label{tab:data}
\end{table}
}

	\section{Proofs} \label{sec:proofs}

	Throughout the proofs, $C$ denotes a generic constant whose value may change from line to line.  If not specified otherwise, all convergences are for $T\to\infty$.

\subsection{A fundamental approximation lemma in Hilbert spaces}

\begin{lemma}\label{lem:app}
Fix $p\in\N$. For $i=1, \dots, p$ and $T\in\N$, let $X_{i,T}$ and $X_i$ denote random variables in a separable Hilbert space $(H_i, \langle \cdot, \cdot \rangle_i)$. Futher, let $(\psi_k^{\scs (i)})_{k\in\naturals}$ be an orthonormal basis of $H_i$ and for brevity write $\langle \cdot, \cdot \rangle = \langle \cdot, \cdot \rangle_i$.  Suppose that
\begin{align*}
(1)\quad& Y_T^n:=\big((\langle X_{1,T},\psi_k^{(1)}\rangle)_{k=1}^n,\dots ,(\langle X_{p,T},\psi_k^{(p)}\rangle)_{k=1}^n\big) \\
& \hspace{.6cm} \weak \big((\langle X_{1},\psi_k^{(1)}\rangle)_{k=1}^n,\dots ,(\langle X_{p},\psi_k^{(p)}\rangle)_{k=1}^n\big)=:Y^n \quad \text{as $T\to\infty$, for  any } n\in\N,  \\
(2)\quad& \lim\limits_{n\to\infty} 
\limsup\limits_{T\to\infty} \pr\Big(\sum_{k=n+1}^{\infty}\sum_{i=1}^{p}\langle X_{i,T},\psi_k^{(i)}\rangle^2 > \eps\Big)
=0 \quad \text{ for all } \eps>0.
\end{align*}
	Then, using the notation $\|(x_k)_{k\in\N}\|_2=\sum_{k=1}^\infty x_k^2$,
	\begin{multline*}
	Y_T^\infty:= \big((\langle X_{1,T},\psi_k^{(1)}\rangle)_{k=1}^\infty,\dots ,(\langle X_{p,T},\psi_k^{(p)}\rangle)_{k=1}^\infty\big) \\
	 \weak  
	 \big((\langle X_1,\psi_k^{(1)}\rangle)_{k=1}^\infty,\dots, (\langle X_p,\psi_k^{(p)}\rangle)_{k=1}^\infty\big) =:Y^\infty \quad \text{ in } (\ell^2(\naturals), \| \cdot\|_2)^p
	\end{multline*}
	 and, as a consequence, 
	 \begin{align*} 
	(X_{1,T},\dots ,X_{p,T})
	\weak (X_1,\dots , X_p)  \quad \text{in } H_1 \times  \dots \times H_p.
	\end{align*}
	\end{lemma}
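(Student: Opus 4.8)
The plan is to first establish the intermediate assertion that $Y_T^\infty \weak Y^\infty$ in the separable Hilbert space $\mathbb{H} := (\ell^2(\N))^p$, and then transfer this conclusion to $H_1 \times \dots \times H_p$ by the continuous mapping theorem. The transfer step is immediate once the $\ell^2$-statement is in hand: for each $i$, the coordinate map $\iota_i \colon \ell^2(\N) \to H_i$, $(a_k)_k \mapsto \sum_k a_k \psi_k^{(i)}$, is an isometric isomorphism (any separable Hilbert space is isometrically isomorphic to $\ell^2(\N)$ via an orthonormal basis), and by Parseval's identity $\iota_i$ maps the coordinate sequence $(\langle X_{i,T},\psi_k^{(i)}\rangle)_k$ back to $X_{i,T}$, and likewise $(\langle X_i,\psi_k^{(i)}\rangle)_k$ to $X_i$. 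Hence $\iota = (\iota_1,\dots,\iota_p)$ is continuous (indeed isometric) with $\iota(Y_T^\infty) = (X_{1,T},\dots,X_{p,T})$ and $\iota(Y^\infty) = (X_1,\dots,X_p)$, so the continuous mapping theorem yields the final assertion.

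To prove $Y_T^\infty \weak Y^\infty$ in $\mathbb{H}$, I would invoke the classical approximation theorem for weak convergence in metric spaces (Billingsley, \emph{Convergence of Probability Measures}, Theorem 3.2). For $n\in\N$, let $\pi_n\colon \mathbb{H}\to\mathbb{H}$ be the projection that, within each of the $p$ blocks, retains the first $n$ coordinates and zeroes out the rest, and set $\tilde Y_T^n := \pi_n(Y_T^\infty)$ and $\tilde Y^n := \pi_n(Y^\infty)$. The theorem, applied with truncation level $n$ playing the role of the approximation index and $T$ the role of the limiting index, requires three ingredients. First, for each fixed $n$ one needs $\tilde Y_T^n \weak \tilde Y^n$ as $T\to\infty$: since $\pi_n$ factors through the finite-dimensional vector of retained coordinates followed by the continuous zero-padding embedding $\R^{np}\hookrightarrow \mathbb{H}$, this follows from hypothesis~(1) and the continuous mapping theorem. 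Second, one needs $\tilde Y^n \weak Y^\infty$ as $n\to\infty$: this holds because $Y^\infty\in\mathbb{H}$ almost surely, as $\sum_k \langle X_i,\psi_k^{(i)}\rangle^2 = \|X_i\|_i^2 < \infty$ a.s.\ by Parseval, whence $\tilde Y^n \to Y^\infty$ in $\mathbb{H}$ almost surely and so in distribution.

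The third ingredient is the approximation bound $\lim_n \limsup_T \pr(\|\tilde Y_T^n - Y_T^\infty\|_{\mathbb{H}} > \eps) = 0$ for every $\eps>0$. Here one observes the identity $\|\tilde Y_T^n - Y_T^\infty\|_{\mathbb{H}}^2 = \sum_{i=1}^p \sum_{k=n+1}^\infty \langle X_{i,T},\psi_k^{(i)}\rangle^2$, which is exactly the tail quantity controlled by hypothesis~(2) (applied with threshold $\eps^2$). These three ingredients deliver $Y_T^\infty \weak Y^\infty$, completing the proof.

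The only genuinely substantive point is this third ingredient, and it is precisely where hypothesis~(2) enters: in an infinite-dimensional space the finite-dimensional convergence in~(1) alone is far from sufficient, and the uniform tail negligibility~(2) supplies exactly the tightness-type control preventing probability mass from escaping into the high-index coordinates. The remaining steps — lifting finite-dimensional weak convergence through the continuous padding map, verifying $Y^\infty\in\ell^2$ almost surely, and the concluding isometric transfer — are routine consequences of Parseval's identity and the continuous mapping theorem.
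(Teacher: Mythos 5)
Your proof is correct, and its skeleton coincides with the paper's: truncate in $(\ell^2(\N),\|\cdot\|_2)^p$, get the fixed-$n$ convergence from hypothesis~(1) plus the continuous zero-padding map, use hypothesis~(2) as the tail/approximation bound, and finish by pushing the $\ell^2$-statement through the isometric basis-expansion map via the continuous mapping theorem. The one place where you genuinely diverge is the middle step. The paper invokes Theorem~2 of Dehling et al.\ (2009), an \emph{existence-form} approximation theorem: from the fixed-$n$ convergences and condition~(2) it produces \emph{some} weak limit $\tilde Y^\infty$ of $Y_T^\infty$ in the complete separable space $(\ell^2(\N),\|\cdot\|_2)^p$, and the paper must then identify $\tilde Y^\infty \overset{d}{=} Y^\infty$ by matching finite-dimensional distributions (citing Billingsley, Theorem~1.3, and van der Vaart--Wellner, Lemma~1.5.3). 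You instead use Billingsley's Theorem~3.2, which takes the limit as given but requires the extra hypothesis that the approximants $\pi_n(Y^\infty)$ converge weakly to $Y^\infty$; you verify this directly and cheaply, since Parseval gives $Y^\infty \in (\ell^2(\N))^p$ almost surely and hence $\pi_n(Y^\infty)\to Y^\infty$ almost surely. Your route buys a slightly more direct argument (no identification-of-the-limit step, one fewer auxiliary citation), at the cost of having to name the limit in advance — which is harmless here because the candidate $Y^\infty$ is explicit; the paper's route is the natural one when the limit is not specified a priori, and its completeness/separability hypothesis is exactly what replaces your verification of $\pi_n(Y^\infty)\weak Y^\infty$.
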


\begin{proof}[Proof of Lemma~\ref{lem:app}]
	To prove the first part, we employ Theorem 2 of \cite{Dehling2009}. Expand the random variables $Y_T^n$ and $Y_n$ in $\reals^{pn}$ to 
	\[
	\tilde{Y}_{T,n}^\infty=\big((a_{T,k}^{(1)})_{k\in\naturals},\dots (a_{T,k}^{(p)})_{k\in\naturals}\big)~\text{and}~\tilde{Y}_{n}^\infty=\big((a_{k}^{(1)})_{k\in\naturals},\dots ,(a_{k}^{(p)})_{k\in\naturals}\big)
	\] 
	in $(\ell^2(\naturals), \| \cdot\|_2)^p$, where $a_{T,k}^{\scs (i)}=\langle X_{i,T},\psi_k^{\scs(i)}\rangle$ and $a_{k}^{\scs(i)}=\langle X_i,\psi_k^{\scs(i)}\rangle$, for any $1\leq k\leq n$, and $a_{T,k}^{\scs(i)}=a_k^{\scs(i)}=0$, for any $k>n$, $i=1,\dots ,p$. By the continuous mapping theorem, $\tilde{Y}_{T,n}^\infty$ converges weakly to $\tilde{Y}_n^\infty$ in $(\ell^2(\naturals), \| \cdot\|_2)^p$, for any $n\in\naturals$ and as $T$ tends to infinity.
		
	By assumption (2) and since the space $(\ell^2(\naturals), \| \cdot\|_2)^p$ is separable and complete, there is a random variable $\tilde{Y}^\infty\in (\ell^2(\naturals), \| \cdot\|_2)^p$ such that $Y_T^\infty\convw \tilde{Y}^\infty$, as $T$ tends to infinity, and $\tilde{Y}_n^\infty \convw \tilde{Y}^\infty$, as $n$ tends to infinity, by Theorem 2 of \cite{Dehling2009}. Due to the latter convergence, the finite dimensional distributions of $\tilde{Y}^\infty$ and $Y^\infty$ are the same. Thus, by Theorem 1.3 of \cite{Billingsley1999} and Lemma 1.5.3 of \cite{VanWel96}, $\tilde{Y}^\infty$ and $Y^\infty$ have the same distribution in $ (\ell^2(\naturals), \| \cdot\|_2)^p$.
	
	Next, observe that, for an arbitrary Hilbert space $H$, the function
	\begin{center}
		$\Phi:=\Bigg\{\begin{array}{llc}\ell^2(\naturals)&\to &H \\ (y_k)_{k\in\naturals}&\mapsto &\sum_{k=1}^\infty y_k \psi_k\end{array}$
	\end{center}
	is continuous, provided $(\psi_k)_{k\in\naturals}$ is an orthonormal basis of $H$. Indeed
	\begin{align*}
	\big\| \Phi\big((y_k)_{k}\big)-\Phi\big((z_k)_{k}\big)\big\|^2
	=\textstyle \sum_{k=1}^{\infty} (y_k-z_k)^2
	= \|y-z\|_2^2.
	\end{align*}
	Thus, the mapping 
		\begin{center}
		$\Phi':=\Bigg\{\begin{array}{llc}(\ell^2(\naturals), \| \cdot\|_2)^p &\to &H_1 \times \dots \times H_p \\ \big((y_{k,1})_{k\in\naturals},\dots ,(y_{k,p})_{k\in\naturals}\big)&\mapsto &\big(\sum_{k=1}^\infty y_{k,1} \psi_k^{(1)},\dots ,\sum_{k=1}^\infty y_{k,p} \psi_k^{(p)}\big)\end{array}$
	\end{center}
	is continuous too, and the continuous mapping theorem implies that
	\begin{align*} 
	\textstyle (X_{1,T},\dots ,X_{p,T})&= \textstyle \Big(\sum_{k=1}^{\infty} \langle X_{1,T},\psi_k^{(1)}\rangle \psi_k^{(1)},\dots  \sum_{k=1}^{\infty} \langle X_{p,T},\psi_k^{(p)}\rangle \psi_k^{(p)}\Big)\\ 
	&\convw 
	\textstyle  \Big(\sum_{k=1}^{\infty} \langle X_1,\psi_k^{(1)}\rangle \psi_k^{(1)},\dots ,\sum_{k=1}^{\infty} \langle X_p,\psi_k^{(p)}\rangle \psi_k^{(p)}\Big)=(X_1,\dots ,X_p), \end{align*}
	as $T$ tends to infinity.
\end{proof}

	\subsection{Proofs for Section~\ref{subsec:so}, \ref{subsec:test}, \ref{subsec:gen} and \ref{subsec:boot}}
	\label{subsec:proofs3}
	
	\begin{proof}[Proof of Lemma~\ref{lem:equivh}] We only prove the equivalence concerning $H_0^{\scs (h)}$; the equivalences regarding $H_0^{\scs (m)}$ follow along similar lines.
	
	\textit{Step 1: Equivalence between \eqref{eq:h0c} and \eqref{eq:h0c1}.}  Suppose that \eqref{eq:h0c1} is met. To prove \eqref{eq:h0c}, it is sufficient to show that 
	\begin{align}  \label{eq:ps}
		\Big\| \ex[X_0^{(u)}\otimes X_h^{(u)}]-\int_0^1 \ex[X_0^{(w)}\otimes X_h^{(w)}]\diff w\Big\|_{2,2}=0
	\end{align}
	for any $u\in[0,1]$. 
	
	Fix $u\in[0,1)$ and let $\delta>0$ be sufficiently small such that $u+\delta<1$. 
	By the reverse triangle inequality, we obtain that
	\begin{align*}
	0 \leq &\  
	\Bigg| \bigg\|\frac{1}{\delta}\int_u^{u+\delta} \ex\big[ X_0^{(w)}\otimes X_h^{(w)} -X_0^{(u)}\otimes X_h^{(u)}\big]\diff w\bigg\|_{2,2} \\
	&\hspace{3cm}
	- \bigg\|\int_0^1 \ex\big[X_0^{(w)}\otimes X_h^{(w)}- X_0^{(u)}\otimes X_h^{(u)} \big]\diff w\bigg\|_{2,2} \Bigg| \\
	\leq&\  
	\frac{1}{\delta} \bigg\| \int_u^{u+\delta} \ex\big[ X_0^{(w)}\otimes X_h^{(w)} \big]\diff w- \delta \int_0^1 \ex\big[ X_0^{(w)}\otimes X_h^{(w)} \big]\diff w\bigg\|_{2,2} \\
	=&\ 
	\frac{1}{\delta} \bigg\|\int_0^{u+\delta} \ex\big[ X_0^{(w)}\otimes X_h^{(w)} \big]\diff w-(u+\delta) \int_0^1 \ex\big[ X_0^{(w)}\otimes X_h^{(w)} \big]\diff w\\
	&\hspace{3cm}-
	\int_0^{u} \ex\big[ X_0^{(w)}\otimes X_h^{(w)} \big]\diff w+ u \int_0^1 \ex\big[ X_0^{(w)}\otimes X_h^{(w)} \big]\diff w\bigg\|_{2,2} \\
	\leq&\ 
	\frac{1}{\delta} \bigg\|\int_0^{u+\delta} \ex\big[ X_0^{(w)}\otimes X_h^{(w)} \big]\diff w-(u+\delta) \int_0^1 \ex\big[ X_0^{(w)}\otimes X_h^{(w)} \big]\diff w\bigg\|_{2,2} \\
	&\hspace{3cm}+ 
	\frac{1}{\delta} \bigg\|\int_0^u \ex\big[ X_0^{(w)}\otimes X_h^{(w)} \big]\diff w-u \int_0^1 \ex\big[ X_0^{(w)}\otimes X_h^{(w)} \big]\diff w\bigg\|_{2,2}.
		\end{align*}
	By continuity of integrals in the upper integration limit, it follows from \eqref{eq:h0c1} that both summands on the right-hand side of this display are equal to zero. As a consequence, 
	\begin{multline}  \label{eq:ps2}
	\bigg\| \int_0^1 \ex[X_0^{(w)}\otimes X_h^{(w)}] \diff w - \ex [X_0^{(u)}\otimes X_h^{(u)} ] \bigg\|_{2,2} \\
	=
	\bigg\|\frac{1}{\delta}\int_u^{u+\delta} \ex[ X_0^{(w)}\otimes X_h^{(w)} -X_0^{(u)}\otimes X_h^{(u)}]\diff w\bigg\|_{2,2}. 
	\end{multline}
	By Jensen's inequality, we can bound the right-hand side of this display from above by
	  \begin{equation}\label{consistencyPart1.1}
	  \bigg( \int_{[0,1]^2} \frac{1}{\delta^2} \int_u^{u+\delta}\big(\ex[X_0^{(w)}(\tau_1)X_h^{(w)}(\tau_2)-X_0^{(u)}(\tau_1)X_h^{(u)}(\tau_2)]\big)^2 \diff w \diff (\tau_1,\tau_2)\bigg)^{1/2}.
	  \end{equation}
	By employing Jensen's inequality again, we can bound the integrand by \[\ex\big[\big(X_0^{(w)}(\tau_1)X_h^{(w)}(\tau_2)-X_0^{(u)}(\tau_1)X_h^{(u)}(\tau_2)\big)^2\big].\] Thus, by Fubini's theorem, \eqref{consistencyPart1.1} is less than or equal to
	\[ \bigg( \frac{1}{\delta^2} \int_u^{u+\delta}\ex[\|X_0^{(w)}\otimes X_h^{(w)}-X_0^{(u)}\otimes X_h^{(u)}\|_{2,2}^2] \diff w\bigg)^{1/2}. \]
	This term is of the order $O(\delta^{1/2})$ due to the inequality
	\begin{align}
	&\ \ex \| X_0^{(w)}\otimes X_h^{(w)}-X_0^{(u)}\otimes X_h^{(u)} \|_{2,2}^2 \nonumber \\
	=&\
	\ex \| X_0^{(w)}\otimes X_h^{(w)}-X_0^{(u)}\otimes X_h^{(w)}+X_0^{(u)}\otimes X_h^{(w)}-X_0^{(u)}\otimes X_h^{(u)} \|_{2,2}^2 \nonumber\\
	\leq&\ 2\big\{  \ex \| (X_0^{(w)}-X_0^{(u)})\otimes X_h^{(w)}\|_{2,2}^2+ \ex \|X_0^{(u)}\otimes (X_h^{(w)}-X_h^{(u)}) \|_{2,2}^2\big \} \nonumber\\
	\le &\ 
	2\big\{  \ex[ \| X_0^{(w)}-X_0^{(u)}\|_2^4]^{1/2} \ \ex[\|X_h^{(w)}\|_{2}^4]^{1/2}+ \ex[ \|X_0^{(u)}\|_2^4]^{1/2} \ \ex[ \|X_h^{(w)}-X_h^{(u)} \|_{2}^4]^{1/2} \big\}  \nonumber\\
	\leq&\
	C |u-w|^2, \label{eq:lip2}
	\end{align}
	where the final bound follows from Lemma~\ref{lem:lip}. Since $\delta$ was chosen arbitrarily, we obtain that the right-hand side of \eqref{eq:ps2} is equal to zero. This proves \eqref{eq:ps} for $u\in[0,1)$, and the case $u=1$ follows from \eqref{eq:lip2}, which is also valid for $u=1$.
		
	Conversely, if \eqref{eq:h0c} holds true, we have by a change of variables, linearity of the integral, Jensen's inequality and Fubini's theorem, 
	\begin{align*}
	&\ \bigg\|\int_0^u \ex\big[ X_0^{(w)}\otimes X_h^{(w)} \big]\diff w-u \int_0^1 \ex\big[ X_0^{(w)}\otimes X_h^{(w)} \big]\diff w\bigg\|_{2,3}^2\\
	=&\
	\bigg\|u \int_0^1 \ex\big[ X_0^{(uw)}\otimes X_h^{(uw)} \big]- \ex\big[ X_0^{(w)}\otimes X_h^{(w)} \big]\diff w\bigg\|_{2,3}^2 \\
	\leq&\ 
	\int_0^1 \int_0^1 u^2 \big\|\ex[X_0^{(uw)}\otimes X_h^{(uw)}]-\ex[X_0^{(w)}\otimes X_h^{(w)}]\big\|_{2,2}^2 \diff w \diff u=0.
	\end{align*}
	
	\textit{Step 2: Equivalence between \eqref{eq:h0c} and \eqref{eq:h0c2}.}
	Note that, irrespective of whether \eqref{eq:h0c} or \eqref{eq:h0c2} is met, local stationarity of $X_{t,T}$ of order $\rho\ge 4$ and stationarity of $(X_t^{\scs (u)})_{t\in\Z}$ with $\Exp\|X_t^{\scs (u)}\|_2^4 < \infty$ implies that
	\begin{align} \label{eq:mdb}
	&\ \| \ex[X_{\lfloor uT\rfloor,T}\otimes X_{\lfloor uT\rfloor+h,T}- X_{\lfloor uT\rfloor}^{(u)}\otimes X_{\lfloor uT\rfloor+h}^{(u)}]\|_{2,2}^2 \nonumber \\
	\leq&\ \ex[\|X_{\lfloor uT\rfloor,T}\otimes X_{\lfloor uT\rfloor+h,T}-X_{\lfloor uT\rfloor}^{(u)}\otimes X_{\lfloor uT\rfloor+h}^{(u)}\|_{2,2}^2]  \nonumber \\
	\leq&\ 2 \big\{ \ex[\|(X_{\lfloor uT\rfloor,T}-X_{\lfloor uT\rfloor}^{(u)})\otimes X_{\lfloor uT\rfloor+h,T}\|_{2,2}^2]+ \ex[\|X_{\lfloor uT\rfloor}^{(u)}\otimes (X_{\lfloor uT\rfloor+h,T}-X_{\lfloor uT\rfloor+h}^{(u)})\|_{2,2}^2]\big\}  \nonumber \\	
	\leq&\ 2 \big\{ \ex[\|X_{\lfloor uT\rfloor,T}-X_{\lfloor uT\rfloor}^{(u)}\|_2^2 \|X_{\lfloor uT\rfloor+h,T}\|_2^2]+ \ex[\|X_{\lfloor uT\rfloor}^{(u)}\|_2^2 \|X_{\lfloor uT\rfloor+h,T}-X_{\lfloor uT\rfloor+h}^{(u)}\|_2^2]\big\}   \nonumber \\
	\leq&\ (C/T^2) \big\{ \ex[ (P_{\lfloor uT\rfloor,T}^{(u)})^4]^{1/2}  \ex \|X_{\lfloor uT\rfloor+h,T}\|_2^4]^{1/2}  + \ex[\|X_{\lfloor uT\rfloor}^{(u)}\|_2^4]^{1/2} \ex[ (P_{\lfloor uT\rfloor+h,T}^{(u)})^4]^{1/2}  \big\}   \nonumber \\
	\leq&\ {C}/{T^2},
	\end{align}
	for any $u\in[0,1]$ and $T\in\N$ and for some universal constant $C>0$.	
	
	Now, suppose that \eqref{eq:h0c} is met. Then, the previous display implies that
	\begin{align*}
	&\ \| \ex[X_{\lfloor uT\rfloor,T}\otimes X_{\lfloor uT\rfloor+h,T}]-\ex[X_{0,T}\otimes X_{h,T}]\|_{2,2}\\
	\leq&\ \| \ex[X_{\lfloor uT\rfloor,T}\otimes X_{\lfloor uT\rfloor+h,T}]-\ex[X_{\lfloor uT\rfloor}^{(u)}\otimes X_{\lfloor uT\rfloor+h}^{(u)}]\|_{2,2} \\
	&\ \quad + \| \ex[X_{0}^{(u)}\otimes X_{h}^{(u)}]-\ex[X_{0}^{(0)}\otimes X_{h}^{(0)}]\|_{2,2} + \| \ex[X_{0}^{(0)}\otimes X_{h}^{(0)}]-\ex[X_{0,T}\otimes X_{h,T}]\|_{2,2}\\
	\leq&\ {C}/{T} + 0 + {C}/{T} = {2C}/{T},
	\end{align*}
	for any $u\in[0,1]$ and $T\in\naturals$, that is, \eqref{eq:h0c2} is met.
		
	Conversely, if \eqref{eq:h0c2} is met, then, by \eqref{eq:mdb} and \eqref{eq:ls}, for any $u,v\in[0,1]$ and $T\in\N$,
	\begin{align*}
	&\ \|\ex[X_{0}^{(u)} \otimes X_h^{(u)}-X_{0}^{(v)} \otimes X_h^{(v)}]\|_2 \\
	=&\
	\|\ex[X_{\lfloor uT\rfloor}^{(u)} \otimes X_{\lfloor uT\rfloor+h}^{(u)}-X_{\lfloor vT\rfloor}^{(v)} \otimes X_{\lfloor vT\rfloor+h}^{(v)}]\|_2 \\
	\le &\
	\|\ex[X_{\lfloor uT\rfloor}^{(u)} \otimes X_{\lfloor uT\rfloor+h}^{(u)}-X_{\lfloor uT\rfloor,T} \otimes X_{\lfloor uT\rfloor+h,T}]\|_2 \\
	&\ \hspace{1cm}+
	\| \ex[X_{\lfloor uT\rfloor,T} \otimes X_{\lfloor uT\rfloor+h,T} - X_{0,T} \otimes X_{h,T}] \|_2 \\
	&\ \hspace{1cm}+
	\| \ex[X_{0,T} \otimes X_{h,T}-X_{\lfloor vT\rfloor,T} \otimes X_{\lfloor vT\rfloor+h,T} \|_2 \\
	&\ \hspace{1cm}+ 
	\|\ex[X_{\lfloor vT\rfloor,T} \otimes X_{\lfloor vT\rfloor+h,T}- X_{\lfloor vT\rfloor}^{(v)} \otimes X_{\lfloor vT\rfloor+h}^{(v)}]\|_2 \\
	 \leq&\ 4 C/T.
	\end{align*}
	Since $T$ was arbitrary, the left-hand side of this display must be zero, whence \eqref{eq:h0c}.
	\end{proof}

\begin{proof}[Proof of Theorem~\ref{theo:main}]
This theorem is an immediate consequence of Theorem~\ref{bootstrapThm}.
\end{proof}	

\begin{proof}[Proof of Corollary~\ref{cor:main1}] 
	 	Suppose that $H_0^{\scs(c,h)}$ is met. Then, by the triangle inequality and a slight abuse of notation (note that $u$ is a variable of integration in the norm $\|\cdot\|_{2,3}$), for $h\leq T$, 
	{\small \begin{align*}
	&\ \| U_{T,h}-G_{T,h}\|_{2,\Omega\times [0,1]^3}  \\
	=&\ \bigg\| \frac{1}{\sqrt{T}}\bigg(\sum_{t=1}^{\lfloor uT\rfloor \wedge (T-h)}\ex[X_{t,T}\otimes X_{t+h,T}]-u\sum_{t'=1}^{T-h}\ex[X_{t',T}\otimes X_{t'+h,T}]\bigg) \bigg\|_{2,3}\\
	=&\ \frac{1}{\sqrt{T}}\bigg\|\frac{1}{T-h}\sum_{t=1}^{\lfloor uT\rfloor \wedge (T-h)}\sum_{t'=1}^{T-h}\ex[X_{t,T}\otimes X_{t+h,T}]-\ex[X_{t',T}\otimes X_{t'+h,T}]\\
	& \hspace{3.5cm} +\bigg(\frac{1}{T-h}-\frac{u}{\lfloor uT\rfloor \wedge (T-h)}\bigg)\sum_{t=1}^{\lfloor uT\rfloor \wedge (T-h)}\sum_{t'=1}^{T-h}\ex[X_{t',T}\otimes X_{t'+h,T}] \bigg\|_{2,3}\\
	\leq&\  \frac{C}{T^{3/2}}\sum_{t,t'=1}^{T-h}\|\ex[X_{t,T}\otimes X_{t+h,T}]-\ex[X_{t',T}\otimes X_{t'+h,T}]\|_{2,2}+\frac{C}{T^{3/2}}\sum_{t=1}^{T-h}\|\ex[X_{t,T}\otimes X_{t+h,T}]\|_{2,2}.
	\end{align*}
	}This expression if of the order $O(T^{-1/2})$ by \eqref{eq:h0c2} and Assumption~\ref{cond:mom}. Hence,  $\| U_{T,h}-\tilde G_{T,h}\|_{2,3}=o_\Prob(1)$, and the assertion for $U_{T}$ follows along similar lines.
	 	
	Now, consider the assertion regarding the alternative 
	$H_1^{\scs (H)} = H_1^{\scs (m)} \cup  H_1^{\scs (c,0)}  \cup \dots \cup  H_1^{\scs (c,H)}$. 
	We only treat the case where $H_1^{\scs (c,h)}$ is met for some $h\in \{0, \dots, H\}$, the case $H_1^{\scs (m)}$ is similar. It is to be shown that $\| U_{T,h}\|_{2,3}\to \infty$ in probability. 
	
By the reverse triangle inequality, we have 
\[ 
	\| U_{T,h}\|_{2,3} = \| \tilde G_{T,h} + \ex U_{T,h}\|_{2,3} \geq \big| \| \tilde G_{T,h}\|_{2,3} - \|\ex U_{T,h}\|_{2,3} \big|.
\]
The term $\|\tilde G_{T,h}\|_{2,3}$ converges weakly to $\|\tilde G_h\|_{2,3}$. Thus, it suffices to show that the second term $\|\ex U_{T,h}\|_{2,3}$ diverges to infinity. For that purpose, note  that another application of the reverse triangle inequality implies that
	\begin{align*}
	\|\ex U_{T,h}\|_{2,3} =&\ \bigg\| \frac{1}{\sqrt{T}}\bigg(\sum_{t=1}^{\lfloor uT\rfloor \wedge (T-h)}\ex[X_{t,T}\otimes X_{t+h,T}]-u\sum_{t=1}^{T-h}\ex[X_{t,T}\otimes X_{t+h,T}]\bigg) \bigg\|_{2,3} \\
	\ge&\ |S_{1,T}-S_{2,T}|,
	\end{align*}
	where 
	\begin{multline*}
	S_{1,T}= \bigg\|\frac{1}{\sqrt{T}}\bigg(\sum_{t=1}^{\lfloor uT\rfloor \wedge (T-h)} \big\{\ex[X_{t,T}\otimes X_{t+h,T}]-\ex[X_t^{(t/T)}\otimes X_{t+h}^{(t/T)}]\big\} \\
	- u\sum_{t=1}^{T-h}\big\{\ex[X_{t,T}\otimes X_{t+h,T}]-\ex[X_t^{(t/T)}\otimes X_{t+h}^{(t/T)}]\big\}\bigg)\bigg\|_{2,3}
	\end{multline*}
	and
	\begin{align*}
	S_{2,T}&=\bigg\|\frac{1}{\sqrt{T}}\bigg(\sum_{t=1}^{\lfloor uT\rfloor \wedge (T-h)} \ex[X_t^{(t/T)}\otimes X_{t+h}^{(t/T)}]-u\sum_{t=1}^{T-h}\ex[X_t^{(t/T)}\otimes X_{t+h}^{(t/T)}]\bigg)\bigg\|_{2,3}.
	\end{align*}
	 In the following, we will show that $S_{1,T}$ vanishes as $T$ increases and that $S_{2,T}$ diverges to infinity. We have
	\begin{multline*}
	S_{1,T} \leq \bigg\{ \int_0^1\bigg(\frac{1}{\sqrt{T}} \sum_{t=1}^{\lfloor uT\rfloor \wedge(T-h)} \big\| \ex[X_{t,T}\otimes X_{t+h,T}]-\ex[X_t^{(t/T)}\otimes X_{t+h}^{(t/T)}] \big\|_{2,2}\\
	+ \frac{u}{\sqrt{T}} \sum_{t=1}^{T-h} \big\| \ex[X_{t,T}\otimes X_{t+h,T}]-\ex[X_t^{(t/T)}\otimes X_{t+h}^{(t/T)}] \big\|_{2,2} \bigg)^2 du \bigg\}^{1/2},
	\end{multline*}
	which is of order $O(T^{-1/2})$ since $\| \ex[X_{t,T}\otimes X_{t+h,T}]-\ex[X_t^{\scs (t/T)}\otimes X_{t+h}^{\scs (t/T)}] \|_{2,2}\leq C/T$ by \eqref{eq:ls}. For the second term $S_{2,T}$, we have, by stationarity
	\begin{align*}
	S_{2,T} &= \sqrt{T}\bigg\|\frac{1}{T}\bigg(\sum_{t=1}^{\lfloor uT\rfloor \wedge (T-h)} \ex[X_0^{(t/T)}\otimes X_{h}^{(t/T)}]-u\sum_{t=1}^{T-h}\ex[X_0^{(t/T)}\otimes X_{h}^{(t/T)}]\bigg)\bigg\|_{2,3},
	\end{align*}
	where the norm converges to 
	\[ \bigg\|\int_0^u \ex[X_0^{(w)}\otimes X_h^{(w)}]\diff w - u \int_0^1 \ex[X_0^{(w)}\otimes X_h^{(w)}]\diff w\bigg\|_{2,3}, \]
	by the dominated convergence theorem and the moment condition \ref{cond:mom}. The expression in the latter display is strictly positive since \eqref{eq:h0c1} is not satisfied and by the continuity of 
	\[ \bigg\|\int_0^u \ex[X_0^{(w)}\otimes X_h^{(w)}]\diff w - u \int_0^1 \ex[X_0^{(w)}\otimes X_h^{(w)}]\diff w\bigg\|_{2,2} \]
	in $u\in[0,1]$. Thus, $S_{2,T}\to \infty$, which implies the assertion.
	\end{proof}

\begin{proof}[Proof of Lemma  \ref{mixing}]
We will only give a proof of \eqref{sumcum}. Parts (i)-(iv) of the cumulant condition   \ref{cond:cum}
follow by similar arguments, which are omitted for the sake of brevity. 
According to Theorem  3 in \cite{StatuleviciusJakimavicius1988}, we have
{\small
\[ |\cum(X_{t_1,T}(\tau_1),\dots ,X_{t_k,T}(\tau_k))|\leq 3(k-1)!2^{k-1} \alpha^{\delta/(1+\delta)}(t_{i+1}-t_i) \prod_{j=1}^{k} \big(\ex|X_{t_j,T}(\tau_j)|^{(1+\delta)k}\big)^{\frac{1}{(1+\delta)k}}, \]
}
for any increasing sequence $t_1\leq t_2\leq \dots \leq t_k$. Straightforward calculations combined with H\"older's and Jensen's inequality lead to
\begin{align*}
\Big\|\prod_{j=1}^k \ex[|X_{t_j,T}|^{(1+\delta)k}]^{\frac{1}{(1+\delta)k}}\Big\|_{2,k}&= \prod_{j=1}^k \big\| \ex[|X_{t_j,T}|^{(1+\delta)k}]^{\frac{1}{(1+\delta)k}}\big\|_{2}\\
&=  \prod_{j=1}^k \bigg(\int_{[0,1]} \ex[|X_{t_j,T}(\tau)|^{(1+\delta)k}]^{\frac{2}{(1+\delta)k}}d\tau\bigg)^{1/2}\\
&\leq  \prod_{j=1}^k \bigg(\int_{[0,1]} \ex[|X_{t_j,T}(\tau)|^{(1+\delta)k}]d\tau\bigg)^{\frac{1}{(1+\delta)k}}\\
&= \prod_{j=1}^k \ex\Big[\big\|X_{t_j,T}\big\|_{(1+\delta)k}^{(1+\delta)k}\Big]^{\frac{1}{(1+\delta)k}}\\
&\leq  \sup_{t,T}\ex\Big[\big\|X_{t,T}\big\|_{(1+\delta)k}^{(1+\delta)k}\Big]^{1/(1+\delta)}\\
&\leq C_{k,1}.
\end{align*}
Thus, combining the previous results, leads to 
\begin{align*}
\|\cum(X_{t_1,T},\dots ,X_{t_k,T})\|_{2,k}
&\leq 3(k-1)!2^{k-1}C_{k,1} \alpha^{\delta/(1+\delta)}(t_{i+1}-t_i) \\
&\leq C_{k,4} \alpha^{\delta/(1+\delta)}(t_{i+1}-t_i),
\end{align*}
for any $i=1,\dots ,k-1$, where the constant $C_{k,4}>0$ depends on $k$ only. 
Hence,
\begin{equation*} 
\|\cum(X_{t_1,T},\dots ,X_{t_k,T})\|_{2,k}\leq C_{k,4} \prod_{i=1}^{k-1} \alpha^{\frac{\delta}{(1+\delta)(k-1)}}(t_{i+1}-t_i).
\end{equation*}
Analogously,  for arbitrary, not necessarily increasing $t_1,\dots ,t_k$, we may obtain that
\[\|\cum(X_{t_1,T},\dots ,X_{t_k,T})\|_{2,k}\leq C_{k,4} \prod_{i=1}^{k-1} \alpha^{\frac{\delta}{(1+\delta)(k-1)}}\big(t_{(i+1)}-t_{(i)}\big),\]
where $\big(t_{(1)},\dots ,t_{(k)}\big)$ denotes the order statistic of $(t_1,\dots ,t_k)$. The latter expression is symmetric in its arguments, thus we have, for any $t_k\in\integers$,
\begin{align*}
&\sum_{t_1,\dots ,t_{k-1}=-\infty}^{\infty} \big\| \cum(X_{t_1,T},\dots ,X_{t_k,T})\big\|_{2,k}\\
&\leq C_{k,4} \sum_{t_1,\dots ,t_{k-1}=-\infty}^{\infty} \prod_{i=1}^{k-1} \alpha^{\frac{\delta}{(1+\delta)(k-1)}}\big(t_{(i+1)}-t_{(i)}\big)\\
&\leq C_{k,4} (k-1)! \sum_{-\infty< t_1\leq \dots \leq t_{k-1}< \infty}  \prod_{i=1}^{k-1} \alpha^{\frac{\delta}{(1+\delta)(k-1)}}(t_{i+1}-t_{i})\\
&\leq C_{k,4} (k-1)! \sum_{-\infty< t_2\leq \dots \leq t_{k-1}< \infty} \sum_{s_1=-\infty}^{\infty} \alpha^{\frac{\delta}{(1+\delta)(k-1)}}(s_1) \prod_{i=2}^{k-1} \alpha^{\frac{\delta}{(1+\delta)(k-1)}}(t_{i+1}-t_{i}).
\end{align*}
By assumption $\{(X_{t,T})_{t\in\Z}: T\in\N\}$  is exponentially strong mixing, and the inner sum is finite and can be bounded by some constant $C_{k,2}$. Thus, 
\begin{multline*}
 \sum_{t_1,\dots ,t_{k-1}=-\infty}^{\infty} \big\| \cum(X_{t_1,T},\dots ,X_{t_k,T})\big\|_{2,k}\\
 \leq C_{k,4} (k-1)! C_{k,2} \sum_{-\infty< t_2\leq \dots \leq t_{k-1}< \infty}\prod_{i=1}^{k-1} \alpha^{\frac{\delta}{(1+\delta)(k-1)}}(t_{i+1}-t_{i}).
 \end{multline*}
Repeating this argument successively, we obtain finally \eqref{sumcum}
as asserted.

\end{proof}

\begin{proof}[Proof of Theorem~\ref{theo:boot}]
	By Slutsky's lemma and Theorem \ref{bootstrapThm}, it is sufficient to prove  that
	\[ 
	\big(\hat{\mathbb{B}}_{T}^{(1)}-\mathbb{B}_{T}^{(1)},\dots ,\hat{\mathbb{B}}_{T}^{(K)}-\mathbb{B}_{T}^{(K)}\big) =o_\Prob(1) 
	\]
	in $\{ L^2([0,1]^2)\times\{L^2([0,1]^3)\}^{H+1}\}^{K}$, as $T$ tends to infinity. This in turn is equivalent to 
	\[ 
	\big(\|\hat{B}_{T}^{(k)}-\tilde{B}_T^{(k)}\|_{2,3},\|\hat{B}_{T,0}^{(k)}-\tilde{B}_{T,0}^{(k)}\|_{2,3}\dots ,\|\hat{B}_{T}^{(k)}-\tilde{B}_{T,h}^{(k)}\|_{2,3}\big)_{k=1,\dots ,K} =o_\Prob(1)
	\]
	in $\reals^{K(H+2)}$. The last convergence holds true if and only if the coordinates converge, i.e., if $\|\hat{B}_{T}^{\scs (k)}-\tilde{B}_{T}^{\scs (k)}\|_{2,3}=o_\Prob(1)$ and $\|\hat{B}_{T,h}^{\scs (k)}-\tilde{B}_{T,h}^{\scs (k)}\|_{2,3}=o_\Prob(1)$, for all $k=1,\dots ,K$ and $h=0,\dots ,H$. We only consider the latter assertion (the former can be treated similarly)  and in fact, we will show convergence in $L^2(\Omega, \Prob)$, which is even stronger.  For this purpose observe that by Fubini's theorem and the independence of the family $(R_i^{\scs (k)})_{i\in\naturals}$
	{\small\begin{align*} 
	&\ex \|\hat{B}_{T,h}^{(k)}-\tilde{B}_{T,h}^{(k)}\|_{2,3}^2\\
	=\,& 
	\ex\Bigg[\int_{[0,1]^3}\frac{1}{mT}\bigg\{\sum_{i=1}^{\lfloor uT\rfloor\wedge(T-h)} R_i^{(k)} \sum_{t=i}^{(i+m-1)\wedge(T-h)}\mu_{t,T,h}(\tau_1, \tau_2)- \hat \mu_{t,T,h}(\tau_1, \tau_2) \bigg\}^2 \diff (u,\tau_1,\tau_2)\Bigg]\\
	=\,& 
	\frac{1}{mT}\int_{[0,1]^3}\sum_{i=1}^{\lfloor uT\rfloor\wedge(T-h)}  \ex\Bigg[\bigg\{\sum_{t=i}^{(i+m-1)\wedge(T-h)} A_{t,1} + A_{t,2}\bigg\}^2\Bigg]\diff (u,\tau_1,\tau_2),
	\end{align*}
	}where 
	\[ 
	A_{t,1}(\tau_1,\tau_2)= \textstyle \frac{1}{\tilde{n}_{t,h}}\sum_{k=\ubar{n}_t}^{\bar{n}_{t,h}}\ex[X_{t,T}(\tau_1)X_{t+h,T}(\tau_2)]-\ex[X_{t+k,T}(\tau_1)X_{t+k+h,T}(\tau_2)] 
	\]
	and
	\[ 
	A_{t,2}(\tau_1,\tau_2)= \textstyle \frac{1}{\tilde{n}_{t,h}}\sum_{k=\ubar{n}_t}^{\bar{n}_{t,h}}X_{t+k,T}(\tau_1)X_{t+k+h,T}(\tau_2)-\ex[X_{t+k,T}(\tau_1)X_{t+k+h,T}(\tau_2)].
	\]
	Since $A_{t,1}$ is deterministic and since $A_{t,2}$ is centred, we can rewrite the expectation in the previous integral as	
	\begin{multline*} 
	\ex\bigg[\bigg\{\sum_{t=i}^{(i+m-1)\wedge(T-h)}A_{t,1}(\tau_1,\tau_2)+A_{t,2}(\tau_1,\tau_2) \bigg\}^2\bigg]\\=\bigg(\sum_{t=i}^{(i+m-1)\wedge(T-h)}A_{t,1}(\tau_1,\tau_2)\bigg)^2+\ex\bigg[\bigg(\sum_{t=i}^{(i+m-1)\wedge(T-h)}A_{t,2}(\tau_1,\tau_2)\bigg)^2\bigg], 
	\end{multline*}
	In the following, we bound both parts separately. For the term $A_{t,1}$, first note that, by stationarity of $(X_t^{\scs (u)})_{t\in\Z}$, 
	\begin{align*}
	&\,\ex[X_{t,T}(\tau_1) X_{t+h,T}(\tau_2)]-\ex[X_{t+k,T}(\tau_1)X_{t+k+h,T}(\tau_2)]\\ 
	&=\,\ex[X_{t,T}(\tau_1) X_{t+h,T}(\tau_2)-X_t^{(t/T)}(\tau_1) X_{t+h}^{(t/T)}(\tau_2)]\\
	&\phantom{=}-\ex[X_{t+k,T}(\tau_1) X_{t+k+h,T}(\tau_2)-X_{t+k}^{(t/T)}(\tau_1) X_{t+k+h}^{(t/T)}(\tau_2)]
	\end{align*}
	in $L^2([0,1]^2)$. Thus, by Jensen's inequality and Fubini's theorem, we have	
	{\small\begin{align*}
	&\ \frac{1}{mT}\int_{[0,1]^3}\sum_{i=1}^{\lfloor uT\rfloor \wedge(T-h)}  \bigg(\sum_{t=i}^{(i+m-1) \wedge(T-h)}A_{t,1}\bigg)^2 \diff (u,\tau_1,\tau_2)\\
	\leq&\ 
	\frac{1}{mT}\int_{[0,1]^3}\sum_{i=1}^{\lfloor uT\rfloor \wedge(T-h)}   \ex\bigg[\bigg\{\sum_{t=i}^{(i+m-1)\wedge(T-h)}\frac{1}{\tilde{n}_{t,h}}\sum_{k=\ubar{n}_t}^{\bar{n}_{t,h}}X_{t,T}(\tau_1)X_{t+h,T}(\tau_2)-X_t^{(t/T)}(\tau_1)X_{t+h}^{(t/T)}(\tau_2)\\
	&\hspace{4cm}-X_{t+k,T}(\tau_1)X_{t+k+h,T}(\tau_2)+X_{t+k}^{(t/T)}(\tau_1)X_{t+k+h}^{(t/T)}(\tau_2) \bigg\}^2\bigg]\diff (u,\tau_1,\tau_2)\\
	\leq&\ \frac{1}{mT}\sum_{i=1}^{T-h}\ex \bigg\| \sum_{t=i}^{(i+m-1)\wedge(T-h)}\frac{1}{\tilde{n}_{t,h}}\sum_{k=\ubar{n}_t}^{\bar{n}_{t,h}} X_{t,T}\otimes X_{t+h,T}-X_t^{(t/T)}\otimes X_{t+h}^{(t/T)}\\
	&\hspace{4cm}-X_{t+k,T}\otimes X_{t+k+h,T}+X_{t+k}^{(t/T)}\otimes X_{t+k+h}^{(t/T)}\bigg\|_{2,2}^2.
	\end{align*}
	}The norm on the right-hand side of the previous inequality be bounded by the triangle inequality by
	\begin{multline*}
	\textstyle \sum_{t=i}^{(i+m-1)\wedge(T-h)}\frac{1}{\tilde{n}_{t,h}}\sum_{k=\ubar{n}_t}^{\bar{n}_{t,h}} \|X_{t,T}\otimes X_{t+h,T}-X_t^{(t/T)}\otimes X_{t+h}^{(t/T)}\|_{2,2}\\
	+\|X_{t+k,T}\otimes X_{t+k+h,T}-X_{t+k}^{(t/T)}\otimes X_{t+k+h}^{(t/T)}\|_{2,2}
	\end{multline*}
	and the inner summands can be bounded due to the local stationarity of $(X_{t,T})$: first,
	\begin{align*}
	\|X_{t,T} \, \otimes &\, X_{t+h,T}-X_t^{(t/T)}\otimes X_{t+h}^{(t/T)}\|_{2,2} \\
	\leq&\ \| X_{t,T}\otimes (X_{t+h,T}-X_{t+h}^{(t/T)})\|_{2,2}+\|X_{t+h}^{(t/T)}\otimes (X_{t,T}-X_t^{(t/T)})\|_{2,2}\\
	=&\ \| X_{t,T}\|_2\|X_{t+h,T}-X_{t+h}^{(t/T)}\|_2+\|X_{t+h}^{(t/T)}\|_2\|X_{t,T}-X_t^{(t/T)}\|_2\\
	\leq&\ T^{-1}\big\{(h+1)\|X_{t,T}\|_2+\|X_{t+h}^{(t/T)}\|_2\big\}P_{t,T}^{(t/T)}
	\end{align*}
	and similarly
	\begin{align*}
	\|X_{t+k,T} \, \otimes &\, X_{t+k+h,T}-X_{t+k}^{(t/T)}\otimes X_{t+k+h}^{(t/T)}\|_{2,2} \\
	\leq&\ T^{-1}\big\{(|k+h|+1)\|X_{t+k,T}\|_2+(|k|+1)\|X_{t+k+h}^{(t/T)}\|_2\big\}P_{t,T}^{(t/T)}.
	\end{align*}
	Assembling bounds, we obtain that
	{\small\begin{align*}
	&\ 
	\frac{1}{mT}\int_{[0,1]^3}\sum_{i=1}^{\lfloor uT\rfloor \wedge(T-h)}  \bigg(\sum_{t=i}^{(i+m-1)\wedge(T-h)}A_{t,1}(\tau_1,\tau_2)\bigg)^2 \diff (u,\tau_1,\tau_2)\\
	\leq&\ 
	\frac{1}{mT}\sum_{i=1}^{T-h}\sum_{t,t'=i}^{(i+m-1)\wedge(T-h)}\frac{1}{\tilde{n}_{t,h} \tilde{n}_{t',h}} \sum_{k=\ubar{n}_t}^{\bar{n}_{t,h}}\sum_{k'=\ubar{n}_{t'}}^{\bar{n}_{t',h}}\frac{1}{T^2}\ex\Big[(|k|+h+1)(|k'|+h+1)P_{t,T}^{(t/T)}P_{t',T}^{(t'/T)}\\
	&\ \hspace{4cm}
	\times\big(\|X_{t,T}\|_2+\|X_{t+h}^{(t/T)}\|_2+\|X_{t+k,T}\|_2+\|X_{t+k+h}^{(t/T)}\|_2\big)\\
	&\ \hspace{4cm}
	\times\big(\|X_{t',T}\|_2+\|X_{t'+h}^{(t'/T)}\|_2+\|X_{t'+k',T}\|_2+\|X_{t'+k'+h}^{(t'/T)}\|_2\big)\Big]\\
	\leq&\
	\frac{C}{mT^3}\sum_{i=1}^{T-h}\sum_{t,t'=i}^{(i+m-1)\wedge(T-h)}\frac{1}{\tilde{n}_{t,h} \tilde{n}_{t',h}} \sum_{k=\ubar{n}_t}^{\bar{n}_{t,h}}\sum_{k'=\ubar{n}_{t'}}^{\bar{n}_{t',h}} (|k|+h+1)(|k'|+h+1)=O\Big(\frac{mn^2}{T^2}\Big),
	\end{align*}
	}which converges to zero by Assumption~\ref{eq:b2}. 
	
	For the term $A_{t,2}$, first observe that, by Jensen's inequality for convex functions, 
	{\small\begin{align*}
	&\ \ex\bigg[\bigg(\sum_{t=i}^{(i+m-1)\wedge(T-h)}A_{t,2}(\tau_1,\tau_2)\bigg)^2\bigg]\\
	\leq&\ 
	m\sum_{t=i}^{(i+m-1)\wedge(T-h)}\frac{1}{\tilde{n}_{t,h}^2}\sum_{k,k'=\ubar{n}_t}^{\bar{n}_{t,h}}\cov\big\{ X_{t+k,T}(\tau_1)X_{t+k+h,T}(\tau_2),X_{t+k',T}(\tau_1)X_{t+k'+h,T}(\tau_2)\big\}.
	\end{align*}
	}By the same arguments as in the proof of Proposition~\ref{prop:cov} and Assumption~\ref{cond:cum}, one can see that the right-hand side of the inequality 
	{\small\begin{multline*}
		\frac{1}{mT}\sum_{i=1}^{T-h} \int_{[0,1]^2} \ex\bigg[\bigg(\sum_{t=i}^{(i+m-1)\wedge(T-h)}A_{t,2}(\tau_1,\tau_2)\bigg)^2\bigg] \diff (\tau_1,\tau_2)\\
	\leq \frac{1}{T}\sum_{i=1}^{T-h} \sum_{t=i}^{(i+m-1)\wedge(T-h)} \frac{1}{\tilde{n}_{t,h}^2} \sum_{k,k'=\ubar{n}_t}^{\bar{n}_{t,h}} \|\cov(X_{t+k,T}\otimes X_{t+k+h,T}, X_{t+k',T} \otimes X_{t+k'+h,T}) \|_{1,2} 
	\end{multline*}	
	}
	is of order $\mathcal{O}(m/n)$. The assertion follows since $m/n=o(1)$ by Assumption~\ref{eq:b2}.
	\end{proof}
	
	\begin{proof}[Proof of Proposition~\ref{prop:test}.]
	The cumulative distribution function of the $(h+2)$nd coordinate of $\bm S$ is continuous by Theorem 7.5 of \cite{DavLif1985}. The assertion under the null hypothesis follows from Lemma~4.1 in \cite{BucKoj17}. 
	Consistency follows from the fact that the bootstrap quantiles are stochastically bounded by Theorem~\ref{theo:boot}, whereas the test statistic diverges by Corollary~\ref{cor:main1}. 
	\end{proof}

\section*{Acknowledgements}
Financial support by the Collaborative Research Center “Statistical modeling of nonlinear
dynamic processes” (SFB 823, Teilprojekt A1, A7 and C1) of the German Research Foundation, by the Ruhr University Research School PLUS, funded by Germany’s Excellence Initiative [DFG GSC 98/3], and by the DAAD (German Academic Exchange Service) is gratefully acknowledged. Parts of this paper were written when Axel Bücher was a postdoctoral researcher at Ruhr-Universität Bochum and while Florian Heinrichs was visiting the Universidad Autónoma de Madrid. The authors would like to thank the institute, and in particular Antonio Cuevas, for its hospitality.

\bibliographystyle{chicago}

\bibliography{bibliography}

\begin{thebibliography}{}

\bibitem[\protect\citeauthoryear{Antoniadis and Sapatinas}{Antoniadis and
  Sapatinas}{2003}]{as03}
Antoniadis, A. and T.~Sapatinas (2003).
\newblock Wavelet methods for continuous time prediction using hilbert-valued
  autoregressive processes.
\newblock {\em Journal of Multivariate Analysis\/}~{\em 87}, 133--158.

\bibitem[\protect\citeauthoryear{Aston and Kirch}{Aston and
  Kirch}{2012}]{Aston2012}
Aston, J.~A. and C.~Kirch (2012).
\newblock Detecting and estimating changes in dependent functional data.
\newblock {\em Journal of Multivariate Analysis\/}~{\em 109\/}(Supplement C),
  204 -- 220.

\bibitem[\protect\citeauthoryear{Aue, Dubart~Nourinho, and H\"ormann}{Aue
  et~al.}{2015}]{adh}
Aue, A., D.~Dubart~Nourinho, and S.~H\"ormann (2015).
\newblock On the prediction of stationary functional time series.
\newblock {\em Journal of the American Statistical Association\/}~{\em 110},
  378--392.

\bibitem[\protect\citeauthoryear{Aue, Gabrys, Horv{\'a}th, and Kokoszka}{Aue
  et~al.}{2009}]{Aue2009}
Aue, A., R.~Gabrys, L.~Horv{\'a}th, and P.~Kokoszka (2009).
\newblock Estimation of a change-point in the mean function of functional data.
\newblock {\em Journal of Multivariate Analysis\/}~{\em 100}, 2254--2269.

\bibitem[\protect\citeauthoryear{{Aue} and {van Delft}}{{Aue} and {van
  Delft}}{2017}]{AueVan2017}
{Aue}, A. and A.~{van Delft} (2017, January).
\newblock {Testing for stationarity of functional time series in the frequency
  domain}.
\newblock {\em ArXiv e-prints\/}.

\bibitem[\protect\citeauthoryear{Berkes, Gabrys, Horvath, and Kokoszka}{Berkes
  et~al.}{2009}]{Berkes2009}
Berkes, I., R.~Gabrys, L.~Horvath, and P.~Kokoszka (2009).
\newblock Detecting changes in the mean of functional observations.
\newblock {\em Journal of the Royal Statistical Society: Series B (Statistical
  Methodology)\/}~{\em 71\/}(5), 927--946.

\bibitem[\protect\citeauthoryear{Billingsley}{Billingsley}{1999}]{Billingsley1999}
Billingsley, P. (1999).
\newblock {\em Convergence of Probability Measures}.
\newblock John Wiley and Sons, Inc.

\bibitem[\protect\citeauthoryear{Bosq}{Bosq}{2000}]{Bos00}
Bosq, D. (2000).
\newblock {\em Linear processes in function spaces}, Volume 149 of {\em Lecture
  Notes in Statistics}.
\newblock Springer-Verlag, New York.
\newblock Theory and applications.

\bibitem[\protect\citeauthoryear{Bosq}{Bosq}{2002}]{Bosq2002}
Bosq, D. (2002).
\newblock Estimation of mean and covariance operator of autoregressive
  processes in banach spaces.
\newblock {\em Statistical inference for Stochastic Processes\/}~{\em 5},
  287--306.

\bibitem[\protect\citeauthoryear{Box and Pierce}{Box and
  Pierce}{1970}]{Box1970}
Box, G. E.~P. and D.~A. Pierce (1970).
\newblock Distribution of residual autocorrelations in
  autoregressive-integrated moving average time series models.
\newblock {\em Journal of the American Statistical Association\/}~{\em
  65\/}(332), 1509--1526.

\bibitem[\protect\citeauthoryear{Brillinger}{Brillinger}{1981}]{Brillinger1965}
Brillinger, D. (1981).
\newblock {\em Time Series: Data Analysis and Theory}.
\newblock Holden Day, Inc., San Francisco.

\bibitem[\protect\citeauthoryear{{B{\"u}cher}, {Fermanian}, and
  {Kojadinovic}}{{B{\"u}cher} et~al.}{2018}]{BucFerKoj18}
{B{\"u}cher}, A., J.-D. {Fermanian}, and I.~{Kojadinovic} (2018).
\newblock {Combining cumulative sum change-point detection tests for assessing
  the stationarity of univariate time series}.
\newblock {\em ArXiv e-prints\/}.

\bibitem[\protect\citeauthoryear{B{\"u}cher and Kojadinovic}{B{\"u}cher and
  Kojadinovic}{2016}]{BucKoj2016}
B{\"u}cher, A. and I.~Kojadinovic (2016, 05).
\newblock A dependent multiplier bootstrap for the sequential empirical copula
  process under strong mixing.
\newblock {\em Bernoulli\/}~{\em 22\/}(2), 927--968.

\bibitem[\protect\citeauthoryear{B{\"u}cher and Kojadinovic}{B{\"u}cher and
  Kojadinovic}{2017}]{BucKoj17}
B{\"u}cher, A. and I.~Kojadinovic (2017).
\newblock A note on conditional versus joint unconditional weak convergence in
  bootstrap consistency results.
\newblock {\em Journal of Theoretical Probability\/}, 1--21.

\bibitem[\protect\citeauthoryear{Davydov and Lifshits}{Davydov and
  Lifshits}{1985}]{DavLif1985}
Davydov, Y.~A. and M.~A. Lifshits (1985, Oct).
\newblock Fibering method in some probabilistic problems.
\newblock {\em Journal of Soviet Mathematics\/}~{\em 31\/}(2), 2796--2858.

\bibitem[\protect\citeauthoryear{Dehling, Durieu, and Volny}{Dehling
  et~al.}{2009}]{Dehling2009}
Dehling, H., O.~Durieu, and D.~Volny (2009).
\newblock New techniques for empirical processes of dependent data.
\newblock {\em Stochastic Processes and their Applications\/}~{\em 119\/}(10),
  3699 -- 3718.

\bibitem[\protect\citeauthoryear{Dehling and Sharipov}{Dehling and
  Sharipov}{2005}]{Dehling2005}
Dehling, H. and O.~Sharipov (2005).
\newblock Estimation of mean and covariance operator for banach space valued
  autoregressive processes with independent innovations.
\newblock {\em Statistical inference for stochastic processes\/}~{\em 8},
  137--149.

\bibitem[\protect\citeauthoryear{Dette, Preu{\ss}, and Vetter}{Dette
  et~al.}{2011}]{DettePreussVetter2011}
Dette, H., P.~Preu{\ss}, and M.~Vetter (2011).
\newblock A measure of stationarity in locally stationary processes with
  applications to testing.
\newblock {\em Journal of the American Statistical Association\/}~{\em
  106\/}(495), 1113--1124.

\bibitem[\protect\citeauthoryear{Dwivedi and Subba~Rao}{Dwivedi and
  Subba~Rao}{2011}]{dwisub2011}
Dwivedi, Y. and S.~Subba~Rao (2011).
\newblock A test for second-order stationarity of a time series based on the
  discrete fourier transform.
\newblock {\em Journal of Time Series Analysis\/}~{\em 32}, 68--91.

\bibitem[\protect\citeauthoryear{Ferraty and Vieu}{Ferraty and
  Vieu}{2006}]{ferrvieu2006}
Ferraty, F. and P.~Vieu (2006).
\newblock {\em Nonparametric Functional Data Analysis: Theory and Practice}.
\newblock New York: Springer.

\bibitem[\protect\citeauthoryear{Fisher}{Fisher}{1932}]{Fis32}
Fisher, R. (1932).
\newblock {\em Statistical methods for research workers}.
\newblock London: Olivier and Boyd.

\bibitem[\protect\citeauthoryear{H{\"{o}}rmann, Kidzi{\'{n}}ski, and
  Hallin}{H{\"{o}}rmann et~al.}{2015}]{horkidhal2015}
H{\"{o}}rmann, S., Kidzi{\'{n}}ski, and M.~Hallin (2015).
\newblock Dynamic functional principal components.
\newblock {\em Journal of the Royal Statistical Society, Ser. B\/}~{\em
  77\/}(2), 319--348.

\bibitem[\protect\citeauthoryear{H{\"o}rmann and Kokoszka}{H{\"o}rmann and
  Kokoszka}{2010}]{Hormann2010}
H{\"o}rmann, S. and P.~Kokoszka (2010, 06).
\newblock Weakly dependent functional data.
\newblock {\em Ann. Statist.\/}~{\em 38\/}(3), 1845--1884.

\bibitem[\protect\citeauthoryear{Horvath, Huskova, and Kokoszka}{Horvath
  et~al.}{2010}]{Horvath2010}
Horvath, L., M.~Huskova, and P.~Kokoszka (2010).
\newblock Testing the stability of the functional autoregressive process.
\newblock {\em Journal of Multivariate Analysis\/}~{\em 101\/}(2), 352 -- 367.
\newblock Statistical Methods and Problems in Infinite-dimensional Spaces.

\bibitem[\protect\citeauthoryear{Horv\'ath and Kokoszka}{Horv\'ath and
  Kokoszka}{2012}]{HorKok12}
Horv\'ath, L. and P.~Kokoszka (2012).
\newblock {\em Inference for functional data with applications}.
\newblock Springer Series in Statistics. Springer, New York.

\bibitem[\protect\citeauthoryear{Hsing and Eubank}{Hsing and
  Eubank}{2015}]{hsingeubank2015}
Hsing, T. and R.~Eubank (2015).
\newblock {\em Theoretical Foundations of Functional Data Analysis, with an
  Introduction to linear Operators}.
\newblock New York: Wiley.

\bibitem[\protect\citeauthoryear{Hyndman and Shang}{Hyndman and
  Shang}{2009}]{hyndmann2009}
Hyndman, R.~J. and H.~L. Shang (2009).
\newblock Forecasting functional time series.
\newblock {\em Journal of the Korean Statistical Society\/}~{\em 38\/}(3), 199
  -- 211.

\bibitem[\protect\citeauthoryear{Janson and Kaijser}{Janson and
  Kaijser}{2015}]{JanKai15}
Janson, S. and S.~Kaijser (2015).
\newblock Higher moments of {B}anach space valued random variables.
\newblock {\em Mem. Amer. Math. Soc.\/}~{\em 238\/}(1127), vii+110.

\bibitem[\protect\citeauthoryear{Jentsch and Subba~Rao}{Jentsch and
  Subba~Rao}{2015}]{jensub2015}
Jentsch, C. and S.~Subba~Rao (2015).
\newblock A test for second order stationarity of a multivariate time series.
\newblock {\em Journal of Econometrics\/}~{\em 185}, 124--161.

\bibitem[\protect\citeauthoryear{Jin, Wang, and Wang}{Jin
  et~al.}{2015}]{Jin2015}
Jin, L., S.~Wang, and H.~Wang (2015).
\newblock A new non-parametric stationarity test of time series in the time
  domain.
\newblock {\em Royal Statistical Society\/}~{\em 77}, 893--922.

\bibitem[\protect\citeauthoryear{Lee and Subba~Rao}{Lee and
  Subba~Rao}{2016}]{LeeSubbaRao2016}
Lee, J. and S.~Subba~Rao (2016).
\newblock A note on general quadratic forms of nonstationary stochastic
  processes.
\newblock Technical report, Texas A \& M University.

\bibitem[\protect\citeauthoryear{Ljung and Box}{Ljung and
  Box}{1978}]{Ljung1978}
Ljung, G.~M. and G.~E.~P. Box (1978).
\newblock On a measure of lack of fit in time series models.
\newblock {\em Biometrika\/}~{\em 65\/}(2), 297--303.

\bibitem[\protect\citeauthoryear{Panaretos and Tavakoli}{Panaretos and
  Tavakoli}{2013}]{PanTav13}
Panaretos, V.~M. and S.~Tavakoli (2013).
\newblock Fourier analysis of stationary time series in function space.
\newblock {\em Ann. Statist.\/}~{\em 41\/}(2), 568--603.

\bibitem[\protect\citeauthoryear{Politis and White}{Politis and
  White}{2004}]{PolWhi2004}
Politis, D.~N. and H.~White (2004).
\newblock Automatic block-length selection for the dependent bootstrap.
\newblock {\em Econometric Reviews\/}~{\em 23\/}(1), 53--70.

\bibitem[\protect\citeauthoryear{Sharipov, Tewes, and Wendler}{Sharipov
  et~al.}{2016}]{sharipov2016}
Sharipov, O., J.~Tewes, and M.~Wendler (2016).
\newblock Sequential block bootstrap in a hilbert space with application to
  change point analysis.
\newblock {\em Canadian Journal of Statistics\/}~{\em 44\/}(3), 300--322.

\bibitem[\protect\citeauthoryear{Statulevicius and Jakimavicius}{Statulevicius
  and Jakimavicius}{1988}]{StatuleviciusJakimavicius1988}
Statulevicius, V. and D.~Jakimavicius (1988).
\newblock Estimates of semiinvariants and centered moments of stochastic
  processes with mixing. i.
\newblock {\em Lithuanian Math. J.\/}~{\em 28}, 226--238.

\bibitem[\protect\citeauthoryear{{van Delft}, {Bagchi}, {Characiejus}, and
  {Dette}}{{van Delft} et~al.}{2017}]{VanBagChaDet17}
{van Delft}, A., P.~{Bagchi}, V.~{Characiejus}, and H.~{Dette} (2017, August).
\newblock {A nonparametric test for stationarity in functional time series}.
\newblock {\em ArXiv e-prints\/}.

\bibitem[\protect\citeauthoryear{van Delft and Eichler}{van Delft and
  Eichler}{2018}]{vanDelft2016}
van Delft, A. and M.~Eichler (2018).
\newblock Locally stationary functional time series.
\newblock {\em Electron. J. Statist.\/}~{\em 12\/}(1), 107--170.

\bibitem[\protect\citeauthoryear{van~der Vaart and Wellner}{van~der Vaart and
  Wellner}{1996}]{VanWel96}
van~der Vaart, A. and J.~Wellner (1996).
\newblock {\em Weak Convergence and Empirical Processes}, Volume~1 of {\em
  Springer series in statistics}.
\newblock Springer Science+Business Media New York.

\bibitem[\protect\citeauthoryear{Vogt}{Vogt}{2012}]{Vogt2012}
Vogt, M. (2012).
\newblock Nonparametric regression for locally stationary time series.
\newblock {\em The Annals of Statistics\/}~{\em 40}, 2601--2633.

\bibitem[\protect\citeauthoryear{Weidmann}{Weidmann}{1980}]{Wei80}
Weidmann, J. (1980).
\newblock {\em Linear operators in {H}ilbert spaces}, Volume~68 of {\em
  Graduate Texts in Mathematics}.
\newblock Springer-Verlag, New York-Berlin.
\newblock Translated from the German by Joseph Sz\"ucs.

\end{thebibliography}


\begin{thebibliography}{}

\bibitem[Billingsley, 1995]{Billingsley1995}
Billingsley, P. (1995).
\newblock {\em Probability and Measure}.
\newblock John Wiley and Sons, Inc.

\bibitem[Bosq, 2000]{Bos00}
Bosq, D. (2000).
\newblock {\em Linear processes in function spaces}, volume 149 of {\em Lecture
  Notes in Statistics}.
\newblock Springer-Verlag, New York.
\newblock Theory and applications.

\bibitem[Brillinger, 1981]{Brillinger1965}
Brillinger, D. (1981).
\newblock {\em Time Series: Data Analysis and Theory}.
\newblock Holden Day, Inc., San Francisco.

\bibitem[Dehling and Philipp, 2002]{DehPhi02}
Dehling, H. and Philipp, W. (2002).
\newblock {\em Empirical Process Techniques for Dependent Data}, volume~1.
\newblock Springer ScieBrance+Business Media New York.

\bibitem[Holmquist, 1988]{Holmquist1988}
Holmquist, B. (1988).
\newblock Moments and cumulants of the multivariate normal distribution.
\newblock {\em Stochastic Analysis and Applications}, 6(3):273--278.

\bibitem[Kadison and Ringrose, 1983]{Kadison1983}
Kadison, R.~V. and Ringrose, J.~R. (1983).
\newblock {\em Fundamentals of the Theory of Operator Algebras, Vol. 1:
  Elementary Theory (Pure and Applied Mathematics)}.
\newblock Academic Press.

\bibitem[Maniglia and Rhandi, 2004]{ManRha2004}
Maniglia, S. and Rhandi, A. (2004).
\newblock Gaussian measures on separable hilbert spaces and applications.
\newblock {\em Quaderni di Matematica}, 2004(1).

\bibitem[Statulevicius and Jakimavicius, 1988]{StatuleviciusJakimavicius1988}
Statulevicius, V. and Jakimavicius, D. (1988).
\newblock Estimates of semiinvariants and centered moments of stochastic
  processes with mixing. i.
\newblock {\em Lithuanian Math. J.}, 28:226--238.

\bibitem[van~der Vaart, 1998]{vanderVaart1998}
van~der Vaart, A.~W. (1998).
\newblock {\em Asymptotic Statistics}.
\newblock Cambridge University Press.

\end{thebibliography}

\newpage

\thispagestyle{empty}

\begin{center}
%
{\bfseries Supplementary Material on   \\ [0mm] ``DETECTING DEVIATIONS FROM SECOND-ORDER STATIONARITY IN LOCALLY STATIONARY FUNCTIONAL \\ [0mm] TIME SERIES''}
\vspace{.5cm}

{\textsc{Axel Bücher, Holger Dette and Florian Heinrichs}

\blfootnote{\textit{Date:} \today.}

}

\end{center}

\begin{abstract}
This supplementary material contains the additional proofs for the main paper. In Appendix~\ref{app:sectest}, we provide the remaining proofs for the results in Sections~\ref{subsec:amoc} and \ref{subsec:choiceM}. Proofs related to Section~\ref{sec:examples} are provided in Appendix~\ref{app:proofex}. Finally, some auxiliary results are collected in Appendix~\ref{app:aux34}.
\end{abstract}

\appendix

\section{Proofs for Section~\ref{subsec:amoc} and \ref{subsec:choiceM}} \label{app:sectest}

	\begin{proof}[Proof of Proposition~\ref{prop:amoc}.]
	By the definition of $X_{t,T}$, we can rewrite
	\begin{multline}\label{eqOneChangePoint}
	U_T(u,\tau)
	=
	\frac{1}{\sqrt{T}}\bigg(\sum_{t=1}^{\lfloor uT\rfloor}Y_{t,T}(\tau)-u\sum_{t=1}^{T} Y_{t,T}(\tau)\bigg)-\frac{uT-\lfloor uT \rfloor}{\sqrt{T}}\mu_2(\tau) \\
	+\frac{1}{\sqrt{T}}\Big( (1-u)\lfloor\lambda T\rfloor \id(\lambda\leq u)+(\lfloor uT\rfloor - u\lfloor \lambda T\rfloor)\id(\lambda>u)\Big)\big\{ \mu_1(\tau)-\mu_2(\tau)\big\}.
	\end{multline}
	By 
	Corollary~\ref{cor:main1},
	\[ \frac{1}{\sqrt{T}}\bigg(\sum_{t=1}^{\lfloor uT\rfloor}Y_{t,T}(\tau)-u\sum_{t=1}^{T} Y_{t,T}(\tau)\bigg) \]
	converges to a centred Gaussian process $\tilde G$.	
	In particular, the norm $\|\cdot\|_{2,2}$ of the previous display is $\mathcal{O}_p(1)$. The norm of the second summand in \eqref{eqOneChangePoint} is of order $\mathcal{O}(T^{-1/2})$ and the norm of the last summand diverges to infinity as $T$ tends to infinity. Thus, $\|U_T\|_{2,2}\to\infty$ in probability, and therefore the test statistic $\Sc_T^{\scs (m)}$ diverges to infinity in probability.
	
	In the proof of Theorem \ref{theo:boot}, we have seen that $\|\hat{B}_{T}^{\scs (k)}-\tilde{B}_T^{\scs (k)}\|_{2,3}=o_\Prob(1)$ as $T\to\infty$ under the assumption of local stationarity, where $\tilde{B}_T^{\scs(k)}$ is defined in \eqref{eq:tildeb}. The same result can be shown with similar arguments in the setting of a change point. Further,
	\begin{align*}
	&\hspace{-.5cm}\tilde{B}_T^{(k)}(u,\tau)-u\tilde{B}_T^{(k)}(1,\tau)\\
	&= \frac{1}{\sqrt{T}}\bigg\{ \sum_{i=1}^{\lfloor uT\rfloor}\frac{R_i^{(k)}}{\sqrt{m}}\sum_{t=i}^{(i+m-1)\wedge T}[X_{t,T}(\tau)-\ex X_{t,T}(\tau)] \\
	&\hspace{5cm} -u\sum_{i=1}^{T}\frac{R_i^{(k)}}{\sqrt{m}}\sum_{t=i}^{(i+m-1)\wedge T}[X_{t,T}(\tau)-\ex X_{t,T}(\tau)]\bigg \} \\
	&= \frac{1}{\sqrt{T}}\bigg(\sum_{i=1}^{\lfloor uT\rfloor}\frac{R_i^{(k)}}{\sqrt{m}}\sum_{t=i}^{(i+m-1)\wedge T}Y_{t,T}(\tau)-u\sum_{i=1}^{T}\frac{R_i^{(k)}}{\sqrt{m}}\sum_{t=i}^{(i+m-1)\wedge T}Y_{t,T}(\tau)\bigg),
	\end{align*}
	where the right-hand side converges according to Theorem \ref{bootstrapThm} to the process $\tilde B$ as well. Thus, $\|\hat{G}_T^{\scs (k)}\|_{2,2}=\mathcal{O}_p(1)$.
\end{proof}

\begin{proof}[Proof of Lemma~\ref{lem:IMSE}]
	We begin by proving the formula for the bias. We have
	{\small\begin{align}\label{eq:ex:sigma_T}
	\begin{split}\ex[\tilde{\sigma}_T(\tau,\phi)]
	&= \frac{1}{T} \sum_{i=1}^{T} \frac{1}{m} \sum_{t,t'=i}^{(i+m-1)\wedge T} \cov\big(X_{t,T}(\tau),X_{t',T}(\phi)\big)\\
	&= \frac{1}{T} \sum_{i=1}^{T} \frac{1}{m} \sum_{t,t'=i}^{(i+m-1)\wedge T} \cov\big(X_t^{(i/T)}(\tau),X_{t'}^{(i/T)}(\phi)\big)+\mathcal{O}(T^{-1})\\
	&= \frac{1}{T} \sum_{i=1}^{T} \frac{1}{m} \sum_{k=-m+1}^{m-1} (m-|k|)\cov\big(X_0^{(i/T)}(\tau),X_{k}^{(i/T)}(\phi)\big)+\mathcal{O}(T^{-1})\\
	&= \sum_{k=-m+1}^{m-1} \int_0^1 \cov\big(X_0^{(w)}(\tau),X_k^{(w)}(\phi)\big) \diff w\\
	&\phantom{=}  \hspace{2cm} - \frac{1}{T} \sum_{i=1}^{T} \frac{1}{m} \sum_{k=-m+1}^{m-1} |k|\cov\big(X_0^{(i/T)}(\tau),X_{k}^{(i/T)}(\phi)\big) + \mathcal{O}(T^{-1}).
	\end{split}
	\end{align}}Further, by Lemma~3.11 in \citesuppl{DehPhi02}, we can rewrite
	{\small\begin{align*}
	\sigma_c(\tau,\phi)
	&= \sum_{k=-\infty}^{\infty} \int_0^1 \cov(X_0^{(w)}(\tau),X_k^{(w)}(\phi))\diff w\\
	&= \sum_{k=-m+1}^{m-1} \int_0^1 \cov(X_0^{(w)}(\tau),X_k^{(w)}(\phi))\diff w
	+ \sum_{|k|\geq m} \int_0^1 \cov(X_0^{(w)}(\tau),X_k^{(w)}(\phi))\diff w\\
&= \sum_{k=-m+1}^{m-1} \int_0^1 \cov(X_0^{(w)}(\tau),X_k^{(w)}(\phi))\diff w+ \mathcal{O}(a^{rm}),
	\end{align*}}for some $0<r<1$. By the previous display, Equation \eqref{eq:ex:sigma_T} and since $\mathcal{O}(a^{rm})+\mathcal{O}(T^{-1})=o(m^{-2})$, we obtain that
	{\small\begin{align*}
	&\int_{[0,1]^2} \big(\ex[\tilde{\sigma}_T(\tau,\phi)]-\sigma_c(\tau,\phi)\big)^2 \diff(\tau,\phi)\\
	&= \int_{[0,1]^2} \bigg\{\frac{1}{T} \sum_{i=1}^{T} \frac{1}{m} \sum_{k=-m+1}^{m-1} |k| \cov\big(X_0^{(i/T)}(\tau),X_k^{(i/T)}(\phi)\big)+o(m^{-2}) \bigg\}^2 \diff(\tau,\phi)\\
	&= \int_{[0,1]^2} \bigg\{\sum_{k=-m+1}^{m-1} \frac{|k|}{m} \int_0^1 \cov\big(X_0^{(w)}(\tau),X_k^{(w)}(\phi)\big)\diff w \bigg\}^2+o(m^{-2}) \diff(\tau,\phi)\\
	&= \frac{1}{m^2} \bigg\| \sum_{k=-\infty}^{\infty} |k| \int_0^1 \cov(X_0^{(w)},X_k^{(w)}) \diff w \bigg\|_{2,2}^2 + o(m^{-2})
	\end{align*}}as asserted.

Next, consider the formula for the variance.
	Observe that $\var\big(\tilde{\sigma}_T(\tau,\phi)\big) = \ex[\tilde{\sigma}_T^2(\tau,\phi)]-(\ex \tilde{\sigma}_T(\tau,\phi))^2$. By Theorem 2.3.2 of \citesuppl{Brillinger1965}, we can rewrite
	\begin{multline*}
	\ex \tilde{\sigma}_T^2(\tau,\phi)
	= \frac{1}{T^2} \sum_{i,i'=1}^{T} \frac{1}{m^2} \sum_{t_1,t_2=i}^{(i+m-1)\wedge T} \sum_{t_3,t_4=i'}^{(i'+m-1)\wedge T}
	\kappa_{t_1,t_2,t_3,t_4}(\tau,\phi) + \kappa_{t_1,t_2}(\tau,\phi)\kappa_{t_3,t_4}(\tau,\phi)\\
	+ \kappa_{t_1,t_3}(\tau,\tau)\kappa_{t_2,t_4}(\phi,\phi)
	+ \kappa_{t_1,t_4}(\tau,\phi)\kappa_{t_2,t_3}(\phi,\tau),
	\end{multline*}
	where $\kappa_{t_1,t_2,t_3,t_4}(\tau,\phi)=\cum\big(X_{t_1,T}(\tau),X_{t_2,T}(\phi),X_{t_3,T}(\tau),X_{t_4,T}(\phi)\big)$ and $\kappa_{t_1,t_2}(\tau,\phi)=\cov\big(X_{t_1,T}(\tau),X_{t_2,T}(\phi)\big)$ for any $t_1,t_2,t_3,t_4 \in \{1,\cdots, T\}$ and $\tau,\phi \in[0,1]$. With this notation, we can further rewrite
	\[ (\ex \tilde{\sigma}_T(\tau,\phi))^2 = \frac{1}{T^2} \sum_{i,i'=1}^{T} \frac{1}{m^2} \sum_{t_1,t_2=i}^{(i+m-1)\wedge T} \sum_{t_3,t_4=i'}^{(i'+m-1)\wedge T} \kappa_{t_1,t_2}(\tau,\phi)\kappa_{t_3,t_4}(\tau,\phi), \]
	thus, 
	\begin{multline*}
	\var\big(\tilde{\sigma}_T(\tau,\phi)\big)
	= \frac{1}{T^2} \sum_{i,i'=1}^{T} \frac{1}{m^2} \sum_{t_1,t_2=i}^{(i+m-1)\wedge T} \sum_{t_3,t_4=i'}^{(i'+m-1)\wedge T}
	\kappa_{t_1,t_2,t_3,t_4}(\tau,\phi)\\
	+ \kappa_{t_1,t_3}(\tau,\tau)\kappa_{t_2,t_4}(\phi,\phi)
	+ \kappa_{t_1,t_4}(\tau,\phi)\kappa_{t_2,t_3}(\phi,\tau).
	\end{multline*}
	In the following, we investigate the sums $B_1,B_2$ and $B_3$ over the three inner summands separately. 
	
	First, observe that by the strong mixing condition and Theorem  3 in \citesuppl{StatuleviciusJakimavicius1988}, the sum over the cumulants $\kappa_{t_1,t_2,t_3,t_4}(\tau,\phi)$ vanishes with rate $m^2T^{-2}$, i.\,e.,  
	\[ B_1 = \frac{1}{T^2} \sum_{i,i'=1}^{T} \frac{1}{m^2} \sum_{t_1,t_2=i}^{(i+m-1)\wedge T} \sum_{t_3,t_4=i'}^{(i'+m-1)\wedge T}
	\kappa_{t_1,t_2,t_3,t_4}(\tau,\phi) = \mathcal{O}(m^2T^{-2}) \]
To investigate $B_2$, we split the sum into $B_2=B_{2,1}+B_{2,2}+B_{2,3}$, where 
	 \begin{align*} &B_{2,1}(\tau,\phi) = \frac{1}{T^2} \sum_{i=1}^{T} \sum_{|i-i'|\leq m} \frac{1}{m^2} \sum_{t_1,t_2=i}^{(i+m-1)\wedge T} \sum_{t_3,t_4=i'}^{(i'+m-1)\wedge T}
	 \kappa_{t_1,t_3}(\tau,\tau)\kappa_{t_2,t_4}(\phi,\phi),\\ 
	 &B_{2,2}(\tau,\phi) = \frac{1}{T^2} \sum_{i=m+2}^{T} \sum_{i'=1}^{i-m-1} \frac{1}{m^2} \sum_{t_1,t_2=i}^{(i+m-1)\wedge T} \sum_{t_3,t_4=i'}^{(i'+m-1)\wedge T}
	 \kappa_{t_1,t_3}(\tau,\tau)\kappa_{t_2,t_4}(\phi,\phi)\\
	 \text{and}\hspace{0.7cm}&\\
	 &B_{2,3}(\tau,\phi) = \frac{1}{T^2} \sum_{i=1}^{T-m-1} \sum_{i'=i+m+1}^{T} \frac{1}{m^2} \sum_{t_1,t_2=i}^{(i+m-1)\wedge T} \sum_{t_3,t_4=i'}^{(i'+m-1)\wedge T}
	 \kappa_{t_1,t_3}(\tau,\tau)\kappa_{t_2,t_4}(\phi,\phi). \end{align*}
	 
	In the following, we will see that both $B_{2,2}$ and $B_{2,3}$ are negligible, while $B_{2,1}$ contributes to the claimed limit expression. The covariances $\kappa_{t_1,t_3}$ and $\kappa_{t_2,t_4}$ can be bounded by $C \alpha^r(|t_1-t_3|)\leq C a^{r|t_1-t_3|}$ and $C\alpha^r(|t_2-t_4|)\leq Ca^{r|t_2-t_4|}$, for some constants $C>0$ and $0<r<1$, respectively. Therefore,
	{\small\begin{align*}
	B_{2,2}(\tau,\phi)  &\leq \frac{C}{T^2} \sum_{i=m+2}^{T} \sum_{i'=1}^{i-m-1} \frac{1}{m^2} \sum_{t_1,t_2=i}^{i+m-1} \sum_{t_3,t_4=i'}^{i'+m-1} a^{r|t_1-t_3|} a^{r|t_2-t_4|}\\
	&= \frac{C}{T^2} \sum_{i=m+2}^{T} \sum_{i'=1}^{i-m-1} \frac{1}{m^2} \sum_{t_1,\cdots, t_4=1}^{m} a^{r(i-i'+t_1-t_3)} a^{r(i-i'+t_2-t_4)}\\
	&= \frac{C}{T^2} \sum_{i=m+2}^{T} \sum_{i'=m+1}^{i-1} \frac{1}{m^2} \sum_{t_1,\cdots, t_4=1}^{m} a^{r(i-i'+m+t_1-t_3)} a^{r(i-i'+m+t_2-t_4)}\\
	&=\frac{C}{T^2} \sum_{i=m+2}^{T} \sum_{i'=m+1}^{i-1} a^{2r(i-i')} \frac{1}{m^2} \sum_{t_1,\cdots, t_4=1}^{m} a^{r(m+t_1-t_3)} a^{r(m+t_2-t_4)} = \mathcal{O}(T^{-1}).
	\end{align*}}	
	Analogously, $B_{2,3}(\tau,\phi)=\mathcal{O}(T^{-1})$. For $B_{2,1}$ observe that 
	\begin{multline*}
	\int_{[0,1]^2} B_{2,1}(\tau,\phi) \diff(\tau,\phi)
	=\int_{[0,1]^2} \frac{1}{T^2} \sum_{i=1}^{T} \sum_{|i-i'|\leq m} \frac{1}{m^2} \sum_{t_1,\cdots, t_4=1}^{m}
	\cov\big(X_{t_1+i}^{(i/T)}(\tau),X_{t_3+i'}^{(i/T)}(\tau) \big)\\
	\times\cov\big(X_{t_2+i}^{(i/T)}(\phi),X_{t_4+i'}^{(i/T)}(\phi) \big)\diff(\tau,\phi)+\mathcal{O}(m^4 T^{-2}).
	\end{multline*}
	The inner sums of the integrand in the above display, can be rewritten as
	{\small\begin{align}\label{eq:m:cov}
	&\phantom{=\,}\sum_{i'=i-m}^{i+m} \frac{1}{m^2} \sum_{t_1,\cdots, t_4=1}^{m}
	\cov\big(X_{t_1+i}^{(i/T)}(\tau),X_{t_3+i'}^{(i/T)}(\tau) \big)\cov\big(X_{t_2+i}^{(i/T)}(\phi),X_{t_4+i'}^{(i/T)}(\phi) \big) \nonumber\\
	&= \sum_{i'=-m}^{m} \frac{1}{m^2} \sum_{t_1,\cdots, t_4=1}^{m}
	\cov\big(X_{0}^{(i/T)}(\tau),X_{t_3-t_1+i'}^{(i/T)}(\tau) \big)\cov\big(X_{0}^{(i/T)}(\phi),X_{t_4-t_2+i'}^{(i/T)}(\phi) \big) \nonumber \\
	&= \sum_{i',k_1,k_2=-m}^{m} \Big(1-\frac{|k_1|}{m}\Big)\Big(1-\frac{|k_2|}{m}\Big)
	\cov\big(X_{0}^{(i/T)}(\tau),X_{k_1+i'}^{(i/T)}(\tau) \big)\cov\big(X_{0}^{(i/T)}(\phi),X_{k_2+i'}^{(i/T)}(\phi) \big) \nonumber \\
	&= \sum_{i'=-m}^{m} \sum_{k_1,k_2=-m}^{m} \Big(1-\frac{|k_1|}{m}\Big)\Big(1-\frac{|k_2|}{m}\Big)
	\gamma_{k_1+i'}(i/T|\tau)\gamma_{k_2+i'}(i/T|\phi),
	\end{align}}where $\gamma_k(u|x)=\cov\big(X_0^{(u)}(x),X_k^{(u)}(x)\big)$, for any $k\in\N$ and $u,x\in[0,1]$. Let $\ell_m$ be an increasing sequence in $\N$, with $\ell_m\le m$, $\ell_m^2/m\to 0$ and $m^3a^{r\ell_m}\to 0$ as $m\to\infty$, for some $0<r<1$, as $m$ tends to infinity; for instance, $\ell_m=m^{1/3}$. By the strong mixing property and Lemma~3.11 in \citesuppl{DehPhi02}, we can rewrite the right-hand side of \eqref{eq:m:cov} as
	{\small\begin{align}\label{eq:m:cov2}
	&\phantom{ =\, }\sum_{i'=-m}^{m} \sum_{k_1,k_2=-i'-\ell_m}^{-i'+\ell_m} \Big(1-\frac{|k_1|}{m}\Big)\Big(1-\frac{|k_2|}{m}\Big)
	\gamma_{k_1+i'}(i/T|\tau)\gamma_{k_2+i'}(i/T|\phi)+ \mathcal{O}(m^3a^{r\ell_m})\nonumber\\
	&=\sum_{i'=-m}^{m} \sum_{k_1,k_2=-\ell_m}^{\ell_m} \Big(1-\frac{|k_1-i'|}{m}\Big)\Big(1-\frac{|k_2-i'|}{m}\Big)
	\gamma_{k_1}(i/T|\tau)\gamma_{k_2}(i/T|\phi)+ \mathcal{O}(m^3a^{r\ell_m})\nonumber\\
	&=\sum_{k_1,k_2=-\ell_m}^{\ell_m} \gamma_{k_1}(i/T|\tau)\gamma_{k_2}(i/T|\phi) \sum_{i'=-m}^{m}  \Big(1-\frac{|k_1-i'|}{m}\Big)\Big(1-\frac{|k_2-i'|}{m}\Big)
	+ \mathcal{O}(m^3a^{r\ell_m}). 
	\end{align} 
	}A tedious but straight-forward calculation based on splitting the next sum into the three cases $i'< k_1\wedge k_2$, $i' = k_1\wedge k_2 , \dots, k_1\vee k_2$  and $i'>k_1\vee k_2$ implies that 
	\begin{align*}
	\sum_{i'=-m}^{m}  \Big(1-\frac{|k_1\wedge k_2-i'|}{m}\Big)\Big(1-\frac{|k_1\vee k_2-i'|}{m}\Big)=\frac{2}{3}m + \mathcal{O}(\ell_m^2/m).
	\end{align*}
	Plugging this into \eqref{eq:m:cov2} leads, by the dominated convergence theorem and Lipschitz continuity of $\gamma_k(u|\tau)$ in $u$, to 	
	{\small\begin{align*}
	\int_{[0,1]^2} B_{2}(\tau,\phi)\diff(\tau,\phi)
	&= \int_{[0,1]^2} \frac{1}{T^2} \sum_{i=1}^{T} \sum_{k_1,k_2=-\ell_m}^{\ell_m} \frac{2}{3}m \gamma_{k_1}(i/T|\tau)\gamma_{k_2}(i/T|\phi)\diff(\tau,\phi) + o(m/T)\\ 
	&= \frac{m}{T} \frac{2}{3}\int_0^1 \bigg(\sum_{k=-\infty}^{\infty}  \int_0^1  \cov\big(X_0^{(w)}(\tau),X_{k}^{(w)}(\tau)\big) \diff \tau\bigg)^2  \diff w  + o(m/T),
	\end{align*}}since, by the strong mixing property, $\sum_{|k|>\ell_m} \int_0^1 \gamma_k(u|\tau)\diff \tau$ is of order $\mathcal{O}(a^{r\ell_m}).$
	By similar arguments, we have 
	{\small\[\int_{[0,1]^2} B_{3}(\tau,\phi)\diff(\tau,\phi)= \frac{m}{T} \frac{2}{3}\int_0^1 \bigg\| \sum_{k=-\infty}^{\infty} \cov(X_0^{(w)},X_k^{(w)}) \bigg\|_{2,2}^2 \diff w  + o(m/T)\]}
	and the theorem's statement follows.
\end{proof}

\section{Proofs for Section~\ref{sec:examples}}
	\label{app:proofex}

\begin{proof}[Proof of Lemma~\ref{lem:ar1}] Note that $\eps_{t,T}=\sigma(t/T)\tilde{\eps}_t = \sigma(0)\tilde{\eps}_t = \eps_{t}^{\scs (0)}$, for any $t\leq 0$. Further, $\ex \eps_{t}^{\scs (u)}=0$ and $\ex \|\eps_{t}^{\scs (u)}\|_2^2= \sigma^2(u)\ex \|\tilde \eps_{t}\|_2^2$, which is strictly greater than zero and finite.
 
\textit{Proof of (i):} Similar to the proof of Theorem 3.1 of \citesuppl{Bos00}, yet, with a random operator, we have
\begin{align*}
	 \|A_u^j(\eps_{u,t-j})\|_{2,\Omega \times [0,1]}
	 = \ex  \|A_u^j(\eps_{u,t-j})\|_{2}
	 \leq \ex [ \|A_u^j\|_\mathcal{L} \|\eps_{u,t-j}\|_2 ]
	 \leq C q^j \ex \|\eps_{u,t-j}\|_2 \leq C q^j
\end{align*}
since
\begin{align*}
	\|A_u^{j}\|_\mathcal{L}\leq \|A_u^{j}\|_\mathcal{S} \leq \|A_u\|_\mathcal{S}^{j}  \leq q^{j}
\end{align*}
by Equation (1.55) of \citesuppl{Bos00} and submultiplicativity of the Hilbert-Schmidt norm. 
We can now follow the proof of Theorem 3.1 of \citesuppl{Bos00} to deduce the assertions in~(i).


\textit{Proof of (ii):} Similarly as before, we have
\begin{align*}
	\Big\| \prod_{i=0}^{j-1} A_{\tfrac{t-i}{T}}(\eps_{t-j,T})\Big\|_{2,\Omega\times[0,1]}^2
	=\ex \Big\| \prod_{i=0}^{j-1} A_{\tfrac{t-i}{T}}(\eps_{t-j,T})\Big\|_2^2
	&\leq \ex \bigg[ \Big\| \prod_{i=0}^{j-1} A_{\tfrac{t-i}{T}}\Big\|_\mathcal{L}^2 \|\eps_{t-j,T}\|_2^2 \bigg]\\
	&\leq q^{2j} \ex \|\eps_{t-j,T}\|_2^2 \leq C q^{2j}
	\end{align*}
where, by convention, $\prod_{i=0}^{-1} A_{(t-i)/T}=\ident_{L^2([0,1])}$. Therefore, for $1\le m \le m'$,
{\small\begin{align*}
\Delta_m^{m'} = \Big\| \sum_{j=m}^{m'}\prod_{i=0}^{j-1}A_{\tfrac{t-i}{T}}(\eps_{t-j,T})\Big\|_{2,\Omega\times[0,1]}^2
\leq \Big(\sum_{j=m}^{m'}\Big\| \prod_{i=0}^{j-1}A_{\tfrac{t-i}{T}}(\eps_{t-j,T})\Big\|_{2,\Omega\times[0,1]}\Big)^2 \leq C \Big(\sum_{j=m}^{m'}q^j\Big)^2, 
\end{align*}
}and the right-hand side converges to zero as $m$ and $m'$ tend to infinity. As a consequence,   $\tilde Y_{t,T} := \sum_{j=0}^{\infty} \prod_{i=0}^{j-1} A_{(t-i)/{T}} (\eps_{t-j,T})$ converges in $L^2(\Omega \times [0,1],\pr \otimes \lambda)$ by the Cauchy criterion. Further, 
\begin{align}\label{eq:ar:lem}
\ex\|\tilde Y_{t,T}\|_2^2 
&\leq\ex\Big[\Big(\sum_{j=0}^{\infty}\big\|\prod_{i=0}^{j-1}A_{\tfrac{t-i}{T}}\big\|_\mathcal{L}\|\eps_{t-j,T}\|_2\Big)^2\Big] \\
&\leq \ex\Big[\Big(\sum_{j=0}^{\infty}q^j\|\eps_{t-j,T}\|_2\Big)^2\Big] \nonumber\\
&\le \Big(\frac1{1-q}\Big)^2 \Exp[\| \tilde \eps_0\|_2^2] \sup_{u\in[0,1]} \sigma^2(u) < \infty. \nonumber
\end{align} 
Hence $\sum_{j=0}^{\infty}\big\|\prod_{i=0}^{j-1}A_{(t-i)/T}\big\|_\mathcal{L}\|\eps_{t-j,T}\|_2<\infty$ almost surely, which implies almost sure convergence in $L^2([0,1])$ of the series defining $\tilde Y_{t,T}$ by the Riesz-Fisher theorem.

Finally, we have
\begin{align*}
\tilde Y_{t,T}-A_{t/T}(\tilde Y_{t-1,T})=\sum_{j=0}^{\infty}\prod_{i=0}^{j-1}A_{\tfrac{t-i}{T}} (\eps_{t-j,T})- A_{t/T}\Big(\sum_{j=0}^{\infty}\prod_{i=0}^{j-1}A_{\tfrac{t-1-i}{T}} (\eps_{t-1-j,T})\Big)=\eps_{t,T},
\end{align*}
whence $\tilde Y_{t,T}$ is a solution of \eqref{eq:ar} satisfying $\sup_{t\in\Z,T\in\N}\ex \|\tilde Y_{t,T}\|_2^2<\infty$  by \eqref{eq:ar:lem} and, as we will show below, is locally stationary of order $\rho=2$.  

Conversely, let $Z_{t,T}$ be a locally stationary solution of \eqref{eq:ar} of order $\rho=2$ which satisfies $\sup_{t\in\Z,T\in\N}\ex \|Z_{t,T}\|_2^2<\infty$. By induction, we have
\[ Z_{t,T}= \sum_{j=0}^{n} \prod_{i=0}^{j-1} A_{\tfrac{t-i}{T}}(\eps_{t-j,T}) + \prod_{i=0}^{n} A_{\tfrac{t-i}{T}}(Z_{t-n-1,T}). \]
Thus, 
		
\begin{align*}
\ex \Big\|Z_{t,T}- \sum_{j=0}^{n} \prod_{i=0}^{j-1} A_{\tfrac{t-i}{T}}(\eps_{t-j,T})\Big\|_2^2
&= \ex \Big\| \prod_{i=0}^{n} A_{\tfrac{t-i}{T}}(Z_{t-n-1,T}) \Big\|_2^2\\
&\leq \ex \bigg[ \prod_{i=0}^{n} \| A_{\tfrac{t-i}{T}}\|_\mathcal{S}^2 \|Z_{t-n-1,T}\|_2^2 \bigg]\\
&\leq q^{2(n+1)} \ex \|Z_{t-n-1,T}\|_2^2,
\end{align*}
which converges to zero as $n$ tends to infinity.

It remains to show that $\tilde Y_{t,T}$ is locally stationary of order $\rho=2$ with approximating family $\{(Y_t^{\scs (u)})_{t\in\Z}: u \in [0,1] \}$. Note that
\begin{align*}
\prod_{i=1}^n B_i - \prod_{i=1}^n C_i 
&= 
\sum_{m=1}^n \Big(\prod_{k=1}^{m-1} B_k \Big) (B_m - C_m) \Big(\prod_{k=m+1}^{n} C_k \Big) \\
&=
\sum_{m=1}^n \Big(\prod_{k=1}^{n-m} B_k \Big) (B_{n-m+1} - C_{n-m+1}) \Big(\prod_{k=n-m+2}^{n} C_k \Big)
\end{align*}
for all $n \in \N, B_i, C_i \in \Lc$, the empty product being defined as the identity on $L^2([0,1])$.
As a consequence
 {\small
 \begin{align*}
 Y_{t,T}-Y_t^{(u)}
 &= 
 \sum_{j=0}^{\infty} \prod_{i=0}^{j-1} A_{\tfrac{t-i}{T}}(\eps_{t-j,T}) - \sum_{j=0}^{\infty} A_u^j(\eps_{u,t-j})\\
 &= 
 \sum_{j=0}^{\infty} \prod_{i=0}^{j-1} A_{\tfrac{t-i}{T}}(\eps_{t-j,T}) - \prod_{i=0}^{j-1} A_{\tfrac{t-i}{T}}(\eps_{u,t-j}) + \prod_{i=1}^{j} A_{\tfrac{t-i+1}{T}}(\eps_{u,t-j}) - \prod_{i=1}^{j} A_u(\eps_{u,t-j})\\
 &= 
 \sum_{j=0}^{\infty} \prod_{i=0}^{j-1} A_{\tfrac{t-i}{T}}(\eps_{t-j,T}-\eps_{u,t-j}) + \sum_{m=1}^{j}\Big(\prod_{i=0}^{j-1-m} A_{\tfrac{t-i}{T}}\Big) (A_{\tfrac{t-j+m}{T}}-A_u)A_u^{m-1}(\eps_{u,t-j}).
 \end{align*}
}Since $\|A_u\|_\mathcal{S}\leq q$ for any $u\in[0,1]$ (with probability one) and $\|Ax\|_2 \leq \|A\|_\mathcal{S}\|x\|_2$ for any $A\in\mathcal{L},x \in L^2([0,1])$, it follows that
{\small
\begin{align*}
\|Y_{t,T}-Y_t^{(u)}\|_2
&\leq 
\sum_{j=0}^{\infty}\Big( q^j |\sigma\big(\tfrac{t-j}{T}\big)-\sigma(u)| \|\tilde \eps_{t-j}\|_2 + q^j\sum_{m=1}^{j} |a\big(\tfrac{t-j+m}{T}\big)-a(u)|\|\tilde \eps_{t-j} \|_2\Big)\\
&\leq 
\sum_{j=0}^{\infty}\Big( q^j \big(\big|\tfrac{t}{T}-u\big|+\tfrac{j}{T}\big)\|\tilde \eps_{t-j}\|_2 + q^j\sum_{m=1}^{j} \big(\big|\tfrac{t}{T}-u\big|+\tfrac{j-m}{T}\big) \|\tilde \eps_{t-j} \|_2\Big)\\
&\leq 
C \sum_{j=0}^{\infty}q^j\|\tilde \eps_{t-j}\|_2 \big((j+1)\big|\tfrac{t}{T}-u\big|+\tfrac{j^2}{T}\big) \\
&\leq 
C \big(\big|\tfrac{t}{T}-u\big|+\tfrac{1}{T}\big) \sum_{j=0}^{\infty}q^j (j+1)^2\|\tilde \eps_{t-j}\|_2.
\end{align*}
}The assertion finally follows from the fact that $P_{t,T}^{\scs (u)}= \sum_{j=0}^{\infty}q^j (j+1)^2\|\tilde \eps_{t-j}\|_2$ has a finite second moment.
\end{proof}

	\section{Auxiliary results for the proofs in Section~\ref{sec:dev1}} 
	\label{app:aux34}

\begin{lemma} \label{lem:nuc}
$C_\Bb$ is a symmetric, positive trace class operator. 
As a consequence (Theorem 1.2.5 of \citealpsuppl{ManRha2004}),  $\Bb$ is a Gaussian random variable in $\Hc_{H+2}$.
\end{lemma}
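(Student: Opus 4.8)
The plan is to establish the three defining properties of a Gaussian covariance operator---symmetry, positivity and the trace class property---and then to conclude with Theorem~1.2.5 of \citealpsuppl{ManRha2004}. Throughout, write $F=(g,f_0,\dots,f_H)\in\Hc_{H+2}$ and fix orthonormal bases $\{e_j\}_j$ of $L^2([0,1]^2)$ and $\{\tilde e_j\}_j$ of $L^2([0,1]^3)$; their disjoint union, with the latter counted $H+1$ times, is an orthonormal basis of $\Hc_{H+2}$. \emph{Symmetry} is read off directly from the block form of $C_\Bb$: each diagonal kernel $r^{(m)}$ and $r^{(c)}_{h,h}$ is symmetric in its two arguments because $u\wedge v$ and the underlying covariances are, while the off-diagonal blocks are populated by $r^{(m,c)}_h$ and its transpose, so that $\langle C_\Bb F,G\rangle=\langle F,C_\Bb G\rangle$ for all $F,G\in\Hc_{H+2}$.

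For \emph{positivity} I would exploit the identity $\int_0^{u\wedge v}=\int_0^1\id(w\le u)\id(w\le v)\diff w$ to pull the $w$-integration outside the quadratic form. Setting $G_w(\tau)=\int_w^1 g(u,\tau)\diff u$ and $F_{h,w}(\tau_1,\tau_2)=\int_w^1 f_h(u,\tau_1,\tau_2)\diff u$, Fubini's theorem (justified by the absolute summability established below) yields
\[
\langle C_\Bb F,F\rangle=\int_0^1\sum_{k=-\infty}^{\infty}\cov\big(Z_0^{(w)},Z_k^{(w)}\big)\,\diff w,
\qquad
Z_t^{(w)}=\langle X_t^{(w)},G_w\rangle+\sum_{h=0}^H\langle X_t^{(w)}\otimes X_{t+h}^{(w)},F_{h,w}\rangle ,
\]
where the expansion of $\cov(Z_0^{(w)},Z_k^{(w)})$ reproduces exactly the kernels $c_{k,1},c_{k,2},c_{k,3}$ (including the cross terms, with the correct multiplicity by stationarity). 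For each fixed $w$, $\{Z_t^{(w)}\}_{t\in\Z}$ is a real stationary sequence and the inner sum is its long-run variance $\lim_N N^{-1}\var(\sum_{t=1}^N Z_t^{(w)})\ge 0$; integrating this nonnegative integrand over $w\in[0,1]$ gives $\langle C_\Bb F,F\rangle\ge 0$.

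The \emph{trace class} property is the main obstacle, and it is here that Assumption~\ref{cond:cum} enters. Since $C_\Bb$ is positive and self-adjoint it suffices to show $\sum_j\langle C_\Bb e_j,e_j\rangle<\infty$; as every basis vector lies in a single factor, this sum splits over the diagonal blocks, and each block contribution is dominated by the integral of the absolute value of the diagonal of the corresponding kernel. For the mean block, Fubini together with $\int_0^u\le\int_0^1$ gives
\begin{align*}
\int_{[0,1]^2} r^{(m)}((u,\tau),(u,\tau))\,\diff(u,\tau)
&\le \int_0^1\sum_{k=-\infty}^{\infty}\int_0^1\big|\cov\big(X_0^{(w)}(\tau),X_k^{(w)}(\tau)\big)\big|\,\diff\tau\,\diff w ,
\end{align*}
and the inner sum is finite uniformly in $w$ by the summable envelope $\eta_2$ of Assumption~\ref{cond:cum}(iv), transferred to the stationary approximations via parts (i)--(iii) and local stationarity. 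The autocovariance blocks are handled identically, now bounding $\int_{[0,1]^2}\big|\cov\big(X_0^{(w)}(\tau_1)X_h^{(w)}(\tau_2),X_k^{(w)}(\tau_1)X_{k+h}^{(w)}(\tau_2)\big)\big|\,\diff(\tau_1,\tau_2)$ by the same envelope. Summing the $H+2$ finite contributions shows $\mathrm{tr}(C_\Bb)<\infty$. Having verified symmetry, positivity and the trace class property, Theorem~1.2.5 of \citealpsuppl{ManRha2004} then furnishes the centred Gaussian variable $\Bb$ in $\Hc_{H+2}$ with covariance operator $C_\Bb$, as claimed.
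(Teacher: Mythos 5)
Your symmetry and positivity arguments follow essentially the same route as the paper's proof: symmetry is a direct verification on the block kernels, and positivity is obtained by writing $\int_0^{u\wedge v}\diff w=\int_0^1\id(w\le u)\id(w\le v)\diff w$, pulling the $w$-integral out of the quadratic form, and recognizing the inner sum over $k$ as a long-run variance of a real stationary sequence (the paper's $Y_k(w)+Z_k(w)$ construction, with the variance-plus-$\mathcal{O}(n^{-1})$ justification that the Ces\`aro limit is nonnegative). Those two parts are fine.

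The trace class part, however, has a genuine gap. You bound $\sum_j\langle C_\Bb e_j,e_j\rangle$ by ``the integral of the absolute value of the diagonal of the corresponding kernel'', i.e.\ you use $\|C^{(m)}\|_\Nc\le\int_{[0,1]^2}|r^{(m)}((u,\tau),(u,\tau))|\diff(u,\tau)$. That is a Mercer-type step, and it needs regularity that is not available here: the approximating processes $X_t^{(u)}$ are only random elements of $L^2([0,1])$, with no path continuity assumed, so the kernels $c_{k,1}(w,\tau,\phi)=\cov\big(X_0^{(w)}(\tau),X_k^{(w)}(\phi)\big)$ are defined only up to $(\tau,\phi)$-null sets; the diagonal $\{\tau=\phi\}$ is such a null set, so the ``diagonal integral'' is not a well-defined functional of the operator, and its finiteness (for one choice of representative) does not imply that the positive operator is trace class. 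If you instead try to compute $\sum_\ell\langle C^{(m)}\psi_\ell,\psi_\ell\rangle$ by expanding in a basis, you face $\sum_\ell\sum_k\ex[\langle Y_0,\psi_\ell\rangle\langle Y_k,\psi_\ell\rangle]$ with $Y_t=X_t^{(w)}-\ex X_t^{(w)}$, and Cauchy--Schwarz in $\ell$ only yields the bound $(\ex\|Y_0\|_2^2)^{1/2}(\ex\|Y_k\|_2^2)^{1/2}=\ex\|Y_0\|_2^2$, which is not summable in $k$; the envelope of Assumption~\ref{cond:cum}(iv) controls the integrated diagonal covariance, not these basis coefficients. The paper avoids the kernel diagonal altogether: by the covariance convergence established for Proposition~\ref{PropCovConv}, each diagonal entry satisfies $\langle C^{(m)}\psi_\ell,\psi_\ell\rangle=\lim_{T\to\infty}\ex[\langle\tilde B_T,\psi_\ell\rangle^2]$ with nonnegative terms, so Fatou's lemma and Parseval give $\|C^{(m)}\|_\Nc\le\liminf_{T\to\infty}\ex\|\tilde B_T\|_{2,2}^2$, and this finite-$T$ second moment is bounded by $T^{-1}\sum_{t_1,t_2=1}^{T}\eta_2(t_2-t_1)\le C$ directly from Assumption~\ref{cond:cum}. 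That reduction to a pre-limit quantity (where the cumulant envelope can legitimately be applied) is the step your argument is missing.
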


\begin{proof}
To ensure readability, we will denote the scalar product of $L^2([0,1]^p)$ by $\langle\cdot,\cdot\rangle_p,p=2,3$ and consider the case $H=0$ only. The arguments for $H\geq 1$ are the same, yet notationally more involved.
	
	\textit{Symmetry:} Let $(g_1,f_1)$ and $(g_2,f_2)$ be elements in $\Hc_{2}$. Then,  
	\begin{align}\label{eq:symOp}
	\langle C_\Bb(g_1,f_1),(g_2,f_2)\rangle 
	&= \big\langle \langle r^{(m)}(\ast,\cdot),g_1(\cdot)\rangle_2,g_2(\ast)\big\rangle_2 \nonumber
	+ \big\langle \langle r_0^{(m,c)}(\ast,\cdot),f_1(\cdot)\rangle_3,g_2(\ast)\big\rangle_2\\ 
	&+ \big\langle \langle r_0^{(m,c)}(\ast,\cdot),g_1(\ast)\rangle_2,f_2(\cdot)\big\rangle_3
	+ \big\langle \langle r_{0,0}^{(c)}(\ast,\cdot),f_1(\cdot)\rangle_3,f_2(\ast)\big\rangle_3. 
	\end{align}
	Now, writing $u=(u_1,u_2)$ and $v=(v_1, v_2)$,
	\begin{align*}
	&\big\langle \langle r^{(m)}(\ast,\cdot),g_1(\cdot)\rangle_2,g_2(\ast)\big\rangle_2\\
	&= \int_{[0,1]^4} \sum_{k=-\infty}^{\infty}\int_0^{u_1\wedge v_1} \cov\big(X_0^{(w)}(u_2),X_k^{(w)}(v_2)\big) \diff w g_1(v)g_2(u)\diff(u,v)\\
	&= \int_{[0,1]^4} \sum_{k=-\infty}^{\infty}\int_0^{u_1\wedge v_1} \cov\big(X_0^{(w)}(u_2),X_k^{(w)}(v_2)\big) \diff w g_1(u)g_2(v)\diff(u,v)\\
	&= \big\langle g_1(\ast), \langle r^{(m)}(\ast,\cdot),g_2(\cdot)\rangle_2\big\rangle_2
	\end{align*}
	and similarly $\big\langle \langle r_{0,0}^{(c)}(\ast,\cdot),f_1(\cdot)\rangle_3,f_2(\ast)\big\rangle_3=\big\langle f_1(\ast), \langle r_{0,0}^{(c)}(\ast,\cdot),f_2(\cdot)\rangle_3\big\rangle_3$. Further, writing $u=(u_1,u_2)$ and $v=(v_1, v_2, v_3)$, we have
	\begin{align*}
	&\big\langle \langle r_0^{(m,c)}(\ast,\cdot),f_1(\cdot)\rangle_3,g_2(\ast)\big\rangle_2+ \big\langle \langle r_0^{(m,c)}(\ast,\cdot),g_1(\ast)\rangle_2,f_2(\cdot)\big\rangle_3\\
	&= \int_{[0,1]^5} \sum_{k=-\infty}^\infty \int_0^{u_1\wedge v_1} \cov\big(X_0^{(w)}(u_2),X_k^{(w)}(v_2)X_{k+h}^{(w)}(v_3)\big)\\
	&\hspace{7cm}\times\big(f_1(v)g_2(u)+f_2(v)g_1(u)\big)\diff w\diff(u,v)\\
	&=\int_{[0,1]^5} \sum_{k=-\infty}^\infty \int_0^{u_1\wedge v_1} \cov\big(X_0^{(w)}(u_2),X_k^{(w)}(v_2)X_{k+h}^{(w)}(v_3)\big)\\
	&\hspace{7cm}\times\big(g_1(u)f_2(v)+g_2(u)f_1(v)\big)\diff w\diff(u,v)\\
	&= \big\langle g_1(\ast), \langle r_0^{(m,c)}(\ast,\cdot),f_2(\cdot)\rangle_3\big\rangle_2+ \big\langle f_1(\cdot), \langle r_0^{(m,c)}(\ast,\cdot),g_2(\ast)\rangle_2\big\rangle_3.
	\end{align*}
	Thus, by \eqref{eq:symOp}, it follows $\langle C_\Bb(g_1,f_1),(g_2,f_2)\rangle = \langle (g_1,f_2),C_\Bb(g_2,f_2)\rangle$.
	
	\textit{Positivity:}
	The positivity of $C_\Bb$ can be seen by similar elementary calculations. Let $(g,f)$ be in $\Hc_{2}$ and observe that
	\begin{align}\label{eq:posOp}
	&\langle C_\Bb(g,f),(g,f)\rangle \nonumber \\
	&= \int_0^1 \sum_{k=-\infty}^{\infty}\bigg\{
		\int_{[0,1]^4} \id(w\leq u_1\wedge v_1) \cov\big(X_0^{(w)}(u_2),X_k^{(w)}(v_2)\big)g(u)g(v)\diff(u,v)\nonumber  \\
		&\phantom{=}+ \int_{[0,1]^5} \id(w\leq u_1\wedge v_1) \cov\big(X_0^{(w)}(u_2),X_k^{(w)}(v_2)X_{k+h}^{(w)}(v_3)\big)\big(f(v)g(u)+g(u)f(v)\big)\diff(u,v)\nonumber  \\
		&\phantom{=}+ \int_{[0,1]^6} \id(w\leq u_1\wedge v_1) \cov\big(X_0^{(w)}(u_2)X_h^{(w)}(u_3),X_k^{(w)}(v_2)X_{k+h}^{(w)}(v_3)\big)f(u)f(v)\diff(u,v) \bigg\} \diff w\nonumber  \\
	&= \int_0^1 \sum_{k=-\infty}^{\infty} \ex[Y_0(w)Y_k(w)+2Y_0(w)Z_k(w)+Z_0(w)Z_k(w)]\diff w \nonumber \\
	&= \int_0^1 \sum_{k=-\infty}^{\infty} \ex[Y_0(w)\big(Y_k(w) + Z_k(w)\big)+\big(Y_0(w)+Z_0(w)\big)Z_k(w)]\diff w,
	\end{align}
	where, for $k\in\Z$ and $w\in[0,1]$, 
	\begin{align*}
	Y_k(w)&=\int_{[0,1]^2} \id(w\leq u_1)g(u)(X_k^{(w)}(u_2)-\ex[X_k^{(w)}(u_2)])\diff u\\
	Z_k(w)&=\int_{[0,1]^3} \id(w\leq u_1)f(u)(X_k^{(w)}(u_2)X_{k+h}^{(w)}(u_3)-\ex[X_k^{(w)}(u_2)X_{k+h}^{(w)}(u_3)])\diff u.
	\end{align*}
	As $Y_k$ and $Z_k$ are defined based on a family of stationary processes, we may write $\ex[(Y_0(w)+Z_0(w))Z_k(w)]=\ex[(Y_{-k}(w)+Z_{-k}(w))Z_0(w)]$. As the summation runs over all $k\in\Z$, we can rewrite the right-hand side of \eqref{eq:posOp} as
	\begin{align*}
	&\int_0^1 \sum_{k=-\infty}^{\infty} \ex[Y_0(w)\big(Y_k(w) + Z_k(w)\big)+\big(Y_k(w)+Z_k(w)\big)Z_0(w)]\diff w\\
	&= \int_0^1 \sum_{k=-\infty}^{\infty} \ex[\big(Y_0(w)+Z_0(w)\big)\big(Y_k(w) + Z_k(w)\big)]\diff w\\
	&= \int_0^1 \lim_{n\to\infty} \sum_{k=-n}^{n} \ex[\big(Y_0(w)+Z_0(w)\big)\big(Y_k(w) + Z_k(w)\big)]\diff w,
	\end{align*}
	which is non-negative since
	\begin{align*}
	&\sum_{k=-n}^{n} \ex[\big(Y_0(w)+Z_0(w)\big)\big(Y_k(w) + Z_k(w)\big)]\\
	&= \ex\bigg[\bigg(\frac{1}{\sqrt{n}}\sum_{k=0}^{n} Y_k(w)+Z_k(w) \bigg)^2 \bigg] + \frac{1}{n} \sum_{k=-n}^{n} |k| \ex[\big(Y_0(w)+Z_0(w)\big)\big(Y_k(w)+ Z_k(w)\big)]\\
	&= \var\bigg(\frac{1}{\sqrt{n}}\sum_{k=0}^{n} Y_k(w)+Z_k(w) \bigg) + \mathcal{O}(n^{-1})
	\end{align*}
	by Assumption~\ref{cond:cum}.
	
	\textit{Trace class:}
	Let $(\psi_\ell^{\scs (1)})_{\ell\in\N}$ and $(\psi_\ell^{\scs (2)})_{\ell\in\N}$ be orthonormal bases of $L^2([0,1]^2)$ and $L^2([0,1]^3)$ respectively. Then the union $\{(\psi_\ell^{(\scs 1)},0)\}_{\ell\in\N} \cup \{(0,\psi_\ell^{\scs (2)})\}_{\ell\in\N}$ is an orthonormal basis of $\Hc_{2}$. By the definition of the trace norm, we have
	\begin{align*}
	\|C_\Bb\|_\Nc &= \sum_{\ell=1}^\infty \langle C_\Bb (\psi_\ell^{(1)},0),(\psi_\ell^{(1)},0)\rangle + \sum_{\ell=1}^\infty \langle C_\Bb (0,\psi_\ell^{(2)}),(0,\psi_\ell^{(2)})\rangle\\
	&= \sum_{\ell=1}^{\infty} \big\langle \langle r^{(m)}(\ast,\cdot),\psi_{\ell}^{(1)}(\cdot)\rangle_2,  \psi_\ell^{(1)}(\ast) \big\rangle_2 + \big\langle \langle r_{0,0}^{(c)}(\ast,\cdot),\psi_{\ell}^{(2)}(\cdot)\rangle_3,  \psi_\ell^{(2)}(\ast) \big\rangle_3\\
	&= \|C^{(m)}\|_\Nc + \|C_0^{(c)}\|_\Nc,
	\end{align*}
	where $C^{(m)}$ and $C_0^{(c)}$ are the operators defined by the kernels $r^{(m)}$ and $r_{0,0}^{(c)}$ respectively. By the proof of (D3) in the proof of Proposition \ref{PropCovConv}, Fatou's lemma and Fubini's theorem,
	\begin{align*}
	\|C^{(m)}\|_\Nc
	&= \sum_{\ell=1}^{\infty} \langle C^{(m)} \psi_\ell^{(1)}, \psi_\ell^{(1)} \rangle\\
	&= \sum_{\ell=1}^{\infty} \int_{[0,1]^4} \sum_{k=-\infty}^{\infty} \int_0^{u\wedge v} \cov\big(X_0^{(w)}(\phi),X_k^{(w)}(\tau)\big)\diff w \psi_\ell^{(1)}(\phi, v) \psi_\ell^{(1)}(\tau, u)\diff(u,v,\tau,\phi)\\
	&= \sum_{\ell=1}^{\infty} \lim\limits_{T\to\infty} \cov\big(\langle \tilde{B}_T,\psi_\ell^{(1)}\rangle , \langle \tilde{B}_T,\psi_\ell^{(1)}\rangle \big)\\
	&= \sum_{\ell=1}^{\infty} \lim\limits_{T\to\infty} \ex[\langle \tilde{B}_T,\psi_\ell^{(1)}\rangle^2]\\
	&\leq \liminf_{T\to\infty}  \sum_{\ell=1}^{\infty} \ex[\langle \tilde{B}_T,\psi_\ell^{(1)}\rangle^2]\\
	&= \liminf_{T\to\infty} \ex\bigg[\sum_{\ell=1}^{\infty}\langle \tilde{B}_T,\psi_\ell^{(1)}\rangle^2\bigg]\\
	&= \liminf_{T\to\infty} \ex\|\tilde{B}_T\|_{2,2}^2,
	\end{align*}
	which is finite since for any $T\in\N$ since
	\begin{align*}
	\ex\|\tilde{B}_T\|_{2,2}^2
	&= \ex\bigg[\int_{[0,1]^2} \tilde{B}_T^2(u,\tau)\diff(u,\tau) \bigg]\\
	&= \int_{[0,1]^2} \ex[\tilde{B}_T^2(u,\tau)]\diff(u,\tau)\\
	&= \int_{[0,1]^2} \ex\bigg[\frac{1}{T}\sum_{t_1,t_2=1}^{\lfloor uT\rfloor}\big(X_{t_1,T}(\tau)-\ex X_{t_1,T}(\tau)\big)\big(X_{t_2,T}(\tau)-\ex X_{t_2,T}(\tau)\big) \bigg] \diff(u,\tau)\\
	&= \int_{[0,1]^2} \frac{1}{T}\sum_{t_1,t_2=1}^{\lfloor uT\rfloor}\cov\big(X_{t_1,T}(\tau),X_{t_2,T}(\tau)\big) \diff(u,\tau)\\
	&\leq \frac{1}{T}\sum_{t_1,t_2=1}^{T}\int_{[0,1]} |\cov\big(X_{t_1,T}(\tau),X_{t_2,T}(\tau)\big)|\diff\tau \\
	&\leq \frac{1}{T}\sum_{t_1,t_2=1}^{T} \nu_2(t_2-t_1)\leq C < \infty.
	\end{align*}
	By similar arguments, it follows that $\|C_0^{(c)}\|_\Nc\leq C$, thus $\|C_\Bb\|_\Nc<\infty$.
\end{proof}

\begin{lemma} \label{lem:lip}
Suppose that $\{X_{t,T}: t=1, \dots, T\}_{T\in\N}$ is a locally stationary time series of order $\rho\ge 1$. Then, for any $1\le p \le \rho$, 
	\begin{equation*} 
	\ex\big[\|X_t^{(u)}-X_t^{(v)}\|_2^p \big]\leq C_p|u-v|^p \qquad \forall\ u,v \in [0,1],
	\end{equation*}	 
	where $C_p=2^{p-1} \sup_{t=1, \dots, T, T\in \N, u \in [0,1]} \ex |P_{t,T}^{\scs (u)}|^p$.
\end{lemma}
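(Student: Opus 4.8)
The plan is to estimate the difference $X_t^{(u)}-X_t^{(v)}$ by passing through the underlying locally stationary process $X_{t,T}$, applying the defining inequality \eqref{eq:ls} once for the parameter $u$ and once for $v$ at a common time index, and then choosing that index so that the two approximation errors add up to at most $|u-v|$. First I would dispose of the trivial case $u=v$, where both sides vanish, and assume without loss of generality that $u<v$. The first genuine step is to note that, by strict stationarity of the approximating family, the quantity $\ex\|X_t^{(u)}-X_t^{(v)}\|_2^p$ does not depend on the time index $t$ — this is the same (joint) shift-invariance of the pair $(X_t^{(u)},X_t^{(v)})_{t\in\Z}$ that is already used implicitly in the Remark following the definition of local stationarity. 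Consequently it suffices to bound the expectation at one conveniently chosen index, and I would pick, for each $T\in\N$, the index $t_T=\lfloor vT\rfloor$, which lies in $\{1,\dots,T\}$ for all large $T$ (recall $0<v\le 1$), so that \eqref{eq:ls} applies there.

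The core estimate is then a double application of the triangle inequality together with \eqref{eq:ls}:
\begin{align*}
\|X_{t_T}^{(u)}-X_{t_T}^{(v)}\|_2
&\le \|X_{t_T}^{(u)}-X_{t_T,T}\|_2+\|X_{t_T,T}-X_{t_T}^{(v)}\|_2\\
&\le \Big(\big|\tfrac{t_T}{T}-u\big|+\tfrac1T\Big)P_{t_T,T}^{(u)}+\Big(\big|\tfrac{t_T}{T}-v\big|+\tfrac1T\Big)P_{t_T,T}^{(v)}.
\end{align*}
Raising to the $p$-th power with the convexity bound $(a+b)^p\le 2^{p-1}(a^p+b^p)$, taking expectations, and using that $\ex|P_{t,T}^{(\cdot)}|^p\le \sup_{t,T,u}\ex|P_{t,T}^{(u)}|^p=C_p/2^{p-1}$ (finite since $p\le\rho$, by Lyapunov's inequality), I obtain
\[
\ex\|X_{t_T}^{(u)}-X_{t_T}^{(v)}\|_2^p
\le C_p\Big[\Big(\big|\tfrac{t_T}{T}-u\big|+\tfrac1T\Big)^p+\Big(\big|\tfrac{t_T}{T}-v\big|+\tfrac1T\Big)^p\Big].
\]
Since $t_T/T\to v$, the bracket converges to $(v-u)^p+0=|u-v|^p$, while the left-hand side is constant in $T$ by the stationarity reduction; letting $T\to\infty$ therefore yields $\ex\|X_t^{(u)}-X_t^{(v)}\|_2^p\le C_p|u-v|^p$ for every $t$.

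The one point requiring care — and the only place where more than routine manipulation enters — is the very first reduction, namely that $\ex\|X_t^{(u)}-X_t^{(v)}\|_2^p$ is independent of $t$. This is precisely the joint shift-invariance of the two approximating sequences at parameters $u$ and $v$; I would either invoke it as the same convention already used in the Remark, or, equivalently, observe that evaluating the norm at $t_T=\lfloor vT\rfloor$ merely amounts to applying a shift-invariant functional, so no genuine $t$-dependence survives. Everything else is elementary: the $c_r$-inequality, Lyapunov's inequality for the uniform $p$-th moment bound on $P_{t,T}^{(u)}$, and the elementary fact that $x^p+y^p\le(x+y)^p$ for $p\ge1$ and $x,y\ge0$ — which incidentally shows that any anchor $t_T/T\to w\in[u,v]$ would serve equally well, the endpoint $w=v$ being simply the most convenient choice.
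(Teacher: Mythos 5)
Your proof is correct and follows essentially the same route as the paper's: insert $X_{t_T,T}$ via the triangle inequality, apply the defining inequality \eqref{eq:ls} twice at a common anchor index (the paper uses $\lfloor uT\rfloor$, you use $\lfloor vT\rfloor$ — immaterial), invoke the $c_r$-inequality and the uniform $p$-th moments of $P_{t,T}^{(u)}$, and let $T\to\infty$. If anything, you are more careful than the paper, which writes its bound with $X_t^{(u)}$ against $X_{\lfloor uT\rfloor,T}$ and thereby leaves implicit exactly the shift-invariance reduction that you spell out.
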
 

\begin{proof} By the triangle  inequality and convexity of $x\mapsto |x|^p$, 
	\begin{align*} 
	\ex\Big[\|X_t^{(u)}-X_t^{(v)}\|_2^p  \Big]
	&\leq 
	2^{p-1} 
	\ex\Big [\|X_t^{(u)}-X_{\lfloor uT\rfloor,T}\|_2^p+\|X_{\lfloor uT\rfloor,T}-X_t^{(v)}\|_2^p \Big]  \\
	&\leq  
	C_p \big(|u-v|+\tfrac{4}{T}\big)^p ,
	\end{align*}
	for any $T\in\naturals$.
	\end{proof}

	Recall the notations introduced in Section~\ref{subsec:boot}. For $k\in\naturals$, define 
\begin{align} \label{eq:tildeb}
\tilde{B}_{T}^{(k)}(u,\tau)
=
\frac{1}{\sqrt{T}} \sum_{i=1}^{\lfloor uT\rfloor}\frac{R_i^{(k)}}{\sqrt{m}}\sum_{t=i}^{(i+m-1)\wedge T} \big\{ X_{t,T}(\tau)- \mu_{t,T}(\tau) \big\},
\end{align}
where $\mu_{t,T}(\tau) = \Exp[X_{t,T}(\tau)]$
and, for any $0\leq h\leq H$, let
\begin{align} \label{eq:tildebh}
\tilde{B}_{T,h}^{(k)}(u,\tau_1,\tau_2)
=
\frac{1}{\sqrt{T}} \sum_{i=1}^{\lfloor uT\rfloor \wedge (T-h)}\frac{R_i^{(k)}}{\sqrt{m}}\sum_{t=i}^{(i+m-1)\wedge (T-h)} \big\{X_{t,T}(\tau_1)X_{t+h,T}(\tau_2)- \mu_{t,T,h}(\tau_1, \tau_2)\big\},
\end{align}
where $\mu_{t,T,h}(\tau_1, \tau_2) = \ex[X_{t,T}(\tau_1)X_{t+h,T}(\tau_2)]$. Finally, let
\[
\Bb_T^{(k)} = (\tilde{B}_{T}^{(k)},\tilde{B}_{T,0}^{(k)},\dots ,\tilde{B}_{T,H}^{(k)}).
\]
We then have the following joint asymptotic behaviour of the primary process $\Bb_T$ and the non-observable multiplier versions $\Bb_T^{\scs (k)}$. Note that Theorem~\ref{theo:main} is an immediate consequence.

\begin{theorem}\label{bootstrapThm}
	Suppose that Assumptions~\ref{cond:ls}--\ref{cond:cum} and \ref{eq:b1} and \ref{eq:b3} are met. Then, for any fixed $K\in\N$,
	\[
	\big(\mathbb{B}_T,\mathbb{B}_{T}^{(1)},\dots ,\mathbb{B}_{T}^{(K)}\big)
	\convw 
	\big(\mathbb{B},\mathbb{B}^{(1)},\dots ,\mathbb{B}^{(K)}\big)
	\]
	in $\{ L^2([0,1]^2)\times(L^2([0,1]^3))^{H+1}\}^{K+1}$, where $\mathbb{B}^{ (1)}, \dots, \mathbb{B}^{ (K)}$ are independent copies of the centred Gaussian variable $\mathbb{B}$ from Theorem~\ref{theo:main} (see also Lemma~\ref{lem:nuc}).
\end{theorem}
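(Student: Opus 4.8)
The plan is to derive the joint convergence in the product Hilbert space $\{L^2([0,1]^2)\times (L^2([0,1]^3))^{H+1}\}^{K+1}$ from the approximation Lemma~\ref{lem:app}, applied to the $(H+2)(K+1)$ scalar coordinate processes obtained by stacking the components of $\mathbb{B}_T$ and of the $K$ multiplier copies $\mathbb{B}_T^{(1)},\dots,\mathbb{B}_T^{(K)}$, expanded with respect to fixed orthonormal bases of $L^2([0,1]^2)$ and $L^2([0,1]^3)$. This reduces the theorem to two tasks: (i) convergence of every finite-dimensional projection onto finitely many basis functions to the corresponding Gaussian law, and (ii) the tail-negligibility hypothesis~(2) of Lemma~\ref{lem:app}. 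For (i) I would prove a multivariate central limit theorem for the projected real vectors by the method of cumulants; the limit is Gaussian, hence determined by its cumulants, so it suffices to check that first cumulants vanish (immediate, since all processes are centred, $\ex[R_i^{(k)}]=0$ rendering each $\mathbb{B}_T^{(k)}$ centred), that second cumulants converge to the target covariances, and that all cumulants of order at least three tend to zero.

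The second-cumulant analysis is where the limit is identified. The covariances internal to $\mathbb{B}_T$ converge to the kernels $r^{(m)},r^{(c)}_{h,h'},r^{(m,c)}_h$ of Theorem~\ref{theo:main} by the standard long-run-variance computation: rewrite each double sum as a sum over lags with a Riesz-type average in the rescaled time $w$, pass from $X_{t,T}$ to the approximating family $X_t^{(w)}$ using local stationarity~\ref{cond:ls} and the bounds in~\ref{cond:cum}, and interchange limit and summation. The covariances internal to each $\mathbb{B}_T^{(k)}$ reproduce the same kernels, since $\ex[R_i^{(k)}R_j^{(k)}]=\delta_{ij}$ turns the normalised block $m^{-1/2}\sum_{t=i}^{i+m-1}(\cdot)$ into a long-run-variance estimator that converges to the same limit as $m\to\infty$ and $m/T\to0$ under~\ref{eq:b1}. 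The crucial simplification is that all cross-covariances vanish identically for every finite $T$: centredness of the multipliers forces $\cov(\mathbb{B}_T,\mathbb{B}_T^{(k)})=0$, and independence of $R_i^{(k)}$ and $R_j^{(k')}$ for $k\ne k'$ forces $\cov(\mathbb{B}_T^{(k)},\mathbb{B}_T^{(k')})=0$. Once joint Gaussianity is established, these vanishing cross-covariances upgrade to genuine independence, identifying the limit as $(\mathbb{B},\mathbb{B}^{(1)},\dots,\mathbb{B}^{(K)})$ with the $\mathbb{B}^{(k)}$ independent copies of $\mathbb{B}$.

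The main obstacle is showing that the joint cumulants of order $r\ge 3$ vanish. By multilinearity, such a cumulant of the projected coordinates expands into terms each carrying a joint cumulant of the underlying $X$-increments together with a moment of the multipliers; whenever some index $k$ occurs an odd number of times the term vanishes, because the standard Gaussians $R_i^{(k)}$ have vanishing odd moments and are independent across $(i,k)$, while in the surviving terms the $X$-cumulants of order at least three are summable by~\ref{cond:cum} and the combined $T^{-1/2}$ and $m^{-1/2}$ normalisations, together with $m\to\infty$ and $m/T\to0$, render each term of vanishing order. Equivalently, conditionally on the data each $\mathbb{B}_T^{(k)}$ is exactly Gaussian, which annihilates the purely multiplier-type higher cumulants and leaves only the mixed and pure-$X$ contributions to be controlled. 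I expect the delicate part to be the bookkeeping of which blocks overlap and how the normalising factors combine when invoking the summability in~\ref{cond:cum}.

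Finally, for task~(ii) I would verify the tail condition by a uniform trace bound: the tail $\ex\sum_{\ell>n}\langle\cdot,\psi_\ell\rangle^2$ of each component is bounded by $\ex$ of its squared-norm tail, which is uniformly summable in $T$ by exactly the moment and covariance estimates appearing in the trace-class part of Lemma~\ref{lem:nuc} (and which are uniform in $T$ by~\ref{cond:mom}). Letting first $T\to\infty$ and then $n\to\infty$ yields the required negligibility, so that Lemma~\ref{lem:app} applies and delivers the claimed joint weak convergence; Theorem~\ref{theo:main} is then recovered by projecting onto the first copy.
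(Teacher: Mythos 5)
Your overall architecture coincides with the paper's own proof: reduce the statement to Lemma~\ref{lem:app}, verify its hypothesis (1) by a Cram\'er--Wold/method-of-cumulants argument (in the paper this is Lemma~\ref{lem:fidis}, split into Proposition~\ref{PropCovConv} for second moments and Proposition~\ref{PropCumConv} for cumulants of order $\ge 3$), and verify hypothesis (2) separately. Your covariance analysis -- internal covariances of $\mathbb{B}_T$ and of each $\mathbb{B}_T^{(k)}$ converging to the same kernels, all cross-covariances vanishing \emph{identically} for finite $T$ by centredness and independence of the multipliers, with independence of the limits then following from joint Gaussianity -- is exactly (D1)--(D10) of Proposition~\ref{PropCovConv}; and your treatment of higher cumulants (terms with a block index of odd multiplicity vanish by properties of the independent Gaussian multipliers, surviving terms are controlled via Condition~\ref{cond:cum} and the $(mT)^{-j/2}$ normalisation) is the strategy of Proposition~\ref{PropCumConv}.

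The genuine gap is in your verification of hypothesis (2) of Lemma~\ref{lem:app}. You claim the tails $\ex\big[\sum_{\ell>n}\langle\cdot,\psi_\ell\rangle^2\big]$ are ``uniformly summable in $T$'' because of the moment and covariance estimates in the trace-class part of Lemma~\ref{lem:nuc}. Those estimates, however, only yield $\sup_T \ex\|\cdot\|^2\le C$, i.e.\ a uniform bound on the \emph{total} series $\sum_{\ell}\ex\langle\cdot,\psi_\ell\rangle^2$; a uniform bound on the sum does not imply that its tails are small uniformly in $T$ (think of coordinate variances concentrated at index $\ell=T$: the total mass stays bounded while $\sup_T\sum_{\ell>n}\ex\langle\cdot,\psi_\ell\rangle^2$ never becomes small). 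What is actually needed -- and what the paper's Lemma~\ref{ThmTightness} proves -- is convergence of the total second moments, $\limsup_{T}\ex\|\tilde B_{T,h}\|_{2,3}^2\le\ex\|\tilde B_h\|_{2,3}^2$ and its analogue for $\tilde B_{T,h}^{(1)}$, which combined with the coordinate-wise convergence $\ex\langle \tilde B_{T,h},\psi_\ell\rangle^2\to\ex\langle\tilde B_h,\psi_\ell\rangle^2$ from Proposition~\ref{PropCovConv} gives
\begin{equation*}
\limsup_{n\to\infty}\limsup_{T\to\infty}\sum_{\ell>n}\ex\langle\cdot,\psi_\ell\rangle^2
\;\le\;
\limsup_{T\to\infty}\ex\|\cdot\|^2-\sum_{\ell=1}^{\infty}\lim_{T\to\infty}\ex\langle\cdot,\psi_\ell\rangle^2\;\le\;0.
\end{equation*}
Establishing this norm convergence is not free: for the multiplier components it requires the same diagonal/off-diagonal lag decomposition of $\tfrac{1}{mT}\sum_{i}\ex\big[\big(\sum_{t=i}^{(i+m-1)\wedge T}\cdots\big)^2\big]$ as in the covariance step, with $m/T\to 0$ entering to control boundary and overlap effects. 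Replace your ``uniform trace bound'' by this norm-convergence argument and your proposal becomes the paper's proof.
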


\begin{proof}[Proof of Theorem~\ref{bootstrapThm}] 
We only prove the assertion for $K=1$; the general case follows by the same arguments but is notationally more involved.   The theorem is then an immediate consequence of the fundamental approximation Lemma~\ref{lem:app}, together with  Lemma~\ref{lem:fidis} and \ref{ThmTightness}.
\end{proof}

Let $\{\psi'_n\}_{n\in\naturals}$ and $\{\psi_n\}_{n\in\naturals}$ be orthonormal bases of $L^2([0,1]^2)$ and $L^2([0,1]^3)$ with 
\[\sup_{n\in\naturals} \sup_{x\in[0,1]^2} |\psi'_n(x)|\leq C<\infty~ \text{and} ~\sup_{n\in\naturals} \sup_{x\in[0,1]^3} |\psi_n(x)|\leq C<\infty.\]
Note that such bases can be constructed as tensor products of the orthonormal basis 
\[\mathcal{B}=\{ \sqrt{2}\cos(2\pi nx), \sqrt{2}\sin(2\pi nx):n\in\naturals \}\cup\{1\}~\text{in}~L^2([0,1]) \]
(c.f.\ \citealpsuppl{Kadison1983}, Example 2.6.11).

\begin{lemma}\label{lem:fidis}
	Let assumptions~\ref{cond:ls}--\ref{cond:cum} and \ref{eq:b1} and \ref{eq:b3}  be satisfied. 
	Then, for any $p\in\naturals$,
	\begin{multline*}
	\Big( (\langle \tilde{B}_T,\psi'_n\rangle)_{n=1}^p,\big\{(\langle \tilde{B}_{T,h},\psi_n\rangle)_{n=1}^p\big\}_{h=0}^H,(\langle \tilde{B}_T^{(1)},\psi'_n\rangle)_{n=1}^p,\big\{(\langle \tilde{B}_{T,h}^{(1)},\psi_n\rangle)_{n=1}^p\big\}_{h=0}^H\Big)\\
	\convw \Big( (\langle \tilde B,\psi'_n\rangle)_{n=1}^p,\big\{(\langle \tilde B_h,\psi_n\rangle)_{n=1}^p\big\}_{h=0}^H,(\langle \tilde B^{(1)},\psi'_n\rangle)_{n=1}^p,\big\{(\langle \tilde B_h^{(1)},\psi_n\rangle)_{n=1}^p\big\}_{h=0}^H\Big),
	\end{multline*}
	in $\reals^{2(H+2)p}$.
\end{lemma}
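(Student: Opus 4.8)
The plan is to prove this multivariate central limit theorem in $\reals^{2(H+2)p}$ by combining the Cramér--Wold device with the method of cumulants. First I would fix an arbitrary real coefficient vector and reduce the statement to the asymptotic normality of a single real-valued linear combination $Z_T$ of the $2(H+2)p$ coordinates. Interchanging the finitely many bounded spatial integrals with the temporal sums, each data projection rewrites as a normalised sum $\langle \tilde B_T,\psi'_n\rangle = T^{-1/2}\sum_{t=1}^T \langle X_{t,T}-\mu_{t,T}, \phi_{n,t}\rangle$ with uniformly bounded weights $\phi_{n,t}(\tau)=\int_{\{u:\lfloor uT\rfloor\ge t\}}\psi'_n(u,\tau)\diff u$, and analogously for the lag-$h$ coordinates with $X_{t,T}\otimes X_{t+h,T}$ replacing $X_{t,T}$. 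Each bootstrap projection takes the same shape but with the centred independent multipliers $R_i^{(1)}$ attached to the $m$-blocks, namely $\langle \tilde B_T^{(1)},\psi'_n\rangle = (Tm)^{-1/2}\sum_{i=1}^T R_i^{(1)}\sum_{t=i}^{(i+m-1)\wedge T}\langle X_{t,T}-\mu_{t,T},\phi_{n,i}\rangle$. Thus $Z_T$ is a finite linear combination of such (block-)sums of a weakly dependent triangular array.

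Second, I would verify the two ingredients of the cumulant method. For the second-order cumulant, $\var(Z_T)=\cum_2(Z_T)$ must converge to the variance prescribed by the covariance operator $C_\Bb$ of Theorem~\ref{theo:main}; this reduces to the covariance computations already performed in Proposition~\ref{prop:cov} and Proposition~\ref{PropCovConv}, which produce exactly the kernels $r^{(m)}, r^{(c)}_{h,h'}, r^{(m,c)}_h$. A decisive simplification is that, since each $R_i^{(1)}$ is centred and independent of the data, the conditional mean of every bootstrap coordinate given the sample vanishes; hence each bootstrap coordinate is \emph{exactly} uncorrelated with every data coordinate for every $T$, and the conditional covariance of the bootstrap block-sums is a lag-window estimator converging to the same long-run covariance. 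This makes the bootstrap replicate an asymptotically independent copy of $\Bb$, as required.

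Third, asymptotic normality follows once I show $\cum_r(Z_T)\to 0$ for all $r\ge 3$, and here Assumption~\ref{cond:cum} is the workhorse. Expanding the $r$-th joint cumulant by multilinearity produces a sum of joint cumulants of the summands $X_{t,T}$ (and of the products $X_{t,T}\otimes X_{t+h,T}$), weighted by the $\phi_{n,t}$ and, in the bootstrap terms, by products of the multipliers. Because the $R_i^{(1)}$ are Gaussian and independent of the data, all their cumulants of order $\ge 3$ vanish, so only second-order multiplier contributions survive and the mixed data--bootstrap terms collapse dramatically. The remaining data cumulants are controlled by the absolute summability \eqref{sumcum} together with parts (iii)--(iv) of \ref{cond:cum}: fixing one time index and summing the rest leaves $O(T)$ surviving configurations, so the purely data-driven terms satisfy $\cum_r(Z_T)=O(T^{1-r/2})=o(1)$, with an analogous (and, thanks to the $m$-block averaging and the vanishing higher multiplier cumulants, no worse) bound for the bootstrap terms. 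Since a centred Gaussian is determined by its moments, the method of cumulants then yields $Z_T\weak N(0,\sigma^2)$, and Cramér--Wold gives the asserted joint convergence.

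The step I expect to be the main obstacle is the combinatorial control of the mixed higher-order cumulants that couple the original statistic, the overlapping $m$-block structure of the bootstrap, and the Gaussian multipliers at once: one must check that the interplay of the regime $m\to\infty$, $m/T\to0$ from \ref{eq:b1} with the summability in \ref{cond:cum} forces every joint cumulant of order $\ge 3$ to vanish, while the second-order terms converge to precisely the block-matched covariance that renders the bootstrap an independent copy. Once these cumulant estimates are in place, the conclusion is immediate.
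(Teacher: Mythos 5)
Your proposal is correct and follows essentially the same route as the paper: Cram\'er--Wold reduction to a scalar linear combination $Z_T$, then the method of moments/cumulants, with the variance handled by exactly the covariance computations of Propositions~\ref{prop:cov} and~\ref{PropCovConv} (including the observation that the centred, data-independent multipliers make the data--bootstrap covariances vanish identically) and the cumulants of order $\ge 3$ killed via multilinearity, Brillinger's product theorem over indecomposable partitions, and the summability in Assumption~\ref{cond:cum}, yielding the $O(T^{1-j/2})$-type bounds. The only cosmetic difference is that you invoke the vanishing of Gaussian cumulants of order $\ge 3$ to force pairing of the multipliers, whereas the paper's Proposition~\ref{PropCumConv} gets the same effect from independence and centredness of the $R_i$ alone; both are valid here since the $R_i$ are standard normal by \ref{eq:b3}.
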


\begin{proof}[Proof of Lemma~\ref{lem:fidis}]
	Fix some $p\in\naturals$. By the Cramér-Wold device, it is sufficient to show that	
	\begin{align} \label{eq:zt} 
	Z_T:=\sum_{n=1}^{p}\bigg(c_n \langle \tilde{B}_T,\psi'_n\rangle + d_n \langle \tilde{B}_T^{(1)},\psi'_n\rangle+ \sum_{h=0}^{H} c_{n,h} \langle \tilde{B}_{T,h},\psi_n\rangle + d_{n,h} \langle \tilde{B}_{T,h}^{(1)},\psi_n\rangle \bigg) 
	\end{align}
	converges weakly to
	\begin{align} \label{eq:z} 
	Z:= \sum_{n=1}^{p}\bigg(c_n \langle \tilde B,\psi'_n\rangle + d_n \langle \tilde B^{(1)},\psi'_n\rangle+ \sum_{h=0}^{H} c_{n,h} \langle \tilde B_h,\psi_n\rangle + d_{n,h} \langle \tilde B_h^{(1)},\psi_n\rangle \bigg),   
	\end{align}
	for any real numbers $c_n,d_n,c_{n,h},d_{n,h}\in\reals,1\leq n\leq p,0\leq h\leq H$. By Theorem 30.1 and Example 30.1 of \citesuppl{Billingsley1995}, the normal distribution is determined uniquely by its moments. Since there is a one-to-one correspondence between moments and cumulants, this also holds true for the latter ones. The only non-zero cumulants of a normal distribution are the first two, which equal the mean and the variance  \citepsuppl{Holmquist1988}.
	
	It is easy to see that $\ex Z_T=0$ since $\tilde{B}_T,\tilde{B}_{T,h},\tilde{B}_T^{(1)}$ and $\tilde{B}_{T,h}^{(1)}$ are centred, for any $0\leq h\leq H$. For example, we have by the Fubini-Tonelli theorem
	\begin{align*}
	\ex[\langle \tilde{B}_T,\psi_n\rangle] 
	&= \ex \bigg[\int_{[0,1]^3} \tilde{B}_T(u,\tau_1,\tau_2) \psi_n(u,\tau_1,\tau_2) \diff (u,\tau_1,\tau_2) \bigg]\\
	&= \int_{[0,1]^3} \ex\big[\tilde{B}_T(u,\tau_1,\tau_2)\big] \psi_n(u,\tau_1,\tau_2) \diff (u,\tau_1,\tau_2)=0.
	\end{align*}
	The theorem is applicable since, by the moment condition \ref{cond:mom}, $\ex\big[|\tilde{B}_T(u,\tau_1,\tau_2)|\big]<\infty$. From Proposition \ref{PropCovConv} follows the convergence of the second moments and by Proposition~\ref{PropCumConv}, the higher-order cumulants vanish. Thus, we can conclude the convergence of $Z_T$ to $Z$ by Theorem 2.22 of \citesuppl{vanderVaart1998}.
\end{proof}

\begin{proposition}\label{PropCovConv}
	Let assumptions \ref{cond:ls}--\ref{cond:cum} and \ref{eq:b1} and \ref{eq:b3} be satisfied. Then, 
	\[
	\lim_{T \to\infty}\var(Z_T)=\var(Z), 
	\]
	with $Z_T$  and $Z$ as defined in \eqref{eq:zt} and \eqref{eq:z}.
\end{proposition}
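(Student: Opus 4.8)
The plan is to expand $\var(Z_T)$ into the bilinear form determined by the covariances and cross-covariances of the finitely many coordinates $\langle \tilde B_T,\psi'_n\rangle$, $\langle \tilde B_{T,h},\psi_n\rangle$, $\langle \tilde B_T^{(1)},\psi'_n\rangle$, $\langle \tilde B_{T,h}^{(1)},\psi_n\rangle$, and to treat the ``primary'' and ``bootstrap'' blocks of coordinates separately. Writing $Z_T=Z_T^{(P)}+Z_T^{(B)}$, where $Z_T^{(P)}$ collects the terms built from $\tilde B_T,\tilde B_{T,h}$ with weights $c_n,c_{n,h}$ and $Z_T^{(B)}$ those built from $\tilde B_T^{(1)},\tilde B_{T,h}^{(1)}$ with weights $d_n,d_{n,h}$, we have $\var(Z_T)=\var(Z_T^{(P)})+\var(Z_T^{(B)})+2\cov(Z_T^{(P)},Z_T^{(B)})$. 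Since in the limit $\mathbb B^{(1)}$ is an \emph{independent} copy of $\mathbb B$, the target decomposes analogously as $\var(Z)=\var(Z^{(P)})+\var(Z^{(B)})$, the two summands being the variances of the corresponding Gaussian linear combinations under the common covariance operator $C_\Bb$ of Theorem~\ref{theo:main}. It thus suffices to show that the cross term vanishes and that the primary and bootstrap variances converge to their respective targets.

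For the cross term I would observe that it is \emph{exactly} zero for every $T$. Each coordinate $\langle \tilde B_T^{(1)},\psi'_m\rangle$ is linear in the multipliers $R_i^{(1)}$, which by \ref{eq:b3} are centred and independent of $\{X_{t,T}\}$; conditioning on the data and using $\ex[R_i^{(1)}]=0$ yields $\ex[\langle \tilde B_T,\psi'_n\rangle\,\langle \tilde B_T^{(1)},\psi'_m\rangle]=0$, and the same holds for all mixed primary--bootstrap products, so $\cov(Z_T^{(P)},Z_T^{(B)})=0$.

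For the primary variance I would reduce $\var(Z_T^{(P)})$, via Fubini and the uniform boundedness of the bases, to the covariances $\cov(\tilde B_T(u,\tau),\tilde B_T(v,\phi))$, $\cov(\tilde B_{T,h},\tilde B_{T,h'})$ and $\cov(\tilde B_T,\tilde B_{T,h})$ integrated against $\psi'_n,\psi_n$, and then prove pointwise convergence of these covariances to the kernels $r^{(m)},r^{(c)}_{h,h'},r^{(m,c)}_h$. Concretely, $\cov(\tilde B_T(u,\tau),\tilde B_T(v,\phi))=T^{-1}\sum_{s\le\lfloor uT\rfloor}\sum_{t\le\lfloor vT\rfloor}\cov(X_{s,T}(\tau),X_{t,T}(\phi))$; replacing $X_{s,T}$ by its stationary approximation $X_s^{(s/T)}$, with the errors controlled by \ref{cond:ls} and the cumulant bounds \ref{cond:cum}, reduces the lag sum to $\sum_k c_{k,1}(s/T)$ and the outer average to a Riemann sum converging to $\int_0^{u\wedge v}\sum_k c_{k,1}(w)\diff w=r^{(m)}$. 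Dominated convergence, justified by the summability in \ref{cond:cum} and the moment bound \ref{cond:mom}, then transfers pointwise kernel convergence to convergence of the integrated quantities, giving $\var(Z_T^{(P)})\to\var(Z^{(P)})$.

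The bootstrap variance is the crux. Here I would first integrate out the multipliers: conditional on the data, $\ex[R_i^{(1)}R_j^{(1)}]=\delta_{ij}$ collapses the double block sum, so that, after taking expectations in $X$, the bootstrap mean--covariance equals $\tfrac1{Tm}\sum_i\sum_{t,t'=i}^{(i+m-1)\wedge T}\cov(X_{t,T}(\tau),X_{t',T}(\phi))$, with analogous expressions for the lagged and mixed blocks (the latter nonzero because, for a fixed copy, the mean and all autocovariance blocks share the \emph{same} multipliers). The main obstacle is to show that this overlapping, block-averaged object converges to the \emph{same} kernels $r^{(m)},r^{(c)}_{h,h'},r^{(m,c)}_h$ as in the primary case. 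The key estimates are that within a block starting at $i$ the inner lag sum equals $\sum_{|k|<m}(m-|k|)\,c_{k,1}(i/T)=m\sum_{k}c_{k,1}(i/T)+o(m)$, where $m\to\infty$ together with the $|k|$-weighted summability of the $c_{k,1}$ guaranteed by \ref{cond:cum} makes the $|k|$-correction negligible against the leading $m\sum_k c_{k,1}(i/T)$; the factor $m$ cancels the prefactor $1/m$, leaving the Riemann sum $T^{-1}\sum_i\sum_k c_{k,1}(i/T)\to r^{(m)}$, while the local-stationarity replacement errors accumulate to $O(m/T)$ and the truncated boundary blocks contribute $O(m/T)$, both vanishing by \ref{eq:b1}. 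Carrying this bookkeeping through the lagged cross-moment blocks (using the fourth-order cumulant summability to replace products of observations by their stationary-limit covariances) and through the mixed mean/autocovariance blocks yields $\var(Z_T^{(B)})\to\var(Z^{(B)})$, and assembling the three pieces completes the proof.
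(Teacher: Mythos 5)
Your proposal is correct and follows essentially the same route as the paper: the paper also reduces $\var(Z_T)$ to coordinate-wise covariance convergences, shows the primary--bootstrap cross-covariances are exactly zero because the multipliers $R_i^{(1)}$ are centred and independent of the data, handles the primary terms via Fubini, the stationary approximation under \ref{cond:ls}/\ref{cond:cum}, Riemann sums and dominated convergence, and handles the bootstrap terms by collapsing the double block sum through $\ex[R_iR_{i'}]=\delta_{ii'}$ and letting the triangular weights $(m-|k|)/m$ tend to $1$ as $m\to\infty$. The only cosmetic difference is that you package the ten convergences (D1)--(D10) into three blocks (primary, bootstrap, cross), which changes nothing in substance.
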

\begin{proof}[Proof of Proposition~\ref{PropCovConv}]
	Since $Z_T$ is a linear combination of $\langle\tilde{B}_T,\psi'_n\rangle, \langle\tilde{B}_{T,h},\psi_n\rangle,\langle\tilde{B}_T^{\scs (1)},\psi'_n\rangle$ and $\langle \tilde{B}_{T,h}^{\scs (1)}, \psi_n \rangle$, for $0\leq h\leq H$ and $1\leq n\leq p$, it is sufficient to prove that
	\begin{align*}
	&\lim_{T \to \infty}\cov(\langle \tilde{B}_T,\psi'_n\rangle, \langle \tilde{B}_T,\psi'_\ell\rangle) 
	= 
	\cov(\langle \tilde B,\psi'_n\rangle, \langle \tilde B,\psi'_\ell\rangle),\tag{D1}\\
	&\lim_{T \to \infty}\cov(\langle \tilde{B}_T,\psi'_n\rangle, \langle \tilde{B}_{T,h},\psi_\ell\rangle) 
	= 
	\cov(\langle \tilde B,\psi'_n\rangle, \langle \tilde B_h,\psi_\ell\rangle),\tag{D2}\\
	&\lim_{T \to \infty}\cov(\langle \tilde{B}_{T,h},\psi_n\rangle, \langle \tilde{B}_{T,h'},\psi_\ell\rangle) 
	= 
	\cov(\langle \tilde B_h,\psi_n\rangle, \langle \tilde B_{h'},\psi_\ell\rangle),\tag{D3}\\
	&\lim_{T \to \infty}\cov(\langle \tilde{B}_T^{(1)},\psi'_n\rangle, \langle \tilde{B}_T^{(1)},\psi'_\ell\rangle) 
	=
	\cov(\langle \tilde B^{(1)},\psi'_n\rangle, \langle \tilde B^{(1)},\psi'_\ell\rangle),\tag{D4}\\
	&\lim_{T \to \infty}\cov(\langle \tilde{B}_T^{(1)},\psi'_n\rangle, \langle \tilde{B}_{T,h}^{(1)},\psi_\ell\rangle) 
	= 
	\cov(\langle \tilde B^{(1)},\psi'_n\rangle, \langle \tilde B_h^{(1)},\psi_\ell\rangle),\tag{D5}\\
	&\lim_{T \to \infty}\cov(\langle \tilde{B}_{T,h}^{(1)},\psi_n\rangle, \langle \tilde{B}_{T,h'},\psi_\ell\rangle) 
	= 
	\cov(\langle \tilde B_h^{(1)},\psi_n\rangle, \langle \tilde B_{h'}^{(1)},\psi_\ell\rangle),\tag{D6}\\
	&\lim_{T \to \infty}\cov(\langle \tilde{B}_T,\psi'_n\rangle, \langle \tilde{B}_T^{(1)},\psi'_\ell\rangle) 
	= 
	\cov(\langle \tilde B,\psi'_n\rangle, \langle \tilde B^{(1)},\psi'_\ell\rangle)=0,\tag{D7}\\
	&\lim_{T \to \infty}\cov(\langle \tilde{B}_T,\psi'_n\rangle, \langle \tilde{B}_{T,h}^{(1)},\psi_\ell\rangle) 
	= 
	\cov(\langle \tilde B,\psi'_n\rangle, \langle \tilde B_h^{(1)},\psi_\ell\rangle)=0,\tag{D8}\\
	&\lim_{T \to \infty}\cov(\langle \tilde{B}_{T,h},\psi_n\rangle, \langle \tilde{B}_T^{(1)},\psi'_\ell\rangle) 
	= 
	\cov(\langle \tilde B_h,\psi_n\rangle, \langle \tilde B^{(1)},\psi'_\ell\rangle)=0, \tag{D9}\\
	&\lim_{T \to \infty}\cov(\langle \tilde{B}_{T,h},\psi_n\rangle, \langle \tilde{B}_{T,h'},\psi_\ell\rangle) 
	= 
	\cov(\langle \tilde B_h,\psi_n\rangle, \langle \tilde B_{h'}^{(1)},\psi_\ell\rangle)=0,\tag{D10}
	\end{align*}
	for any $h,h'\in\{0,\dots ,H\}$ and $n,\ell\in\{1,\dots ,p\}$. For that purpose, observe that all scalar products have mean zero.

	\noindent \textit{Proof of (D3).} Fix $h,h'\in\{0,\dots ,H\}$. We have
	{\small\begin{align*} 
	\begin{split}
		S_{T,3}=&\ \cov\big(\langle\tilde{B}_{T,h},\psi_{n}\rangle, \langle\tilde{B}_{T,h'},\psi_{\ell}\rangle\big)\\
		=&\ \ex\bigg[ \int_{[0,1]^6} \tilde{B}_{T,h}(u,\tau_1,\tau_2)\psi_{n}(u,\tau_1,\tau_2) \tilde{B}_{T,h'}(u',\tau_1',\tau_2')\psi_{\ell}(u',\tau_1',\tau_2') \diff (u,u',\tau_1,\tau_1',\tau_2,\tau_2')\bigg]\\
		=&\ \frac{1}{T} \sum_{t=1}^{T-h}\sum_{t'=1}^{T-h'} \ex\bigg[\int_{[0,1]^6} \psi_{n}(u,\tau_1,\tau_2)\psi_{\ell}(u',\tau_1',\tau_2')\big\{X_{t,T}(\tau_1)X_{t+h,T}(\tau_2)-\mu_{t,T,h}(\tau_1, \tau_2) \big\}\\ 
		&\phantom{=}\times\big\{X_{t',T}(\tau_1')X_{t'+h',T}(\tau_2')-\mu_{t',T,h'}(\tau_1', \tau_2')\big\} \id(t\leq \lfloor uT \rfloor, t'\leq \lfloor u'T\rfloor)\diff (u,u',\tau_1,\tau_1',\tau_2,\tau_2') \bigg] \\
		=&\  \frac{1}{T} \sum_{t=1}^{T-h}\sum_{t'=1}^{T-h'} \int_{[0,1]^6} \cov\big(X_{t,T}(\tau_1)X_{t+h,T}(\tau_2),X_{t',T}(\tau_1')X_{t'+h',T}(\tau_2')\big)\\
		&\hspace{2.5cm} \times \psi_{n}(u,\tau_1,\tau_2)\psi_{\ell}(u',\tau_1',\tau_2') \id(t\leq \lfloor uT \rfloor, t'\leq \lfloor u'T\rfloor)\diff (u,u',\tau_1,\tau_1',\tau_2,\tau_2'),
		\end{split}
		\end{align*}
	}where we applied Fubini's theorem in the last equality. Further, we can rewrite
	{\small\begin{align*}
	\cov(X_{t,T}\otimes X_{t+h,T},X_{t',T}\otimes X_{t'+h',T})
	&=
	\cov\big((X_{t,T}-X_t^{(t/T)})\otimes X_{t+h,T},X_{t',T}\otimes X_{t'+h',T}\big)\\
	&\hspace{0.5cm}+ \cov\big(X_t^{(t/T)}\otimes (X_{t+h,T}-X_{t+h}^{(t/T)}),X_{t',T}\otimes X_{t'+h',T}\big)\\
	&\hspace{0.5cm}+ \cov\big(X_t^{(t/T)}\otimes X_{t+h}^{(t/T)},(X_{t',T}-X_{t'}^{(t/T)})\otimes X_{t'+h',T}\big)\\
	&\hspace{0.5cm}+ \cov\big(X_t^{(t/T)}\otimes X_{t+h}^{(t/T)},X_{t'}^{(t/T)}\otimes(X_{t'+h',T}-X_{t'+h'}^{(t/T)}) \\
	&\hspace{0.5cm}+  \cov(X_t^{(t/T)}\otimes X_{t+h}^{(t/T)},X_{t'}^{(t/T)}\otimes X_{t'+h'}^{(t/T)})\big).
	\end{align*}
	}Invoking this decomposition, we can split each integral appearing in $S_{T,3}$ into five summands. 
	By \ref{cond:cum}, Proposition \ref{prop:cov} and the Cauchy-Schwarz inequality, the sums over all of this summands are of the order $\mathcal O(T^{-1})$, except for the last one. Thus, 
we obtain that
{\small\begin{align}\label{eq:S2}
	S_{T,3}&= \int_{[0,1]^6} \bigg\{\frac{1}{T}\sum_{t=1}^{\lfloor (u\wedge u')T \rfloor}\cov\big(X_{t}^{(t/T)}(\tau_1)X_{t+h}^{(t/T)}(\tau_2),X_{t}^{(t/T)}(\tau_1')X_{t+h'}^{(t/T)}(\tau_2')\big)\\
	&\hspace{1.8cm}+ \frac{1}{T}\sum_{t=1}^{\lfloor uT \rfloor}\sum_{t'=t+1}^{\lfloor u'T\rfloor} \cov\big(X_{t}^{(t/T)}(\tau_1)X_{t+h}^{(t/T)}(\tau_2),X_{t'}^{(t/T)}(\tau_1')X_{t'+h'}^{(t/T)}(\tau_2')\big) \nonumber\\
	&\hspace{1.8cm}+ \frac{1}{T}\sum_{t'=1}^{\lfloor u'T\rfloor}\sum_{t=t'+1}^{\lfloor uT \rfloor} \cov\big(X_{t}^{(t/T)}(\tau_1)X_{t+h}^{(t/T)}(\tau_2),X_{t'}^{(t/T)}(\tau_1')X_{t'+h'}^{(t/T)}(\tau_2')\big)\bigg\} \nonumber \\
	&\hspace{1.3cm}\times \psi_{n}(u,\tau_1,\tau_2)\psi_{\ell}(u',\tau_1',\tau_2') \diff (u,u',\tau_1,\tau_1',\tau_2,\tau_2') + \mathcal{O}(T^{-1}). \nonumber
	\end{align}
}The convergence of the integral over the first sum is straightforward:
	{\small\begin{align*}
	\lim\limits_{T\to\infty} & \int_{[0,1]^6} \frac{1}{T} \sum_{t=1}^{\lfloor (u\wedge u')T\rfloor}\cov\big(X_{0}^{(t/T)}(\tau_1)X_{h}^{(t/T)}(\tau_2),X_{0}^{(t/T)}(\tau_1')X_{h'}^{(t/T)}(\tau_2')\big)\\
	&\hspace{1.3cm} \times \psi_{n}(u,\tau_1,\tau_2)\psi_{\ell}(u',\tau_1',\tau_2')\diff (u,u',\tau_1,\tau_1',\tau_2,\tau_2')\\
	=&\int_{[0,1]^6} \int_{0}^{u\wedge u'}\cov\big(X_{0}^{(w)}(\tau_1)X_{h}^{(w)}(\tau_2),X_{0}^{(w)}(\tau_1')X_{h'}^{(w)}(\tau_2')\big)\diff w\\
	&\hspace{1.3cm}\times \psi_{n}(u,\tau_1,\tau_2)\psi_{\ell}(u',\tau_1',\tau_2')\diff (u,u',\tau_1,\tau_1',\tau_2,\tau_2'),
	\end{align*}
	}where the limit and the integral can be interchanged, by \ref{cond:mom} and Lebesgue's dominated convergence theorem. The convergence of the remaining two sums is technically more involved, and we only present details for the case $t<t'$. 
	By stationarity of $(X_t^{\scs (u)})_{t\in\Z}$,
{\small\begin{align*}
	&\int_{[0,1]^6} 
	\bigg\{\frac{1}{T} \sum_{t=1}^{\lfloor uT \rfloor}\sum_{t'=t+1}^{\lfloor u'T\rfloor}\cov\big(X_{t}^{(t/T)}(\tau_1)X_{t+h}^{(t/T)}(\tau_2),X_{t'}^{(t/T)}(\tau_1')X_{t'+h'}^{(t/T)}(\tau_2')\big)\bigg\}\\
	&\hspace{1cm}  
	\times \psi_{n}(u,\tau_1,\tau_2)\psi_{\ell}(u',\tau_1',\tau_2')\diff (u,u',\tau_1,\tau_1',\tau_2,\tau_2')\\
	=&\int_{[0,1]^6} 
	\bigg\{\frac{1}{T} \sum_{t=1}^{\lfloor (u\wedge u')T \rfloor}\sum_{k=1}^{\lfloor u'T\rfloor-t}\cov\big(X_{0}^{(t/T)}(\tau_1)X_{h}^{(t/T)}(\tau_2),X_{k}^{(t/T)}(\tau_1')X_{k+h'}^{(t/T)}(\tau_2')\big)\bigg\}\\
	&\hspace{1cm}   
	\times \psi_{n}(u,\tau_1,\tau_2)\psi_{\ell}(u',\tau_1',\tau_2')\diff (u,u',\tau_1,\tau_1',\tau_2,\tau_2')\\
	=&\int_{[0,1]^6} 
	\bigg\{\frac{1}{T} \sum_{k=1}^{{\lfloor u'T\rfloor -1}}\sum_{t=1}^{\lfloor uT\rfloor\wedge (\lfloor u'T\rfloor-k)}\cov\big(X_{0}^{(t/T)}(\tau_1)X_{h}^{(t/T)}(\tau_2),X_{k}^{(t/T)}(\tau_1')X_{k+h'}^{(t/T)}(\tau_2')\big)\bigg\}\\
	&\hspace{1cm}  
	\times \psi_{n}(u,\tau_1,\tau_2)\psi_{\ell}(u',\tau_1',\tau_2') \diff (u,u',\tau_1,\tau_1',\tau_2,\tau_2').
\end{align*}
}By Lebesgue's dominated convergence theorem and Lemma \ref{LemmaLimitIntegral}, the right-hand side of the latter display converges to
{\small\begin{multline*} 
	\int_{[0,1]^6} 
	\bigg\{\sum_{k=1}^{\infty}\int_{0}^{u\wedge u'}\cov\big(X_{0}^{(w)}(\tau_1)X_{h}^{(w)}(\tau_2),X_{k}^{(w)}(\tau_1')X_{k+h'}^{(w)}(\tau_2')\big)\diff w\bigg\}\\
	\times \psi_{n}(u,\tau_1,\tau_2)\psi_{\ell}(u',\tau_1',\tau_2') \diff (u,u',\tau_1,\tau_1',\tau_2,\tau_2'). 
\end{multline*} 
}Thus, we have 
{\small\begin{align*}
\lim_{T\to\infty} S_{T,3} 
&= \int_{[0,1]^6} \psi_{n}(u,\tau_1,\tau_2)\psi_{\ell}(u',\tau_1',\tau_2')\cov\big(\tilde B_h(u,\tau_1,\tau_2),\tilde B_{h'}(u',\tau_1',\tau_2')\big)\diff (u,u',\tau_1,\tau_1',\tau_2,\tau_2')\\
&= \cov(\langle \tilde B_h,\psi_{n}\rangle,\langle \tilde B_{h'},\psi_{\ell}\rangle),
\end{align*}
}which proves (D3).

\medskip

\noindent \textit{Proof of (D6).}  By the independence of the standard normally distributed random variables $(R_i)_{i\in\naturals} = (R_i	^{\scs (1)})_{i\in\naturals}$, we have 
{\small\begin{align*}
	S_{T,6} 
	&= 
	\cov(\langle \tilde{B}_{T,h}^{(1)},\psi_n\rangle, \langle \tilde{B}_{T,h'},\psi_\ell\rangle)\\
	&= 
\frac{1}{T}\sum_{i=1}^{T-h}\sum_{i'=1}^{T-h'}\frac{1}{m}\ex\bigg[ R_iR_{i'} \int_{[0,1]^6}\psi_n(u,\tau_1,\tau_2)\psi_\ell(u',\tau_1',\tau_2') \id(i\leq\lfloor uT\rfloor, i'\leq \lfloor u'T\rfloor)\\
	&\phantom{====}
\times\bigg( \sum_{t=i}^{(i+m-1)\wedge (T-h)}X_{t,T}(\tau_1)X_{t+h,T}(\tau_2)-\mu_{t,T,h}(\tau_1, \tau_2) \bigg)\\
	&\phantom{====}
\times\bigg( \sum_{t'=i'}^{(i'+m-1)\wedge (T-h')}X_{t',T}(\tau_1')X_{t'+h',T}(\tau_2')-\mu_{t',T,h'}(\tau_1', \tau_2') \bigg)\diff (u,u',\tau_1,\tau_1',\tau_2,\tau_2') \bigg]\\
	&= 
\frac{1}{T}\sum_{i=1}^{T-(h\vee h')}\frac{1}{m}\ex\bigg[ \int_{[0,1]^6}\psi_n(u,\tau_1,\tau_2)\psi_\ell(u',\tau_1',\tau_2') \id(i\leq\lfloor uT\rfloor\wedge \lfloor u'T\rfloor)\\
	&\phantom{====}
\times\bigg( \sum_{t=i}^{(i+m-1)\wedge (T-h)}X_{t,T}(\tau_1)X_{t+h,T}(\tau_2)-\mu_{t,T,h}(\tau_1, \tau_2) \bigg)\\
	&\phantom{====}
\times\bigg( \sum_{t'=i}^{(i+m-1)\wedge (T-h')}X_{t',T}(\tau_1')X_{t'+h',T}(\tau_2')-\mu_{t',T,h'}(\tau_1', \tau_2')  \bigg)\diff (u,u',\tau_1,\tau_1',\tau_2,\tau_2') \bigg]\\
	&= 
	\frac{1}{T}\sum_{i=1}^{T-(h\vee h')}\frac{1}{m} \int_{[0,1]^6}  \psi_n(u,\tau_1,\tau_2)\psi_\ell(u',\tau_1',\tau_2') \id(i\leq\lfloor (u\wedge u')T\rfloor ) 
 \\
	&\phantom{====}  \times \bigg( \sum_{t=i}^{(i+m-1)\wedge(T-h)}\sum_{t'=i}^{(i+m-1)\wedge(T-h')} \cov\big(X_{t,T}(\tau_1)X_{t+h,T}(\tau_2),X_{t',T}(\tau_1')X_{t'+h',T}(\tau_2')\big)  \bigg)  \\
	&\hspace{10cm}
	\diff (u,u',\tau_1,\tau_1',\tau_2,\tau_2'), 
\end{align*}
}by Fubini's theorem. 
By the same arguments that led to \eqref{eq:S2}, we further  have
{\small\begin{align*}
	S_{T,6} 
	&= 
	\frac{1}{T}\sum_{i=1}^{T}\frac{1}{m} \int_{[0,1]^6}  \psi_n(u,\tau_1,\tau_2)\psi_\ell(u',\tau_1',\tau_2') \id(i\leq\lfloor (u\wedge u')T\rfloor ) 
 \\
	&\phantom{====}  \times \bigg(\sum_{t=i}^{i+m-1}\sum_{t'=i}^{i+m-1}\cov\big(X_t^{(i/T)}(\tau_1)X_{t+h}^{(i/T)}(\tau_2),X_{t'}^{(i/T)}(\tau_1')X_{t'+h'}^{(i/T)}(\tau_2')\big)\bigg)  \\
	&\hspace{9cm}
	\diff (u,u',\tau_1,\tau_1',\tau_2,\tau_2') +\mathcal{O}(m^{-1}). 
\end{align*}
}As before, we split the above sum into three sums $L_1, L_2, L_3$, for $t=t', t<t'$ and $t>t'$, respectively. For $L_1$, we have
{\small\begin{align*}
L_1
&=
\frac{1}{T}\sum_{i=1}^{T}\frac{1}{m} \int_{[0,1]^6}\bigg(\sum_{t=i}^{i+m-1 }\cov\big(X_t^{(i/T)}(\tau_1)X_{t+h}^{(i/T)}(\tau_2),X_t^{(i/T)}(\tau_1')X_{t+h'}^{(i/T)}(\tau_2')\big)\bigg)\\
&\hspace{3cm}
\times \psi_n(u,\tau_1,\tau_2)\psi_\ell(u,\tau_1,\tau_2) \id(i\leq\lfloor uT\rfloor\wedge \lfloor u'T\rfloor)\diff (u,u',\tau_1,\tau_1',\tau_2,\tau_2')\\
&=
\int_{[0,1]^6} \frac{1}{T}\bigg(\sum_{i=1}^{\lfloor (u\wedge u')T\rfloor }\cov\big(X_0^{(i/T)}(\tau_1)X_{h}^{(i/T)}(\tau_2),X_0^{(i/T)}(\tau_1')X_{h'}^{(i/T)}(\tau_2')\big)\bigg) \\
&\hspace{3cm}
\times\psi_n(u,\tau_1,\tau_2)\psi_\ell(u,\tau_1,\tau_2)\diff (u,u',\tau_1,\tau_1',\tau_2,\tau_2'),
\end{align*}
}by stationarity of $(X_t^{\scs (u)})_{t\in\Z}$. The right-hand side converges to 
{\small
\begin{multline*}
\int_{[0,1]^6}\psi_n(u,\tau_1,\tau_2)\psi_\ell(u,\tau_1,\tau_2) \\
\times \int_0^{u\wedge u'}\cov\big(X_0^{(w)}(\tau_1)X_{h}^{(w)}(\tau_2),X_0^{(w)}(\tau_1')X_{h}^{(w)}(\tau_2')\big)\diff w \diff (u,u',\tau_1,\tau_1',\tau_2,\tau_2'), 
\end{multline*}
}as $T$ tends to infinity, by Lebegue's dominated convergence theorem. 

The sums $L_2$ and $L_3$ can be treated in a similar manner, and we only provide details for $L_2$. By the same arguments as before and the stationarity of $(X_t^{\scs (u)})_{t\in\Z}$, it follows
{\small\begin{align*}
L_2 
&=\int_{[0,1]^6}\frac{1}{T}\sum_{i=1}^{\lfloor(u\wedge u')T\rfloor}\frac{1}{m} \sum_{t=i}^{i+m-2}\sum_{t'=t+1}^{i+m-1}\cov\big(X_t^{(i/T)}(\tau_1)X_{t+h}^{(i/T)}(\tau_2),X_{t'}^{(i/T)}(\tau_1')X_{t'+h'}^{(i/T)}(\tau_2')\big)\\
&\phantom{==}\times\psi_n(u,\tau_1,\tau_2)\psi_\ell(u',\tau'_1,\tau'_2) \diff (u,u',\tau_1,\tau_1',\tau_2,\tau_2')\\
&=\int_{[0,1]^6}\frac{1}{T}\sum_{i=1}^{\lfloor(u\wedge u')T\rfloor}\frac{1}{m} \sum_{t=i}^{i+m-2}\sum_{k=1}^{i+m-1-t}\cov\big(X_0^{(i/T)}(\tau_1)X_h^{(i/T)}(\tau_2),X_{k}^{(i/T)}(\tau_1')X_{k+h'}^{(i/T)}(\tau_2')\big)\\
&\phantom{==}\times\psi_n(u,\tau_1,\tau_2)\psi_\ell(u',\tau'_1,\tau'_2) \diff (u,u',\tau_1,\tau_1',\tau_2,\tau_2')\\
&=\int_{[0,1]^6}\sum_{k=1}^{m-1} \frac{m-k}{m} \frac{1}{T}\sum_{i=1}^{\lfloor(u\wedge u')T\rfloor}\cov\big(X_0^{(i/T)}(\tau_1)X_h^{(i/T)}(\tau_2),X_{k}^{(i/T)}(\tau_1')X_{k+h'}^{(i/T)}(\tau_2')\big)\\
&\phantom{==}\times\psi_n(u,\tau_1,\tau_2)\psi_\ell(u',\tau'_1,\tau'_2) \diff (u,u',\tau_1,\tau_1',\tau_2,\tau_2').
\end{align*} 
}The right-hand side of the previous display converges to
{\small\begin{multline*}
 \int_{[0,1]^6}\psi_n(u,\tau_1,\tau_2)\psi_\ell(u',\tau_1',\tau_2') \sum_{k=1}^{\infty}\int_0^{u\wedge u'}\cov\big(X_0^{(w)}(\tau_1)X_{h}^{(w)}(\tau_2),X_{k}^{(w)}(\tau_1')X_{k+h'}^{(w)}(\tau_2')\big)\diff w \\
 \diff (u,u',\tau_1,\tau_1',\tau_2,\tau_2'),
 \end{multline*}
}as $T$ tends to infinity, by Lebegue's dominated convergence theorem. Thus, (D6) follows by Fubini's theorem, since
{\small\begin{align*}
\lim\limits_{T\to\infty} S_{T,6}
&= \int_{[0,1]^6} \sum_{k=-\infty}^{\infty}\int_0^{u\wedge u'}\cov\big(X_0^{(w)}(\tau_1)X_{h}^{(w)}(\tau_2),X_{k}^{(w)}(\tau_1')X_{k+h'}^{(w)}(\tau_2')\big)\diff w\\
&\phantom{==}\times \psi_n(u,\tau_1,\tau_2)\psi_\ell(u,\tau_1,\tau_2) \diff (u,u',\tau_1,\tau_1',\tau_2,\tau_2')\\
&= \ex[\langle \psi_n,\tilde B_h^{(1)}\rangle \langle \psi_\ell,\tilde B_{h'}^{(1)}\rangle]= \cov(\langle \psi_n,\tilde B_h^{(1)}\rangle, \langle \psi_\ell,\tilde B_{h'}^{(1)}\rangle).
\end{align*}
}

\noindent \textit{Proof of (D7)--(D10).}  The convergences (D7) to (D10) follows from the fact that the multipliers $R_i=R_i^{\scs (1)}$ are independent from the data and centred. For example, we have 
{\small\begin{align*}
	&\, \cov(\langle \tilde{B}_{T,h},\psi_n\rangle, \langle \tilde{B}_{T,h}^{(1)},\psi_\ell\rangle)\\
	=&\, \ex[\langle \tilde{B}_{T,h},\psi_n\rangle \langle \tilde{B}_{T,h}^{(1)},\psi_\ell\rangle]\\
	=&\, \ex\bigg[\langle \tilde{B}_{T,h},\psi_n\rangle \sum_{i=1}^{T-h}\frac{R_i}{\sqrt{mT}}\sum_{t=i}^{(i+m-1)\wedge (T-h)}\int_{[0,1]^3}\big(X_{t,T}(\tau_1)X_{t+h,T}(\tau_2)-\mu_{t,T,h}(\tau_1, \tau_2) \big)\\
	&\phantom{====}\times\psi_\ell(u,\tau_1,\tau_2)\id(i\leq \lfloor uT\rfloor)\diff (u,\tau_1,\tau_2)\bigg] = 0,
\end{align*}
}which implies (D10).
	
\noindent \textit{Proof of (D1)-(D2), (D4)-(D5).}  Convergences (D1)-(D2) and (D4)-(D5) can be shown with the same arguments as (D3) and (D6), respectively, but they are technically less involved.
%
\end{proof}

\begin{proposition}\label{PropCumConv}
	Let assumptions \ref{cond:ls}--\ref{cond:cum} and \ref{eq:b1} and \ref{eq:b3} be satisfied. Then, 
	\[
	\lim_{T\to\infty}\cum_j(Z_T)=0, 
	\]
	for any $j\geq 3$, where $Z_T$ is defined in \eqref{eq:zt}.
\end{proposition}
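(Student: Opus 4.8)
The plan is to exploit the multilinearity of joint cumulants together with the product theorem for cumulants and the special structure of the Gaussian multipliers $R_i=R_i^{(1)}$. Writing $Z_T=\sum_\alpha Z_{T,\alpha}$, where the sum runs over the finitely many scalar products $\langle\tilde B_T,\psi'_n\rangle$ and $\langle\tilde B_{T,h},\psi_n\rangle$ (the \emph{data terms}) and $\langle\tilde B_T^{(1)},\psi'_n\rangle$ and $\langle\tilde B_{T,h}^{(1)},\psi_n\rangle$ (the \emph{bootstrap terms}), multilinearity gives $\cum_j(Z_T)=\sum_{\alpha_1,\dots,\alpha_j}\cum(Z_{T,\alpha_1},\dots,Z_{T,\alpha_j})$, so it suffices to show that each joint cumulant on the right tends to zero for $j\ge 3$. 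After integrating out the basis functions, each data term takes the form $T^{-1/2}\sum_t a_t$ and each bootstrap term the form $(mT)^{-1/2}\sum_i R_i\sum_{t\in B_i}b_t$, with $B_i=\{i,\dots,(i+m-1)\wedge(T-h)\}$, where $a_t$ and $b_t$ are centred functionals depending on the data only through $X_{t,T}$ (respectively through $X_{t,T}$ and $X_{t+h,T}$) and whose kernels are uniformly bounded because the basis functions are. A further use of multilinearity expands each joint cumulant into sums over the time and block indices.

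First I would dispose of the multiplier structure. In each resulting cumulant the factors $R_i$ are independent of the data and centred and, being Gaussian, have all cumulants of order $\ge 3$ equal to zero. By the product theorem for cumulants (\citealpsuppl{Brillinger1965}, Theorem 2.3.2) a joint cumulant of products decomposes into a sum over indecomposable partitions of products of cumulants of the individual factors; since a cumulant of two mutually independent groups vanishes, any partition block mixing an $R$-factor with a data factor contributes zero, and any block consisting of a single $R$ or of three or more $R$'s contributes zero as well. Hence only partitions survive in which the $R$-indices are matched into pairs carrying equal block index $i$; in particular every contribution containing an odd number of bootstrap terms vanishes identically. The remaining data cumulants, including those of the products $X_{t,T}X_{t+h,T}$, are reduced once more to cumulants of the field $X_{\cdot,T}$ by the same product theorem and are then bounded through Assumption~\ref{cond:cum}, in particular by the summability \eqref{sumcum} and the bounds in parts (iii)--(iv).

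It then remains to carry out the power count. For the purely data contributions the standard estimate applies: the prefactor is $T^{-j/2}$, one time index may be summed freely over $O(T)$ values, and summability of the field cumulants renders the remaining $j-1$ lag sums finite, so the contribution is $O(T^{1-j/2})=o(1)$ for $j\ge 3$. For the bootstrap and mixed contributions (an even number $b\ge 2$ of bootstrap terms) each bootstrap term carries the smaller normalisation $(mT)^{-1/2}$ and an inner block sum of length at most $m$; the $R$-pairing identifies block indices in pairs, and summability of the field cumulants confines paired and data-linked indices to $O(m)$-windows, leaving a single index free to range over $O(T)$ values. The block sums then contribute only nonnegative powers of $m$ that are over-compensated by the normalisations, yielding a bound of the form $O\big((m/T)^{j/2-1}\big)$ in the pure-bootstrap case and $O(T^{1-j/2})$ up to nonnegative powers of $m/T$ in the mixed case; both vanish for $j\ge 3$ by Assumption~\ref{eq:b1} (and, where the bandwidth enters through $b_t$, by Assumption~\ref{eq:b2}). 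I expect the main obstacle to be exactly this last bookkeeping: organising the indecomposable partitions in the mixed data--bootstrap cumulants and verifying uniformly that the exponent of $T$ is strictly negative. A convenient cross-check is the conditional viewpoint, in which $Z_T$ is, given the data, Gaussian with random mean given by the data terms and with conditional variance equal to a finite linear combination of bootstrap variance estimators of the type $\tilde\sigma_T$ from Lemma~\ref{lem:IMSE}, whose concentration already encodes the required decay.
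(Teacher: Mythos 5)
Your proposal is correct and follows, in outline, the same route as the paper's proof: expand $\cum_j(Z_T)$ by multilinearity into joint cumulants of the individual scalar products, integrate out the uniformly bounded basis functions, apply Brillinger's product theorem over indecomposable partitions, and close with the summability conditions of Assumption~\ref{cond:cum} and a power count, the purely data contributions being $O(T^{1-j/2})$ exactly as in the paper. Where you genuinely differ is in the treatment of the multipliers, and your version is sharper. The paper uses only independence of the $R_i$ from the data together with the observation that a block index occurring exactly once kills the cumulant; it then groups the multiplier indices into $k$ clusters of multiplicities $n_\ell\ge 2$, bounds the resulting sums by $O(m^{j-(k-1)}T)$, and arrives at the order $O(m^{j/2}T^{1-j/2})$ for the pure-multiplier cumulants. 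You instead exploit Gaussianity: pure-$R$ blocks of size one or of size at least three vanish, so the $R$'s must be matched into pairs with equal block index; in particular every contribution with an odd number of bootstrap factors is identically zero, and the pairing localizes all but one block index into $O(m)$-windows, yielding your bounds $O\bigl((m/T)^{j/2-1}\bigr)$ in the pure-bootstrap case and $O(T^{1-j/2})$ up to nonnegative powers of $m/T$ in the mixed case. This is not merely cosmetic: your bounds tend to zero under Assumption~\ref{eq:b1} alone, whereas the paper's stated bound $m^{j/2}T^{1-j/2}=(m/T)^{j/2}\,T$ is not guaranteed to vanish from $m\to\infty$, $m/T\to0$ (it would require $m=o(T^{1-2/j})$); so the explicit Gaussian pairing is precisely what makes the final estimate close cleanly under the proposition's hypotheses. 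Two minor remarks: first, your parenthetical appeal to Assumption~\ref{eq:b2} is vacuous, since the processes entering $Z_T$ are the versions \eqref{eq:tildeb} and \eqref{eq:tildebh} centred at the true means $\mu_{t,T}$ and $\mu_{t,T,h}$, so no bandwidth $n$ appears anywhere — consistent with the proposition not assuming \ref{eq:b2}; second, what remains to turn the sketch into a proof is the bookkeeping you yourself flag, namely reducing cumulants of the products $X_{t,T}(\tau_1)X_{t+h,T}(\tau_2)$ to cumulants of the field via indecomposable partitions of the $j\times 2$ table, which the paper carries out around \eqref{cumulantsProductThmApplied}.
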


\begin{proof}
	By linearity of cumulants, we have
	\begin{align*}
	&\, \cum_j(Z_T) \\
	=&\, \cum_j\Bigg(\sum_{n=1}^{p}\bigg(c_n \langle \tilde{B}_T,\psi'_n\rangle + d_n \langle \tilde{B}_T^{(1)},\psi'_n\rangle+ \sum_{h=0}^{H} c_{n,h} \langle \tilde{B}_{T,h},\psi_n\rangle + d_{n,h} \langle \tilde{B}_{T,h}^{(1)},\psi_n\rangle \bigg)\Bigg)\\
	=&\, \sum_{n_1,\dots ,n_j=1}^{p} \sum_{\substack{w=(w_1,\dots ,w_j)\\w_i\in\{0,1\}\times\{-1,\dots ,H\}, 1\leq i\leq j}} \bigg(\prod_{i=1}^j a_i^{(w_i)}\bigg) \cum(\langle A_1^{(w_1)},\psi_{n_1}\rangle,\dots ,\langle A_j^{(w_j)},\psi_{n_j}\rangle),
	\end{align*}
	where 
	\[a_i^{(0,-1)}=c_i,A_i^{(0,-1)}=\tilde{B}_T, a_i^{(0,h)}=c_{i,h}~\text{and}~A_i^{(0,h)}=\tilde{B}_{T,h},\]
	and further,
	\[a_i^{(1,-1)}=d_i,A_i^{(1,-1)}=\tilde{B}_T^{(1)}, a_i^{(1,h)}=d_{i,h}~\text{and}~A_i^{(1,h)}=\tilde{B}_{T,h}^{(1)},\]
	for $h=0,\dots ,H$ and $i=1,\dots ,j$. Fix some integers $n_1,\dots ,n_j\in\{1,\dots ,p\}$. Further, denote the cumulants in the above sum by
	\[ \overline{\cum}(w):=\cum(\langle A_1^{(w_1)},\psi_{n_1}\rangle,\dots ,\langle A_j^{(w_j)},\psi_{n_j}\rangle). \]
	In the following, we restrict our attention to the subset $(\{0,1\}\times\{0,\dots ,H\})^j$ of the set $(\{0,1\}\times\{-1,\dots ,H\})^j$ since the proof for the latter follows the same arguments but is notationally more involved. 
	
	First, fix $w=(w_1,\dots ,w_j)$ with $w_i\in \{0\}\times \{0,\dots ,H\}$. Thus, for any $w_i$ there is a $h_i\in\{0,\dots H\}$ such that $A_i^{\scs (w_i)}=\tilde{B}_{T,h_i}$.
	By the definition of cumulants and Fubini's theorem, we obtain that
	{\small\begin{align*}	
	&\, \cum(\langle \tilde{B}_{T,h_1},\psi_{n_1}\rangle,\dots ,\langle \tilde{B}_{T,h_j},\psi_{n_j}\rangle)\\
	=&\, 
	\sum_{\{\nu_1,\dots ,\nu_R\}} (-1)^{R-1} (R-1)! \prod_{r=1}^{R} \ex\bigg[ \prod_{i\in\nu_r} \int_{[0,1]^3} \psi_{n_i}\big(u^{(i)},\tau_1^{(i)},\tau_2^{(i)}\big)\\
	&\phantom{===} 
	\times \frac{1}{\sqrt{T}} \sum_{t=1}^{\lfloor u^{(i)}T\rfloor\wedge (T-h_i)} \big(X_{t,T}(\tau_1^{(i)})X_{t+h_i,T}(\tau_2^{(i)})-\mu_{t,T,h_i}(\tau_1^{(i)},\tau_2^{(i)})\big)\diff (u^{(i)},\tau_1^{(i)},\tau_2^{(i)}) \bigg]\\
	=&\, 
	\sum_{\{\nu_1,\dots ,\nu_R\}} (-1)^{R-1} (R-1)! \prod_{r=1}^{R} \ex\bigg[\int_{[0,1]^{3|\nu_r|}}  \prod_{i\in\nu_r} \psi_{n_i}\big(u^{(i)},\tau_1^{(i)},\tau_2^{(i)}\big) \\
	&\phantom{===} 
	\times \frac{1}{\sqrt{T}} \sum_{t=1}^{\lfloor u^{(i)}T\rfloor\wedge (T-h_i)} \big(X_{t,T}(\tau_1^{(i)})X_{t+h_i,T}(\tau_2^{(i)})-\mu_{t,T,h_i}(\tau_1^{(i)},\tau_2^{(i)}) \big)\diff (u^{(i)},\tau_1^{(i)},\tau_2^{(i)}| i\in\nu_r) \bigg]\\
	=&\, 
	\sum_{\{\nu_1,\dots ,\nu_R\}} \int_{[0,1]^{3j}} (-1)^{R-1} (R-1)! \prod_{r=1}^{R} \ex\bigg[ \prod_{i\in\nu_r} \psi_{n_i}\big(u^{(i)},\tau_1^{(i)},\tau_2^{(i)}\big)\\
	&\phantom{=====} 
	\times\frac{1}{\sqrt{T}} \sum_{t=1}^{\lfloor u^{(i)}T\rfloor\wedge (T-h_i)} \big(X_{t,T}(\tau_1^{(i)})X_{t+h_i,T}(\tau_2^{(i)})-\mu_{t,T,h_i}(\tau_1^{(i)},\tau_2^{(i)})\big)\bigg] \\
	& \hspace{9cm}
	\diff (u^{(i)},\tau_1^{(i)},\tau_2^{(i)}| 1\leq i\leq j) \\
	=&\, 
	\int_{[0,1]^{3j}} \cum \bigg( \frac{1}{\sqrt{T}}\sum_{t=1}^{\lfloor u^{(1)}T\rfloor\wedge (T-h_1)}X_{t,T}(\tau_1^{(1)})X_{t+h_1,T}(\tau_2^{(1)})\psi_n\big(u^{(1)},\tau_1^{(1)},\tau_2^{(1)}\big),\dots\\
	&\phantom{=====} 
	\dots, \frac{1}{\sqrt{T}}\sum_{t=1}^{\lfloor u^{(j)}T\rfloor\wedge (T-h_j)}X_{t,T}(\tau_1^{(j)})X_{t+h_j,T}(\tau_2^{(j)})\psi_n(u^{(j)},\tau_1^{(j)},\tau_2^{(j)})\bigg) \\
	& \hspace{9cm}
	\diff (u^{(i)},\tau_1^{(i)},\tau_2^{(i)}| 1\leq i\leq j)\\
	=&\, 
	\int_{[0,1]^{3j}} \frac{1}{T^{j/2}}\sum_{t_1=1}^{T-h_1}\dots\sum_{t_j=1}^{T-h_j}\cum \big(X_{t_1,T}(\tau_1^{(1)})X_{t_1+h_1,T}(\tau_2^{(1)}),\dots, X_{t_j,T}(\tau_1^{(j)})X_{t_j+h_j,T}(\tau_2^{(j)})\big) \\
	&\phantom{=====}
	\times\prod_{i=1}^j \psi_{n_i}\big(u^{(i)},\tau_1^{(i)},\tau_2^{(i)}\big)\id(t_i\leq \lfloor u^{(i)}T\rfloor)  
	\diff (u^{(i)},\tau_1^{(i)},\tau_2^{(i)}| 1\leq i\leq j)
	\end{align*}
	}where the summation extends over all partitions $\{\nu_1,\dots ,\nu_R\}$ of the set $\{1, \dots , j\}$. 	
	The absolute value of this expression is bounded by
	{\small\begin{align*}
	&\frac{1}{T^{j/2}}\sum_{t_1=1}^{T-h_1}\dots\sum_{t_j=1}^{T-h_j} \bigg(\int_{[0,1]^{3j}}\prod_{i=1}^j \psi_{n_i}^2(u^{(i)},\tau_1^{(i)},\tau_2^{(i)})\id(t_i\leq \lfloor u^{(i)}T\rfloor) \diff (u^{(i)},\tau_1^{(i)},\tau_2^{(i)}| 1\leq i\leq j)\bigg)^{1/2} \\
	&\phantom{===}\times\bigg(\int_{[0,1]^{2j}} \cum^2 \big(X_{t_1,T}(\tau_1^{(1)})X_{t_1+h_1,T}(\tau_2^{(1)}),\dots , X_{t_j,T}(\tau_1^{(j)})X_{t_j+h_j,T}(\tau_2^{(j)})\big) \\
	&\hspace{10cm} \diff(\tau_1^{(i)},\tau_2^{(i)}| 1\leq i\leq j)\bigg)^{1/2}\\
	\leq&\,  
	\frac{C}{T^{j/2}}\sum_{t_1=1}^{T-h_1}\dots\sum_{t_j=1}^{T-h_j} \|\cum \big(X_{t_1,T}\otimes X_{t_1+h_1,T},\dots , X_{t_j,T}\otimes X_{t_j+h,T}\big) \|_{2,2j}
	\end{align*}}since, by assumption, $\psi_n(x)\leq C$ uniformly in $x$ and $n$.
	
	In the following, we will bound the expression 
	\[ \sum_{t_1=1}^{T-h_1}\dots\sum_{t_j=1}^{T-h_j} \big\| \cum(X_{t_1,T}\otimes X_{t_1+h_1,T},\dots ,X_{t_j,T}\otimes X_{t_j+h_j,T})\big\|_{2,2j}. \]
	For that purpose, consider the table 
	\begin{equation*}
	S:=\begin{array}{ccc}
	(1,0)&&(1,1)\\
	\vdots&&\vdots\\
	(j,0)&&(j,1),
	\end{array}
	\end{equation*}
	where $j\geq 3$. In the following, the tuple $(i,0)$ will be identified with the index $t_i$ and $(i,1)$ will be identified with $t_i+h_i$. Let $\{\nu_1,\dots ,\nu_q\}$ be a partition of $S$.
	Two sets $\nu_i$ and $\nu_\ell$ of the partition \textit{hook} if there is an index $k$ such that $(k,0)\in\nu_i$ and $(k,1)\in\nu_\ell$ or vice versa. The sets $\nu_i$ and $\nu_\ell$ \textit{communicate} if there is a sequence $\nu_i=\tilde{\nu}_1,\dots ,\tilde{\nu_k}=\nu_\ell$ such that $\tilde{\nu}_{i'}$ and $\tilde{\nu}_{i'+1}$ hook, for any $1\leq i'\leq k-1$. The partition $\{\nu_1,\dots ,\nu_q\}$ is \textit{indecomposable} if all pairs of sets communicate. By Theorem 2.3.2 of \citesuppl{Brillinger1965}, we can rewrite
	\begin{align}\label{cumulantsProductThmApplied}
	\begin{split}
	\cum(X_{t_1,T}X_{t_1+h_1,T},\dots ,X_{t_j,T}X_{t_j+h,T})
	= \sum_{\{\nu_1,\dots ,\nu_q\}} \prod_{k=1}^{q} \cum(X_{t_{s_1},T}^{1-s_2}X_{t_{s_1}+h_{s_1},T}^{s_2}, s\in\nu_k),
	\end{split}
	\end{align}
	where $s=(s_1,s_2)$, where the summation extends over all indecomposable partitions $\{\nu_1,\dots ,\nu_q\}$ of $S$ and where we omit the arguments $\tau_{i}^{\scs (k)}$ for the ease of notation. Observe that $X_{t_{s_1},T}^{1-s_2}X_{t_{s_1}+h_{s_1},T}^{s_2}=X_{t_{s_1},T}$ if $s_2=0$ and $X_{t_{s_1},T}^{1-s_2}X_{t_{s_1}+h_{s_1},T}^{s_2}=X_{t_{s_1}+h_{s_1},T}$ if $s_2=1$.
	
	Clearly, Equation \eqref{cumulantsProductThmApplied} leads to the bound
	\begin{multline*}
	\sum_{t_1=1}^{T-h_1}\dots\sum_{t_j=1}^{T-h_j} \big\| \cum(X_{t_1,T}\otimes X_{t_1+h_1,T},\dots ,X_{t_j,T}\otimes X_{t_j+h_j,T})\big\|_{2,2j}\\
	\leq \sum_{\{\nu_1,\dots ,\nu_q\}} \sum_{t_1=1}^{T-h_1}\dots\sum_{t_j=1}^{T-h_j} \prod_{\ell=1}^{q} \Big\|\cum\Big(X_{t_{s_1},T}^{1-s_2}X_{t_{s_1}+h_{s_1},T}^{s_2}, s\in\nu_\ell\Big)\Big\|_{2,|\nu_\ell|}
\end{multline*}
	Fix an indecomposable partition $\{\nu_1,\dots ,\nu_q\}$ of $S$. If $q=1$, the sum \[\sum_{t_1=1}^{T-h_1}\dots\sum_{t_j=1}^{T-h_j} \Big\|\cum\Big(X_{t_1,T},X_{t_1+h_1,T},\cdots, X_{t_j,T},X_{t_j+h_j,T} \Big)\Big\|_{2,2j}\] is of order $\mathcal{O}(T)$ by \ref{cond:cum}. For $q\ge 2$, there exist $\mu_1,\dots ,\mu_{q-1}$ such that
	\[ \nu_i\cap \big\{(\mu_1,0),\dots ,(\mu_{q-1},0),(\mu_1,1),\dots ,(\mu_{q-1},1) \big\}\neq\varnothing, \]
	for any $i=1,\dots ,q$. Informally speaking, the indices $\mu_1,\dots ,\mu_{q-1}$ 'connect' the sets of the partition. Without loss of generality, we assume $\mu_1=1,\dots ,\mu_{q-1}=q-1$, $(\mu_i,0)=(i,0)\in\nu_i$ and $(\mu_i,1)=(i,1)\in\nu_{i+1}$ for $i=1, \dots, q-1$.  Then,
	{\small\begin{align*}
&\,  \sum_{t_1=1}^{T-h_1}\dots\sum_{t_j=1}^{T-h_j} \prod_{\ell=1}^{q} \Big\|\cum\Big(X_{t_{s_1},T}^{1-s_2}X_{t_{s_1}+h_{s_1},T}^{s_2}, s\in\nu_\ell\Big)\Big\|_{2,|\nu_\ell|} \\
	=&\,  \sum_{t_1=1}^{T-h_1}\dots\sum_{t_j=1}^{T-h_j} \Big\|\cum\Big(X_{t_1,T},X_{t_{s_1},T}^{1-s_2}X_{t_{s_1}+h_{s_1},T}^{s_2}, s\in\nu_1\setminus\{(1,0)\}\Big)\Big\|_{2,|\nu_1|}\\
	&\phantom{=} \times  \Big\|\cum\Big(X_{t_2,T},X_{t_1+h_1,T},X_{t_{s_1},T}^{1-s_2}X_{t_{s_1}+h_{s_1},T}^{s_2}, s\in\nu_2\setminus\big\{(2,0),(1,1)\big\}\Big)\Big\|_{2,|\nu_2|}\\
	&\phantom{=}\cdots\\
	&\phantom{=} \times  \Big\|\cum\Big(X_{t_{q-1},T},X_{t_{q-2}+h_{q-2},T},X_{t_{s_1},T}^{1-s_2}X_{t_{s_1}+h_{s_1},T}^{s_2}, s\in\nu_{q-1}\setminus\big\{(q-1,0),(q-2,1)\big\}\Big)\Big\|_{2,|\nu_{q-1}|}\\
	&\phantom{=} \times  \Big\|\cum\Big(X_{t_{q-1}+h_{q-1},T},X_{t_{s_1},T}^{1-s_2}X_{t_{s_1}+h_{s_1},T}^{s_2}, s\in\nu_q\setminus\{(q-1,1)\}\Big)\Big\|_{2,|\nu_q|}.
	\end{align*}
	}Consider the sets $\tilde{\nu}_1:=\nu_1\setminus\{(1,0)\}, \tilde{\nu}_2:=\nu_2\setminus\{(2,0),(1,1)\},\dots ,\tilde{\nu}_{q-1}:=\nu_{q-1}\setminus\{(q-1,0),(q-2,1)\},\tilde{\nu}_q=\nu_q\setminus\{(q-1,1)\}$, and observe that these sets form a partition of the set $\{(q,0),\dots ,(j,0),(q,1),\dots ,(j,1)\}$. Let $m_i$ be the cardinality of $\tilde{\nu}_i$, for $i=1,\dots ,q$. 
	By adding summands, we can bound the above sum by
	{\small \begin{align*}
	&\sum_{t_1,\dots ,t_{q-1}=1}^{T}\Bigg( \sum_{t_1^{(1)}, \dots, t_{m_1}^{(1)} =-\infty}^{\infty} \Big\|\cum\Big(X_{t_1,T},X_{t_1^{(1)},T},\dots ,X_{t_{m_1}^{(1)},T}\Big)\Big\|_{2,m_1+1}\\
	&\hspace{1.5cm}
	\times \sum_{t_1^{(2)},\dots ,t_{m_2}^{(2)}=-\infty}^{\infty} \Big\|\cum\Big(X_{t_2,T},X_{t_1+h_1,T},X_{t_1^{(2)},T},\dots ,X_{t_{m_2}^{(2)},T}\Big)\Big\|_{2,m_2+2}\\
	&\hspace{1.6cm}\vdots\\
	&\hspace{1.5cm}\times \sum_{t_1^{(q-1)},\dots ,t_{m_{q-1}}^{(q-1)}=-\infty}^{\infty} \Big\|\cum\Big(X_{t_{q-1},T},X_{t_{q-2}+h_{q-2},T},X_{t_1^{(q-1)},T},\dots ,X_{t_{m_{q-1}}^{(q-1)},T}\Big)\Big\|_{2,m_{q-1}+2}\\
	&\hspace{1.5cm} \times\sum_{t_1^{(q)},\dots ,t_{m_q}^{(q)}=-\infty}^{\infty} \Big\|\cum\Big(X_{t_{q-1}+h_{q-1},T},X_{t_1^{(q)},T},\dots ,X_{t_{m_q}^{(q)},T}\Big)\Big\|_{2,m_q+1}\Bigg).
	\end{align*}
	}The last inner sum is bounded by some constant $C_{m_{q+1}}$ by Assumption \ref{cond:cum}. The outer sum over the index $t_{q-1}$ can be pulled in front of the last inner sum and we obtain
	{\small \[ \sum_{t_{q-1}=1}^{T}\sum_{t_1^{(q-1)},\dots ,t_{m_{q-1}}^{(q-1)}=-\infty}^{\infty} \Big\|\cum\Big(X_{t_{q-1},T},X_{t_{q-2}+h,T},X_{t_1^{(q-1)},T},\dots ,X_{t_{_{q-1}}^{(q-1)},T}\Big)\Big\|_{2,m_{q-1}+2} \leq C_{m_{q-1}+2} \]
	}Doing this successively, we have the bound
	\begin{align*}
	&\,C_{m_q+1}\prod_{i=2}^{q-1} C_{m_i+2} \sum_{t_1=1}^{T} \sum_{t_1^{(1)},\dots ,t_{m_1}^{(1)}=-\infty}^{\infty} \Big\|\cum\Big(X_{t_1,T},X_{t_1^{(1)},T},\dots ,X_{t_{m_1}^{(1)},T}\Big)\Big\|_{2,m_1+1} \\
	\leq&\, C_{m_1+1}C_{m_q+1}\bigg(\prod_{i=2}^{q-1} C_{m_i+2}\bigg) T = \mathcal{O}(T). \end{align*}
	We finally obtain that $\cum(\langle \tilde{B}_T,\psi_{n_1}\rangle,\dots ,\langle \tilde{B}_T,\psi_{n_j}\rangle)=\mathcal{O}(T^{1-j/2})$, which vanishes as $T$ tends to infinity since $j\geq 3$. Thus, we have proven the statement for any $w=(w_1,\dots ,w_j)$ with $w_i\in\{0\}\times\{0,\dots ,H\}$. 
	
	In the following, we investigate the cumulant $\overline{\cum}(w)$, for $w=(w_1,\dots ,w_j)$ with $w_i\in\{1\}\times\{0,\dots ,H\}$. The cumulants corresponding to arbitrary $w\in (\{0,1\}\times\{0,\dots ,H\})^j$ can be bounded by using the same arguments.
	By similar arguments as for the case $w_i\in\{0\}\times\{0,\dots ,H\}$, we obtain that
{\small\begin{align}\label{bootstrapCum_j} 
	&\, |\cum(\langle \tilde{B}_{T,h_1}^{(1)},\psi_{n_1}\rangle,\dots ,\langle \tilde{B}_{T,h_j}^{(1)},\psi_{n_j}\rangle)|\nonumber\\
	=&\, 
	\bigg|\int_{[0,1]^{3j}} \cum\big(\tilde{B}_{T,h_1}^{(1)}(u^{(1)},\tau_1^{(1)},\tau_2^{(1)})\psi_{n_1}(u^{(1)},\tau_1^{(1)},\tau_2^{(1)}), \dots \nonumber\\
	&\hspace{2cm} 
	\dots ,\tilde{B}_{T,h_j}^{(1)}(u^{(j)},\tau_1^{(j)},\tau_2^{(j)})\psi_{n_j}(u^{(j)},\tau_1^{(j)},\tau_2^{(j)})\big) \diff (u^{(i)},\tau_1^{(i)},\tau_2^{(i)}|1\leq i\leq j)\bigg|\nonumber\\
	=&\,
	\bigg|\int_{[0,1]^{3j}} \bigg(\prod_{i=1}^{j}\psi_{n_i}(u^{(i)},\tau_1^{(i)},\tau_2^{(i)})\bigg) \frac{1}{(mT)^{j/2}}\sum_{i_1=1}^{\lfloor u^{(1)}T\rfloor\wedge(T-h_1)} \dots \sum_{i_j=1}^{\lfloor u^{(j)}T\rfloor\wedge(T-h_j)} \nonumber\\ 
	&\phantom{=}
	\times\cum\bigg(R_{i_1} \sum_{t={i_1}}^{({i_1}+m-1)\wedge(T-h_1)}X_{t,T}(\tau_1^{(1)})X_{t+h_1,T}(\tau_2^{(1)}),  \dots \nonumber\\
	&\hspace{2cm} \cdots,R_{i_j} \sum_{t={i_j}}^{({i_j}+m-1)\wedge(T-h_j)}X_{t,T}(\tau_1^{(j)})X_{t+h_j,T}(\tau_2^{(j)})\bigg) \diff (u^{(i)},\tau_1^{(i)},\tau_2^{(i)}|1\leq i\leq j)\bigg|\nonumber\\
	&\leq 
	\frac{C}{(mT)^{j/2}}\sum_{i_1=1}^{T-h_1}\dots\sum_{i_j=1}^{T-h_j} \bigg\|\cum\bigg(R_{i_1} \sum_{t={i_1}}^{({i_1}+m-1)\wedge(T-h_1)}X_{t,T}\otimes X_{t+h_1,T},\dots\nonumber\\
	&\phantom{====================}\dots ,R_{i_j} \sum_{t={i_j}}^{({i_j}+m-1)\wedge(T-h_j)}X_{t,T}\otimes X_{t+h_j,T}\bigg)\bigg\|_{2,2j}.
	\end{align}
}

Most of the cumulants on the right-hand side of the above equation are zero. More specific, if there is an index $i_\ell$ with $i_\ell\neq i_{\ell'}$ for any $\ell'\neq \ell$, then by Theorem 2.3.1 (iii) and Theorem 2.3.2 of \citesuppl{Brillinger1965}, the corresponding cumulant in the above sum equals zero. Thus, we can bound the right-hand side of \eqref{bootstrapCum_j} by
\[ 
\frac{C_j}{(mT)^{j/2}}
\sum_{k=1}^{\lfloor j/2\rfloor}
\sum_{\substack{n_1,\cdots,n_k\geq 2\\ \sum_{i=1}^{k}n_i=j}}
\sum_{i_1,\cdots,i_k=1}^{T} 
\big\|\cum(R_{i_1}Y_{i_1},\cdots, R_{i_1}Y_{i_1},\cdots, R_{i_k}Y_{i_k},\cdots, R_{i_k}Y_{i_k})\big\|_{2,2j}, 
\]	
where $n_\ell$ determines how often the product $R_{i_\ell} Y_{i_\ell}$ occurs in the cumulants and 
\[
Y_{i_\ell}=\sum_{t=i_\ell}^{(i_\ell+m-1)\wedge (T-h_\ell)} X_{t,T}\otimes X_{t+h_\ell,T},
\] 
for any $\ell\in\{1,\dots,k\}$. By Theorem 2.3.2 of \citesuppl{Brillinger1965}, we can again rewrite each cumulant in the above sum as a sum over products of cumulants of single random variables, where the sum ranges over all indecomposable partitions of the table 
	\begin{equation*}
\begin{array}{ccc}
R_{i_1}&&Y_{i_1}\\
\vdots&&\vdots\\
R_{i_1}&&Y_{i_1}\\
\vdots&&\vdots\\
R_{i_k}&&Y_{i_k}\\
\vdots&&\vdots\\
R_{i_k}&&Y_{i_k}.
\end{array}
\end{equation*}
By making use of the same technique as before, we can use the indecomposability to prove that 
\[\sum_{\substack{n_1,\cdots,n_k\geq 2\\ \sum_{i=1}^{k}n_i=j}}\sum_{i_1,\cdots,i_k=1}^{T} \big\|\cum(R_{i_1}Y_{i_1},\cdots, R_{i_1}Y_{i_1},\cdots, R_{i_k}Y_{i_k},\cdots, R_{i_k}Y_{i_k})\big\|_{2,2j}\]
is of order $\mathcal{O}(m^{j-(k-1)}T)$. Now we can see that the right-hand side of \eqref{bootstrapCum_j}, and thus, $\cum(\langle \tilde{B}_{T,h_1},\psi_{n_1}\rangle,\dots ,\langle \tilde{B}_{T,h_j},\psi_{n_j}\rangle)$ are of order $\mathcal{O}(m^{j/2}T^{1-j/2})$, which vanishes as $T$ tends to infinity. Similar, $\overline{\cum}(w)$ vanishes for any $w\in(\{0,1\}\times\{0,\dots ,H\})^j$, as $T$ tends to infinity and, by this, $\cum_j(Z_T)$ does so as well.
\end{proof}

\begin{proposition}\label{prop:cov}
	Let Assumptions \ref{cond:ls}-\ref{cond:cum} be satisfied. Then, for any $h,h'\in\N_0,$
	\begin{align*}\frac{1}{T} \sum_{t=1}^{T-h}\sum_{t'=1}^{T-h'} &\|\cov\big((X_{t,T}-X_t^{(t/T)})\otimes X_{t+h,T},X_{t',T}\otimes X_{t'+h',T}\big)\|_{2,4}\\
	&+ \|\cov\big(X_t^{(t/T)}\otimes (X_{t+h,T}-X_{t+h}^{(t/T)}),X_{t',T}\otimes X_{t'+h',T}\big)\|_{2,4}\\
	&+ \|\cov\big(X_t^{(t/T)}\otimes X_{t+h}^{(t/T)},(X_{t',T}-X_{t'}^{(t/T)})\otimes X_{t'+h',T}\big)\|_{2,4}\\
	&+ \|\cov\big(X_t^{(t/T)}\otimes X_{t+h}^{(t/T)},X_{t'}^{(t/T)}\otimes(X_{t'+h',T}-X_{t'+h'}^{(t/T)})\big)\|_{2,4}=\mathcal{O}(T^{-1}).
	\end{align*}
\end{proposition}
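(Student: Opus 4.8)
The plan is to treat each of the four summands separately; since they are completely symmetric in role (each is a cross-covariance of two ``tensor-product'' variables in which exactly one of the four scalar factors is an approximation error of the form $D_{s} := X_{s,T} - X_s^{(s/T)}$), it suffices to bound the first one, the remaining three following verbatim. Writing out the kernel of the cross-covariance operator, the first summand is the $L^2([0,1]^4)$-norm of
\[
(\tau_1,\tau_2,\tau_1',\tau_2') \mapsto \cov\big( D_t(\tau_1) X_{t+h,T}(\tau_2),\, X_{t',T}(\tau_1') X_{t'+h',T}(\tau_2') \big),
\]
and the first step is to expand this covariance of products by the product theorem for cumulants (Theorem 2.3.2 in \citesuppl{Brillinger1965}), exactly as in the proof of Proposition~\ref{PropCumConv}. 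This writes the covariance as a finite sum, over all \emph{indecomposable} partitions of the $2\times 2$ table with rows $\{D_t, X_{t+h,T}\}$ and $\{X_{t',T}, X_{t'+h',T}\}$, of products of joint cumulants of the corresponding scalar variables; indecomposability is precisely what survives after subtracting $\ex[D_t\otimes X_{t+h,T}]\,\ex[X_{t',T}\otimes X_{t'+h',T}]$ from the full moment.

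Next I would extract a factor $1/T$ from every term of this expansion. In any partition the difference variable $D_t$ lies in exactly one block. If that block contains at least two elements, it is a joint cumulant of $D_t$ with variables of the admissible type $Z^{(u)}_{\cdot,T}\in\{X_{\cdot,T},X_\cdot^{(\cdot)}\}$, so Assumption~\ref{cond:cum}(i) bounds its $\|\cdot\|_{2,k}$-norm by $T^{-1}\eta_k(\cdots)$. If instead $D_t$ forms a singleton block, its contribution is the mean $\ex[D_t]$, whose $\|\cdot\|_2$-norm is at most $\ex\|X_{t,T}-X_t^{(t/T)}\|_2 \le T^{-1}\ex[P_{t,T}^{(t/T)}] \le C/T$ by local stationarity~\eqref{eq:ls} and the moment condition~\ref{cond:mom}; in this case indecomposability forces some \emph{other} block to couple $X_{t+h,T}$ to the second row, supplying the decay in $|t-t'|$. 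The remaining blocks are bounded by Assumption~\ref{cond:cum}(iii) (joint cumulants of the $X_{\cdot,T}$) and by~\ref{cond:mom} for any further singletons. Because the $L^2([0,1]^4)$-norm of a product whose factors depend on disjoint groups of the coordinates $(\tau_1,\tau_2,\tau_1',\tau_2')$ factorises into the product of the $L^2$-norms of the individual cumulant kernels, each term of the expansion is bounded by $C\,T^{-1}$ times a product of $\eta$-functions evaluated at differences of the time indices.

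Finally I would perform the double summation over $t,t'$. For a fixed indecomposable partition the associated product of $\eta$-functions depends on $t,t'$ only through differences of the time indices, and indecomposability guarantees that these differences are ``connected'', so that after fixing one free time index the sum over all the others converges by the summability of $\eta_k$ (here the weighted summability $\sum (1+|t_1|+\cdots)\eta_k < \infty$ is what absorbs the couplings). Consequently $\sum_{t,t'}\|\cov(\cdots)\|_{2,4} \le C\,T^{-1}\cdot O(T)=O(1)$, and multiplying by the prefactor $1/T$ yields the claimed order $O(T^{-1})$. The main obstacle is the bookkeeping for the indecomposable-partition expansion: one has to verify case by case (in particular in the degenerate situation where $D_t$ or one of the other factors is isolated in a singleton) that every term simultaneously carries the $1/T$ gain and enough $\eta$-decay in $|t-t'|$ to keep the double sum of order $T$, which is exactly the interplay already exploited in the proof of Proposition~\ref{PropCumConv}.
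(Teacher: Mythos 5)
Your treatment of the \emph{first} summand is, in substance, exactly the paper's own argument: expand the covariance of the two tensor products via the product theorem for cumulants (Theorem 2.3.2 of Brillinger, 1965) into a sum over indecomposable partitions of the $2\times 2$ table; extract a factor $T^{-1}$ from the block containing the difference $D_t=X_{t,T}-X_t^{(t/T)}$ --- via Assumption~\ref{cond:cum}(i) when that block has at least two elements, via \eqref{eq:ls} together with Assumptions~\ref{cond:ls} and \ref{cond:mom} when $D_t$ sits in a singleton block --- and then use the summability of the $\eta_k$ (bounding constrained sums by unconstrained ones) to show that the double sum over $(t,t')$ is $O(T)$. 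The only cosmetic difference is that for the covariance-times-covariance terms the paper uses Cauchy--Schwarz plus $\ex\|D_t\|_2^2=O(T^{-2})$ instead of Assumption~\ref{cond:cum}(i) with $k=2$; both work.

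The gap is your opening claim that the four summands are ``completely symmetric in role'' so that the remaining three follow \emph{verbatim}. They do not. In the third and fourth summands the difference variable is $X_{t',T}-X_{t'}^{(t/T)}$ (resp.\ $X_{t'+h',T}-X_{t'+h'}^{(t/T)}$): the stationary approximation is taken at the rescaled time $t/T$ of the \emph{other} row, not at $t'/T$. Assumption~\ref{cond:cum}(i) only covers differences of the matched form $X_{t_1,T}-X_{t_1}^{(t_1/T)}$, so your key step ``the block containing $D$ is bounded by $T^{-1}\eta_k$'' fails for these terms as stated. One must insert $X_{t'}^{(t'/T)}$, bound $X_{t',T}-X_{t'}^{(t'/T)}$ by Assumption~\ref{cond:cum}(i), and bound the remaining piece $X_{t'}^{(t'/T)}-X_{t'}^{(t/T)}$ by Assumption~\ref{cond:cum}(ii), which produces a factor $|t'-t|/T$ rather than $1/T$; the extra factor $|t'-t|$ is then absorbed in the double sum precisely by the weighted summability $\sum(1+|t_1|+\dots+|t_{k-1}|)\eta_k<\infty$. (The second summand also needs (ii), but there the mismatch is only $h/T\le H/T$ and is harmless.) This is exactly what the paper flags in the closing sentence of its proof, namely that the third and fourth summands rely on the weighted rather than the plain summability. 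You do invoke the weighted summability, but for the wrong purpose --- to absorb the ``couplings'' of indecomposable partitions in the first summand, where plain summability suffices --- and you never use Assumption~\ref{cond:cum}(ii) at all, so as written your argument does not cover two of the four terms.
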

\begin{proof}
	To ensure readability, we focus on the sum over the first summand. The other summands can be treated with similar arguments. First, define $Y_{t,T}=X_{t,T}-X_t^{\scs (t/T)}$. From the definition of cumulants, Theorem 2.3.2 of \citesuppl{Brillinger1965} and the triangular inequality, we get the bound
	{\small\begin{align*}
	&\|\cov\big(Y_{t,T}\otimes X_{t+h,T},X_{t',T}\otimes X_{t'+h',T}\big) \|_{2,4}
	\leq \|\cum\big(Y_{t,T}, X_{t+h,T},X_{t',T}, X_{t'+h',T}\big)\|_{2,4}\\
&\phantom{=}+ \|\cum(Y_{t,T})\|_2 \|\cum(X_{t+h,T},X_{t',T}, X_{t'+h',T})\|_{2,3}\\
&\phantom{=}+ \|\cum(X_{t+h,T})\|_2 \|\cum(Y_{t,T},X_{t',T}, X_{t'+h',T})\|_{2,3}\\
&\phantom{=}+ \|\cum(X_{t',T})\|_2 \|\cum(Y_{t,T},X_{t+h,T}, X_{t'+h',T})\|_{2,3}\\
&\phantom{=}+ \|\cum(X_{t'+h',T})\|_2 \|\cum(Y_{t,T},X_{t+h,T}, X_{t',T})\|_{2,3}\\
&\phantom{=}+ \|\cum(Y_{t,T})\|_2 \|\cum(X_{t',T})\|_2 \|\cum(X_{t+h,T}, X_{t'+h',T})\|_{2,2}\\
&\phantom{=}+ \|\cum(X_{t+h,T})\|_2 \|\cum(X_{t',T})\|_2 \|\cum(Y_{t,T}, X_{t'+h',T})\|_{2,2}\\
&\phantom{=}+ \|\cum(Y_{t,T})\|_2 \|\cum(X_{t'+h',T})\|_2 \|\cum(X_{t+h,T}, X_{t',T})\|_{2,2}\\
&\phantom{=}+ \|\cum(X_{t+h,T})\|_2 \|\cum(X_{t'+h',T})\|_2 \cum(Y_{t,T}, X_{t',T})\|_{2,2}\\
&\phantom{=}+ \|\cum(Y_{t,T}, X_{t',T})\|_{2,2} \|\cum(X_{t+h,T}, X_{t'+h',T})\|_{2,2}\\
&\phantom{=}+ \|\cum(Y_{t,T}, X_{t'+h',T})\|_{2,2} \|\cum(X_{t+h,T},X_{t',T})\|_{2,2}\\
&	\leq C \Big\{\|\cum\big(Y_{t,T}, X_{t+h,T},X_{t',T}, X_{t'+h',T}\big)\|_{2,4}\\
&\phantom{=}+ \frac{1}{T} \|\cum(X_{t+h,T},X_{t',T}, X_{t'+h',T})\|_{2,3}
+  \|\cum(Y_{t,T},X_{t',T}, X_{t'+h',T})\|_{2,3}\\
&\phantom{=}+  \|\cum(Y_{t,T},X_{t+h,T}, X_{t'+h',T})\|_{2,3}
+  \|\cum(Y_{t,T},X_{t+h,T}, X_{t',T})\|_{2,3}\\
&\phantom{=}+  \frac{1}{T} \|\cum(X_{t+h,T}, X_{t'+h',T})\|_{2,2}
+  \|\cum(Y_{t,T}, X_{t'+h',T})\|_{2,2}\\
&\phantom{=}+  \frac{1}{T}  \|\cum(X_{t+h,T}, X_{t',T})\|_{2,2}
+  \cum(Y_{t,T}, X_{t',T})\|_{2,2}\\
&\phantom{=}+ \|\cum(Y_{t,T}, X_{t',T})\|_{2,2} \|\cum(X_{t+h,T}, X_{t'+h',T})\|_{2,2}\\
&\phantom{=}+ \|\cum(Y_{t,T}, X_{t'+h',T})\|_{2,2} \|\cum(X_{t+h,T},X_{t',T})\|_{2,2}\Big\},
	\end{align*}
	}where me made use of \eqref{eq:ls} in the second inequality.
	Now, we can investigate the sums over all summands separately. We focus exemplary on three summands, as the remaining summands can be treated with the same arguments. By \ref{cond:cum}, we have
{\small\begin{align*}
	&\hspace{-1cm}\frac{1}{T}\sum_{t=1}^{T-h}\sum_{t'=1}^{T-h'} \|\cum\big(Y_{t,T}, X_{t+h,T},X_{t',T}, X_{t'+h',T}\big)\|_{2,4}\\
	&\leq \frac{1}{T}\sum_{t_1,\cdots,t_4=1}^{T} \|\cum\big(Y_{t_1,T}, X_{t_2,T},X_{t_3,T}, X_{t_4,T}\big)\|_{2,4}\\
	&\leq \frac{1}{T} \sum_{t_1,\cdots,t_4=1}^{T} \frac{1}{T}\eta_4(t_2-t_1,t_3-t_1,t_4-t_1) = \mathcal{O}(T^{-1}).
\end{align*}
}Similarly, 
{\small\begin{align*}
&\hspace{-1cm}\frac{1}{T} \sum_{t=1}^{T-h}\sum_{t'=1}^{T-h'}\frac{1}{T} \|\cum(X_{t+h,T},X_{t',T}, X_{t'+h',T})\|_{2,3}\\
&\leq \frac{1}{T^2} \sum_{t_1,t_2,t_3=1}^{T}  \|\cum(X_{t_1,T},X_{t_2,T}, X_{t_3,T})\|_{2,3}\\
&\leq  \frac{1}{T^2}\sum_{t_1,t_2,t_3=1}^{T} \eta_3(t_2-t_1,t_3-t_1) = \mathcal{O}(T^{-1}).
\end{align*}
}and
{\small\begin{align*}
	&\hspace{-1cm}\frac{1}{T}\sum_{t=1}^{T-h}\sum_{t'=1}^{T-h'}\|\cum(Y_{t,T}, X_{t',T})\|_{2,2} \|\cum(X_{t+h,T}, X_{t'+h',T})\|_{2,2}\\
	&\leq \frac{1}{T}\sum_{t=1}^{T-h}\sum_{t'=1}^{T-h'}\|\ex[(Y_{t,T})^2]\|_2 \|\ex[X_{t',T}^2]\|_2 \|\cum(X_{t+h,T}, X_{t'+h',T})\|_{2,2}\\
	&\leq \frac{C}{T^2} \sum_{t=1}^{T-h}\sum_{t'=1}^{T-h'}\|\cum(X_{t+h,T}, X_{t'+h',T})\|_{2,2}\\
	&\leq \frac{C}{T^2} \sum_{t,t'=1}^{T} \eta_2(t-t')= \mathcal{O}(T^{-1}).
	\end{align*}
}The proof for the third and fourth summand relies on the summability assumption of $(1+|t_j|)\nu_k(t_1,\dots,t_{k-1})$ rather than $\nu_k(t_1,\dots,t_{k-1})$.
\end{proof}

\begin{lemma}\label{ThmTightness}
	Let Assumptions~\ref{cond:ls}--\ref{cond:cum} and \ref{eq:b1} and \ref{eq:b3}  be satisfied. Then,
	\[ \lim\limits_{n\to\infty}\limsup\limits_{T\to\infty} \ex\bigg[\sum_{\ell=n+1}^{\infty}\bigg(\langle \tilde{B}_T,\psi'_\ell\rangle^2 + \langle \tilde{B}_T^{(1)},\psi'_\ell\rangle^2 + \sum_{h=0}^{H} \langle \tilde{B}_{T,h},\psi_\ell\rangle^2 + \langle \tilde{B}_{T,h}^{(1)},\psi_\ell\rangle^2\bigg) \bigg]=0. \]
\end{lemma}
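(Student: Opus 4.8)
The plan is to prove the statement directly as an expectation (i.e.\ $L^1$) bound; this is precisely the input needed for condition (2) of the approximation Lemma~\ref{lem:app} via Markov's inequality, so no further reduction is required. Since the four families of summands are structurally identical, I would treat the most involved one, $\langle\tilde B_{T,h},\psi_\ell\rangle$, in detail and only indicate the analogous modifications for $\langle\tilde B_T,\psi'_\ell\rangle$ and the two bootstrap versions. The decisive structural input is that the orthonormal bases are tensor products of the one-dimensional trigonometric basis $\mathcal B$; I write $\psi_\ell=e_j\otimes f_{k_1}\otimes f_{k_2}$, separating the ``time'' variable $u$ from the ``space'' variables $(\tau_1,\tau_2)$.

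First I would exploit that $\tilde B_{T,h}(\cdot,\tau_1,\tau_2)$ is a partial-sum process in $u$. Integrating against $e_j$ produces the antiderivative weight $E_j(t/T)=\int_{t/T}^1 e_j(u)\diff u$, and since antiderivatives of the trigonometric functions of frequency $\sim j$ decay like $1/j$, one has $\sup_{s}|E_j(s)|\le C/\max(j,1)$. Consequently, writing $G_t=X_{t,T}(\tau_1)X_{t+h,T}(\tau_2)-\mu_{t,T,h}$ and $V_j=\int_0^1 \tilde B_{T,h}(u,\cdot)e_j(u)\diff u$, the total spatial energy at time-frequency $j$ satisfies
\[
\ex\|V_j\|_{2,2}^2=\frac1T\sum_{t,t'}E_j(t/T)E_j(t'/T)\int_{[0,1]^2}\cov\big(G_t(\tau),G_{t'}(\tau)\big)\diff\tau\le\frac{C}{\max(j,1)^2}\sum_{s}\eta_2(s),
\]
using the diagonal covariance bound in Assumption~\ref{cond:cum}(iv). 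Since $\sum_j\ex\|V_j\|_{2,2}^2=\ex\|\tilde B_{T,h}\|_{2,3}^2$ by Parseval in $u$, this shows both that $\ex\|\tilde B_{T,h}\|_{2,3}^2$ is bounded uniformly in $T$ and that the high-time-frequency part of the tail, $\sum_{j>J}\ex\|V_j\|_{2,2}^2$, tends to $0$ as $J\to\infty$, uniformly in $T$.

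The genuine difficulty — and the step where the antiderivative trick gives nothing — is the high-space-frequency part at bounded time-frequency, namely $\sum_j\ex\|(\ident-\Pi_K)V_j\|^2$, where $\Pi_K$ projects the $(\tau_1,\tau_2)$-variable onto the first basis elements. By Parseval this equals $\frac1T\sum_{t,t'}E_j(t/T)E_j(t'/T)A_K(t,t')$ with $A_K(t,t')=\ex\langle(\ident-\Pi_K)G_t,(\ident-\Pi_K)G_{t'}\rangle$, and I would bound $A_K$ in two complementary ways. By Cauchy–Schwarz, $|A_K(t,t')|\le r_K:=\sup_{t,T}\ex\|(\ident-\Pi_K)G_t\|_{2,2}^2$; the uniform tightness $r_K\to0$ I would obtain by replacing $G_t$ with its stationary approximant (local stationarity together with Lemma~\ref{lem:lip}) and applying a Dini argument to the family of covariance operators $\{C^{(u)}\}_{u\in[0,1]}$, which is continuous in trace norm — hence compact — so that its tail traces vanish uniformly in $u$. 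On the other hand, because the projection only shrinks Hilbert-space norms, a weak-dependence (Davydov/cumulant) estimate yields $|A_K(t,t')|\le\beta(|t-t'|)$ with $\beta$ summable and \emph{independent of $K$}. Splitting the double sum at a lag $L=L_K\to\infty$ chosen so that $L_K r_K\to0$ then makes $\frac1T\sum_{t,t'}|A_K(t,t')|$ vanish as $K\to\infty$, uniformly in $T$; multiplying by the summable weights $\sum_j C/\max(j,1)^2$ controls the whole space-frequency tail.

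Finally, I would combine the two pieces through the tensor enumeration: for every $J,K$ there is $n$ such that $\{1,\dots,n\}$ contains all indices $\ell$ whose factors satisfy $j\le J$ and space-frequency $\le K$, so that $\{\ell>n\}\subseteq\{j>J\}\cup\{\text{space-frequency}>K\}$; letting $n\to\infty$ forces $J,K\to\infty$, and the two uniform bounds give the result for the $\tilde B_{T,h}$-family. The mean term $\tilde B_T$ is the same argument in one space variable, and the two bootstrap families are identical once the block-sum second-moment identities derived in the proof of Proposition~\ref{PropCovConv} (statements (D4)–(D6)) are inserted in place of the covariances above, the multiplier independence removing all cross terms. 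The main obstacle is precisely the uniform-in-$T$ control of the high-space-frequency tail, that is, simultaneously establishing $r_K\to0$ (uniform tightness of the covariance operators) and the $K$-uniform lag-summable decay of $A_K$.
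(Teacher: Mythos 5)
Your route is genuinely different from the paper's: the paper never bounds the tail directly, but writes it as $\ex\|\tilde B_{T,h}\|_{2,3}^2$ minus the head, handles the head with the coefficient-wise covariance convergence (D3) of Proposition~\ref{PropCovConv}, and proves convergence of the full squared norm --- a quantity that involves only the diagonal integrals $\int\cov(G_t(\tau),G_{t'}(\tau))\diff\tau$ controlled by Assumption~\ref{cond:cum}(iv). Your treatment of the time-frequency tail via the antiderivative weights $E_j$ is correct and is a nice observation. The gap sits exactly where you locate the main difficulty: the claim that a ``weak-dependence (Davydov/cumulant) estimate'' yields $|A_K(t,t')|\le\beta(|t-t'|)$ with $\beta$ summable and \emph{independent of} $K$. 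The lemma is stated under Assumptions~\ref{cond:ls}--\ref{cond:cum}, \ref{eq:b1}, \ref{eq:b3} only; strong mixing is \emph{not} a hypothesis (it enters the paper only through Lemma~\ref{mixing}, as a sufficient condition for \ref{cond:cum} under additional moment assumptions). A Hilbert-space Davydov inequality would indeed give a $K$-uniform bound, because projections shrink norms while the generating $\sigma$-fields are unchanged --- but it needs mixing coefficients, which you do not have. Under the cumulant conditions, the available bounds are \ref{cond:cum}(iv), which controls only the diagonal quantity $\int|\cov(G_t(\tau),G_{t'}(\tau))|\diff\tau$ (and compression by $\ident-\Pi_K$ destroys precisely this diagonal structure), and \ref{cond:cum}(iii) combined with the product theorem for cumulants, which controls the Hilbert--Schmidt norm of the cross-covariance operator $C_{t,t'}$ of $G_t$ and $G_{t'}$. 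Since $A_K(t,t')=\sum_{\psi}\langle C_{t,t'}\psi,\psi\rangle$, the sum running over the basis elements of space-frequency larger than $K$, a bound uniform in the projection requires summable-in-lag control of the \emph{nuclear} norm of $C_{t,t'}$; the Hilbert--Schmidt norm only gives $|\mathrm{tr}(\Pi_K C_{t,t'}\Pi_K)|\le\sqrt K\,\|C_{t,t'}\|_{HS}$, losing a factor $\sqrt K$, and nuclear-norm decay is nowhere assumed.

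Nor can this be repaired by interpolating your two bounds. Writing $|A_K(t,t')|\le\min\{r_K,\;\eta_2(|t-t'|)+\sqrt K\,\eta_{HS}(|t-t'|)\}$, with $\eta_{HS}$ a summable majorant of $\|C_{t,t'}\|_{HS}$ obtained from \ref{cond:cum}(iii), and splitting the double sum at a lag $L_K$ forces the two requirements $L_K r_K\to0$ and $\sqrt K\sum_{|s|>L_K}\eta_{HS}(s)\to0$. Since the assumptions give $r_K\to0$ and summability of $\eta_{HS}$ without any rates, these requirements can be incompatible: for instance $r_K\sim1/\log K$ together with $\eta_{HS}(s)\sim s^{-2}$ forces $L_K\ll\log K$ and $L_K\gg\sqrt K$ simultaneously. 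So, as it stands, the high-space-frequency tail is not controlled under the lemma's hypotheses. To close the argument you would either have to add strong mixing to the statement, or replace the far-off-diagonal estimate by the paper's mechanism, which avoids projected cross-covariances altogether: compare $\ex\|\tilde B_{T,h}\|_{2,3}^2$ (diagonal covariances only, where \ref{cond:cum}(iv) suffices) with $\ex\|\tilde B_h\|_{2,3}^2$, and invoke (D3) for the finitely many retained coefficients; the same remark applies to your treatment of the bootstrap processes via (D4)--(D6).
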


\begin{proof}[Proof of Lemma~\ref{ThmTightness}]
	By linearity of the expectation, we can prove the property for every process separately. We restrict our attention to the cases
	\begin{equation}\label{ThmTightnessPart1}
	\lim\limits_{n\to\infty}\limsup\limits_{T\to\infty} \ex\bigg[\sum_{\ell=n+1}^{\infty}\langle \tilde{B}_{T,h},\psi_\ell\rangle^2 \bigg]=0 
	\end{equation}
	and
	\begin{equation}\label{ThmTightnessPart2}
	\lim\limits_{n\to\infty}\limsup\limits_{T\to\infty} \ex\bigg[\sum_{\ell=n+1}^{\infty} \langle \tilde{B}_{T,h}^{(1)},\psi_\ell\rangle^2 \bigg]=0;
	\end{equation}
	the assertions regarding $\tilde{B}_T$ and $\tilde{B}_T^{\scs (1)}$ follow by similar arguments.
	
	First, by linearity of expectation,	
	\begin{align*}
	0&\leq \limsup\limits_{n\to\infty} \limsup\limits_{T\to\infty} \ex\bigg[\sum_{\ell=n+1}^{\infty} \langle \tilde{B}_{T,h},\psi_\ell\rangle^2\bigg]\\
	&=\limsup\limits_{n\to\infty} \limsup\limits_{T\to\infty} \ex\bigg[\sum_{\ell=1}^{\infty} \langle \tilde{B}_{T,h},\psi_\ell\rangle^2-\sum_{\ell=1}^{n} \langle \tilde{B}_{T,h},\psi_\ell\rangle^2\bigg]\\
	&\leq \limsup\limits_{T\to\infty} \ex\bigg[\sum_{\ell=1}^{\infty} \langle \tilde{B}_{T,h},\psi_\ell\rangle^2\bigg] - \liminf\limits_{n\to\infty} \liminf\limits_{T\to\infty} \ex\bigg[\sum_{\ell=1}^{n} \langle \tilde{B}_{T,h},\psi_\ell\rangle^2\bigg]\\
	&= \limsup\limits_{T\to\infty} \ex\bigg[\sum_{\ell=1}^{\infty} \langle \tilde{B}_{T,h},\psi_\ell\rangle^2\bigg] - \sum_{\ell=1}^{\infty} \liminf\limits_{T\to\infty} \ex\big[\langle \tilde{B}_{T,h},\psi_\ell\rangle^2\big]\\
	&= \limsup\limits_{T\to\infty} \ex\|\tilde{B}_{T,h}\|_{2,3}^2 - \ex\|B_h\|_{2,3}^2,
	\end{align*}
	where we used Equation (D3) from the proof of Proposition \ref{PropCovConv} in the last step. Thus, it is sufficient to prove $\limsup_{T\to\infty} \ex\|\tilde{B}_{T,h}\|_{2,3}^2 \leq \ex\|B_h\|_{2,3}^2$. 
By Fubini's theorem, we have 
	\begin{align*}
		&\, \ex\|\tilde{B}_{T,h}\|_{2,3}^2\\
		=&\, \frac{1}{T}\sum_{t,t'=1}^{T-h} \int_{[0,1]^3}\cov\big(X_{t,T}(\tau_1)X_{t+h,T}(\tau_2),X_{t',T}(\tau_1)X_{t'+h,T}(\tau_2)\big)\id(t,t'\leq\lfloor uT\rfloor) \diff (u,\tau_1,\tau_2).
		\end{align*}
	As in the proof of (D3) in the proof of Proposition \ref{PropCovConv}, we split the above sum into three sums $S_{T,1},S_{T,2},S_{T,3}$ according to $t=t',t<t'$ and $t>t'$, respectively.

For the convergence of the first sum, we obtain, by stationarity,
	\begin{align*}
	S_{T,1}&=\frac{1}{T}\sum_{t=1}^{T-h} \int_{[0,1]^3}\var\big(X_{t,T}(\tau_1)X_{t+h,T}(\tau_2)\big)\id(t\leq\lfloor uT\rfloor) \diff (u,\tau_1,\tau_2)\\
	&= \frac{1}{T}\sum_{t=1}^{T-h} \int_{[0,1]^3}\var\big(X_{t}^{(t/T)}(\tau_1)X_{t+h}^{(t/T)}(\tau_2)\big)\id(t\leq\lfloor uT\rfloor) \diff (u,\tau_1,\tau_2)+\mathcal{O}(T^{-1})\\
	&= \int_{[0,1]^3}\frac{1}{T}\sum_{t=1}^{T-h} \var\big(X_{0}^{(t/T)}(\tau_1)X_{h}^{(t/T)}(\tau_2)\big)\id(t\leq\lfloor uT\rfloor) \diff (u,\tau_1,\tau_2)+\mathcal{O}(T^{-1})\\
	&\xrightarrow{T\to\infty} \int_{[0,1]^3}\int_0^u \var\big(X_0^{(w)}(\tau_1) X_h^{(w)}(\tau_2)\big) \diff w(u,\tau_1,\tau_2).
	\end{align*}
	Next, the double sum involving $t<t'$ can be treated as follows:
	{\small\begin{align*}
	&\, \frac{1}{T}\sum_{t=1}^{(T-h) \wedge \lfloor uT\rfloor}\sum_{t'=t+1}^{(T-h) \wedge \lfloor uT\rfloor} \int_{[0,1]^3}\cov\big(X_{t,T}(\tau_1)X_{t+h,T}(\tau_2),X_{t',T}(\tau_1)X_{t'+h,T}(\tau_2)\big) \diff (u,\tau_1,\tau_2)\\
	=&\, 
	\frac{1}{T} \int_{[0,1]^3} \sum_{t=1}^{\lfloor uT\rfloor}\sum_{t'=t+1}^{\lfloor uT\rfloor}  \cov\big(X_{t}^{(t/T)}(\tau_1)X_{t+h}^{(t/T)}(\tau_2),X_{t'}^{(t/T)}(\tau_1)X_{t'+h}^{(t/T)}(\tau_2)\big) \diff (u,\tau_1,\tau_2)+\mathcal{O}(T^{-1})\\
	=&\, \frac{1}{T} \int_{[0,1]^3} \sum_{t=1}^{\lfloor uT\rfloor}\sum_{k=1}^{\lfloor uT\rfloor-t}\cov\big(X_{t}^{(t/T)}(\tau_1)X_{t+h}^{(t/T)}(\tau_2),X_{k+t}^{(t/T)}(\tau_1)X_{k+t+h}^{(t/T)}(\tau_2)\big)\diff (u,\tau_1,\tau_2)+\mathcal{O}(T^{-1})\\
	=&\, \frac{1}{T} \int_{[0,1]^3} \sum_{t=1}^{\lfloor uT\rfloor}\sum_{k=1}^{\lfloor uT\rfloor-t}\cov\big(X_0^{(t/T)}(\tau_1)X_h^{(t/T)}(\tau_2),X_{k}^{(t/T)}(\tau_1)X_{k+h}^{(t/T)}(\tau_2)\big)\diff (u,\tau_1,\tau_2)+\mathcal{O}(T^{-1}).
	\end{align*}
}By Lebesgue's dominated convergence theorem, the integral and the limit, as $T$ tends to infinity, are interchangeable in the last equality. Thus, the right-hand side converges according to Lemma \ref{LemmaLimitIntegral} to
\[ 
	\int_{[0,1]^3}\sum_{k=1}^{\infty} \int_{0}^{u}\cov\big(X_{0}^{(w)}(\tau_1)X_{h}^{(w)}(\tau_2),X_{k}^{(w)}(\tau_1)X_{k+h}^{(w)}(\tau_2)\big)\diff w \diff (u,\tau_1,\tau_2).  
\]
	
	A similar assertion holds for the double sum involving $t>t'$. Altogether, we obtain that $\ex\|\tilde{B}_{T,h}\|_{2,3}^2$ converges to 
	\begin{multline*}
	 \int_{[0,1]^3}\sum_{k=-\infty}^{\infty} \int_{0}^{u}\cov\big(X_{0}^{(w)}(\tau_1)X_{h}^{(w)}(\tau_2),X_{k}^{(w)}(\tau_1)X_{k+h}^{(w)}(\tau_2)\big)\diff w \diff (u,\tau_1,\tau_2)\\
	= \int_{[0,1]^3} \var\big(\tilde B_h(u,\tau_1,\tau_2)\big) \diff (u,\tau_1,\tau_2) = \ex\|\tilde B_h\|_{2,3}^2
	\end{multline*}
	by Fubini's theorem, which proves \eqref{ThmTightnessPart1}.

	For the proof of \eqref{ThmTightnessPart2} observe that 
	\[ 
	0\leq \lim\limits_{n\to\infty}\limsup\limits_{T\to\infty}\ex\bigg[\sum_{\ell=n+1}^{\infty}\langle \tilde{B}_{T,h}^{(1)},\psi_\ell\rangle^2\bigg]\leq \limsup\limits_{T\to\infty} \ex\|\tilde{B}_{T,h}^{(1)}\|_{2,3}^2-\ex\|\tilde B_h^{(1)}\|_{2,3}^2, \]
	as before, and we can conclude the statement by showing $\limsup_{T\to\infty}\ex\|\tilde{B}_{T,h}^{\scs (1)}\|_{2,3}^2\leq \ex\|\tilde B_h^{\scs (1)}\|_{2,3}^2$. Fubini's theorem and the independence of the family $(R_i)_{i\in\naturals}$ lead to
	\begin{align*} 
	&\, \ex\|\tilde{B}_{T,h}^{(1)}\|_{2,3}^2 \\
	=&\, \ex\bigg[\int_{[0,1]^3} \frac{1}{mT} \sum_{i,i'=1}^{\lfloor uT\rfloor\wedge(T-h)}R_iR_{i'} \sum_{t=i}^{(i+m-1)\wedge(T-h)}\sum_{t'=i'}^{(i'+m-1)\wedge(T-h)}\big\{X_{t,T}(\tau_1)X_{t+h,T}(\tau_2)\\
	&\hspace{2.6cm} -\mu_{t,T,h}(\tau_1,\tau_2) \big\}\big\{X_{t',T}(\tau_1)X_{t'+h,T}(\tau_2)-\mu_{t',T,h}(\tau_1,\tau_2)\big\} \diff (u,\tau_1,\tau_2)\bigg]\\
	=&\, S_{T,1}+S_{T,2}+S_{T,3},
	\end{align*}
	where
	\begin{align*}
	S_{T,1}=&\int_{[0,1]^3}\frac{1}{T}\sum_{i=1}^{\lfloor uT\rfloor\wedge(T-h)} \frac{1}{m}\sum_{t=i}^{(i+m-1)\wedge(T-h)}   \var\big(X_{t,T}(\tau_1)X_{t+h,T}(\tau_2)\big)\diff (u,\tau_1,\tau_2),\\
	S_{T,2}=&\int_{[0,1]^3}\frac{1}{T}\sum_{i=1}^{\lfloor uT\rfloor\wedge(T-h)} \frac{1}{m}\sum_{t=i}^{(i+m-2)\wedge(T-h)}\sum_{t'=t+1}^{(i+m-1)\wedge(T-h)}  \\
	&\hspace{2cm} \cov\big(X_{t,T}(\tau_1)X_{t+h,T}(\tau_2),X_{t',T}(\tau_1)X_{t'+h,T}(\tau_2)\big)
	 \diff (u,\tau_1,\tau_2), \\
	S_{T,3}=&\int_{[0,1]^3}\frac{1}{T}\sum_{i=1}^{\lfloor uT\rfloor\wedge(T-h)} \frac{1}{m}\sum_{t'=i}^{(i+m-2)\wedge(T-h)}\sum_{t=t'+1}^{(i+m-1)\wedge(T-h)}   \\
	&\hspace{2cm} \cov\big(X_{t,T}(\tau_1)X_{t+h,T}(\tau_2),X_{t',T}(\tau_1)X_{t'+h,T}(\tau_2)\big)
	 \diff (u,\tau_1,\tau_2).
	\end{align*}
	We investigate the three previous terms separately. By the same arguments as in the proof of Proposition~\ref{prop:cov} and the stationarity of $(X_t^{\scs (u)})_{t\in\Z}$, we have
	{\small\begin{align*} 
	\begin{split}
	S_{T,1}&=\int_{[0,1]^3}  \frac{1}{T}\sum_{i=1}^{\lfloor uT\rfloor} \frac{1}{m}\sum_{t=i}^{(i+m-1)\wedge(T-h)} \var\big(X_t^{(t/T)}(\tau_1)X_{t+h}^{(t/T)}(\tau_2)\big)\diff (u,\tau_1,\tau_2)+\mathcal{O}(mT^{-1})\\
	&= \int_{[0,1]^3} \frac{1}{T}\sum_{i=1}^{\lfloor uT\rfloor} \frac{1}{m}\sum_{t=i}^{i+m-1} \var\big(X_0^{(t/T)}(\tau_1)X_h^{(t/T)}(\tau_2)\big)\diff (u,\tau_1,\tau_2)+\mathcal{O}(mT^{-1}).
	\end{split}
	\end{align*}
	}For $u<1$, the previous integrand can be rewritten as
	{\small\begin{multline*}
	\frac{1}{T}\sum_{i=1}^{m-1}\frac{i}{m} \var\big(X_0^{(t/T)}(\tau_1)X_h^{(t/T)}(\tau_2)\big)
	+ \frac{1}{T}\sum_{i=m}^{\lfloor uT\rfloor} \var\big(X_0^{(t/T)}(\tau_1)X_h^{(t/T)}(\tau_2)\big)\\
	+ \frac{1}{T}\sum_{i=\lfloor uT\rfloor +1}^{\lfloor uT\rfloor +m-1}\frac{\lfloor uT\rfloor +m-i}{m} \var\big(X_0^{(t/T)}(\tau_1)X_h^{(t/T)}(\tau_2)\big),
	\end{multline*}
	}which implies that
	\[ \lim\limits_{T\to\infty}S_{T,1}=\int_{[0,1]^3}\int_0^u \var\big(X_0^{(w)}(\tau_1),X_h^{(w)}(\tau_2)\big)\diff w \diff (u,\tau_1,\tau_2), \]
	by Lebesgue's dominated convergence theorem. 
The sums $S_{T,2}$ and $S_{T,3}$ can be treated similarly, which finally implies that
	\begin{align*} 
	&\, \limsup_{T\to\infty}\ex\|\tilde{B}_T^{(1)}\|_{2,3}^2 \\
	=&\, \int_{[0,1]^3}\sum_{t=-\infty}^{\infty} \int_0^u \cov\big(X_0^{(w)}(\tau_1)X_h^{(w)}(\tau_2),X_t^{(w)}(\tau_1)X_{t+h}^{(w)}(\tau_2)\big)\diff w\diff (u,\tau_1,\tau_2)\\
	=&\, \int_{[0,1]^3} \var\big(\tilde B(u,\tau_1,\tau_2)\big) \diff (u,\tau_1,\tau_2)=\ex\|\tilde B^{(1)}\|_{2,3}^2. 
	\end{align*}
	Thus \eqref{ThmTightnessPart2} holds true, which proves the lemma.
\end{proof}

%
%
%

\begin{lemma}\label{LemmaLimitIntegral}
	Let $(f_k)_{k\in\naturals}$ be a sequence of integrable functions on the unit interval $[0,1]$, such that $f_k(x)\leq \nu(k)$, for all $x\in[0,1]$, with $\sum_{k=1}^{\infty}\nu(k)<\infty$ and let $(a_n)_{n\in\naturals}$ be a sequence of integers with $a_n\to\infty$ as $n$ tends to infinity. Then, 
	\[ 
	\lim_{n\to\infty} \sum_{k=1}^{a_n} \frac{1}{n} \sum_{\ell=1}^{\lfloor un \rfloor} f_k\big(\tfrac{\ell}{n}\big)
	= 
	\sum_{k=1}^{\infty} \int_0^u f_k(x)\diff x
	\]
	for any $u\in[0,1]$.
\end{lemma}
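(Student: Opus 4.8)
The plan is to reduce the statement to Tannery's theorem (a discrete form of dominated convergence) combined with the elementary convergence of equispaced Riemann sums for each fixed index. Throughout, write $S_{k,n}(u)=\frac1n\sum_{\ell=1}^{\lfloor un\rfloor} f_k(\ell/n)$ for the inner sum, so that the quantity of interest is $\lim_{n\to\infty}\sum_{k=1}^{a_n} S_{k,n}(u)$.

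First I would record the two ingredients. The domination $|f_k|\le\nu(k)$ on $[0,1]$ immediately yields the uniform bound $|S_{k,n}(u)|\le \frac1n\lfloor un\rfloor\,\nu(k)\le \nu(k)$, valid for all $n\in\naturals$ and all $k$, and likewise $\big|\int_0^u f_k(x)\diff x\big|\le u\,\nu(k)\le\nu(k)$; since $\sum_k\nu(k)<\infty$, the series $\sum_k\int_0^u f_k$ converges absolutely and the right-hand side is well defined. The second ingredient is that, for each \emph{fixed} $k$, integrability of $f_k$ on $[0,1]$ gives $S_{k,n}(u)\to\int_0^u f_k(x)\diff x$ as $n\to\infty$; in the intended application the $f_k$ are continuous covariance functions, so this is the standard convergence of equispaced Riemann sums.

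Next I would run the $\varepsilon$-splitting argument that controls the growing summation range. Fix $\varepsilon>0$ and choose $N\in\naturals$ with $\sum_{k>N}\nu(k)<\varepsilon$. For $n$ large enough that $a_n>N$, split $\sum_{k=1}^{a_n}S_{k,n}=\sum_{k=1}^{N}S_{k,n}+\sum_{k=N+1}^{a_n}S_{k,n}$. The tail is uniformly small, namely $\big|\sum_{k=N+1}^{a_n}S_{k,n}\big|\le\sum_{k>N}\nu(k)<\varepsilon$ by the uniform bound, and analogously $\big|\sum_{k>N}\int_0^u f_k\big|<\varepsilon$. The finite head $\sum_{k=1}^N S_{k,n}$ converges, as $n\to\infty$, to $\sum_{k=1}^N\int_0^u f_k$, being a finite sum of convergent Riemann sums. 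Combining the three estimates yields $\limsup_{n\to\infty}\big|\sum_{k=1}^{a_n}S_{k,n}-\sum_{k=1}^\infty\int_0^u f_k\big|\le 2\varepsilon$, and letting $\varepsilon\downarrow 0$ gives the claim.

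The only genuine obstacle is the coupling of the summation range $a_n$ to $n$: one cannot interchange $\lim_n$ with $\sum_k$ term by term, since the number of summands itself grows. This is precisely what the uniform, summable domination $|S_{k,n}|\le\nu(k)$ resolves, as it renders the portion of the sum beyond any fixed $N$ negligible uniformly in $n$, after which the remaining finitely many terms are handled by plain Riemann-sum convergence. I would also remark that the hypothesis is to be read as the two-sided bound $|f_k(x)|\le\nu(k)$, which is what simultaneously guarantees absolute convergence of $\sum_k\int_0^u f_k$ and validity of the tail estimate.
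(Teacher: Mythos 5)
Your proof is correct, and it takes a genuinely different (more elementary) route than the paper. The paper disposes of the lemma in a single stroke: it defines the step functions $g_n(k,x) = \ind(k\le a_n)\sum_{\ell=1}^{\lfloor un\rfloor} f_k(\ell/n)\,\ind\big(x\in((\ell-1)/n,\ell/n]\big)$ on the product space $\naturals\times[0,1]$ (counting measure times Lebesgue measure), observes that the double sum in question equals $\int g_n$, that $g_n(k,x)\to f_k(x)$ pointwise (a.e.), and that $|g_n(k,x)|\le\nu(k)$ is an integrable dominating function on that product space, and then invokes Lebesgue's dominated convergence theorem once. You instead decouple the two difficulties: the growing outer range $k\le a_n$ is handled by a Tannery-type $\varepsilon$-splitting (the tail beyond a fixed $N$ is uniformly small in $n$ by summability of $\nu$), and the inner sum for each fixed $k$ is handled by classical Riemann-sum convergence. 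The two arguments are morally the same --- Tannery's theorem is dominated convergence for counting measure --- but the paper's version buys compactness, since one theorem application simultaneously covers the $k$-tail and the $\ell$-discretization, while yours buys transparency about exactly where each hypothesis enters and avoids constructing the auxiliary product-space functions. Note also that both proofs share the same implicit strengthening of the hypotheses: mere Lebesgue integrability of $f_k$ guarantees neither that $g_n(k,\cdot)\to f_k$ a.e.\ (as the paper needs) nor that equispaced Riemann sums converge to $\int_0^u f_k$ (as you need); one needs Riemann integrability or a.e.\ continuity, which does hold for the covariance functions to which the lemma is applied. You flag this, as well as the need to read the stated bound $f_k\le\nu(k)$ as the two-sided bound $|f_k|\le\nu(k)$ --- both remarks are apt and apply equally to the paper's own proof.
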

\begin{proof}
The statement is an immediate consequence of Lebesgue's dominated convergence theorem, applied to the sequence of functions $g_n(k,x) = \ind(k \le a_n) \sum_{\ell=1}^{\scs \lfloor un \rfloor} f_k(\ell/n) \ind(x \in ((\ell-1)/n, \ell/n]).$
\end{proof}

\bibliographystylesuppl{apalike}
\bibliographysuppl{bibliography}

\end{document}